\documentclass[12pt]{article}
\usepackage[utf8x]{inputenc}
\usepackage{amsmath}
\usepackage{amsfonts}
\usepackage{latexsym}
\usepackage{amsthm}
\usepackage{xargs}
\usepackage{graphicx}
\usepackage{stmaryrd}
\usepackage{color}
\usepackage{caption}
\usepackage{subcaption}
\usepackage{euscript}
\usepackage{url}
\usepackage{emptypage}

\newtheorem{theorem}{Theorem}[section]

\newtheorem{corollary}[theorem]{Corollary}

\newtheorem{lemma}[theorem]{Lemma}

\newtheorem{proposition}[theorem]{Proposition}
\newtheorem{remark}[theorem]{Remark}

\setlength{\oddsidemargin}{0in}
\setlength{\evensidemargin}{0.3in}
\setlength{\textwidth}{15.5cm}
\setlength{\textheight}{20cm}
\setlength{\topmargin}{0.45in}
\setlength{\baselineskip}{2mm}



\newcommand{\dist}{\kappa}
\newcommand{\coef}{\textup{d}}
\newcommand{\param}{\sigma}
\newcommand{\ost}{\omega}
\newcommand{\distin}{\bar{\dist}}


\newcommand{\re}{\mathrm{Re}\,}
\newcommand{\im}{\mathrm{Im}\,}
\newcommand{\uns}{\mathrm{u}}
\newcommand{\sta}{\mathrm{s}}
\newcommand{\us}{*}
\newcommand{\inn}{\mathrm{in}}


\newcommand{\vu}{u}

\newcommand{\vw}{w}
\newcommand{\vs}{s}

\newcommand{\xb}{\bar{x}}
\newcommand{\yb}{\bar{y}}
\newcommand{\zb}{\bar{z}}
\newcommand{\tb}{\bar{t}}


\newcommand{\hetz}{Z_0}
\newcommand{\hetr}{R_0}
\newcommand{\hetth}{\Theta_0}
\newcommand{\Fb}{\mathbf{F}}
\newcommand{\Gb}{\mathbf{G}}
\newcommand{\Hb}{\mathbf{H}}
\newcommand{\vt}{\vartheta}
\newcommand{\Diffw}{\mathcal{E}}
\newcommand{\kin}{k^\inn}
\newcommand{\kintilde}{{\tilde k}^\inn}
\newcommand{\Pinsmall}{P_1^\inn}
\newcommand{\Pin}{P^\inn}
\newcommand{\psiin}{\psi_\inn}
\newcommand{\xiin}{\xi_\inn}


\newcommand{\Fout}{\mathcal{F}^{\rm out}}
\newcommand{\Lout}{\mathcal{L}^{\rm out}}
\newcommand{\Linner}{\mathcal{L}}
\newcommand{\Minner}{\mathcal{M}}
\newcommand{\Mmatch}{\mathcal{M}_1}
\newcommand{\Ginner}{\mathcal{G}}
\newcommand{\Gdiffin}{\hat{\mathcal{G}}}
\newcommand{\Ldiffin}{\hat{\mathcal{L}}}
\newcommand{\Gmatch}{\mathcal{G}_0}
\newcommand{\oprhsphi}{\mathcal{A}}
\newcommand{\oprhsP}{\mathcal{B}}


\newcommandx{\DoutT}[4][1=\dist, 2=T,3=\beta]{D_{#1,#3,#2}^{#4}}
\newcommand{\Doutinter}{D_{\dist,\beta}}
\newcommand{\Doutintertilde}{\tilde{D}_{\dist,\beta}}
\newcommandx{\Dmchout}[5][1=\dist, 2={,}, 3=\beta_1, 4=\beta_2]{D_{#1,#3,#4}^{\mathrm{mch}#2#5}}
\newcommandx{\Dmchin}[2][1={,}]{\mathcal{D}_{\dist,\beta_1,\beta_2}^{\mathrm{mch}#1#2}}
\newcommandx{\Tout}[1][1=\ost]{\mathbb{T}_{#1}}
\newcommand{\Din}[1]{\mathcal{D}_{\beta_0,\distin}^{\mathrm{in},#1}}
\newcommand{\Dinu}[1]{D_{\beta_0,\distin}^{\mathrm{in},#1}}
\newcommand{\DoutTinvars}[1]{\mathcal{D}_{\dist,\beta,T}^{#1}}
\newcommand{\Ein}{E_{\beta_0,\distin}}

\newcommandx{\Bsin}[3][3=\ost]{\mathcal{X}^{#1}_{#2,#3}}
\newcommandx{\Bsinfloor}[3][3=\ost]{\tilde{\mathcal{X}}^{#1}_{#2,#3}}
\newcommandx{\Bsdiffin}[2][1=\ost]{\mathcal{X}_{#1,#2}}
\newcommandx{\Bsdiffinfloor}[2][2=\ost]{\tilde{\mathcal{X}}_{#1,#2}}


\newcommand{\thmoutloce}{Theorem~2.4 in~\cite{BCS16a}}
\newcommand{\secDifference}{Section~2.4 in~\cite{BCS16a} }

\newcommand{\thmdifpartsolinjectivee}{Theorem~2.6 in~\cite{BCS16a}}

\newcommand{\thmktildeave}{Theorem~2.8 in~\cite{BCS16a}}

\newcommand{\lemUpsilone}{Lemma~2.10 in~\cite{BCS16a}}
\newcommand{\rmkL}{Remark~5.7 in~\cite{BCS16a} }
\newcommand{\mainthmregular}{Theorem~2.13 in~\cite{BCS16a}}

\title{Breakdown of a 2D heteroclinic connection in the Hopf-zero singularity (II). The generic case}
\author{I. Baldom\'a, O. Castej\'on, T. M. Seara}

\begin{document}
\maketitle

\section*{Summary}
In this paper we prove the breakdown of the two-dimensional stable and unstable manifolds 
associated to two saddle-focus points which appear in the unfoldings of the Hopf-zero singulariry.
The method consists in obtaining an asymptotic formula for the difference between this manifolds which turns to be exponentially small
respect to the unfolding parameter. The formula obtained is explicit but depends on the so-called Stokes constants, which arise in the study
of original vector field and which corresponds to the so called inner equation in singular perturbation theory. 

{\sl Keywords:} Exponentially small splitting, Hopf-zero bifurcation, inner equation, Stokes constant.

\section{Introduction and main result}\label{IntroHopfZero2D-inner}
The present work is the natural continuation of~\cite{BCS16a}.
We briefly explain the setting we deal with below, for a more complete introduction we refer to the reader to the mentioned work~\cite{BCS16a}.

The Hopf-zero singularity is a vector field $X^*$ in $\mathbb{R}^3$ having the origin as a fixed point with linear part having the zero eigenvalue and
a pair of purely imaginary eigenvalues. When we consider versal analytic unfoldings with two parameters (one parameter if we consider
conservative unfoldings), $X_{\mu,\nu}$ such that $X_{0,0}= X^*$, we generically encounter a
\textit{beyond all orders phenomenon}: for values of $(\mu,\nu)$ belonging to some open set, performing the normal form procedure up to \textit{any} order,
we find the same qualitative behavior.
Concretely, if $(\mu,\nu)$ belongs to an appropriate curve in the parameter space, the normal form vector field of order $n$, $X_{\mu,\nu}^n$, possesses two
critical points of saddle-focus type having two coincident heteroclinic connections (a curve and a surface corresponding to the invariant
manifolds of the critical points).

This qualitative behavior changes when we consider the whole unfolding $X_{\mu,\nu}$. Normal form theory assures that
$X_{\mu,\nu}=X_{\mu,\nu}^n+F_{\mu,\nu}^n$ with $X_{\mu,\nu}^n$ the normal form vector field and $F_{\mu,\nu}^n$ the remainder.
Standard perturbation theory assures that $X_{\mu,\nu}$ also has two critical points of saddle-focus type with invariant manifolds associated to them.
However, generically, the heteroclinic connections do not persist anymore. The question then is how to measure
the distance between them. Since this is a beyond of orders phenomenon, in the analytic setting, these  distances turn out to be exponentially small,
see~\cite{BCS13} and~\cite{BCS16a} for more details.

In~\cite{BCS13}, the distance between the one dimensional manifolds was computed. In~\cite{BCS16a}, the distance between the two dimensional
manifolds was computed for a class of non generic analytic unfoldings.
In fact, it was proven that this distance is dominated for a suitable version of the Melnikov function.
This was done by adding an artificial parameter to make the perturbation terms smaller.
Let us to explain this with more detail in order to compare both (generic and non generic) scenarios.
Consider the normal form up to order $2$ and decompose $X_{\mu,\nu}^2 = \tilde{X}_{\mu,\nu}^2+ P_{\mu,\nu}^2$ with $P_{\mu,\nu}^2$
depending only on the parameters $\mu,\nu$.
We introduce the parameter $q$ as:
\begin{equation}\label{parameterp}
X_{\mu,\nu} = \tilde{X}_{\mu,\nu}^2 + (\sqrt\mu)^{q} (P_{\mu,\nu}^2+ F_{\mu,\nu}^2).
\end{equation}
The artificial parameter $q$ determines whether we are in the non generic ($q>0$) or in the generic ($q=0$) setting.
We call \textit{regular case} to the case  $q>0$. It is worthy of mention that the results
given in~\cite{BCS16a} still are valid for the case $q=0$ which turns out to be called \textit{singular case}.

In the present work, we prove the asymptotic formula for the distance between the two dimensional invariant manifolds
of generic analytic unfoldings of the Hopf-zero singularity, which is of order $\mathcal{O}\left (\mu^{-\bar a} e^{-\frac{a }{\sqrt{\mu}}}\right )$
for some constants $a, \bar a>0$. 
To do so, we use previous results in~\cite{BCS16a} and we
introduce the so-called \emph{inner equation}, which is an equation  independent of parameters which corresponds to the original vector field $X^*$.
It turns out that the difference between two suitable solutions of this equation, approximates the distance
of our invariant manifolds.
Previous works proving exponentially small phenomena have to deal with inner equations.
In~\cite{Gel97-2}, the corresponding inner equations were studied for several periodically perturbed second order equations.
In~\cite{GS01} and \cite{MSS11b} there is a rigorous study of the inner equation of the H\'enon map and the Mcmillan map respectively
using Resurgence Theory~\cite{Ecalle81a, Ecalle81b}.
In~\cite{OSS03} there is a rigorous analysis of the inner equation for the Hamilton-Jacobi equation associated to a pendulum equation
with a certain perturbation term, also using Resurgence Theory.
Besides, there are other works where functional analysis techniques are used to deal with more general cases.
\cite{BaldomaInner}  is the only result which deals with the
inner equation associated to very general type of polynomial Hamiltonian systems with a fast perturbation.
In~\cite{BM12}, the inner equation for generalized standard maps is studied.
In~\cite{BaSe08} the authors study
the inner equation associated to the splitting of the one-dimensional heteroclinic connection of the Hopf-zero singularity in the conservative
case.

Let us now to enunciate properly the main result in this work. As in~\cite{BCS16a}, we perform the normal form procedure
up to order three and we see that, rescaling of variables
and renaming of parameters, $X_{\mu,\nu}$ can be written as
\begin{align}\label{sistema-NForder3}
 \frac{d\xb}{d\tb}&=\xb\left(\nu-\beta_1\zb\right)+\yb\left(\alpha_0+\alpha_1\nu+\alpha_2\mu+\alpha_3\zb\right)+\bar{f}(\xb,\yb,\zb,\mu,\nu),\medskip\nonumber\\
\frac{d\yb}{d\tb}&=-\xb\left(\alpha_0+\alpha_1\nu+\alpha_2\mu+\alpha_3\zb\right)+\yb\left(\nu-\beta_1\zb\right)+\bar{g}(\xb,\yb,\zb,\mu,\nu),\medskip\\
\frac{d\zb}{dt}&=-\mu+\zb^2+\gamma_2(\xb^2+\yb^2)+ \bar{h}(\xb,\yb,\zb,\mu,\nu)\nonumber
\end{align}
with $\bar{f}(\xb,\yb,\zb,\mu,\nu), \bar{g}(\xb,\yb,\zb,\mu,\nu) = \mathcal{O}_3(\xb,\yb,\zb,\mu,\nu)$ and
$$
\bar{h}(\xb,\yb,\zb,\mu,\nu)=\gamma_3\mu^2+\gamma_4\nu^2+\gamma_5\mu\nu+ \mathcal{O}_3(\xb,\yb,\zb,\mu,\nu).
$$

The case in~\cite{BCS16a} consists in considering, for $q\geq 0$,
$(\sqrt \mu) ^q(\bar{f},\bar{g},\bar{h})$ instead of $(\bar{f},\bar{g},\bar{h})$ as perturbation terms.

The main result in this work is:
\begin{theorem}\label{mainthm-inner-intro}
The system~\eqref{sistema-NForder3}, with $\mu,\,\beta_1,\gamma_2>0$ and $|\nu|<\beta_1\sqrt{\mu}$
has two critical points $\bar S_\pm(\mu,\nu)$ of saddle-focus type.

Fix $\vu \in \mathbb{R}$ and let $\bar D^{\uns,\sta}(\vu,\theta,\mu,\nu)$ ($\bar D^{\uns,\sta}(\vu,\theta,\mu)$ in the conservative case)
be the distance between the two dimensional unstable manifold of $\bar S_-(\mu,\nu)$ and the two dimensional stable manifold of
$\bar S_+(\mu,\nu)$ when they meet the plane $\zb=\sqrt{\mu} \tanh(\beta_1\vu)$.

We define the function
$$
\bar\vt(\vu,\mu)=\frac{\alpha_0\vu}{\sqrt{\mu}}+\frac{1}{\beta_1}\left(\alpha_3+\alpha_0L_0\right)\left[\log\cosh(\beta_1\vu)-\frac{1}{2}\log\mu\right]+\alpha_0L(\vu),
$$
where the constant $L_0$ and the function $L(u)$ can be computed explicitly and only depend on the terms of order three of $\bar{f},\bar{g}$ and $\bar{h}$
(see~\rmkL for an explicit  formula of them).

Then, there exist constants $\mathcal{C}_1^*$, $\mathcal{C}_2^*$ in such a way that, given $T_0>0$, for all $\vu\in[-T_0,T_0]$ and $\theta\in\mathbb{S}^1$,
the following holds:
\begin{enumerate}
 \item 
 In the conservative case, which corresponds to $\beta_1=1$ and $\nu=0$, as $\mu\to 0^+$,
\begin{align*}
 \bar D^{\uns,\sta}(\vu,\theta,\mu)=
&\sqrt{\frac{\gamma_2}{2}}\frac{e^{-\frac{\alpha_0\pi}{2\sqrt\mu}}}{(\sqrt\mu)^3}\cosh^{3}(\vu)\Bigg[\mathcal{C}_1^*\cos\Big(\theta+\bar\vt(\vu,\mu)\Big)\\
&+\mathcal{C}_2^*\sin\Big(\theta+\bar\vt(\vu,\mu)\Big)+\mathcal{O}\left(\frac{1}{|\log\mu|}\right)\Bigg].
\end{align*}

\item 
In the dissipative case, there exists a function $\nu=\nu_0(\mu)=\mathcal{O}(\mu)$, such that,
as $\mu\to 0^+$,\begin{align*}
 \bar D^{\uns,\sta}(\vu,\theta,\,\mu,\nu_0(\mu))=&\sqrt{\frac{\gamma_2}{\beta_1+1}}\cosh^{1+\frac{2}{\beta_1}}(\beta_1\vu)
\frac{e^{-\frac{\alpha_0\pi}{2\beta_1\sqrt\mu}}}{(\sqrt\mu)^{1+\frac{2}{\beta_1}}}\Bigg[\mathcal{C}_1^*\cos\Big(\theta+\bar\vt(\vu,\mu)\Big)\\
&+\mathcal{C}_2^*\sin\Big(\theta+\bar\vt(\vu,\mu)\Big)+\mathcal{O}\left(\frac{1}{|\log(\mu)|}\right)\Bigg].
\end{align*}
\end{enumerate}\end{theorem}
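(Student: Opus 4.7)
The plan is to combine the general machinery of \cite{BCS16a}---which, as noted in the introduction, also applies to the singular regime $q=0$---with a detailed study of the \emph{inner equation} associated with the unperturbed vector field $X^*$. The outer results (\thmoutloce, \thmdifpartsolinjectivee, \thmktildeave) provide analytic parametrizations of the 2D unstable manifold of $\bar S_-$ and stable manifold of $\bar S_+$ on complex domains approaching the singularities of the unperturbed heteroclinic at $\vu = \pm i\pi/(2\sqrt\mu)$, and express their difference as a known linear operator applied to an explicit source. In the regular case ($q>0$) the leading contribution of this source reduces to a Melnikov integral; when $q=0$ the source is not small in the matching strip and its leading contribution must be extracted from a $(\mu,\nu)$-independent limiting problem.

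The first step is therefore to derive and analyse this limiting problem. Zooming in at the upper singularity via $\vu = i\pi/(2\sqrt\mu) + \sqrt\mu\,s$ and rescaling the dependent variables so that the dominant balance survives as $\mu\to 0$, one obtains a system independent of the parameters: the inner equation. Following the framework of \cite{BaldomaInner,BaSe08}, we construct two distinguished solutions $\psi^{\uns}_{\rm in}$ and $\psi^{\sta}_{\rm in}$ in Banach spaces of analytic functions with polynomial decay on appropriate complex half-planes, characterised by prescribed decay as $\re s\to -\infty$ and $\re s\to +\infty$ respectively, matching the inner limits of the outer unstable and stable parametrizations. The analysis of their difference in the common domain yields an expansion
\[
\psi^{\uns}_{\rm in}(s,\theta) - \psi^{\sta}_{\rm in}(s,\theta) = \bigl(\mathcal{C}_1^*\cos\theta + \mathcal{C}_2^*\sin\theta\bigr)\, e^{-i\alpha_0 s} + \text{higher harmonics and orders},
\]
which intrinsically defines the Stokes constants $\mathcal{C}_1^*,\mathcal{C}_2^*$.

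The second step is the matching between inner and outer descriptions. On an overlap annulus where both the outer parametrizations and the rescaled inner solutions are valid, one shows that each outer manifold coincides with the pullback of the corresponding inner solution up to a relative error $\mathcal{O}(|\log\mu|^{-1})$. Substituting the inner expansion into the outer difference formula and undoing the inner scaling produces the leading asymptotic of $\bar D^{\uns,\sta}$ at a reference cross-section. Transporting this estimate from $\{\zb=0\}$ to the section $\{\zb=\sqrt\mu\tanh(\beta_1\vu)\}$ by the unperturbed flow generates the prefactor $\cosh^3(\vu)$ (conservative case) or $\cosh^{1+2/\beta_1}(\beta_1\vu)$ (dissipative case), while the phase $\bar\vt(\vu,\mu)$ arises as the integral, along the corrected orbit, of the rotation frequency $\alpha_0+\alpha_3\zb+\cdots$; this also explains the explicit form of $L_0$ and $L(\vu)$. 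In the dissipative case, $\nu=\nu_0(\mu)$ is obtained via the implicit function theorem by imposing that the $\zb$-components of the two manifolds coincide at the cross-section, a condition whose leading part reduces to a scalar transverse equation in $\nu$.

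The main obstacle is the quantitative study of the inner equation itself. In contrast with the scalar Hamilton--Jacobi settings of \cite{OSS03,BaSe08}, ours is a genuine three-dimensional vector field carrying a nontrivial angular variable; the fixed-point scheme defining $\psi^{\uns,\sta}_{\rm in}$ must therefore be posed in weighted Banach spaces simultaneously controlling polynomial decay in $|\re s|$ and the Fourier spectrum in $\theta$. Showing that the resulting difference admits the claimed exponentially small expansion with well-defined constants $\mathcal{C}_1^*,\mathcal{C}_2^*$ that are independent of all matching and regularisation choices is the technical heart of the argument, and is the step expected to consume the bulk of the paper.
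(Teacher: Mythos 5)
Your outline follows essentially the same route as the paper: the outer results of \cite{BCS16a} are reused in the singular case $q=0$, the inner equation is obtained by blowing up at the singularity of the unperturbed heteroclinic, two distinguished solutions $\psiin^{\uns,\sta}$ are built by a fixed point argument in weighted Banach spaces controlling both polynomial decay in $\vs$ and the Fourier spectrum in $\theta$, their difference defines the Stokes constants, and a matching argument transfers this to the manifolds. However, the step that actually produces the theorem is missing. The outer theory already gives the global form $\Delta=r_1^{\uns}-r_1^{\sta}=\cosh^{2/\coef}(\coef\vu)(1+P_1)\sum_l\Upsilon^{[l]}e^{il\xi}$, so there is no ``transport by the unperturbed flow'' left to do; what must be proved is an asymptotic for the unknown coefficients $\Upsilon^{[\pm1]}$. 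This is achieved by evaluating the quotient $\Delta/(\cosh^{2/\coef}(\coef\vu)(1+P_1))$ at the point $\vu_+=i(\pi/(2\coef)-\dist\delta)$ with $\dist=\dist_0\log(1/\delta)$ and $1-\gamma>\alpha\dist_0$: there the weight $e^{-\im\xi(\vu_+,\theta)}\sim e^{-\alpha\pi/(2\coef\delta)+\alpha\dist}$ compensates the exponential smallness, while the matching error $\mathcal{O}(\delta^{1-\gamma}|\vs|^{-2})$ and the discrepancy between $\Delta_\inn$ and the candidate first order are each smaller than the main term by a factor $1/\dist\sim1/\log(1/\delta)$. The logarithmic choice of $\dist$, which balances $e^{\alpha\dist}$ against powers of $\delta$ and is the source of the $\mathcal{O}(1/|\log\mu|)$ error, is nowhere in your outline, so the claimed relative error of the matching is unsupported as stated.

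Two further descriptions should be corrected. The leading term of $\Delta\psiin$ is not $(\mathcal{C}_1^*\cos\theta+\mathcal{C}_2^*\sin\theta)e^{-i\alpha_0\vs}$ but $\vs^{2/\coef}\,\Upsilon_\inn^{[-1]}e^{-i\xi_\inn(\vs,\theta)}$ with $\xi_\inn=\theta+\coef^{-1}\alpha\vs+\coef^{-1}(c+\alpha L_0)\log\vs+\mathcal{O}(1/|\vs|)$; the algebraic prefactor $\vs^{2/\coef}$ and the logarithmic phase cannot be relegated to ``higher orders'', since they generate the powers of $\cosh$ and of $\sqrt\mu$ in the prefactor and the term $\log\cosh(\beta_1\vu)-\frac{1}{2}\log\mu$ in $\bar\vt$. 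Also, $\nu_0(\mu)$ is not obtained by matching $\zb$-components (both manifolds are graphs $r=r^{\uns,\sta}(\vu,\theta)$ over the same section $\zb=\sqrt\mu\tanh(\beta_1\vu)$ by construction); it is the curve, supplied by Theorem~2.8 of \cite{BCS16a}, on which the average $\Upsilon^{[0]}$ of the radial difference vanishes, which is needed so that the non-oscillatory part of $\Delta$ does not dominate the exponentially small oscillation.
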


\begin{remark}
The constants $\mathcal{C}_i^*$, which are usually called Stokes constants (see \cite{St64,St02}),
depend on the full jet of the original vector field $X^*$ and therefore, up to now, they can only be computed numerically.
This computation is not trivial, and is not the goal of the present paper.
For the one-dimensional case, it has been done for particular examples in \cite{larrealseara}.
A detailed and accurate numerical computation of the distance in the one- and two-dimensional cases in many examples (in conservative and non-conservative settings)
has been done in \cite{diks}.
\end{remark}
\begin{remark}
In the dissipative case, a more general result is indeed proven: for given $a_1,a_2\in\mathbb{R}$ and $a_3>0$, there exists a function $\nu=\nu(\mu)$
(depending on $a_1,a_2$ and $a_3$) satisfying $\nu(\mu)-\nu_0(\mu) = \mathcal{O}\big (\mu^{a_2}
e^{-\frac{a_3\pi}{2\beta_1\sqrt\mu}}\big)$, such that,
as $\mu\to 0^+$,
$$
 \bar D^{\uns,\sta}(\vu,\theta,\,\mu,\nu(\mu))= \bar D^{\uns,\sta}(\vu,\theta,\mu,\nu_0(\mu)) + a_1\cosh^{1+\frac{2}{\beta_1}}(\beta_1\vu)\mu^{a_2}
e^{-\frac{a_3\pi}{2\beta_1\sqrt\mu}}\left(1+\mathcal{O}( \sqrt \mu )\right).
$$
The result in Theorem~\ref{mainthm-inner-intro} corresponds to $a_1=0$.
\end{remark}

The main ideas of the proof of Theorem~\ref{mainthm-inner-intro} are given in
Section~\ref{secSetupHeuristic2D-inner} below. Sections~\ref{sec:asymptotic}--\ref{sec:matching} are devoted to present
the technical proofs of the results in the mentioned Section~\ref{secSetupHeuristic2D-inner}.

\section{Set-up and heuristics of the proof of Theorem~\ref{mainthm-inner-intro}}\label{secSetupHeuristic2D-inner}
The main goal in this section is to present the strategy to prove Theorem~\ref{mainthm-inner-intro}.
As we will see its demonstration is involved and requires deep tools in functional
analysis as well as complex matching techniques which will be explained in detail.

We begin in Section~\ref{preliminary} by presenting the adequate setting to deal with and the precise statement of some of the results
proven in~\cite{BCS16a} that, as we said in Section~\ref{IntroHopfZero2D-inner},
still hold true in the current setting.
We also roughly present the strategy we will follow to prove Theorem~\ref{mainthm-inner-intro}.
Later, in Section~\ref{subsec-introinner}, we derive and study the \textit{inner equation} and we present the way to use this equation to prove
Theorem \ref{mainthm-inner-intro}.
This strategy is developed with more details in Sections~\ref{subsec:intromatching} and~\ref{subsec:introformuladiff}.

As a general rule in this work, we will omit the dependence of the functions with respect to its variables and parameters whenever this dependence is clear.
Moreover, we will denote by $K$ \textit{any} constant independent of the parameters which can change its value along the paper.

\subsection{Preliminary considerations and previous results}\label{preliminary}
This section is divided in three subsections which summarize the setting and results introduced in~\cite{BCS16a}.
In Section~\ref{subsec:scaling} we briefly explain the appropriate scalings and changes of variables we perform. After this,
in Section~\ref{subsec:unperturb}, we
study what we consider the unperturbed system and finally, in Section~\ref{subsec:previousresults}, we present the
results proven in~\cite{BCS16a} we will use along this work.
They are related to the existence and properties of global parameterizations of the invariant unstable and stable manifolds of the critical points in the, 
usually called, \textit{outer} domains (see \eqref{defDoutT}).

\subsubsection{Scalings and symplectic polar variables}\label{subsec:scaling}
We scale system \eqref{sistema-NForder3} as in~\cite{BCS16a}, see also~\cite{BCS13}. Indeed,
we define the new parameters $\delta=\sqrt{\mu}$, $\param=\delta^{-1}\nu$ and
we rename the coefficients $b=\gamma_2$, $c=\alpha_3$ and $\coef=\beta_1$.
We also introduce the constant $h_3$ of $\bar{h}$ given by
$$\bar{h}(0,0,\zb,0,0)= h_3 \zb^3 + \mathcal{O}(\zb^4).$$
With the new variables
$x=\delta^{-1}\xb$, $y=\delta^{-1}\yb$, $z=\delta^{-1}\zb+\delta h_3 /2$ and $t=\delta\tb$
system~\eqref{sistema-NForder3} becomes:
\begin{equation}\label{initsys-inner}
 \begin{aligned}
  \frac{dx}{dt}&= x\left(\param-\coef z\right)+\left(\frac{\alpha(\delta^2,\delta\param)}{\delta}+cz\right)y+
	\delta^{-2}f (\delta x,\delta y, \delta z, \delta,\delta\param),\\
  \frac{dy}{dt}&=-\left(\frac{\alpha(\delta^2,\delta\param)}{\delta}+cz\right)x+y\left(\param-\coef z\right)+
	\delta^{-2}g (\delta x,\delta y, \delta z, \delta,\delta\param),\\
 \frac{dz}{dt}&=-1+b(x^2+y^2)+z^2+\delta^{-2}h (\delta x, \delta y, \delta z, \delta,\delta\param),
 \end{aligned}
\end{equation}
where
$\alpha(\delta^2,\delta\param)=\alpha_0+\alpha_1\delta\param+\alpha_2\delta^2$ with $\alpha_0\neq 0$ and $f,g$ and $h$ are
the corresponding ones to $\bar{f},\bar{g}$ and $\bar{h}$.
To shorten the notation we write system~\eqref{initsys-inner} as
\begin{equation}\label{initsys-inner2DX}
\frac{d \zeta }{dt} = X(\zeta,\delta,\param)= X_{0}(\zeta,\delta,\param) + \delta^{-2} X_1(\delta \zeta, \delta, \delta \sigma),\qquad \zeta=(x,y,z).
\end{equation}
Note that $X_1(\delta \zeta,\delta,\delta \sigma)= \mathcal{O}_3(\delta \zeta,\delta, \delta \sigma)$.
As it is seen in \cite{BCS13}, the vector field $X$ defined in~\eqref{initsys-inner2DX} has two critical points
$S_\pm(\delta,\param)$ of saddle-focus type of the form:
$$
S_{\pm}(\delta,\param) = (0,0,\pm 1) + \big (\mathcal{O}(\delta^{3}), \mathcal{O}(\delta^{3}), \mathcal{O}(\delta^{2})\big ).
$$

We consider system~\eqref{initsys-inner2DX} in symplectic cylindric coordinates
\begin{equation}\label{cylindriccoordsunpertub}
x=\sqrt{2r}\cos\theta,\qquad y=\sqrt{2r}\sin\theta,\qquad z=z,
\end{equation}
and we obtain
\begin{equation}\label{syspolar}
 \begin{aligned}
   \frac{dr}{dt}&=2r(\param-\coef z)+\delta^{-2} \Fb (\delta r,\theta,\delta z,\delta,\delta\param),\\
   \frac{d\theta}{dt}&=-\frac{\alpha}{\delta}-cz+\delta^{-2}\Gb (\delta r,\theta,\delta z,\delta,\delta\param),\\
   \frac{dz}{dt}&=-1+2br+z^2+\delta^{-2} \Hb (\delta r,\theta,\delta z,\delta,\delta\param),
 \end{aligned}
\end{equation}
where $\mathbf{X}_1=( \Fb, \Gb, \Hb)$ is defined by
\begin{equation}\label{defFGHbold}
\mathbf{X}_1 (\delta r, \theta, \delta z,\delta,\delta \param) = \left (
\begin{array}{ccc} \sqrt{2r} \cos \theta & \sqrt{2r} \sin \theta & 0 \\
-\frac{1}{\sqrt{2r}} \sin \theta & \frac{1}{\sqrt{2r}} \cos \theta & 0 \\
0 & 0 & 1 \end{array}\right ) X_1(\delta \zeta,\delta, \delta \sigma)
\end{equation}
with $\delta \zeta = (\delta \sqrt{2r} \cos \theta, \delta \sqrt{2r}\sin \theta, \delta z)$ and $X_1$ is
introduced in~\eqref{initsys-inner2DX}.

\subsubsection{The unperturbed system}\label{subsec:unperturb} 
From now we will call the unperturbed system  to system~\eqref{syspolar} with $\Fb=\Gb=\Hb=0$ and $\sigma=0$
(or equivalently system~\eqref{initsys-inner} with $f=g=h=0$ and $\sigma=0$). In symplectic cylindric coordinates~\eqref{cylindriccoordsunpertub},
the unperturbed system is:
$$
\frac{dr}{dt}=-2\coef rz,\qquad
\frac{d\theta}{dt}=-\frac{\alpha}{\delta}-cz, \qquad
\frac{dz}{dt}=-1+2br+z^2.
$$
As $b>0$, it has a 2-dimensional heteroclinic manifold, $\Gamma$, connecting
$S_+(\delta,0)=(0,0,1)$ and $S_-(\delta,0)=(0,0,-1)$ given by:
$$\Gamma:=\left\{(r,z)\in\mathbb{R}^2\,:\,-1+\frac{2br}{\coef+1}+z^2=0\right\},$$
which can be parameterized with $t\in \mathbb{R}$ and $\theta_0 \in [0,2\pi)$, by the solutions of the unperturbed system:
\begin{eqnarray}
 r&=&\displaystyle\hetr(t):=\frac{(\coef+1)}{2b}\frac{1}{\cosh^2(\coef t)},\label{hetr}\medskip\\
 \theta&=&\hetth(t,\theta_0):=\theta_0-\frac{\alpha}{\delta}t-\frac{c}{\coef}\log\cosh(\coef t),
\medskip \notag\\
z&=&\hetz(t):=\tanh(\coef t).\label{hetz}
\end{eqnarray}

\subsubsection{Previous results in~\cite{BCS16a} and notation}\label{subsec:previousresults}
The results in this section deal with the existence of adequate parameterizations of the invariant manifolds and the
quantitative (non-sharp) bounds of them and their difference in suitable complex domains.

To recover the singular case in the results in~\cite{BCS16a} take $p=q-2=-2$,  see~\eqref{parameterp}.

\begin{enumerate}
\item \textit{Existence of the invariant manifolds}.
The critical point $S_{-}(\delta, \param)$ (resp. $S_{+}(\delta,\param)$) has a two dimensional unstable (resp. stable) manifold which, in symplectic
polar coordinates, can be written as
\begin{equation}\label{eq:paramunstmanifold}
r= r^{\uns,\sta}(\vu,\theta)=R_0(\vu)+r_{1}^{\uns,\sta}(\vu,\theta),\qquad z=Z_0(u).
\end{equation}
These parameterizations are well defined in $\DoutT{\uns} \times \Tout$ with
$$
\Tout=\left\{\theta\in\mathbb{C}/(2\pi\mathbb{Z})\,:\,|\im\theta|\leq\ost\right\}
$$
and, given constants $\dist, T>0$ sufficiently large and $0<\beta <\pi/2$,
\begin{equation}\label{defDoutT}
\DoutT{\uns}=\Bigg\{\vu\in\mathbb{C} \,:\, |\im\vu|\leq \frac{\pi}{2\coef}-\dist\delta-\tan\beta\re\vu,\, \re \vu \geq -T  \Bigg\},
\end{equation}
for the unstable manifold, and $\DoutT{\sta}=-\DoutT{\uns}$, for the stable one. See Figure~\ref{figDoutunsDinuns} where the domain $\DoutT{\uns}$
is included.

To avoid cumbersome notations, if there is not danger of confusion, from now on we will omit the dependence on variables $(\vu,\theta)$.

We introduce some notation used in~\cite{BCS16a}:
\begin{equation}\label{notationFGHbis}
\bar{X}_1(r)=\mathbf{X}_1(\delta (R_0(u)+r), \theta, \delta \hetz(u),\delta,\delta \param),\qquad \bar{X}_1=(F,G,H),
\end{equation}
where $\mathbf{X}_1=(\Fb,\Gb,\Hb)$ is defined in~\eqref{defFGHbold},
and the operators
\begin{align}
\Lout(r) =& \big(-\delta^{-1}\alpha-c\hetz(\vu)\big )\partial_\theta r +\partial_\vu r -2\hetz(\vu) r \label{defLout}\\
\Fout(r)=&2\param(\hetr(\vu)+r)+\delta^{-2} F(r)+\delta^{-2}\frac{\coef+1}{b}\hetz(\vu)H(r) \notag\\
&-\delta^{-2} G(r)\partial_\theta r-\left(\frac{2br+\delta^{-2} H(r)}{\coef(1-\hetz^2(\vu))}\right)\partial_\vu r.\label{defFout}
\end{align}
The result we use is:
\begin{theorem}[\thmoutloce]\label{thmoutloc-inner}
Consider the PDE:
\begin{equation}\label{PDEequal-inner}
\Lout(r_1^{\uns,\sta}) = \Fout(r_1^{\uns,\sta}).
\end{equation}
Let $0<\beta<\pi/2$ be a constant. There exist $\dist^*\geq1$, $\param^*>0$ and $\delta^*>0$, such that
for all $0<\delta<\delta^*$, if
$\dist=\dist(\delta)$ satisfies:
\begin{equation}\label{conddist-inner}
 \dist^*\delta\leq\dist\delta\leq \frac{\pi}{8\coef},
\end{equation}
and $|\param|\leq\param^*\delta$, the unstable manifold of $S_-(\delta,\param)$ and the stable manifold of $S_+(\delta,\param)$ are given respectively by:
$$
 \big (\sqrt{2r^{\uns,\sta}(\vu,\theta)}\cos\theta,\sqrt{2r^{\uns,\sta}(\vu,\theta)}\sin\theta,\hetz(\vu)),
$$
where, for $(\vu,\theta)\in \DoutT{\uns,\sta} \times\Tout$, the functions $r^{\uns,\sta}$ can be decomposed as
$$
r=r^{\uns,\sta}(\vu,\theta)=R_0(\vu)+r_1^{\uns,\sta}(\vu,\theta),
$$
with $r_1^\uns$ and $r_1^\sta$ satisfying the same equation \eqref{PDEequal-inner}.

Moreover, there exists $M>0$ such that for all $(\vu,\theta)\in \DoutT{\uns,\sta} \times\Tout$:
\begin{align*}
|r^{\uns,\sta}_1(\vu,\theta)|\leq M& \delta|\cosh(\coef\vu)|^{-3},\;\; |\partial_u r^{\uns,\sta}_1(\vu,\theta)|\leq M\delta|\cosh(\coef\vu)|^{-4},\\
&|\partial_{\theta} r^{\uns,\sta}_1(\vu,\theta)|\leq M\delta^2|\cosh(\coef\vu)|^{-4}.
\end{align*}
\end{theorem}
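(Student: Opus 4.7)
The plan is to apply the parameterization method: seek the invariant manifolds as graphs $r = \hetr(\vu) + r_1(\vu,\theta)$, $z = \hetz(\vu)$ over the unperturbed heteroclinic, and solve the resulting fixed-point equation \eqref{PDEequal-inner} in a Banach space of analytic functions on $\DoutT{\uns,\sta}\times \Tout$ with weights adapted to the decay of $\hetr$. First, one derives \eqref{PDEequal-inner} by imposing that the graph be invariant under \eqref{syspolar}; using that $(\hetr,\hetz)$ solves the unperturbed system cancels the leading-order terms, and rearranging yields $\Lout(r_1) = \Fout(r_1)$ with $\Lout$ the linearized transport operator along the unperturbed flow \eqref{defLout} and $\Fout$ gathering the $\param$-detuning, the remainder $X_1$, and the quadratic contributions in $r_1$ from \eqref{defFout}.

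Next, I would construct a right inverse of $\Lout$ by the method of characteristics. The characteristics of $\Lout$ are precisely the unperturbed orbits $\hetth(\vu,\theta_0)$, while the coefficient of the zeroth-order term is $-2\hetz(\vu)$ whose primitive yields the integrating factor $\cosh^{-2}(\coef\vu)$. A Fourier expansion in $\theta$ reduces the problem to a family of scalar ODEs, and integrating from $-\infty$ in the unstable case (respectively from $+\infty$ in the stable one), which encodes the asymptotic condition at $S_\mp$, produces the formula
\[
(\Lout)^{-1}(\phi)(\vu,\theta) = \int_{-\infty}^{\vu}\phi\!\left(s,\,\theta - \int_{s}^{\vu}\!\left(\tfrac{\alpha}{\delta}+c\hetz(\xi)\right)\!d\xi\right)\frac{\cosh^{2}(\coef s)}{\cosh^{2}(\coef \vu)}\,ds,
\]
integrated along a path inside $\DoutT{\uns}$, with the analogous formula on $\DoutT{\sta}$ for the stable case.

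With this inverse in hand, I would set up the fixed-point problem $r_1 = (\Lout)^{-1}\Fout(r_1)$ in the Banach space with norm $\|r\| = \sup|\cosh(\coef\vu)|^{3}|r(\vu,\theta)|$, together with weighted norms with weight $|\cosh(\coef\vu)|^{4}$ for $\partial_\vu r$ and $\delta^{-1}|\cosh(\coef\vu)|^{4}$ for $\partial_\theta r$. The central estimate is that $(\Lout)^{-1}$ gains a factor $|\cosh(\coef\vu)|^{-1}$; together with $\hetr\sim|\cosh(\coef\vu)|^{-2}$ and the fact that $X_1 = \mathcal{O}_3(\delta\zeta,\delta,\delta\param)$, this yields $|\Fout(0)|\leq K\delta|\cosh(\coef\vu)|^{-4}$, placing the independent term inside the ball of radius $M\delta$ in the weighted norm. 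Each nonlinear term of $\Fout$ either contains an additional power of $\hetz$ or $\hetr+r$, or carries an extra factor of $\delta$, so the Lipschitz constant of $\Fout$ on this ball is small. Banach's fixed-point theorem then produces $r_1^{\uns,\sta}$ with the announced bound; the derivative estimates follow by differentiating the fixed-point identity, where $\partial_\vu$ loses one power of the weight and $\partial_\theta$ gains an extra factor of $\delta$ because the rapid phase $e^{-ik\alpha(\vu-s)/\delta}$ generated in each Fourier mode by the twisted $\theta$-shift effectively restricts the useful integration window to size $\delta/|k|$.

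The main obstacle will be controlling $(\Lout)^{-1}$ uniformly up to the boundary of $\DoutT{\uns}$, whose upper edge is only at distance $\sim\dist\delta$ from the singularity $\vu = i\pi/(2\coef)$ of $\cosh(\coef\vu)$. Choosing an integration path parallel to this boundary, one must prove that $|\cosh(\coef s)|^{2}/|\cosh(\coef \vu)|^{2}$ is integrable along the path and produces exactly the announced gain of one power of $|\cosh(\coef\vu)|^{-1}$ uniformly up to the boundary; this is the core technical estimate, and it is precisely what dictates the lower bound $\dist^*\delta \leq \dist\delta$ in \eqref{conddist-inner}. Equally delicate is verifying that the twisted shift $\theta - \int_s^{\vu}(\alpha/\delta + c\hetz)\,d\xi$ keeps the second argument of $\phi$ inside the strip $\Tout$ along the chosen path, which imposes a compatibility condition between $\ost$, $\dist$ and $\delta$ and is the technical origin of the upper bound $\dist\delta \leq \pi/(8\coef)$ in \eqref{conddist-inner}.
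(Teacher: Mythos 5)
First, a point of record: the paper does not prove this statement at all --- it is Theorem~2.4 of the companion work \cite{BCS16a}, quoted here as a black box, so there is no in-paper proof to compare against. That said, your strategy (parameterize the manifolds as graphs $r=\hetr(\vu)+r_1(\vu,\theta)$ over the unperturbed heteroclinic, invert $\Lout$ along characteristics/Fourier modes, and run a fixed-point argument in Banach spaces weighted by powers of $|\cosh(\coef\vu)|$ on complex domains reaching $\mathcal{O}(\delta)$-close to $\pm i\pi/(2\coef)$) is precisely the method of \cite{BCS16a}, and it is mirrored almost verbatim by this paper's own proof of the inner-equation analogue (Theorem~\ref{thminner} via Proposition~\ref{propinner}): same Fourier-coefficient right inverse integrated from $\mp\infty$, same $\|\cdot\|_{n,\ost}$ and $\llfloor\cdot\rrfloor_{n,\ost}$ norms, same ``independent term plus small Lipschitz constant'' contraction. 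Your heuristics for the one-power gain of the inverse near the singularity and for the extra $\delta$ in $\partial_\theta r_1$ (oscillatory kernel $e^{-il\alpha(\vu-s)/\delta}$ in the nonzero modes) are also the right mechanisms.

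There is, however, a concrete error in your explicit formula for $(\Lout)^{-1}$. Since $\Lout(r)=\cdots+\partial_\vu r-2\hetz(\vu)r$ and $\int 2\hetz = \frac{2}{\coef}\log\cosh(\coef\vu)$, the integrating factor is $\cosh^{-2/\coef}(\coef\vu)$ and the kernel must be
\[
\left(\frac{\cosh(\coef\vu)}{\cosh(\coef s)}\right)^{2/\coef},
\]
not $\cosh^{2}(\coef s)/\cosh^{2}(\coef\vu)$: you have inverted the ratio and replaced the exponent $2/\coef$ by $2$ (correct only for $\coef=1$). The operator you wrote is therefore not a right inverse of $\Lout$. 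The giveaway is already in the paper: the homogeneous solution of $\Lout$ is the prefactor $\cosh^{2/\coef}(\coef\vu)$ appearing in Theorem~\ref{thmdifpartsolinjective}, and the inner analogue $\Ginner^{\us}$ in~\eqref{defGuns} carries the kernel $\vs^{2/\coef}/w^{2/\coef}$. By a numerical coincidence your wrong kernel still yields the one-power gain against a $|\cosh|^{-4}$ weight, which is why the rest of your estimates look plausible, but the fixed-point identity $r_1=(\Lout)^{-1}\Fout(r_1)$ would not reproduce equation~\eqref{PDEequal-inner}. With the kernel corrected, your outline is the standard and, as far as can be judged from here, the intended proof.
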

\item \textit{Difference of parameterizations}.
With respect to the difference between the parameterizations \eqref{eq:paramunstmanifold},
defined in
$$
\Doutinter\times\Tout:= (\DoutT{\uns}\cap \DoutT{\sta} )\times \Tout,
$$
we have:
\begin{theorem}[\thmdifpartsolinjectivee]\label{thmdifpartsolinjective}
Let $|\param|\leq\delta\param^*$.
The difference $\Delta:=r_1^{\uns}-r_1^{\sta}$ can be written as:
$$
\Delta(\vu,\theta)=\cosh^{2/\coef}(\coef\vu)(1+P_1(\vu,\theta))\tilde{k}(\xi(\vu,\theta)),
$$
where $\tilde k(\tau)$ is a $2\pi-$periodic function and  the function $\xi$ is defined as:
\begin{equation}\label{defxi}
\xi(\vu,\theta)=\theta+\delta^{-1}\alpha\vu+\coef^{-1}(c+\alpha L_0) \log\cosh(\coef\vu)+\alpha L(\vu)+\chi(\vu,\theta),
\end{equation}
being $L_0\in \mathbb{R}$ a constant. The functions $P_1, L, \chi$ are real analytic functions. In addition,
 $(\xi(\vu,\theta),\theta)$ is injective in $\Doutinter\times\Tout$ and:
\begin{enumerate}
 \item For all $(\vu,\theta) \in \Doutinter\times\Tout$:
\begin{equation}\label{boundLchi-diff}
|L(\vu)| \leq M\,\quad |L'(\vu)| \leq M, \quad |\chi(\vu,\theta)| \leq \frac{M\delta}{|\cosh (\coef \vu)|},
\end{equation}
for some constant $M$.
Moreover, $L(0)=0$ and $L(\vu)$ is defined on the limit $\vu\to i\pi/(2\coef)$.
\item There exists a constant $M$ such that for all  $(\vu,\theta)\in\Doutinter\times\Tout$:
\begin{equation}\label{boundp1-diff}
|P_1(\vu,\theta)|\leq \frac{M\delta}{|\cosh(\coef\vu)|}.
\end{equation}
\end{enumerate}
\end{theorem}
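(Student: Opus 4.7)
The starting point is that both $r_1^\uns$ and $r_1^\sta$ satisfy the same quasilinear PDE $\Lout(r_1^{\uns,\sta})=\Fout(r_1^{\uns,\sta})$ on the overlap $\Doutinter\times\Tout$. Subtracting and using the mean value theorem, the difference $\Delta=r_1^\uns-r_1^\sta$ satisfies a linear homogeneous PDE of the form $\Lout(\Delta)=\mathcal{D}\Fout[r_1^\uns,r_1^\sta]\,\Delta$, where the right-hand side operator carries small coefficients inherited from the Theorem~\ref{thmoutloc-inner} bounds on $r_1^{\uns,\sta}$ and their derivatives. So the whole game is to represent the general solution of this linear equation and to exploit its special structure.

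The next step is to straighten the principal symbol. The unperturbed operator $\Lout$ (as in~\eqref{defLout}) has characteristics $\theta+\delta^{-1}\alpha u+\coef^{-1}c\log\cosh(\coef u)=\mathrm{const}$, and a direct computation shows that along characteristics $\cosh^{-2/\coef}(\coef u)$ times any function of this invariant lies in $\ker\Lout$. Thus the natural ansatz is
$$
\Delta(u,\theta)=\cosh^{2/\coef}(\coef u)(1+P_1(u,\theta))\,\tilde k(\xi(u,\theta)),
$$
with $\xi$ of the form stated in~\eqref{defxi}. Plugging this ansatz into the linear PDE satisfied by $\Delta$ and separating the $\tilde k'$ and $\tilde k$ terms produces a pair of quasilinear equations for the unknown correctors $(P_1,\chi)$ and the profile~$L$. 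The coefficient $L_0$ and the function $L(u)$ arise as the terms one is \emph{forced} to add so that the $\chi$-equation has a bounded solution on $\Doutinter$ up to the singularity $u=i\pi/(2\coef)$; they are determined, as the statement says, by the cubic part of $\bar f,\bar g,\bar h$ through the linearized right-hand side evaluated on the unperturbed heteroclinic.

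The heart of the argument is then a fixed-point scheme for $(P_1,\chi)$ in a Banach space of analytic functions on $\Doutinter\times\Tout$ equipped with $\cosh(\coef u)$-weighted norms matched to the target bounds~\eqref{boundLchi-diff}--\eqref{boundp1-diff}. Using the characteristics of $\Lout$ to invert the principal part, one obtains an integral operator that gains a factor $\delta/|\cosh(\coef u)|$ thanks to Theorem~\ref{thmoutloc-inner}; this factor is what makes the operator a contraction for $\delta$ small enough and yields the claimed $M\delta/|\cosh(\coef u)|$ estimates on $P_1$ and $\chi$. Once $(P_1,\chi)$ are constructed, $\tilde k$ is defined intrinsically as $\Delta/[\cosh^{2/\coef}(\coef u)(1+P_1)]$ expressed in the variable $\xi$; periodicity of $\tilde k$ in $\tau$ with period $2\pi$ comes from the $2\pi$-periodicity in $\theta$ of $\Delta$, $P_1$, and the $\theta$-dependence of $\xi$ (which reduces to the identity shift modulo $2\pi$).

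Finally, injectivity of $(\xi(u,\theta),\theta)$ on $\Doutinter\times\Tout$ is a direct consequence of the bound $|\chi|\le M\delta/|\cosh(\coef u)|$ (and the matching bound on $\partial_\theta\chi$ obtained by Cauchy estimates), since $\partial_\theta\xi=1+\partial_\theta\chi$ is then uniformly close to $1$, so the map is a small perturbation of the identity in $\theta$ at each fixed $u$. The main obstacle, and where most of the technical work goes, is the weighted fixed-point step: one has to track the $\cosh(\coef u)$ weights carefully all the way out to the endpoints of $\Doutinter$ (i.e., up to distance $\dist\delta$ from the singularities $\pm i\pi/(2\coef)$), since any loss in the weight would destroy the subsequent exponentially small asymptotics — this is precisely what forces the careful choice of $L_0$ and $L(u)$ so that no secular terms appear near the singularities.
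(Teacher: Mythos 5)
First, a point of reference: the paper does not prove this theorem at all --- it is quoted verbatim as \thmdifpartsolinjectivee, and the closest argument actually carried out in the present work is the parallel one for the inner difference $\Delta\psi_{\inn}$ in Section~\ref{sec:diffinner} (Propositions~\ref{propP1} and~\ref{propk}). Measured against that, your overall architecture is the right one: subtract the two copies of \eqref{PDEequal-inner}, apply the mean value theorem to obtain a linear first-order PDE for $\Delta$, factor the general solution as a particular nonvanishing solution times a solution of the reduced homogeneous transport equation, identify the latter as $\tilde k(\xi)$ via an injective characteristic variable $\xi$, and produce the correctors $P_1$, $\chi$ (and the forced terms $L_0$, $L$, which remove the secular $\vs^{-1}$-type contributions) by contraction in $\cosh(\coef\vu)$-weighted spaces. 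One slip: the $\theta$-independent kernel of $\Lout$ is $\cosh^{+2/\coef}(\coef\vu)$, not $\cosh^{-2/\coef}(\coef\vu)$, since the zeroth-order term of $\Lout$ is $-2\tanh(\coef\vu)r$; your ansatz in the following display has the correct sign, so this is cosmetic.

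The genuine gap is the injectivity step. The map in question is $(\vu,\theta)\mapsto(\xi(\vu,\theta),\theta)$: its second component is $\theta$ itself, so points with different $\theta$ are automatically separated and nothing needs to be proved there. The content of the claim is injectivity of $\vu\mapsto\xi(\vu,\theta)$ at each fixed $\theta$, and your argument --- that $\partial_\theta\xi=1+\partial_\theta\chi$ is uniformly close to $1$ --- addresses the wrong derivative and establishes only the vacuous part. What is needed is control of
$\partial_\vu\xi=\delta^{-1}\alpha+(c+\alpha L_0)\tanh(\coef\vu)+\alpha L'(\vu)+\partial_\vu\chi$
along segments joining $\vu_1$ to $\vu_2$ inside $\Doutinter$: the dominant term $\delta^{-1}\alpha$ must beat the others, which is not automatic near the singularities, where $|\tanh(\coef\vu)|$ and $|\partial_\vu\chi|$ grow like $(\dist\delta)^{-1}$; one concludes only because $\dist$ is taken large (condition \eqref{conddist-inner}). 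This is precisely how the paper argues for the inner analogue in Proposition~\ref{propk}, via $0=|\vs_1-\vs_2|\,|\coef^{-1}\alpha+\int_0^1[\cdots]\,d\lambda|\geq|\vs_1-\vs_2|(\coef^{-1}\alpha-K/\distin)$, and the same mean-value argument in $\vu$ is what your proof requires here.
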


As a straightforward consequence of this result,
\begin{equation}\label{expression-delta}
\Delta(\vu,\theta)
=\cosh^{2/\coef}(\coef\vu)(1+P_1(\vu,\theta))\sum_{l\in\mathbb{Z}}\Upsilon^{[l]}e^{il\xi(\vu,\theta)},
\end{equation}
where $P_1$ and $\xi$ are given in Theorem~\ref{thmdifpartsolinjective} and $\Upsilon^{[l]}$, that are the Fourier
coefficients of the function $\tilde k(\tau)$, are unknown. Of course, they depend on $\delta$ and $\param$ although we do not write it explicitly.
\item \textit{The Fourier coefficients $\Upsilon^{[l]}$}.
Next lemma deals with the exponential smallness of $\Upsilon^{[l]}$ when $l\neq 0$:
\begin{lemma}[\lemUpsilone]\label{lemUpsilonlexpsmall}
Let $\Upsilon^{[l]}$, $l\in\mathbb{Z}$, $l\neq0$, be the coefficients appearing in expression~\eqref{expression-delta} of $\Delta$.
Take $\dist$ as in Theorem~\ref{thmoutloc-inner}.
There exists a constant $M$, independent of $\dist$ such that:
$$
\left|\Upsilon^{[\pm1]}\right|\leq M\frac{\delta^{-2-2/\coef}}{\dist^{3+2/\coef}}e^{-\frac{\alpha\pi}{2\coef\delta}+\alpha\dist},\qquad
\left|\Upsilon^{[l]}\right|\leq M\frac{\delta^{-2-2/\coef}}{\dist^{3+2/\coef}}e^{-\frac{\alpha\pi}{2\coef\delta}\frac{3|l|}{4}}, \qquad |l|\geq 2 .
$$
\end{lemma}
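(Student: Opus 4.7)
The plan is to derive an explicit integral representation of $\Upsilon^{[l]}$ from~\eqref{expression-delta} and then exploit the rapidly oscillating factor $e^{-il\xi(\vu,\theta)}$ by choosing $\vu$ with non-zero imaginary part close to the upper (or lower) boundary of $\Doutinter$. Since from~\eqref{defxi} we have $\xi=\theta+\delta^{-1}\alpha\vu+\cdots$, pushing $\im\vu$ towards $\mp\pi/(2\coef)$ produces the expected exponential decay of order $e^{-|l|\alpha\pi/(2\coef\delta)}$.

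More precisely, solving~\eqref{expression-delta} for $\tilde k$ and performing the change of variables $\tau=\xi(\vu,\theta)$ in the Fourier formula $\Upsilon^{[l]}=(2\pi)^{-1}\int_0^{2\pi}\tilde k(\tau)e^{-il\tau}\,d\tau$ yields, for each fixed $\vu\in\Doutinter$,
$$
\Upsilon^{[l]}=\frac{1}{2\pi}\int_{-\pi}^{\pi}\frac{\Delta(\vu,\theta)}{\cosh^{2/\coef}(\coef\vu)\,(1+P_1(\vu,\theta))}\,e^{-il\xi(\vu,\theta)}\,\partial_\theta\xi(\vu,\theta)\,d\theta.
$$
The identity $\xi(\vu,\theta+2\pi)=\xi(\vu,\theta)+2\pi$, visible from~\eqref{defxi}, together with the injectivity statement in Theorem~\ref{thmdifpartsolinjective}, ensures that $\theta\mapsto\xi(\vu,\theta)$ traces a legitimate fundamental period for the $2\pi$-periodic integrand $\tilde k(\tau)e^{-il\tau}$, and a Cauchy-type argument (using holomorphy of all ingredients in $\Doutinter\times\Tout$) shows that the right-hand side is independent of $\vu$ in the whole of $\Doutinter$. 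I then take $\vu_0=-i\,\mathrm{sgn}(l)\bigl(\pi/(2\coef)-\dist\delta\bigr)$. By~\eqref{conddist-inner}, $\coef\dist\delta\in(0,\pi/8]$, so $|\cosh(\coef\vu_0)|=\sin(\coef\dist\delta)\asymp\coef\dist\delta$. Combining this with $|\Delta|\leq M\delta|\cosh(\coef\vu)|^{-3}$ from Theorem~\ref{thmoutloc-inner}, the bounds $|1+P_1|\geq 1/2$ and $|\partial_\theta\xi|\leq K$ from Theorem~\ref{thmdifpartsolinjective} (the latter obtained from Cauchy estimates applied to $\chi$), and the key exponential estimate
$$
\bigl|e^{-il\xi(\vu_0,\theta)}\bigr|=e^{l\,\im\xi(\vu_0,\theta)}\leq K\,e^{-|l|\alpha\pi/(2\coef\delta)+|l|\alpha\dist},
$$
obtained from~\eqref{defxi} and~\eqref{boundLchi-diff} (the contributions of $\log\cosh(\coef\vu_0)$, $L(\vu_0)$ and $\chi(\vu_0,\theta)$ to $\im\xi$ being $\mathcal{O}(1)$ on the chosen contour), I arrive at
$$
|\Upsilon^{[l]}|\leq M\,\frac{\delta^{-2-2/\coef}}{\dist^{3+2/\coef}}\,e^{-|l|\alpha\pi/(2\coef\delta)+|l|\alpha\dist}.
$$
For $|l|=1$ this is exactly the first claim. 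For $|l|\geq 2$ a second application of~\eqref{conddist-inner} gives $\dist\leq\pi/(8\coef\delta)$, so $-|l|\alpha\pi/(2\coef\delta)+|l|\alpha\dist\leq-(3|l|/4)\alpha\pi/(2\coef\delta)$, which yields the second claim.

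The main technical obstacle is the rigorous justification of the $\vu$-independence of the integral representation above; equivalently, one must verify that while deforming $\vu$ from the real axis to the chosen horizontal line, the image loop $\{\xi(\vu,\theta):\theta\in[-\pi,\pi]\}$ remains inside the analyticity strip $\Tout$ of $\tilde k$. This demands quantitative control of the variation of $\im\xi$ in terms of the geometry of $\Doutinter$ and the width $\ost$ of $\Tout$, again using~\eqref{boundLchi-diff}.
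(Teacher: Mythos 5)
Your route is the same one the paper itself sketches right after the lemma (item 3 of Section~\ref{subsec:previousresults}): represent $\Upsilon^{[l]}$ as a Fourier-type integral valid for every $\vu\in\Doutinter$, evaluate it at a point at distance $\dist\delta$ from the singularity $\mp i\,\mathrm{sgn}(l)\,\pi/(2\coef)$, and feed in the bounds of Theorems~\ref{thmoutloc-inner} and~\ref{thmdifpartsolinjective}; your sign conventions and the use of~\eqref{conddist-inner} to pass from $e^{|l|\alpha\dist}$ to the factor $3|l|/4$ are correct. The genuine difference is how you obtain the integral representation in the presence of the full phase~\eqref{defxi}: you substitute $\tau=\xi(\vu,\theta)$ inside the Fourier integral of $\tilde k$, which forces you to deform the $\tau$-contour from $[0,2\pi]$ up to the curve $\{\xi(\vu,\theta)\}$ and hence to verify that $\tilde k$ is analytic on the whole swept region — the obstacle you flag at the end but do not close. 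This step can be avoided entirely by the device the paper uses in the proof of Proposition~\ref{properrorUpsilonsinner}: straighten the phase first via the diffeomorphism $(w,\theta)=\big(\delta\alpha^{-1}(\xi(\vu,\theta)-\theta),\theta\big)$, so that the sum becomes $\sum_l\Upsilon^{[l]}e^{il(\theta+\delta^{-1}\alpha w)}$ and, at each fixed $w$, the coefficient $\Upsilon^{[l]}e^{il\delta^{-1}\alpha w}$ is extracted by the plain Fourier formula in $\theta$ with no contour deformation. I recommend you adopt that version; as written, the $\vu$-independence of your representation is the one substantive point left unproved.

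A second, smaller imprecision: the contributions of $\log\cosh(\coef\vu_0)$, $L(\vu_0)$ and $\chi(\vu_0,\theta)$ to $\im\xi(\vu_0,\theta)$ are indeed $\mathcal{O}(1)$, but they enter the exponent multiplied by $l$, so your ``key exponential estimate'' really reads $e^{C|l|}e^{-|l|\alpha\pi/(2\coef\delta)+|l|\alpha\dist}$ rather than $K e^{-|l|\alpha\pi/(2\coef\delta)+|l|\alpha\dist}$. For $|l|=1$ this is harmless; for $|l|\geq2$ you must absorb $e^{C|l|}$ into the slack between $-|l|\alpha\pi/(2\coef\delta)+|l|\alpha\dist$ and $-(3|l|/4)\,\alpha\pi/(2\coef\delta)$, which works for $\delta$ small provided $\alpha\dist+C\leq\alpha\pi/(8\coef\delta)$ (i.e.\ $\dist$ is not taken at the very top of the range allowed by~\eqref{conddist-inner}); this should be stated rather than hidden in the constant $K$.
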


Let us to explain how we can get exponentially small bounds for $\Upsilon^{[l]}$ when $l\neq 0$ from expression~\eqref{expression-delta}
in the easiest case: when $\xi(\vu,\theta)= \theta + \delta^{-1} \alpha \vu$, that is to say the constants $c=L_0=0$ and the functions
$L\equiv \chi\equiv 0$. Indeed, in this case
$$
\sum_{l\in\mathbb{Z}}\Upsilon^{[l]} e^{il \delta^{-1} \alpha \vu } e^{il \theta } =
\frac{\Delta (\vu,\theta)}{\cosh ^{2/\coef}(\coef\vu)(1+P_1(\vu,\theta))}.
$$
Therefore, $\Upsilon^{[l]} e^{il \delta^{-1} \alpha \vu }$ are the Fourier coefficients of a $2\pi$-periodic in $\theta$ function and
we have that, for all $\vu\in \Doutinter$ (recall that $\Delta(\vu,\theta)$ is defined on $\Doutinter\times \Tout$):
$$
\Upsilon^{[l]} = e^{-il \delta^{-1} \alpha \vu } \frac{1}{2\pi} \int_{0}^{2\pi}
\frac{\Delta (\vu,\theta)}{\cosh ^{2/\coef}(\coef\vu)(1+P_1(\vu,\theta))} e^{-il\theta}\, d\theta.
$$
Since this equality holds true for any value of $\vu\in \Doutinter$, we take $\vu = \vu_+:=\pi/(2\delta)-\dist$ when $l>0$ and
conversely $\vu = \vu_-:=-\pi/(2\delta)+\dist$ when $l<0$. Using Theorem~\ref{thmoutloc-inner}, we have that
$$
|\Delta (\vu_{\pm},\theta)| \leq |r_{1}^{\uns}(\vu_{\pm},\theta)|+|r_1^{\sta}(\vu_{\pm},\theta)|\leq M \delta^{-2} \dist^{-3}.
$$
Therefore, using Theorem~\ref{thmdifpartsolinjective} to bound $P_1$, one has that
$$
|\Upsilon^{[l]}|\leq K \delta^{-2 -2/\coef} \dist^{-3-2/\coef} e^{-|l| \left (\frac{\alpha \pi}{2\coef \delta} +  \alpha \dist\right )}
$$
which is an exponentially small bound for $\Upsilon^{[l]}$ when $l\neq 0$.

Of course, this is an extremely easy case, but it shows that it is crucial to have quantitative bounds of the behavior of
the parameterization of the invariant manifolds at points $\mathcal{O}(\delta)-$close to the singularities $\pm i \pi/(2\coef)$.

To finish, we present the following result which deals with the average $\Upsilon^{[0]}$.

\begin{theorem}[\thmktildeave]\label{thmktilde0}
Let $\Upsilon^{[0]}$ be the average of the function $\tilde k(\tau)$ appearing in Theorem~\ref{thmdifpartsolinjective}.
\begin{enumerate}
\item
In the conservative case, for all $0\leq\delta\leq\delta_0$ one has
$\Upsilon^{[0]}=0$.
\item In the dissipative case, there exists a curve $\param=\param_*^0(\delta)=\mathcal{O}(\delta)$ such that for all
$0\leq\delta\leq\delta_0$ one has:
$$\Upsilon^{[0]}=\Upsilon^{[0]}(\delta,\param_*^0(\delta))=0.$$
In addition, given constants $a_1$, $a_2\in\mathbb{R}$ and $a_3>0$, there exists a curve $\param=\param_*(\delta)=\mathcal{O}(\delta)$
such that for all $0\leq\delta\leq\delta_0$ one has:
\begin{equation}\label{Upsilon0expsmall}
\Upsilon^{[0]}=\Upsilon^{[0]}(\delta,\param_*(\delta))=a_1\delta^{a_2}e^{-\frac{a_3\pi}{2\coef\delta}}.
\end{equation}
\end{enumerate}
\end{theorem}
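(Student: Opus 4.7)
\bigskip
\noindent\textbf{Proof proposal.}
The plan is to combine a structural (Hamiltonian) argument for the conservative case with a one-parameter implicit function theorem in $\param$ for the dissipative case.

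In the conservative case ($\coef=1$, $\param=0$), the vector field~\eqref{initsys-inner} preserves a natural $2$-form and the splitting between the stable and unstable manifolds inherits this structure. My approach is to show that on the overlap $\Doutinter\times\Tout$ the difference admits a representation $\Delta(\vu,\theta)=\partial_\theta \Phi(\vu,\theta)$ for a single-valued $2\pi$-periodic potential $\Phi(\vu,\theta)$ obtained from the difference of the generating functions (in the symplectic sense) of the two Lagrangian manifolds. Comparing this with the factorisation $\Delta=\cosh^{2/\coef}(\coef\vu)(1+P_1(\vu,\theta))\tilde k(\xi(\vu,\theta))$ of Theorem~\ref{thmdifpartsolinjective} and using the injectivity of $(\xi,\theta)$, one deduces that $\tilde k$ is itself a $\xi$-derivative of a single-valued $2\pi$-periodic function, so that the zeroth Fourier mode $\Upsilon^{[0]}$ vanishes identically.

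In the dissipative case I view $\Upsilon^{[0]}$ as a function of $(\delta,\param)$ for $|\param|\le \param^*\delta$ and apply the implicit function theorem in $\param$. The steps would be: (i) establish $C^1$ regularity of $\param\mapsto\Upsilon^{[0]}(\delta,\param)$ by differentiating the fixed-point equation~\eqref{PDEequal-inner} for $r_1^{\uns,\sta}$ with respect to $\param$ and redoing the argument of Theorem~\ref{thmoutloc-inner} for the parameter derivative; (ii) expand
\[
\Upsilon^{[0]}(\delta,\param)=\Upsilon^{[0]}(\delta,0)+\param\,A(\delta)+\mathcal{O}(\param^2),
\]
identifying the dominant $\param$-linear contribution as coming from the term $2\param(\hetr(\vu)+r)$ in the definition~\eqref{defFout} of $\Fout$; (iii) verify that $A(\delta)=c_0+\mathcal{O}(\delta)$ with $c_0\neq 0$ and $\Upsilon^{[0]}(\delta,0)=\mathcal{O}(\delta)$, so that the IFT yields a unique $\param=\param_*^0(\delta)=\mathcal{O}(\delta)$ annihilating $\Upsilon^{[0]}$; (iv) for the general prescription~\eqref{Upsilon0expsmall}, apply the same IFT argument to the shifted equation $\Upsilon^{[0]}(\delta,\param)=a_1\delta^{a_2}e^{-a_3\pi/(2\coef\delta)}$ obtaining a curve $\param_*(\delta)$ which differs from $\param_*^0(\delta)$ only by an exponentially small quantity.

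The main obstacle is twofold. In the conservative case, the Hamiltonian structure is partially hidden by the scalings and the passage to symplectic polar coordinates, so actually \emph{constructing} the single-valued potential $\Phi$ on $\Doutinter\times\Tout$ and verifying that the would-be period $\oint_{0}^{2\pi}\Delta\,d\theta$ vanishes requires careful unwinding of the conservation laws and use of the exact geometry of $\DoutT{\uns}\cap\DoutT{\sta}$. In the dissipative case the delicate point is the quantitative lower bound $c_0\neq 0$ uniformly in $\delta$: since $\Upsilon^{[0]}$ is later compared against exponentially small quantities, one needs a Melnikov-type computation of $\partial_\param\Upsilon^{[0]}(\delta,0)$ and must ensure this derivative is not swallowed by the subdominant error terms of the asymptotic analysis.
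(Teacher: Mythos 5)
First, a point of record: the paper you are looking at does not prove this statement at all. It is quoted verbatim from the companion work (Theorem~2.8 in~\cite{BCS16a}), listed among ``the results in~\cite{BCS16a} we will use along this work''. So there is no internal proof to compare against; your proposal has to be judged on its own. On that basis, your dissipative-case strategy is sound and is essentially the standard one: regard $\Upsilon^{[0]}$ as a $C^1$ function of $\param$ on $|\param|\leq\param^*\delta$, isolate the $\param$-linear contribution coming from the term $2\param(\hetr(\vu)+r)$ in $\Fout$, check that its coefficient is bounded away from zero uniformly in $\delta$, and apply the implicit function theorem both to the equation $\Upsilon^{[0]}=0$ and to the shifted equation $\Upsilon^{[0]}=a_1\delta^{a_2}e^{-a_3\pi/(2\coef\delta)}$. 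The work is in step (i), the differentiability of the invariant-manifold parameterizations with respect to $\param$ with uniform bounds, but that is a routine (if lengthy) re-run of the fixed-point argument.

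The conservative case is where your argument has a genuine gap, in two places. First, the two-dimensional invariant manifolds here are surfaces in $\mathbb{R}^3$, not Lagrangian submanifolds of a symplectic manifold, so there is no generating-function formalism to invoke; the structural input available in the conservative case is that the vector field is volume-preserving (equivalently, that the $(r,z)$ part of the unperturbed and perturbed systems is exact), and what that buys is a flux-type identity for $\int_0^{2\pi}\Delta(\vu,\theta)\,d\theta$ — you would need to actually derive that identity rather than assert the existence of a single-valued potential $\Phi$ with $\Delta=\partial_\theta\Phi$. Second, and more seriously, even if you had $\int_0^{2\pi}\Delta(\vu,\theta)\,d\theta=0$ for all $\vu$, the factorization $\Delta=\cosh^{2/\coef}(\coef\vu)(1+P_1(\vu,\theta))\tilde k(\xi(\vu,\theta))$ does not convert this into $\Upsilon^{[0]}=0$: integrating in $\theta$ gives $2\pi\Upsilon^{[0]}\bigl(1+\mathcal{O}(\delta)\bigr)=-\int_0^{2\pi}(1+P_1)\sum_{l\neq0}\Upsilon^{[l]}e^{il\xi}\,d\theta$, because the Jacobian $\partial_\theta\xi=1+\partial_\theta\chi$ of the substitution $\tau=\xi(\vu,\theta)$ does not cancel the factor $1+P_1$. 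This only yields that $\Upsilon^{[0]}$ is exponentially small, not that it vanishes. The distinction is not cosmetic: in Theorem~\ref{mainthm-inner} the contribution $\cosh^{2/\coef}(\coef\vu)\Upsilon^{[0]}$ carries no prefactor $\delta^{-2-2/\coef}e^{-\alpha\pi/(2\coef\delta)}$, so an $\Upsilon^{[0]}$ that is merely ``exponentially small'' with the wrong rate would swamp the asymptotic formula. You need an exact vanishing mechanism (an exactness/flux identity applied directly to the object whose average is $\Upsilon^{[0]}$), and your sketch does not supply one.
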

\end{enumerate}

These are the results in~\cite{BCS16a} we will use along this work. They (among others) lead to the following result
\begin{theorem}[\mainthmregular]
In the dissipative case we take $\param=\param_*(\delta)$, where $\param^*$
is one of the curves defined in Theorem~\ref{thmktilde0}. Let $\Upsilon^{[0]}=\Upsilon^{[0]}(\param_*(\delta),\delta)$ be the constant
provided by this Theorem. In the conservative case recall that $\Upsilon^{[0]}=0$.
Let $\vt(\vu,\delta)=\delta^{-1}\alpha\vu+c\coef^{-1}\left[\log\cosh(\coef\vu)-\log\delta\right]$.

There exist constants $\mathcal{C}_1,\mathcal{C}_2$ such that, given $T_0>0$, for all $\vu\in[-T_0,T_0]$ and $\theta\in\mathbb{S}^1$
\begin{eqnarray*}
 \Delta(\vu,\theta)&=&\cosh^{\frac{2}{\coef}}(\coef\vu)\Upsilon^{[0]}\Big(1+\mathcal{O}(\delta)\Big)\\
&&+\delta^{-2-\frac{2}{\coef}}\cosh^{\frac{2}{\coef}}(\coef\vu)e^{-\frac{\alpha\pi}{2\coef\delta}}
\Bigg[\mathcal{C}_1\cos\Big(\theta+\vt(\vu,\delta)-\alpha \coef^{-1} L_0 \log \delta \Big)\\
&&+\mathcal{C}_2\sin\Big(\theta+\vt(\vu,\delta) -\alpha \coef^{-1} L_0 \log \delta \Big)+\mathcal{O}(1)\Bigg],
\end{eqnarray*}
where we recall that $\coef=1$ in the conservative case.
\end{theorem}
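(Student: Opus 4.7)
The plan is to dissect the Fourier representation of the difference. By Theorem~\ref{thmdifpartsolinjective} and~\eqref{expression-delta},
\[
\Delta(\vu,\theta)=\cosh^{2/\coef}(\coef\vu)(1+P_1(\vu,\theta))\sum_{l\in\mathbb{Z}}\Upsilon^{[l]}e^{il\xi(\vu,\theta)},
\]
and I split the sum into the contributions of $l=0$, $l=\pm 1$ and $|l|\geq 2$. On $[-T_0,T_0]\times\mathbb{S}^1$ the phase $\xi$ is real, $\cosh(\coef\vu)$ is bounded above and below, and Theorem~\ref{thmdifpartsolinjective} gives $|P_1|,|\chi|=\mathcal{O}(\delta)$. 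For the tail, $|e^{il\xi}|=1$ and Lemma~\ref{lemUpsilonlexpsmall} (with $\dist$ fixed at the lower end of~\eqref{conddist-inner}) yields
\[
\left|\sum_{|l|\geq 2}\Upsilon^{[l]}e^{il\xi}\right|\leq M'\delta^{-2-2/\coef}e^{-3\alpha\pi/(4\coef\delta)},
\]
smaller than the $l=\pm 1$ scale by a factor $e^{-\alpha\pi/(4\coef\delta)}$ and thus absorbed into the $\mathcal{O}(1)$ bracket of the theorem. Theorem~\ref{thmktilde0} handles $l=0$: either $\Upsilon^{[0]}=0$ (conservative case, or the first option in the dissipative case) or $\Upsilon^{[0]}=a_1\delta^{a_2}e^{-a_3\pi/(2\coef\delta)}$ along the curve $\param=\param_*(\delta)$; multiplied by $(1+P_1)=1+\mathcal{O}(\delta)$ this produces the first summand of the theorem.

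The main oscillatory term comes from $l=\pm 1$. Since $\Delta$ is real, $\Upsilon^{[-1]}=\overline{\Upsilon^{[1]}}$. Using~\eqref{defxi}, the phase decomposes as
\[
\xi(\vu,\theta)=\bigl[\theta+\vt(\vu,\delta)-\alpha\coef^{-1}L_0\log\delta\bigr]+\alpha\coef^{-1}L_0\log\cosh(\coef\vu)+\alpha L(\vu)+\chi(\vu,\theta),
\]
and on $[-T_0,T_0]\times\mathbb{S}^1$ the last three summands are uniformly bounded in $\delta$ (with $\chi=\mathcal{O}(\delta)$). Writing $e^{i\xi}$ as $e^{i(\theta+\vt-\alpha\coef^{-1}L_0\log\delta)}$ times a bounded $\vu$-dependent factor and applying elementary trigonometric identities converts $2\,\re[\Upsilon^{[1]}e^{i\xi}]$ into
\[
\mathcal{C}_1(\delta)\cos\bigl(\theta+\vt(\vu,\delta)-\alpha\coef^{-1}L_0\log\delta\bigr)+\mathcal{C}_2(\delta)\sin\bigl(\theta+\vt(\vu,\delta)-\alpha\coef^{-1}L_0\log\delta\bigr)+\mathcal{O}(\delta),
\]
where Lemma~\ref{lemUpsilonlexpsmall} guarantees that $\delta^{2+2/\coef}e^{\alpha\pi/(2\coef\delta)}\mathcal{C}_i(\delta)$ are real and uniformly bounded as $\delta\to 0^+$.

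The hard part is this final step: turning the bounded $\delta$-dependent coefficients $\mathcal{C}_i(\delta)$ into genuine $\delta$-independent constants $\mathcal{C}_1,\mathcal{C}_2$. The outer estimates of~\cite{BCS16a} control only the size of $\Upsilon^{[\pm 1]}$, not its precise asymptotic, which is why the bracket of~\mainthmregular\ carries an $\mathcal{O}(1)$ remainder rather than an $o(1)$ one: one extracts $\mathcal{C}_1,\mathcal{C}_2$ as limits of $\mathcal{C}_i(\delta)$ along suitable subsequences and absorbs the fluctuation $\mathcal{C}_i(\delta)-\mathcal{C}_i$ into that bounded $\mathcal{O}(1)$ correction. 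Sharpening this to an honest $o(1)$ error, and ultimately to the $\mathcal{O}(1/|\log\mu|)$ of Theorem~\ref{mainthm-inner-intro}, requires a quantitative asymptotic for $\Upsilon^{[\pm 1]}$ obtained via matching with the inner equation, which is the central new ingredient of the present work.
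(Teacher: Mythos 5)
Your argument is correct as a proof of the literal statement, but it is a genuinely different route from the one the paper takes (by reference to \cite{BCS16a}). There, the constants $\mathcal{C}_1,\mathcal{C}_2$ are not extracted abstractly: one builds a Poincar\'e--Melnikov first order $\Delta_0$ out of explicit approximations $r_{10}^{\uns,\sta}$ of the invariant manifolds, and $\mathcal{C}_1,\mathcal{C}_2$ are the (computable) first Fourier coefficients coming from that Melnikov function; the same computation shows $\Upsilon^{[\pm1]}-\Upsilon^{[\pm1]}_0$ is genuinely smaller in the regular case $p>-2$, so that the identical formula is an asymptotic expansion there and degenerates to a sharp upper bound only at $p=-2$. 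Your route exploits precisely this degeneration: since the bracket carries an $\mathcal{O}(1)$ remainder of the same size as $\mathcal{C}_1\cos+\mathcal{C}_2\sin$, the statement reduces to the bound $|\Delta-\cosh^{2/\coef}(\coef\vu)\Upsilon^{[0]}(1+\mathcal{O}(\delta))|\leq K\delta^{-2-2/\coef}\cosh^{2/\coef}(\coef\vu)e^{-\alpha\pi/(2\coef\delta)}$, which follows from \eqref{expression-delta}, \eqref{boundp1-diff} and Lemma~\ref{lemUpsilonlexpsmall} exactly as you argue (one could even take $\mathcal{C}_1=\mathcal{C}_2=0$). What your version loses is any meaning for the constants, and it could not be upgraded to the regular-case asymptotic nor to Theorem~\ref{mainthm-inner}, as you yourself note. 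Two small inaccuracies, both harmless because they are absorbed by the $\mathcal{O}(1)$: your phase decomposition of $\xi$ from \eqref{defxi} is off by $\coef^{-1}(c+\alpha L_0)\log\delta$ (the $-\log\delta$ terms in the argument of the theorem do not come from rewriting $\xi$, which contains no $\log\delta$, but from the oscillatory $\log\delta$-dependence of $\Upsilon^{[\pm1]}$ themselves, visible in \eqref{defUpsilon0lneg}); and the ``bounded factor'' $e^{i(\alpha\coef^{-1}L_0\log\cosh(\coef\vu)+\alpha L(\vu))}$ depends on $\vu$, so the $\mathcal{C}_i(\delta)$ you produce before passing to subsequences are not constants in $\vu$ either.
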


This result is not an asymptotic formula for the difference in the singular case, but it provides a (sharp) upper bound for $\Delta$.
Assuming the results in Theorems~\ref{thmoutloc-inner}, \ref{thmdifpartsolinjective} and~\ref{thmktilde0} and Lemma~\ref{lemUpsilonlexpsmall},
its proof consists
in finding $\Delta_0$, a good approximation of $\Delta$ by means of suitable approximations of $r_{1}^{\uns,\sta}$ on domains containing points
$\mathcal{O}(\delta)-$close to the singularities, at $u=\pm i \pi/(2\coef)$, of the parameterization of the heteroclinic connection
$\Gamma$ in~\eqref{hetr}, \eqref{hetz}.
Roughly speaking, $\Delta_0$ comes from a Poincar\'e-Melnikov perturbation theory and only gives an asymptotic formula in the
regular case, that is, when $p=q-2>-2$.

Let us to explain why the classical perturbation theory does not work in the singular ($p=-2$) case. Indeed,
the approximations of $r_{1}^{\uns,\sta}$, which we called $r_{10}^{\uns,\sta}$ in~\cite{BCS16a}, satisfy that (see \thmoutloce):
$$
\big (\hetr(\vu) \big )^{-1} r_{10}^{\uns,\sta}(\vu,\theta)= \mathcal{O}(\delta^{p+2}),\qquad
\big (\hetr(\vu) \big )^{-1} (r_1^{\uns,\sta} (\vu,\theta)-r_{10}^{\uns,\sta}(\vu,\theta))=\mathcal{O}(\delta^{2(p+2)}).
$$
Therefore, for $p=-2$ they are of $\mathcal{O}(1)$ when $\vu=\pm i\pi/(2\coef)+\mathcal{O}(\delta)$ and consequently $r_{10}^{\uns,\sta}$
are not good approximations (in the relative error sense) of the parameterizations $r_1^{\uns,\sta}$ anymore.
In this paper, we look for suitable approximations of $r_{1}^{\uns,\sta}(\vu,\theta)$ when $\vu$ is $\mathcal{O}(\delta)$ close to the singularities
and $p=-2$ by means of the \textit{inner equation} which is introduced, and deeply studied, in the following section.

In fact, since we are in the real analytic setting, we only need to look for good approximations for $r_{1}^{\uns,\sta}(\vu,\theta)$
when $\vu$ is close to the singularity $i \pi/(2\coef)$. From now on, we will restrict ourselves to these values of $\vu$.

To finish this section we present the strategy we will follow from now on to prove Theorem~\ref{mainthm-inner-intro}:
\begin{enumerate}
\item In Section~\ref{subsec-introinner} we deal with the inner equation. First we derive it, through suitable changes of variables
$(\vu,\theta)\to (\vs,\theta)$. After that we find two particular solutions of the inner
equation which we call
$\psiin^\uns(\vs,\theta)$ and $\psiin^\sta(\vs,\theta)$, satisfying some asymptotic conditions.
Finally, we shall find an asymptotic formula for their difference
$\Delta\psiin(\vs,\theta):=\psiin^\uns(\vs,\theta)-\psiin^\sta(\vs,\theta)$.
\item In Section~\ref{subsec:intromatching} we shall see that in suitable complex domains and after appropriate scaling and changes of variables,
$\psiin^\uns(\vs,\theta)$ and $\psiin^\sta(\vs,\theta)$ are
respectively good approximations of the functions $r_1^\uns(\vu,\theta)$ and $r_1^\sta(\vu,\theta)$ defined in Theorem \ref{thmoutloc-inner}.
\item Finally, in Section \ref{subsec:introformuladiff}, using the previous results, one can find an asymptotic
formula of $\Delta(\vu,\theta)$ in terms of $\Delta\psiin(\vs,\theta)$.
\end{enumerate}

From now on, in the dissipative case, we assume that $\param$ lies on one of the curves $\param_*(\delta)$ given by Theorem \ref{thmktilde0}.
In particular $\sigma= \mathcal{O}(\delta)$.
\subsection{The inner equation}\label{subsec-introinner}
The inner equation is an independent of $\delta$ and $\param$ equation having the dominant quantitative behavior of the partial differential
equation~\eqref{PDEequal-inner} when $\vu$ is $\mathcal{O}(\delta)$ close to the singularity $ i\pi/(2\coef)$.
In Section~\ref{subsec:derivationinner} we explain how to derive it. After, in Section~\ref{subsec:introinnereq} we deal with the existence of suitable
solutions, $\psiin^{\uns,\sta}$ of this inner equation and finally in Section~\ref{subsec:introdiffinner} we study the form of the difference between them.

\subsubsection{Derivation of the inner equation}\label{subsec:derivationinner}
We consider the following change of variables:
\begin{equation}\label{change-s}
\vs=\vs(\vu)=\frac{1}{\delta\hetz(\vu)}, \qquad  \vu=\vu(\vs)=\hetz^{-1}\left ( \frac{1}{\delta \vs}\right ),
\end{equation}
where we recall that $\hetz(\vu) = \tanh(\coef \vu)$.
\begin{remark}\label{rmkchange-s}
The change~\eqref{change-s} is well-defined for $\vu$ belonging to some sufficiently small neighborhoods of $i\pi/(2\coef)$.
For instance if $\vu\in \DoutT{\uns,\sta}$ and $\im \vu\geq \pi/(4\coef)$.

The usual inner change of variables would be $\delta \vs = \coef \big (\vu - i\pi/(2\coef))$,
see~\cite{BaldomaInner, BFGS12}. However we notice that, when
we are close to $i \pi/(2\coef)$, we have that
$$
s(u)=
\frac{\coef}{\delta}\left (\vu-\frac{i\pi}{2\coef} \right ) +  \mathcal{O}\left (\frac{(\coef \vu -i\pi/2)^3}{\delta}\right )
$$
and then, both changes of variables are close to each other  close to the singularity $i\pi/(2\coef)$.
\end{remark}

We consider now $r^{\uns,\sta}(\vu(\vs),\theta)= \hetr(\vu(\vs))+ r_1^{\uns,\sta}(\vu(\vs),\theta)$ where $r_1^{\uns,\sta}$ are the functions given in
Theorem~\ref{thmoutloc-inner}. It is clear that 
\begin{equation}\label{expR0us}
\begin{aligned}
\hetr(\vu(\vs))&=\frac{(\coef+1)}{2b}\frac{1}{\cosh^2(\coef \vu(\vs))}=\frac{\coef+1}{2b }\left(1-\frac{1}{\delta^2\vs^2}\right), \\
r_1^{\uns,\sta}(\vu(\vs),\theta) &= \mathcal{O}\left (\frac{1}{\delta^2 s^{3}}\right).
\end{aligned}
\end{equation}
Consequently, it is convenient to introduce, for a function $r_1$ defined either on $\DoutT{\uns}$ or $\DoutT{\sta}$, the new function:
\begin{equation}\label{defpsi}
\psi(\vs,\theta)=\delta^2r_1\left(\vu(\vs),\theta\right).
\end{equation}

If there is not danger of confusion, we will omit the dependence on variables $\vu,\vs, \theta$ assuming that $\psi$
is always evaluated in $(\vs,\theta)$.

Assume now that $r_1$ is a solution of $\Lout(r_1)=\Fout(r_1)$, where $\Lout$ and $\Fout$ were defined in~\eqref{defLout} and~\eqref{defFout}.
Then $\psi$ defined by~\eqref{defpsi} is a solution of the partial differential equation:
\begin{equation}\label{PDE-psi-new}
-\alpha\partial_\theta\psi+\coef\partial_\vs\psi-2\vs^{-1}\psi = c\vs^{-1}\partial_\theta\psi
+\coef \delta^2\vs^2\partial_\vs\psi+
\delta^3 \Fout(\delta^{-2} \psi).
\end{equation}

We restate Theorem~\ref{thmoutloc-inner} in the new variables ($s$ and $\psi$). Notice that the resulting functions, $\psi^{\uns,\sta}$, are defined
in the restricted domain $\DoutTinvars{\uns}\times\Tout$, with:
\begin{equation}\label{defDoutTuns-innervars}
\DoutTinvars{\uns,\sta}=\left\{\vs\in\mathbb{C} \,:\, \vs=\vs(\vu)=\frac{1}{\delta\hetz(\vu)},\,\vu\in\DoutT{\uns,\sta}\cap
\left\{\im\vu\geq\frac{\pi}{4\coef}\right\}\right\}.
\end{equation}
\begin{theorem}\label{thmoutloc-innervariables}
Consider the functions $r_1^{\uns,\sta}$, given by Theorem~\ref{thmoutloc-inner}, and define:
$$
\psi^{\uns,\sta}(\vs,\theta)=\delta^{2}r_1^{\uns,\sta}\left (\vu(\vs),\theta\right ),\qquad\qquad(\vs,\theta)\in\DoutTinvars{\uns,\sta}\times\Tout
$$
satisfying~\eqref{PDE-psi-new}. There exists $M>0$ such that, for $(\vs,\theta)\in\DoutTinvars{\uns,\sta}\times\Tout$,
$$
|\psi^{\uns,\sta}(\vs,\theta)|\leq \frac{M}{|\vs|^{3}},\qquad |\partial_{\vs} \psi^{\uns,\sta}(\vs,\theta)|\leq \frac{M}{|\vs|^{4}},\qquad
|\partial_{\theta} \psi^{\uns,\sta}(\vs,\theta)|\leq \frac{M}{|\vs|^{4}}.
$$
\end{theorem}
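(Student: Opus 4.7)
The strategy is to derive Theorem~\ref{thmoutloc-innervariables} directly from Theorem~\ref{thmoutloc-inner} by tracking the change of variables $\vu\leftrightarrow\vs$ through both the partial differential equation and the supremum estimates. Two things need to be checked: that $\psi^{\uns,\sta}(\vs,\theta)=\delta^2 r_1^{\uns,\sta}(\vu(\vs),\theta)$ satisfies \eqref{PDE-psi-new}, and that the three quantitative bounds hold on $\DoutTinvars{\uns,\sta}\times\Tout$.

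For the PDE, I would first compute the Jacobian of the change of variables. From $\vs=1/(\delta\tanh(\coef\vu))$, the identity $1-\tanh^2=\mathrm{sech}^2$, and the relation $\tanh(\coef\vu)=1/(\delta\vs)$, a direct calculation gives
\[
\frac{d\vs}{d\vu} \;=\; -\frac{\coef}{\delta}\cdot\frac{1-\tanh^2(\coef\vu)}{\tanh^2(\coef\vu)} \;=\; \frac{\coef}{\delta}\bigl(1-\delta^2\vs^2\bigr).
\]
Plugging $r_1(\vu,\theta)=\delta^{-2}\psi(\vs(\vu),\theta)$ into $\Lout(r_1)=\Fout(r_1)$, using $\hetz(\vu)=1/(\delta\vs)$, and multiplying through by $\delta^3$, a term-by-term match gives precisely \eqref{PDE-psi-new}: the three terms $-\alpha\partial_\theta\psi+\coef\partial_\vs\psi-2\vs^{-1}\psi$ come from the principal part of $\Lout$, while the correction $c\vs^{-1}\partial_\theta\psi$ arises from the $c\hetz(\vu)\partial_\theta r$ piece, and $\coef\delta^2\vs^2\partial_\vs\psi$ comes from the non-principal part of the Jacobian.

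For the bounds, the key geometric identity is
\[
\cosh^2(\coef\vu) \;=\; \frac{1}{1-\tanh^2(\coef\vu)} \;=\; \frac{\delta^2\vs^2}{\delta^2\vs^2-1}.
\]
On $\DoutTinvars{\uns,\sta}$, the condition $\im\vu\geq\pi/(4\coef)$ combined with the outer-domain restriction keeps $\delta\vs=\coth(\coef\vu)$ in a compact subset of $\mathbb{C}$ that is uniformly bounded away from $\pm 1$; consequently $|\cosh(\coef\vu)|$ is comparable to $|\delta\vs|$ up to a constant, and $|1-\delta^2\vs^2|$ is bounded above and below. Applying this comparison to the bounds of Theorem~\ref{thmoutloc-inner} gives
\[
|\psi^{\uns,\sta}| \;=\; \delta^2|r_1^{\uns,\sta}| \;\leq\; \frac{M\delta^3}{|\cosh(\coef\vu)|^3} \;\leq\; \frac{M}{|\vs|^3}, \qquad |\partial_\theta\psi^{\uns,\sta}| \;\leq\; \frac{M\delta^4}{|\cosh(\coef\vu)|^4} \;\leq\; \frac{M}{|\vs|^4}.
\]
For the $\vs$-derivative one inverts the Jacobian, $\partial_\vs\psi^{\uns,\sta}=\delta^3\bigl(\coef(1-\delta^2\vs^2)\bigr)^{-1}\partial_\vu r_1^{\uns,\sta}$, and the same comparison yields $|\partial_\vs\psi^{\uns,\sta}|\leq M/|\vs|^4$.

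The only delicate point is controlling the factor $(1-\delta^2\vs^2)^{-1}$ produced by inverting the Jacobian in the $\vs$-derivative estimate; this is precisely the reason the inner domain $\DoutTinvars{\uns,\sta}$ is taken as the image of the subset $\im\vu\geq\pi/(4\coef)$ rather than of the full outer domain. On this subset $\tanh(\coef\vu)$ is bounded away from $0$ and from $\pm 1$, so the change of variables $\vu\leftrightarrow\vs$ is a biholomorphism with bounded derivative in both directions, and all prefactors can be absorbed into the constant $M$. Beyond this bookkeeping, no new analytic input is required: the theorem is essentially a dictionary translation of Theorem~\ref{thmoutloc-inner}.
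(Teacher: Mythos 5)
Your proposal is correct and follows exactly the route the paper intends: the paper gives no proof beyond the remark that $|\delta\vs|\leq K$ on $\DoutTinvars{\uns,\sta}$, and your computation of the Jacobian $d\vs/d\vu=\coef\delta^{-1}(1-\delta^2\vs^2)$, the identity $\cosh^2(\coef\vu)=\delta^2\vs^2/(\delta^2\vs^2-1)$, and the resulting term-by-term translation of both the PDE and the bounds of Theorem~\ref{thmoutloc-inner} supply precisely the missing bookkeeping. The only loose phrase is the claim that the biholomorphism has ``bounded derivative in both directions'' (it is of size $\delta^{-1}$ one way and $\delta$ the other), but your actual estimates track these factors correctly, so nothing is affected.
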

\begin{remark}
To check this result we use that, $|\delta \vs|\leq K$ if $\vs \in \DoutTinvars{\uns,\sta}$.
\end{remark}

The inner equation is the equation~\eqref{PDE-psi-new} by taking $\delta=0$.
In order to see that $\delta^3 \Fout(\delta^{-2} \psi)$ is well
defined for $\delta=0$ we need to make some computations. Note that, since we want to apply the results
of Theorem~\ref{thmoutloc-innervariables}, for $|\sigma| \leq \sigma^* \delta$, we will also have that $\sigma=0$.
We define
\begin{equation}\label{defrho}
\rho(\psi,\vs,\delta)=\delta \sqrt{2 (\hetr(\vu(\vs)) + r_{1}(\vu(\vs)))}=\sqrt{\frac{\coef+1}{b}\left(-\vs^{-2}+\delta^2\right)+2\psi}.
\end{equation}
One of the terms in $\delta^3 \Fout(\delta^{-2}\psi)$ (see~\eqref{defFout}) is $\delta F(\delta^{-2} \psi)$ with
$F$ defined by~\eqref{notationFGHbis}. Let us to compute it:
\begin{align*}
\delta F(\delta^{-2} \psi) &= \delta \Fb(\delta(\hetr(\vu(\vs)) + r_1(\vu(\vs),\theta), \theta, \delta \hetz(\vu(\vs)), \delta,\delta \param) \\
&=\rho(\psi,\vs,\delta) \big [\cos \theta f(\xi(\psi,\vs,\delta)) + \sin \theta g(\xi(\psi,\vs,\delta))\big ]
\end{align*}
with $\xi(\psi,\vs,\delta) =(\rho(\psi,\vs,\delta)\cos \theta, \rho(\psi,\vs,\delta)\sin \theta,\delta \hetz(u(s)), \delta,\delta \param)$. Note that,  
applying the change of variables~\eqref{change-s},
$$
\xi (\psi,\vs,\delta)=
(\rho(\psi,\vs,\delta)\cos \theta, \rho(\psi,\vs,\delta)\sin \theta,\vs^{-1}, \delta,\delta \param).
$$
Analogously one can see that, defining
\begin{align}\label{notationFGHhat}
 \hat F(\psi,\delta)&:= \delta F(\delta^{-2}\psi)=\rho(\psi,\vs,\delta) \big [\cos \theta f(\xi(\psi,\vs,\delta)) + \sin \theta g(\xi(\psi,\vs,\delta))\big ] \notag\\
 \hat G(\psi,\delta)&:=
\delta^{-1} G(\delta^{-2}\psi)= \frac{1}{\rho(\psi,\vs,\delta)} \big [\cos \theta g(\xi(\psi,\vs,\delta)) - \sin \theta f(\xi(\psi,\vs,\delta))\big ] \notag\\
 \hat H(\psi,\delta)&:=H(\delta^{-2}\psi) = h(\xi(\psi,\vs,\delta)),
\end{align}
then
\begin{equation}\label{expgoodFout}
\begin{aligned}
\delta^{3} \Fout (\delta^{-2} \psi) = &\param\rho^{2}(\psi,\vs,\delta)
 +\hat F(\psi,\delta)+\frac{\coef+1}{b}\vs^{-1}\hat H(\psi,\delta)\\
&-\hat G(\psi,\delta)\partial_\theta\psi+\vs^2\left(2b\psi+\hat H(\psi,\delta)\right)\partial_\vs\psi.
\end{aligned}
\end{equation}
It is clear then that $\delta^3 \Fout (\delta^{-2}\psi)$ is well defined for $\delta=0$. 
So the inner equation is well defined.

To finish this section, we define the operators:
\begin{align}
\Linner(\psi)=&-\alpha\partial_\theta\psi+\coef\partial_\vs\psi-2\vs^{-1}\psi, \label{defopLinner} \\
\Minner(\psi,\delta)=&c\vs^{-1}\partial_\theta\psi+\coef\delta^2\vs^2\partial_\vs\psi + \param\rho^{2}(\psi,\vs,\delta) +\hat F(\psi,\delta)
+\frac{\coef+1}{b}\vs^{-1}\hat H(\psi,\delta)\notag\\
&-\hat G(\psi,\delta)\partial_\theta\psi
+\vs^2\left(2b\psi+\hat H(\psi,\delta)\right)\partial_\vs\psi. \label{defopMinner}
\end{align}
Then equation~\eqref{PDE-psi-new}, see~\eqref{expgoodFout} for the expression of $\delta^3 \Fout (\delta^{-2}\psi)$, can be written as:
\begin{equation}\label{PDE-psishort}
 \Linner(\psi)=\Minner(\psi,\delta).
\end{equation}
The inner equation is obtained by taking $\delta=0$ in~\eqref{PDE-psishort}:
\begin{equation}\label{PDE-innershort}
 \Linner(\psiin)=\Minner(\psiin,0)
\end{equation}
or equivalently:
\begin{align}\label{PDE-inner}
 -\alpha\partial_\theta\psiin+&\coef\partial_\vs\psiin-2\vs^{-1}\psiin=c\vs^{-1}\partial_\theta\psiin+\hat F(\psiin,0)
 +\frac{\coef+1}{b}\vs^{-1}\hat H(\psiin,0)\nonumber \\&-\hat G(\psiin,0)\partial_\theta\psiin
+\vs^2\left(2b\psiin+\hat H(\psiin,0)\right)\partial_\vs\psiin.
\end{align}
\begin{remark} As it was remarked to the authors by V. Gelfreich, the inner equation is very related with the initial Hopf-zero singularity $X^\ast$. 
In fact with system~\eqref{sistema-NForder3} taking the parameters $\nu=\mu=0$, which is the vector field $X^\ast$ after the normal form procedure 
of order two.
This is due to that, besides the change $\vs=\vs(\vu)$, to find the inner equation we undo the scalings performed in Section~\ref{subsec:scaling}
and put $\delta=0$. Indeed, let us give more details. 
We perform to system $X^{\ast}$, the change of coordinates~\eqref{cylindriccoordsunpertub}:
$$
\bar x=\sqrt{2 R} \cos \theta ,\qquad \bar y=\sqrt{2R}\sin \theta,\qquad \bar z = \frac{1}{s}.
$$
It turns out that, taking $\bar f=\bar g =\bar h=0$, the new system has a solution parameterizated by
$R=\tilde{R_0}(\vs)=-(1+\beta_1)/(2\gamma_2 s^2)$. Notice that, by expression~\eqref{expR0us} of $\hetr(\vu(\vs))$ and recalling that $\beta_1=\coef$ and $\gamma_2 =b$,
$\tilde R_0(\vs) = \big[\delta^2 R_0(\vs)\big ]_{| \delta =0}$. 
Therefore, if we look for a solution of the new system parameterizated by $(\theta, s)$ of the form
$$
R= -\frac{\beta_1 +1}{2\gamma_2 s^2} + \varphi(s,\theta) =-\frac{\coef +1}{2b s^2}+ \varphi(s,\theta)
$$
with $\varphi(\vs,\theta) \to 0$ as $\vs \to \infty$, the partial differential equation that $\varphi$ satisfies is exactly the inner equation.
\end{remark} 
\subsubsection{Study of the inner equation}\label{subsec:introinnereq}
First, we introduce the complex domains in which equation~\eqref{PDE-innershort} will be solved.
Given $\beta_0,\distin>0$, we define (see Figure \ref{figDinuns}):
\begin{figure}
	\centering
	\includegraphics[width=6cm]{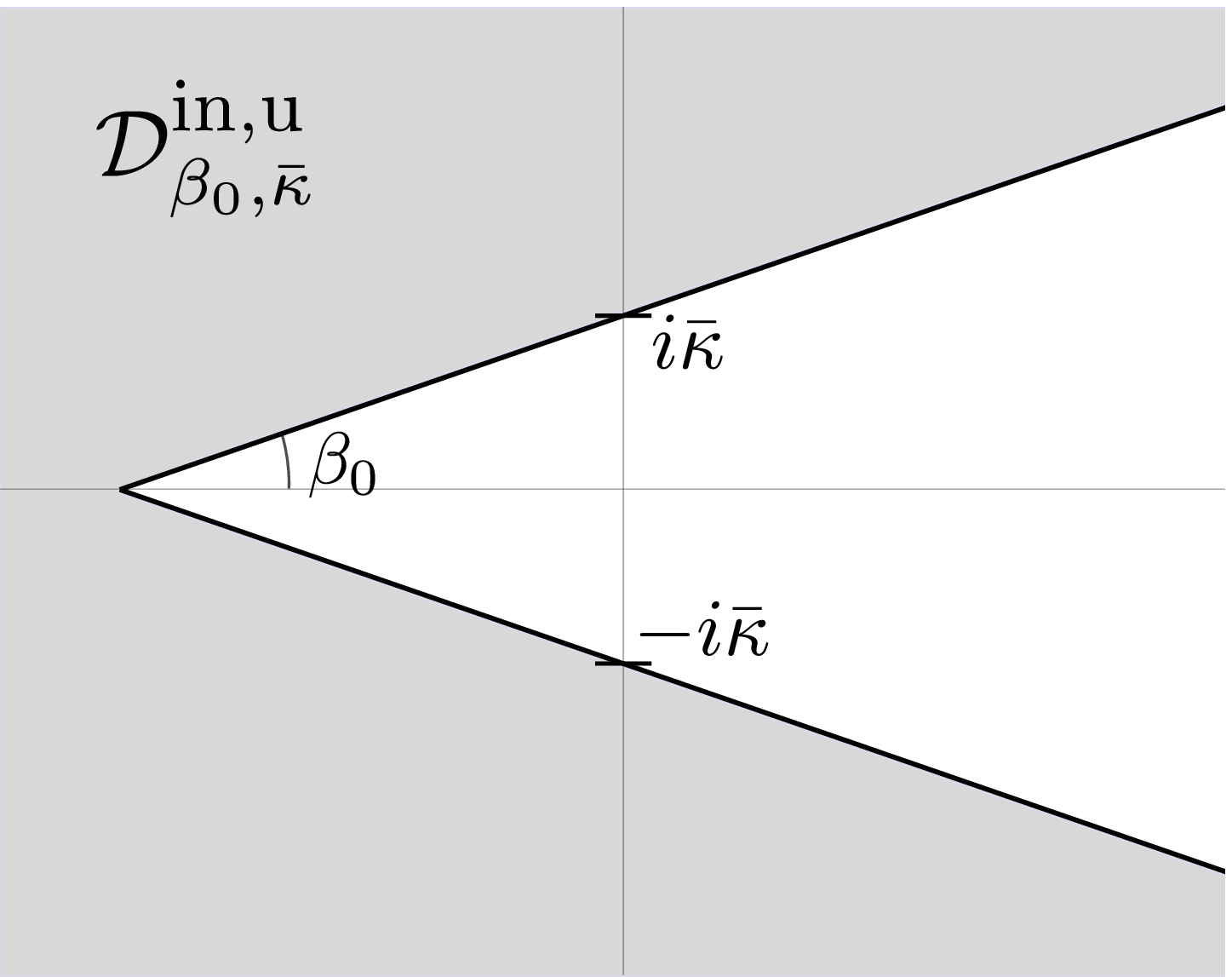}
        \caption{The domain $\Din{\uns}$ in the $s$ plane.}\label{figDinuns}
\end{figure}
\begin{equation}\label{defdinus-inner2D}
\Din{\uns}=\{\vs\in\mathbb{C}\,:\, |\im \vs|\geq\tan\beta_0\re \vs+\distin\},\qquad \Din{\sta}=-\Din{\uns}.
\end{equation}
We also define analogous domains to $\Din{\uns,\sta}$, in terms of the outer variables $\vu$:
\begin{equation}\label{defdin-outervars}
\Dinu{\uns}=\left \{\vu\in\mathbb{C}\,:\, \left |\im \left (\vu-i\frac{\pi}{2\coef}\right )\right|\geq\tan\beta_0\re \vu+\distin\delta\right\}
\end{equation}
and $\Dinu{\sta}=-\Dinu{\uns}$.
It is easy to check that taking $\distin=\dist/2$, where $\dist$ is the parameter defining the domains $\DoutT{\uns}$ introduced in~\eqref{defDoutT}, and choosing an adequate $T>0$, then for all $0<\beta_0,\beta<\pi/2$ and for $\delta$ small enough one has that $\DoutT{\uns}\subset\Dinu{\uns}$ (see Figure \ref{figDoutunsDinuns}). Analogously, we also have that $\DoutT{\sta}\subset\Dinu{\sta}$.
\begin{figure}
	\centering
	\includegraphics[width=6cm]{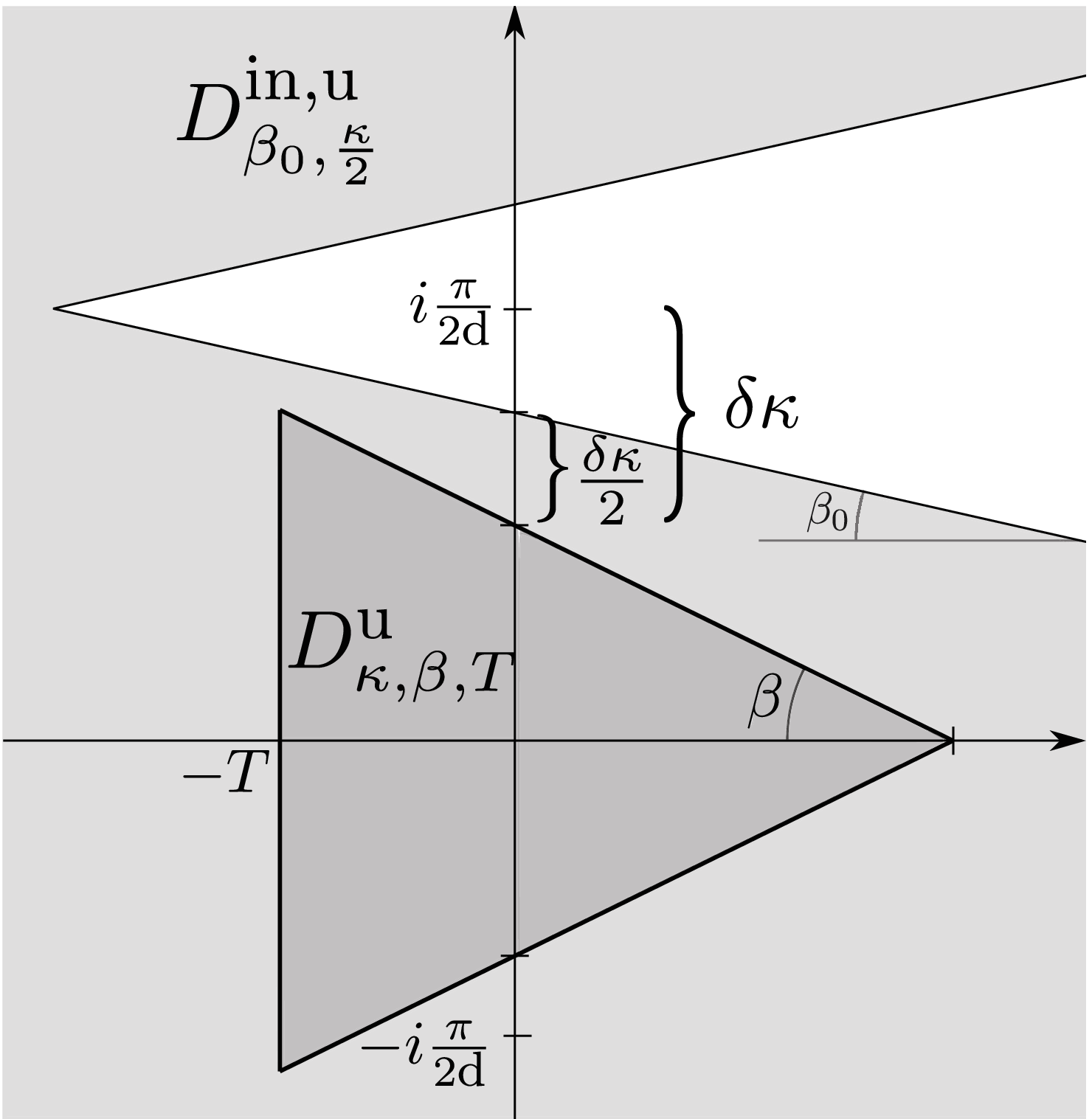}
        \caption{The domains $\DoutT{\uns}$ and $\Dinu{\uns}$ with $\bar\dist=\dist/2$ in the $u$ plane.}\label{figDoutunsDinuns}
\end{figure}

We will look for particular solutions $\psiin^{\uns}$ and $\psiin^{\sta}$ of equation \eqref{PDE-innershort} satisfying:
\begin{equation}\label{condpsi0uns}
\lim_{\re \vs\to\mp\infty}|\psiin^{\uns,\sta}(\vs,\theta)|\to0,\qquad (\vs,\theta)\in \Din{\uns,\sta}\times\Tout.
\end{equation}
Where we take the sign $-$ for $\uns$ and $+$ for $\sta$.
We will find these solutions by means of a suitable right inverse of the operator $\Linner$ defined in \eqref{defopLinner}. More precisely, assume $\Ginner$ is such that $\Linner\circ\Ginner=\mathrm{Id}$. If $\psiin$ satisfies the implicit equation:
\begin{equation}\label{innerfixedpoint-intro}
 \psiin=\Ginner(\Minner(\psiin,0)),
\end{equation}
then clearly $\psiin$ is a solution of equation \eqref{PDE-innershort}. In other words, $\Ginner$ allows us to write \eqref{PDE-innershort} as the fixed point equation \eqref{innerfixedpoint-intro}. We now introduce the right inverse used in each case: $\Ginner^{\uns,\sta}$, which will allow us to prove the existence of the functions
$\psiin^{\uns,\sta}$ satisfying \eqref{condpsi0uns}. We shall refer to each case as the ``unstable'' one and the ``stable'' one because we shall see, later on in Theorem~\ref{thmmatching},
that each one approximates respectively the unstable or stable manifold (more precisely, $\psi^\uns$ and $\psi^\sta$ defined in Theorem~\ref{thmoutloc-innervariables})
in some bounded subdomains of $\Din{\uns}$ and $\Din{\sta}$ respectively.

Given a function $\phi(\vs,\theta)$, $2\pi-$periodic in $\theta$, we define $\Ginner^{\uns,\sta}$ as:
\begin{equation}\label{defGuns}
 \Ginner^\us(\phi)(\vs,\theta)=\sum_{l\in\mathbb{Z}}{\Ginner^{\us}}^{[l]}(\phi)(\vs)e^{il\theta},\qquad  \us=\uns,\sta
\end{equation}
where the Fourier coefficients ${\Ginner^{\us}}^{[l]}(\phi)$ are defined as:
$$
{\Ginner^{\us}}^{[l]}(\phi)(\vs)=\vs^{\frac{2}{\coef}}\int_{\mp\infty}^\vs\frac{e^{-\frac{il\alpha}{\coef}(w-\vs)}}{w^{\frac{2}{\coef}}}\phi^{[l]}(w)dw,,\qquad  \us=\uns,\sta
$$
where, we take $-\infty$ for the unstable case and $+\infty$ for the stable one.
Here $\phi^{[l]}$ stands for the $l-$th Fourier coefficient of $\phi$, and $\int_{\mp\infty}^s$ means the integral over any path included
in $\Din{\us}$ such that $\re s\to\mp\infty$.

One can easily check that:
\begin{equation}\label{eqinverses}
\Linner\circ\Ginner^\uns=\Linner\circ\Ginner^\sta=\textrm{Id}.
\end{equation}
The following theorem states the existence of both functions $\psiin^\uns$ and $\psiin^\sta$. Its proof can be found in Section \ref{sec:inner}.

\begin{theorem}\label{thminner}
Let $\beta_0>0$ and $\distin$ be large enough. Then equation \eqref{PDE-innershort} has two solutions $\psiin^\uns$ and $\psiin^{\sta}$, defined respectively in
$\Din{\uns}$ and $\Din{\sta}$, such that there exists $M>0$:
$$|\psiin^{\uns,\sta}(\vs,\theta)|\leq \frac{M}{|\vs|^{3}},\qquad |\partial_{\vs} \psiin^{\uns,\sta}(\vs,\theta)|\leq \frac{M}{|\vs|^{4}},\qquad
|\partial_{\theta} \psiin^{\uns,\sta}(\vs,\theta)|\leq \frac{M}{|\vs|^{4}}.
$$
for all $(\vs,\theta)\in\Din{\uns,\sta}\times \Tout$.
Moreover:
$$|\psiin^{\uns,\sta}(\vs,\theta)-\Ginner^{\uns,\sta}(\Minner(0,0))(\vs,\theta)|\leq \frac{M}{|\vs|^{4}},\qquad  (\vs,\theta)\in\Din{\uns,\sta} \times \Tout.$$
\end{theorem}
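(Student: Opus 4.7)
The plan is to recast the inner equation \eqref{PDE-innershort} as the fixed point equation $\psiin = \Ginner^{\uns,\sta}(\Minner(\psiin, 0))$ and solve it by a Banach contraction argument in a weighted space of analytic functions. I focus on the unstable case; the stable one follows from the symmetry $\vs\mapsto -\vs$ which sends $\Din{\sta}$ to $\Din{\uns}$. For each integer $n\geq 1$ introduce
\[
\mathcal{X}_n^{\uns}=\{\phi:\Din{\uns}\times\Tout\to\mathbb{C} \text{ analytic, } 2\pi\text{-periodic in }\theta,\ \|\phi\|_n=\sup|\vs|^n|\phi|<\infty\}.
\]
The fact that $|\vs|\geq\distin$ on $\Din{\uns}$ gives the inclusion $\mathcal{X}_{n+1}^\uns\hookrightarrow\mathcal{X}_n^\uns$ with norm $1/\distin$, which is the source of the smallness for the contraction. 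The first key estimate is the mapping property $\Ginner^\uns:\mathcal{X}_{n+1}^\uns\to\mathcal{X}_n^\uns$, bounded uniformly in $\distin$. For the zeroth Fourier mode the bound $|\vs^{2/\coef}\int_{-\infty}^\vs w^{-2/\coef-n-1}\,dw|\leq K|\vs|^{-n}$ follows along a horizontal ray in $\Din{\uns}$; for $l\neq 0$ an integration by parts on the oscillatory factor $e^{-il\alpha(w-\vs)/\coef}$ produces a main boundary term of the form $-(\coef/(il\alpha))\phi^{[l]}(\vs)$, preserving the decay exponent.

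The second ingredient is the analysis of $\Minner(\cdot,0):\mathcal{X}_3^\uns\to\mathcal{X}_4^\uns$. One first checks that $\rho(\psi,\vs,0)=\sqrt{-(\coef+1)/(b\vs^2)+2\psi}$ has a single-valued analytic branch on $\Din{\uns}$ when $\distin$ is large and $\|\psi\|_3$ is small (the dominant term being $-(\coef+1)/(b\vs^2)$), so that $\hat F(\psi,0)$, $\hat G(\psi,0)$, $\hat H(\psi,0)$ are well-defined. A term-by-term inspection of \eqref{defopMinner} then shows the claimed mapping property, with $\Minner(0,0)\sim 1/|\vs|^4$ coming from $\hat F(0,0)$ and $\vs^{-1}\hat H(0,0)$. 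Combining both ingredients, the self-map $\Gamma(\psi)=\Ginner^\uns(\Minner(\psi,0))$ sends a ball in $\mathcal{X}_3^\uns$ to itself with $\Gamma(0)$ of norm $O(1)$ and Lipschitz constant $K/\distin$; Banach's fixed point theorem produces $\psiin^\uns\in\mathcal{X}_3^\uns$ with $\|\psiin^\uns\|_3\leq M$. The derivative estimates $|\partial_\vs\psiin^\uns|,|\partial_\theta\psiin^\uns|\leq M/|\vs|^4$ then follow from Cauchy estimates on discs of radius proportional to $|\vs|$ contained in a slightly enlarged version of $\Din{\uns}$, obtained by running the argument on a nested family of such domains.

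For the refined estimate I use
\[
\psiin^\uns-\Ginner^\uns(\Minner(0,0))=\Ginner^\uns\bigl(\Minner(\psiin^\uns,0)-\Minner(0,0)\bigr),
\]
and inspect the difference term by term, using the Cauchy gain $\partial_\vs\psiin^\uns\in\mathcal{X}_4^\uns$. All contributions except $c\vs^{-1}\partial_\theta\psi$ land in $\mathcal{X}_5^\uns$: the differences $\hat F(\psi,0)-\hat F(0,0)$ and $\vs^{-1}[\hat H(\psi,0)-\hat H(0,0)]$ are of order $1/|\vs|^5$ by the pointwise bounds $\partial_\psi\hat F, \vs^{-1}\partial_\psi\hat H=O(1/|\vs|^2)$ (coming from $\partial_\psi\rho=1/\rho\sim|\vs|$ together with $f, g, h$ being $O_3$); the term $\hat G(\psi,0)\partial_\theta\psi$ is of order $|\vs|^{-2}\cdot|\vs|^{-3}$; and $\vs^2(2b\psi+\hat H(\psi,0))\partial_\vs\psi$ is of order $|\vs|^2\cdot|\vs|^{-3}\cdot|\vs|^{-4}$. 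Applying $\Ginner^\uns:\mathcal{X}_5^\uns\to\mathcal{X}_4^\uns$ handles these. The remaining term $c\vs^{-1}\partial_\theta\psiin^\uns$ has zero $\theta$-average, so the same integration by parts yields $\Ginner^\uns(c\vs^{-1}\partial_\theta\psiin^\uns)=-(c\coef/\alpha)\vs^{-1}\psiin^\uns+\ldots\in\mathcal{X}_4^\uns$, completing the bound $|\psiin^\uns-\Ginner^\uns(\Minner(0,0))|\leq M/|\vs|^4$. The main obstacle throughout is the potentially destructive $\vs^2$ prefactor in $\vs^2(2b\psi+\hat H(\psi,0))\partial_\vs\psi$ and the large factor $\hat G\sim 1/\rho\sim|\vs|$ in $-\hat G(\psi,0)\partial_\theta\psi$; both are tamed by the Cauchy derivative gain that provides one extra power of $1/|\vs|$, so that the products remain in $\mathcal{X}_5^\uns$ rather than degrading the decay.
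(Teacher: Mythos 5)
Your overall architecture (fixed point for $\Ginner^{\uns,\sta}\circ\Minner(\cdot,0)$ in weighted spaces of analytic functions, then a term-by-term treatment of $\Minner(\psiin,0)-\Minner(0,0)$ for the refined bound) is the paper's, but there are two genuine gaps in the way you close the argument.

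First, the contraction does not close in the pure sup-norm space $\mathcal{X}_3^\uns$. The operator $\Minner(\cdot,0)$ in~\eqref{defopMinner} is a first-order differential operator: it contains $c\vs^{-1}\partial_\theta\psi$, $-\hat G(\psi,0)\partial_\theta\psi$ and $\vs^2(2b\psi+\hat H(\psi,0))\partial_\vs\psi$. Consequently $\Minner(\phi_1,0)-\Minner(\phi_2,0)$ contains $\partial_\vs(\phi_1-\phi_2)$ and $\partial_\theta(\phi_1-\phi_2)$, which are not controlled by $\sup|\vs|^3|\phi_1-\phi_2|$; your claimed Lipschitz constant $K/\distin$ for $\Gamma$ on a ball of $\mathcal{X}_3^\uns$ cannot be established. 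The paper resolves this by iterating in the space $\Bsinfloor{\uns}{3}$ whose norm $\llfloor\cdot\rrfloor_{3,\ost}^\uns$ carries $\partial_\vs$ and $\partial_\theta$ with one extra power of decay, together with the smoothing property $\llfloor\Ginner^\uns(\phi)\rrfloor_{n-1,\ost}^\uns\leq M\|\phi\|_{n,\ost}^\uns$ (item~\ref{itempropsGinner} of Section~\ref{subsec:Bspaces}), which recovers derivative control of the output from a derivative-free norm of the input; this is what makes Lemma~\ref{lemlipconstinner} and hence the contraction work. Your a posteriori Cauchy estimates cannot be fed back into the iteration unless you set up a genuine nested-domain scheme, which you do not.

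Second, the Cauchy estimate cannot yield $|\partial_\theta\psiin^{\uns}|\leq M|\vs|^{-4}$. The analyticity domain in $\theta$ is the strip $\Tout$ of \emph{fixed} width $\ost$, so a Cauchy disc in $\theta$ has radius $O(1)$, not $O(|\vs|)$, and from $|\psiin^\uns|\leq M|\vs|^{-3}$ you only get $|\partial_\theta\psiin^\uns|\leq M|\vs|^{-3}$. The extra power is a Fourier phenomenon, not a Cauchy one: the loss of decay in $\Ginner^\uns$ occurs only in the zeroth mode ($\|{\Ginner^\uns}^{[l]}(\phi)\|_n\leq M|l|^{-1}\|\phi^{[l]}\|_n$ for $l\neq0$, versus a loss of one power for $l=0$), and $\partial_\theta$ annihilates precisely that mode, so $\|\partial_\theta\Ginner^\uns(\phi)\|_{n,\ost}^\uns\leq M\|\phi\|_{n,\ost}^\uns$ and $\partial_\theta\psiin^\uns=\partial_\theta\Ginner^\uns(\Minner(\psiin^\uns,0))$ inherits decay $4$ from $\|\Minner(\psiin^\uns,0)\|_{4,\ost}^\uns\leq M$. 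You already have the integration-by-parts observation needed for this; you just apply it in the wrong place. (A minor further caveat: the stable case is obtained by repeating the argument on $\Din{\sta}$ with $\Ginner^\sta$, not literally by the symmetry $\vs\mapsto-\vs$, since the nonlinearity has no parity.)
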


\subsubsection{Study of the difference $\Delta\psiin=\psiin^\uns-\psiin^\sta$}\label{subsec:introdiffinner}
Once the existence of these two particular solutions of the inner equation~\eqref{PDE-innershort}, $\psiin^{\uns}$ and $\psiin^{\sta}$, is established,
one can look for an asymptotic expression of their difference $\Delta\psiin=\psiin^\uns-\psiin^\sta$.
We will study this difference in $\Ein\times\Tout$, where $\Ein$ is the domain (see Figure \ref{figEin}):
\begin{figure}
	\centering
	\includegraphics[width=6cm]{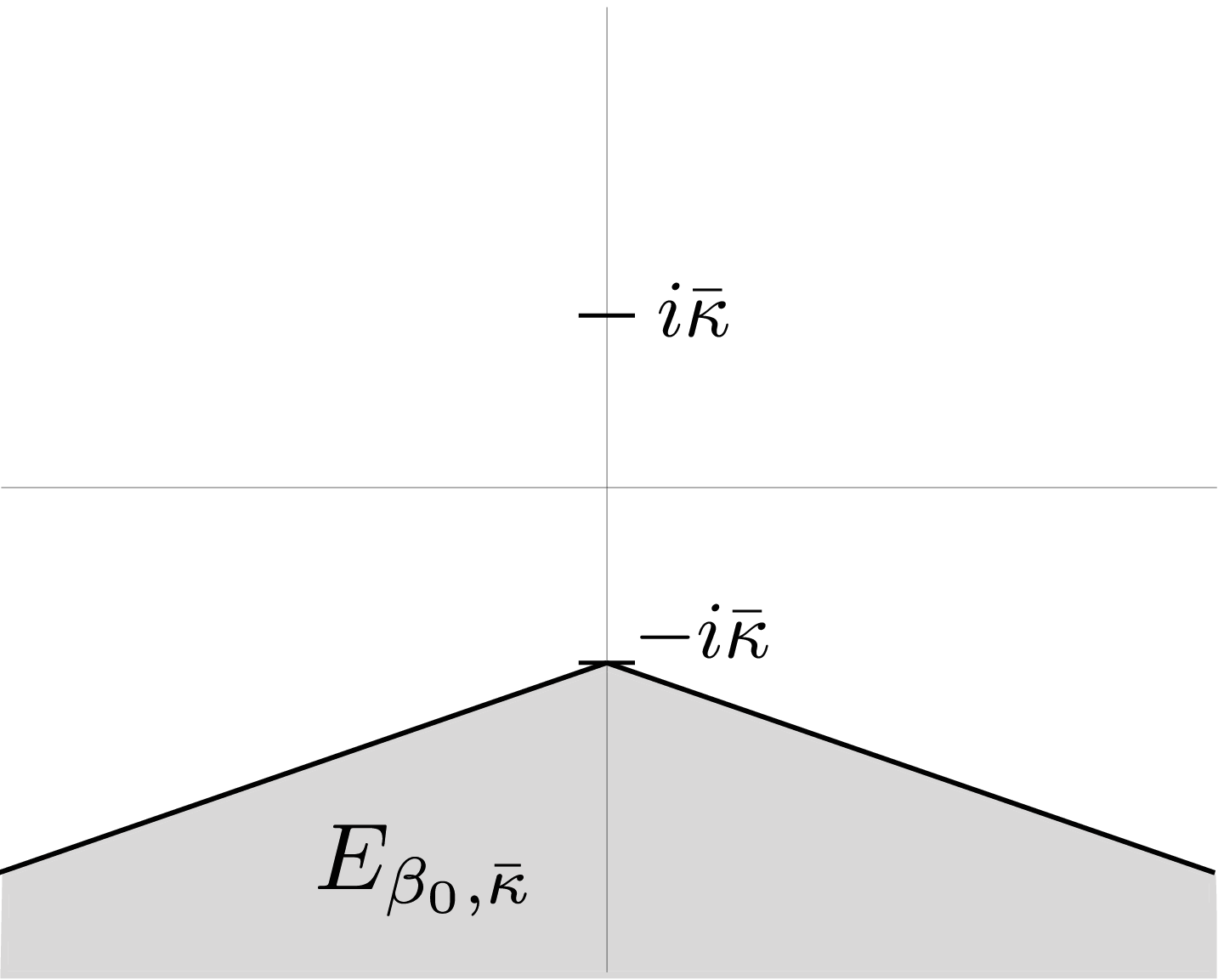}
        \caption{The domain $\Ein$.}\label{figEin}
\end{figure}
$$
\Ein=\Din{\uns}\cap\Din{\sta}\cap\{\vs\in\mathbb{C}\,:\,\im\vs<0\}.
$$

Subtracting equations \eqref{PDE-inner} for $\psiin^\uns$ and $\psiin^\sta$ and using the mean value theorem, one obtains a linear equation for $\Delta\psiin$ of the following form:
\begin{align}\label{PDE-difference-intro}
-\alpha\partial_\theta\Delta\psiin&+\coef\partial_\vs\Delta\psiin-2\vs^{-1}\Delta\psiin\\
&=a_1(\vs,\theta)\Delta\psiin+a_2(\vs,\theta)\partial_\vs\Delta\psiin+(c\vs^{-1}+a_3(\vs,\theta))\partial_\theta\Delta\psiin, \nonumber
\end{align}
for certain ``small'' (as $|s|\to \infty$) functions $a_1, a_2$ and $a_3$, which we will specify in Section~\ref{sec:diffinner}.
Of course, $a_i$, $i=1,2,3$, depend on $\psiin^\uns$ and $\psiin^\sta$, which now are known functions.
Since $\Delta\psiin$ is a solution of~\eqref{PDE-difference-intro}, we first study the form that all solutions of this equation have.
Next we give the main ideas of how this can be done, which basically are the same as in \secDifference.

Let $\Pin$ be a particular solution of~\eqref{PDE-difference-intro} such that $\Pin(\vs,\theta)\neq0$ for $(\vs,\theta)\in\Ein\times\Tout$.
Then, every solution of~\eqref{PDE-difference-intro} can be written as:
$$
\Delta\psiin(\vs,\theta)=\Pin(\vs,\theta)\kin(\vs,\theta),
$$
where $\kin(\vs,\theta)$ is a solution of the homogeneous equation:
\begin{equation}\label{PDE-k-intro}
-\alpha\partial_\theta \kin+\coef\partial_\vs \kin=a_2(\vs,\theta)\partial_\vs \kin+(c\vs^{-1}+a_3(\vs,\theta))\partial_\theta \kin.
\end{equation}

First let us describe how we can find a suitable particular solution $\Pin$ of equation \eqref{PDE-difference-intro}. Since the functions $a_1, a_2$ and
$a_3$ are ``small'' and $c\vs^{-1}$ is also small if we take $|\vs|$ sufficiently large,
equation~\eqref{PDE-difference-intro} can be regarded as a small perturbation of:
$$
-\alpha\partial_\theta\Delta\psiin+\coef\partial_\vs\Delta\psiin-2\vs^{-1}\Delta\psiin=0.
$$
This equation has a trivial solution given by $\Pin_0(\vs,\theta)=\vs^{2/\coef}$.
Thus, we will look for a solution of the form $\Pin(\vs,\theta)=\vs^{2/\coef}(1+\Pinsmall(\vs,\theta))$
where $\Pinsmall$ will be a ``small'' function. Note that, being $\Pinsmall$ small, we will ensure that
$\Pin(\vs,\theta)\neq0$ for $(\vs,\theta)\in\Ein\times\Tout$. The rigorous statement of this result can be found in
Proposition~\ref{propP1} of Section~\ref{sec:diffinner}.

Now we shall sketch the study of equation~\eqref{PDE-k-intro}. In fact, equations of the form~\eqref{PDE-k-intro} have been studied in previous works.
One of its main features is that if $\xi(\vs,\theta)$ is a solution of \eqref{PDE-k-intro} such that $(\xi(\vs,\theta),\theta)$ is injective in $\Ein\times\Tout$,
then any solution $\kin(\vs,\theta)$ of this equation can be written as:
$$\kin(\vs,\theta)=\kintilde(\xi(\vs,\theta)),$$
for some function $\kintilde(\tau)$ which has to be $2\pi$-periodic in $\tau$. To find $\xi$, one could proceed as we did with $\Pin$. Indeed, since the functions
$a_2(\vs,\theta)$ and $c\vs^{-1}+a_3(\vs,\theta)$ are ``small'', one could derive that the dominant part of equation~\eqref{PDE-k-intro} is given by:
\begin{equation}\label{dominantpart-PDE-k}
-\alpha\partial_\theta \kin+\coef\partial_\vs \kin=0.
\end{equation}
A trivial solution of \eqref{dominantpart-PDE-k} is given by:
$\xi_0(\vs,\theta)=\theta+\coef^{-1}\alpha\vs$,
and thus we could expect to find a suitable solution of~\eqref{PDE-k-intro} given by $\xi=\xi_0+\xi_1$,
where $\xi_1$ is supposed to be a ``small'' function.

However, as we shall see in Section~\ref{sec:diffinner}, this is not quite accurate.
Nevertheless, to some extent it summarizes the underlying idea of the proof. In fact, we will see that the dominant part of equation \eqref{PDE-k-intro} is:
\begin{equation}\label{dominantpart-PDE-k-true}
-\alpha\partial_\theta \kin+\coef\partial_\vs \kin=\coef L_0\vs^{-1}\partial_\vs \kin+c\vs^{-1}\partial_\theta \kin,
\end{equation}
where $L_0$ is the constant given in Theorem~\ref{thmdifpartsolinjective}.
This constant is closely related to the function $a_2(\vs,\theta)$.
Indeed, in Lemma~\ref{lema123}, we will see that:
$$a_2(\vs,\theta)=\frac{\tilde a_2(\theta)}{\vs}+\mathcal{O}(\vs^{-2}),$$
and $\coef L_0$ is the average of $\tilde a_2(\theta)$. Note that the function $\xi_0$ defined as:
$$\xi_0(\vs,\theta)=\theta+\coef^{-1}\alpha\vs+\coef^{-1}(c+\alpha L_0)\log\vs,$$
solves \eqref{dominantpart-PDE-k-true} up to terms of order $\vs^{-2}$.
Thus, the particular solution $\xi_{\inn}$ that we will use is $\xi_{\inn}=\xi_0 + \varphi$
where $\varphi$ is a function that is ``small'' when $|s| \to \infty$.
This result is contained in Proposition \ref{propk} of Section \ref{sec:diffinner}.

All these considerations lead to the following result.
\begin{theorem}\label{thmdiffinner}
Consider the difference:
$\Delta\psiin=\psiin^\uns-\psiin^\sta$ defined in $\in\Ein\times\Tout$. It can be written of the form
$$
\Delta \psiin(\vs,\theta) = \Pin(\vs,\theta) \kintilde(\xi_{\inn}(\vs,\theta),\theta)
$$
where the function $\kintilde(\tau)$ is a $2\pi-$periodic function, $\Pin$ is a particular
solution of the PDE~\eqref{PDE-difference-intro} and $\xi_\inn$ is a solution of the homogeneous PDE
equation~\eqref{PDE-k-intro} such that $(\xi_\inn(\vs,\theta),\theta)$ is injective in $\Ein\times\Tout$. They are of the form
\begin{align*}
\kintilde(\tau)&=\sum_{l<0}\Upsilon_\inn^{[l]}e^{il\tau}, \qquad
\Pin(\vs,\theta) = s^{2/d} (1+\Pin_1(\vs,\theta)), \\
\xi_\inn(\vs,\theta) &= \theta+\coef^{-1}\alpha\vs+\coef^{-1}(c+\alpha L_0)\log\vs+\varphi(\vs,\theta)
\end{align*}
with $\Upsilon_\inn^{[l]}$, $\Pin_1$ and $\varphi$ satisfying that there exists $M>0$ such that 
$\left|\Upsilon_\inn^{[l]}\right|\leq M$ and, for $(\vs,\theta)\in\Ein\times\Tout$:
\begin{equation}\label{P1inntheorem}
|\varphi(\vs,\theta)|,\,|\partial_\vs \varphi(\vs,\theta)|\leq \frac{M}{|\vs|},\qquad 
|\Pinsmall(\vs,\theta)|\leq \frac{M}{|\vs|},\qquad |\partial_\vs\Pinsmall(\vs,\theta)|\leq \frac{M}{|\vs|^2}.
\end{equation}
\end{theorem}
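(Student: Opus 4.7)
The plan is to linearize the inner equation around $(\psiin^{\uns},\psiin^{\sta})$, construct a non-vanishing particular solution $\Pin$ of the linearized equation and a ``straightening'' solution $\xi_\inn$ of its homogeneous part, and finally exploit the method of characteristics together with the decay of $\Delta\psiin$ to identify $\kintilde$ as a $2\pi$-periodic function whose Fourier spectrum lies in $\mathbb{Z}_{<0}$. I work throughout on $\Ein\times\Tout$. For the linearization, I subtract equation~\eqref{PDE-inner} at $\psiin^{\uns}$ from the same at $\psiin^{\sta}$, applying the mean value theorem to the nonlinear dependence of $\Minner$ on $\psi,\partial_{\vs}\psi,\partial_\theta\psi$. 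Using the bounds of Theorem~\ref{thminner}, the coefficients $a_1,a_2,a_3$ of~\eqref{PDE-difference-intro} satisfy $a_i=\mathcal{O}(|\vs|^{-1})$. Expanding $a_2(\vs,\theta)=\tilde a_2(\theta)/\vs+\mathcal{O}(|\vs|^{-2})$, I define $\coef L_0$ as the $\theta$-average of $\tilde a_2$; a separate identification will show that this $L_0$ agrees with the one from Theorem~\ref{thmdifpartsolinjective}.

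\textbf{Construction of $\Pin$ and $\xi_\inn$.} Since $\Linner(\vs^{2/\coef})=0$, I write $\Pin=\vs^{2/\coef}(1+\Pinsmall)$ and solve the resulting fixed-point equation for $\Pinsmall$ in the Banach space $\{\phi:\sup|\vs||\phi|<\infty\}$, inverting the transport operator $-\alpha\partial_\theta+\coef\partial_{\vs}$ by integration along characteristics entering $\Ein$ from $\im\vs=-\infty$; this yields the stated bounds on $\Pinsmall$ and $\partial_{\vs}\Pinsmall$, and $|\Pinsmall|<1$ ensures $\Pin$ does not vanish. For the homogeneous equation, a naive ansatz $\theta+\coef^{-1}\alpha\vs+(\text{small})$ fails because $c\vs^{-1}\partial_\theta\kin$ and the non-zero average of $\vs\,a_2$ in~\eqref{PDE-k-intro} produce a resonant logarithm. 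I therefore start from
\[
\xi_0(\vs,\theta)=\theta+\coef^{-1}\alpha\vs+\coef^{-1}(c+\alpha L_0)\log\vs,
\]
which by construction solves~\eqref{PDE-k-intro} modulo $\mathcal{O}(|\vs|^{-2})$, and solve for $\varphi$ in $\xi_\inn=\xi_0+\varphi$ by the same fixed-point scheme, obtaining the estimates in~\eqref{P1inntheorem}. Injectivity of $(\xi_\inn,\theta)$ on $\Ein\times\Tout$ then follows since $\partial_{\vs}\xi_\inn=\coef^{-1}\alpha+\mathcal{O}(|\vs|^{-1})$ is bounded away from zero.

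\textbf{Representation and Fourier analysis.} Setting $\kin=\Delta\psiin/\Pin$ and using the equations satisfied by $\Delta\psiin$ and $\Pin$, one checks that $\kin$ solves the homogeneous PDE~\eqref{PDE-k-intro}. Since $\xi_\inn$ is a non-trivial first integral of the characteristic system and $(\xi_\inn,\theta)$ is injective, the method of characteristics gives $\kin(\vs,\theta)=\kintilde(\xi_\inn(\vs,\theta))$; the $2\pi$-periodicity of $\kin$ in $\theta$ together with $\xi_\inn(\vs,\theta+2\pi)=\xi_\inn(\vs,\theta)+2\pi$ forces $\kintilde$ to be $2\pi$-periodic. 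To control its Fourier spectrum, I use that $\Ein$ reaches $\im\vs\to-\infty$: there $e^{il\xi_\inn}$ grows like $e^{-l\alpha\coef^{-1}|\im\vs|}$, while $\Delta\psiin/\Pin=\mathcal{O}(|\vs|^{-3-2/\coef})\to 0$. Computing each $\Upsilon_\inn^{[l]}$ by a Fourier integral along a horizontal line and letting $\im\vs\to-\infty$ forces $\Upsilon_\inn^{[l]}=0$ for every $l\geq 0$, while the bound $|\Upsilon_\inn^{[l]}|\leq M$ for $l<0$ follows by Cauchy-type estimates using the same decay.

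\textbf{Main obstacle.} The most delicate point is the identification of the resonant coefficient $\coef^{-1}(c+\alpha L_0)$ in $\xi_0$. Any other choice leaves a non-zero $\vs^{-1}$ average in the remainder, so the fixed-point equation for $\varphi$ cannot close in a $|\vs|^{-1}$-decaying norm. Checking that the average of $\tilde a_2$ defined here really equals $\coef L_0$, and that it matches the $L_0$ coming from the outer problem (which is crucial for the matching of Section~\ref{subsec:intromatching}), will require a careful identification of the leading asymptotics of the coefficients in the linearized equation.
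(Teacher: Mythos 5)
Your proposal follows essentially the same route as the paper: the mean-value-theorem linearization, the multiplicative ansatz $\Pin\,\kintilde(\xi_\inn)$ with $\Pin=\vs^{2/\coef}(1+\Pinsmall)$, the resonant logarithm $\coef^{-1}(c+\alpha L_0)\log\vs$ absorbed into $\xi_0$, fixed-point constructions for $\Pinsmall$ and $\varphi$ via a right inverse of $-\alpha\partial_\theta+\coef\partial_\vs$ integrating from $\im\vs=-\infty$, and the vanishing of the non-negative Fourier modes of $\kintilde$ deduced from the decay of $\Delta\psiin$ as $\im\vs\to-\infty$. The identification of the average of $\vs\,a_2^{[0]}$ with $\coef L_0$ that you flag as the main obstacle is precisely the content of the paper's Lemma~\ref{lema123}, which indeed uses the matching Theorem~\ref{thmmatching} to relate $a_2$ to the outer function $l_2$ defining $L_0$.
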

The proof of this Theorem is given in Section \ref{sec:diffinner}. Let us to notice here that, as an straightforward consequence
of this result:
\begin{equation}\label{expression-deltapsiinn}
\Delta \psiin(\vs,\theta)=s^{2/\coef} (1+\Pin_1(\vs,\theta))\sum_{l<0}\Upsilon_\inn^{[l]}e^{il\xi_\inn(\vs,\theta)}.
\end{equation}
\begin{corollary}\label{bounddiffinnerexp}
There exists a constant $M$ such that, if $\theta \in \mathbb{S}^1$ and $\vs\in \Ein$, then
$$
\left | \Delta \psiin(\vs,\theta)\right |,\, \left | \partial_\vs \Delta \psiin(\vs,\theta)\right |  \leq M |\vs |^{2/\coef} e^{\im \xi_\inn(\vs,\theta)}.
$$
\end{corollary}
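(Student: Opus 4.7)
The plan is to deduce the corollary directly from the series representation~\eqref{expression-deltapsiinn} provided by Theorem~\ref{thmdiffinner}:
$$
\Delta \psiin(\vs,\theta) = \vs^{2/\coef}(1+\Pinsmall(\vs,\theta))\sum_{l<0}\Upsilon_\inn^{[l]} e^{il\xi_\inn(\vs,\theta)}.
$$
For each $l=-m$ with $m\geq 1$ and $\theta\in\mathbb{R}$, $|e^{il\xi_\inn(\vs,\theta)}|=e^{m\,\im\xi_\inn(\vs,\theta)}$, so the estimate reduces to summing a geometric series in the variable $x:=e^{\im \xi_\inn(\vs,\theta)}$.

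The first substantive step is to show that $\im \xi_\inn(\vs,\theta)$ is uniformly bounded above by a strictly negative constant on $\Ein\times\mathbb{S}^1$. From the definition of $\Ein=\Din{\uns}\cap\Din{\sta}\cap\{\im \vs<0\}$, any $\vs\in\Ein$ satisfies $-\im \vs\geq \distin+\tan(\beta_0)|\re \vs|$, and in particular $\im \vs\leq -\distin$. Writing
$$
\xi_\inn(\vs,\theta)=\theta+\coef^{-1}\alpha\vs+\coef^{-1}(c+\alpha L_0)\log \vs+\varphi(\vs,\theta),
$$
and using $\theta\in\mathbb{R}$, $|\im \log \vs|\leq\pi$, and $|\varphi(\vs,\theta)|\leq M/|\vs|\leq M/\distin$ from~\eqref{P1inntheorem}, I would obtain
$$
\im \xi_\inn(\vs,\theta)\leq \coef^{-1}\alpha\,\im \vs+K\leq -\coef^{-1}\alpha\,\distin+K,
$$
which is $\leq -c_0<0$ for $\distin$ large enough (as permitted by Theorem~\ref{thminner}). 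Hence $x\leq e^{-c_0}<1$ uniformly on $\Ein\times\mathbb{S}^1$.

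With this bound in hand, the uniform estimate $|\Upsilon_\inn^{[l]}|\leq M$ from Theorem~\ref{thmdiffinner} gives
$$
\Big|\sum_{l<0}\Upsilon_\inn^{[l]} e^{il\xi_\inn(\vs,\theta)}\Big|\leq M\sum_{m\geq 1}x^m=\frac{Mx}{1-x}\leq \frac{M}{1-e^{-c_0}}\,e^{\im\xi_\inn(\vs,\theta)}.
$$
Combined with $|\vs^{2/\coef}|=|\vs|^{2/\coef}$ (on the appropriate branch) and $|1+\Pinsmall|\leq 2$ from~\eqref{P1inntheorem}, this yields the first claim of the corollary.

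For the derivative bound, I would differentiate the series term by term. Writing $\Delta \psiin = A\cdot B$ with $A=\vs^{2/\coef}(1+\Pinsmall)$ and $B$ the series, $\partial_\vs A$ satisfies $|\partial_\vs A|\leq K|\vs|^{2/\coef-1}$, so the contribution $\partial_\vs A\cdot B$ is handled exactly as above with a harmless extra $|\vs|^{-1}$ factor. For $A\cdot \partial_\vs B$, each term gains a factor $il\,\partial_\vs\xi_\inn$, and since $\partial_\vs\xi_\inn=\coef^{-1}\alpha+O(|\vs|^{-1})$ is bounded, the extra $|l|=m$ yields the sum $\sum_{m\geq 1}m x^m=x/(1-x)^2$, again controlled by a constant multiple of $e^{\im \xi_\inn(\vs,\theta)}$. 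The main obstacle throughout is the uniform lower bound on $-\im\xi_\inn$, without which neither series would collapse to the required exponential form; the geometry of $\Ein$, together with taking $\distin$ sufficiently large, is exactly what makes this work.
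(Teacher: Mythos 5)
Your proof is correct and follows the same route as the paper's (two-line) argument: the paper likewise observes that $\im\xi_\inn<0$ on $\Ein\times\mathbb{S}^1$ for $\distin$ large, and then invokes the series~\eqref{expression-deltapsiinn}, the bound $|\Upsilon_\inn^{[l]}|\leq M$ and the estimates~\eqref{P1inntheorem}; you have simply written out the geometric-series summation and the term-by-term differentiation that the paper leaves implicit.
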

\begin{proof}
We only need to note that $\im \xi_\inn(\vs,\theta) <0$ for $(\vs,\theta)\in \Ein\times\mathbb{S}^1$ if $\distin$ is sufficiently large. 
Then we use formula~\eqref{expression-deltapsiinn} that $\left|\Upsilon_\inn^{[l]}\right|\leq M$ and bounds~\eqref{P1inntheorem} as well.
\end{proof}
\subsection{Study of the matching errors $\psi_1^\uns$ and $\psi_1^\sta$}\label{subsec:intromatching}
In this section we shall show that the functions $\psiin^\uns$ and $\psiin^\sta$ found in Theorem \ref{thminner}
approximate the functions $\psi^\uns$ and $\psi^\sta$, defined in Theorem \ref{thmoutloc-innervariables}, in some
complex subdomains of $\Din{\uns}$ and $\Din{\sta}$ respectively, which, when written in the $(\vu,\theta)$ variables,
correspond to domains close to the singularity $ i\pi/(2\coef)$. Let us first define these domains.
Recall that, if $\us= \uns,\sta$, $\psi^{\us}(\vs,\theta)=\delta^2r_1^{\us}(\vu(\vs),\theta)$ (see~\eqref{change-s}
for the definition of the change $\vu(\vs)$) and that $r_1^\us$ are defined in the domains $\DoutT{\us}\times\Tout$
(see~\eqref{defDoutT} for the definition of these domains), where $\dist$ satisfies condition~\eqref{conddist-inner}
and $\beta$ is some fixed constant.
Take $\beta_1,\beta_2$ two constants independent of $\delta$ and $\param$, such that:
$$
0<\beta_1<\beta<\beta_2<\pi/2.
$$
Fix also a constant $\gamma\in(0,1)$. We define the points $\vu_j\in\mathbb{C}$, $j=1,2$ to be those satisfying (see Figure~\ref{figDmatch}):
\begin{equation}\label{defuj}
\begin{aligned}
\im \vu_j=-\tan\beta_j\re &\vu_j+\frac{\pi}{2\coef}-\delta\dist,\quad
\re \vu_1<0,\; \re\vu_2>0\\
&\left|\vu_j-i\left(\frac{\pi}{2\coef}-\delta\dist\right)\right|=\delta^\gamma.
\end{aligned}
\end{equation}
\begin{figure}
	\centering
	\begin{subfigure}[b]{0.45\textwidth}
	  \centering
	  \includegraphics[width=6.1cm]{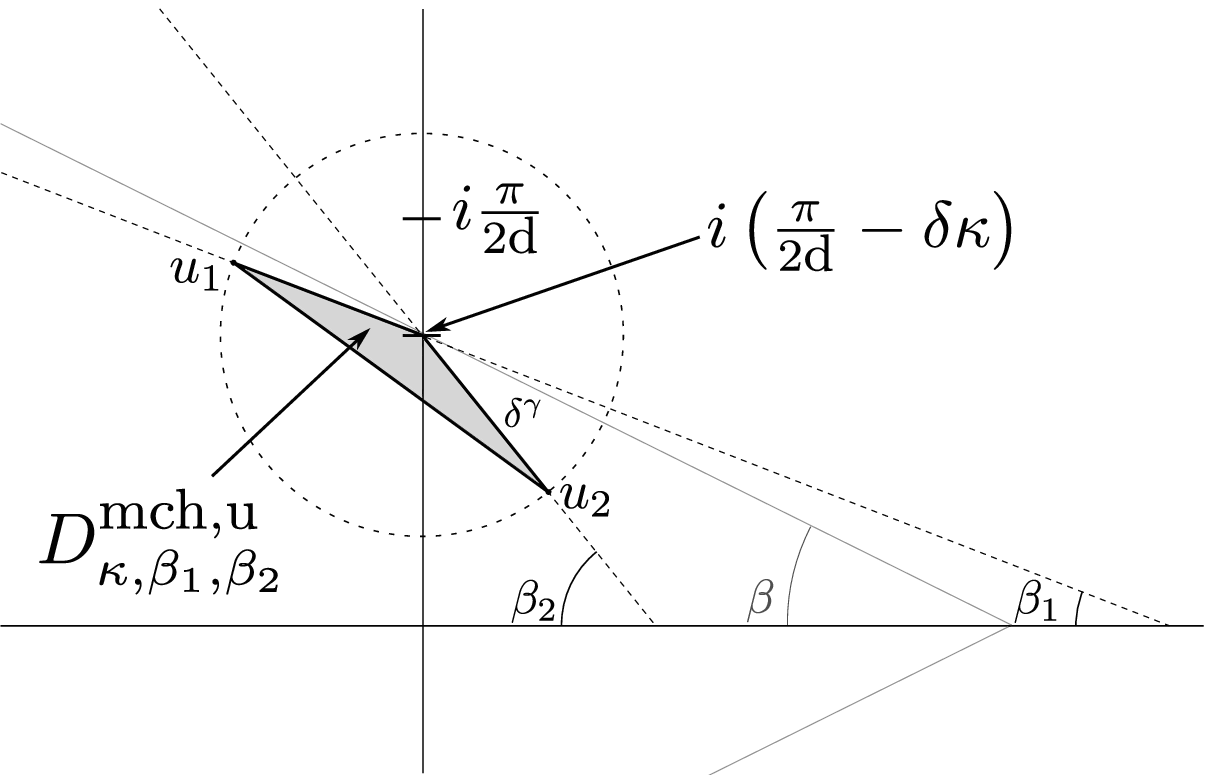}
	  \caption{The domain $\Dmchout{\uns}$.}
	\end{subfigure}
	\begin{subfigure}[b]{0.45\textwidth}
	  \centering
	  \includegraphics[width=4.5cm]{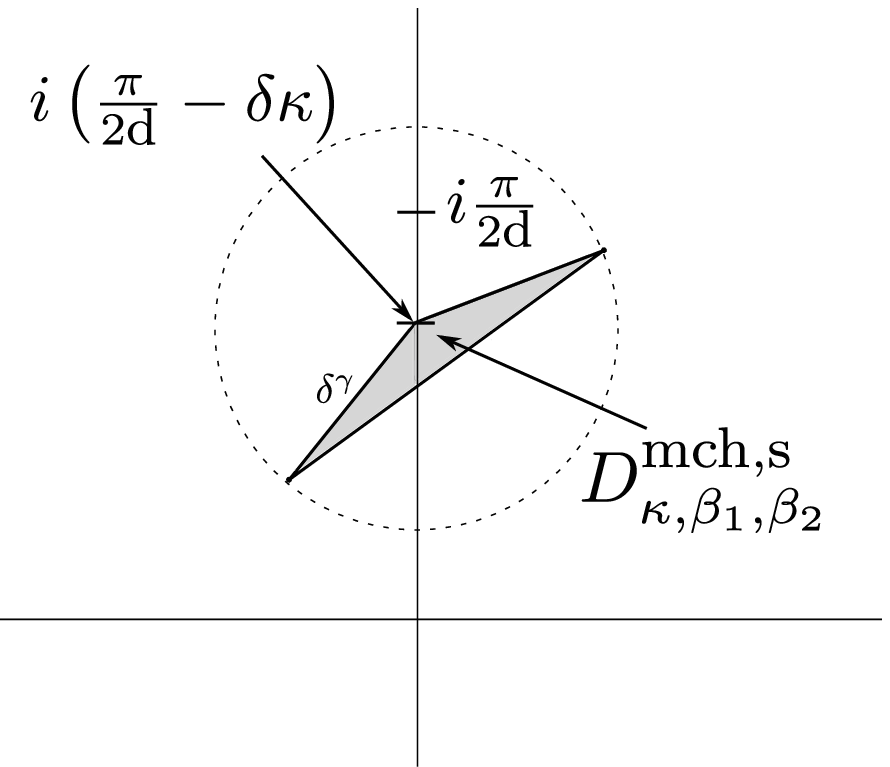}
	  \caption{The domain $\Dmchout{\sta}$.}
	\end{subfigure}
	\caption{The domains $\Dmchout{\uns}$ and $\Dmchout{\sta}$.}\label{figDmatch}
\end{figure}
Then we define the following domain (see Figure~\ref{figDmatch}):
\begin{align*}
\Dmchout{\uns}=\Big\{\vu\in\mathbb{C}\,:\, &\im u\leq -\tan\beta_1\re \vu+\frac{\pi}{2\coef}-\delta\dist,
\\ & \im \vu\leq -\tan\beta_2\re \vu+\frac{\pi}{2\coef}-\delta\dist,\\
&\im \vu\geq \im u_1-\tan\left(\frac{\beta_1+\beta_2}{2}\right)(\re \vu-\re \vu_1)\Big\}.
\end{align*} 
Note that $\Dmchout{\uns}$ is a triangular domain, having its vertices at $\vu_1$, $\vu_2$ and $i(\pi/(2\coef)-\delta\dist)$. We also define:
$$\Dmchout{\sta}=\{\vu\in\mathbb{C}\,:\, -\overline{\vu}\in\Dmchout{\uns}\}.$$
One can easily see  that, taking $\distin=\dist/2$, one has (see Figures \ref{figDoutunsDinuns} and \ref{figDmatch}):
$$\Dmchout{\uns}\subset\DoutT{\uns}\subset\Dinu{\uns},\quad \Dmchout{\sta}\subset\DoutT{\sta}\subset\Dinu{\sta},$$
with $\DoutT{\uns}$ defined in~\eqref{defDoutT},  $\DoutT{\sta}=-\DoutT{\uns}$ and $\Dinu{\uns}$ and $\Dinu{\sta}$ were defined in \eqref{defdin-outervars}.

Finally we define the following domains in terms of the inner variable $\vs$:
\begin{equation}\label{Dmchindef}
\Dmchin{\us}=\{\vs\in\mathbb{C}\,:\,i\frac{\pi}{2\coef}+\delta\frac{\vs}{\coef}\in\Dmchout{\us}\},\qquad \us = \uns,\sta.
\end{equation}
One also has that for $\distin=\dist/2$ and taking $\delta$ sufficiently small:
$$\Dmchin{\uns}\subset\Din{\uns},\qquad\Dmchin{\sta}\subset\Din{\sta},$$
where $\Din{\uns}$ and $\Din{\sta}$ were defined in \eqref{defdinus-inner2D}.

We will also denote:
\begin{equation}\label{etsj-inner2D}
 \vs_j=\frac{\coef}{\delta}\left (\vu_j-i\frac{\pi}{2\coef}\right ),\quad j=1,2,
\end{equation}
with $\vu_1,\vu_2$ defined in~\eqref{defuj}.
It is clear that, there exist constants $K_1$, $K_2$ such that:
\begin{equation}\label{fitasj-inner2D}
 K_1\delta^{\gamma-1}\leq|\vs_j|\leq K_2\delta^{\gamma-1},\qquad j=1,2,
\end{equation}
and that for all $\vs\in\Dmchin{\us}$, $\us=\uns,\sta$, we have:
\begin{equation}\label{upperlowerboundss}
\dist\coef\cos\beta_1\leq|\vs|\leq K_2\delta^{\gamma-1}.
\end{equation}

Next goal is to see how well the functions $\psiin^{\uns,\sta}$ approximate $\psi^{\uns,\sta}$ in the matching domains $\Dmchin{\uns,\sta}\times\Tout$.
To that aim, we recall the definition of the matching error:
\begin{equation}\label{defpsi1us-intro}
 \psi_1^\uns(\vs,\theta):=\psi^\uns(\vs,\theta)-\psiin^\uns(\vs,\theta),\qquad\psi_1^\sta(\vs,\theta):=\psi^\sta(\vs,\theta)-\psiin^\sta(\vs,\theta).
\end{equation}
We stress that Theorems~\ref{thmoutloc-innervariables} and~\ref{thminner} yield directly the existence of $\psi_1^{\uns,\sta}$.
On the one hand, $\psi^\uns(\vs,\theta)$ is defined for $\vs\in\DoutTinvars{\uns}$ (see~\eqref{defDoutTuns-innervars} for its definition) and
$\psiin^{\uns}(\vs,\theta)$ is defined for $\vs\in\Din{\uns}$ (see \eqref{defdinus-inner2D}).
Then, since
$$
\Dmchin{\uns}\subset\DoutTinvars{\uns}\subset\Din{\uns},
$$
one has that
$\psi_1^\uns=\psi^\uns-\psiin^\uns$ is defined in $\Dmchin{\uns}$. On the other hand,
Theorems~\ref{thmoutloc-innervariables} and~\ref{thminner} also provide us with a non-sharp upper bound for these functions
of order $\mathcal{O}(|s|^{-3})$.
In the following result we prove that, restricting $\psi_1^{\uns}$ and $\psi_1^\sta$ to the smaller
domains $\Dmchin{\uns}$ and $\Dmchin{\uns}$ respectively, we can get better upper bounds for them.

\begin{theorem}\label{thmmatching}
Let $\psi_1^\uns$ and $\psi_1^\sta$ be defined in \eqref{defpsi1us-intro}. There exists $M>0$ such that:
$$|\psi_1^{\us}(\vs,\theta)|\leq M\delta^{1-\gamma}|\vs|^{-2}\qquad \vs\in\Dmchin{\us}\qquad \us =\uns,\sta
$$
\end{theorem}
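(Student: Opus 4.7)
The plan is to derive a linear PDE satisfied by $\psi_1^{\us}=\psi^{\us}-\psiin^{\us}$ on $\Dmchin{\us}$ and then to improve, via a fixed-point argument in a suitable weighted Banach space, the crude pointwise bound $|\psi_1^{\us}|\leq 2M|\vs|^{-3}$ inherited directly from Theorems~\ref{thmoutloc-innervariables} and~\ref{thminner}. Subtracting $\Linner(\psi^{\us})=\Minner(\psi^{\us},\delta)$ and $\Linner(\psiin^{\us})=\Minner(\psiin^{\us},0)$ and applying the mean value theorem in the first argument of $\Minner(\,\cdot\,,\delta)$ produces
\[
\Linner(\psi_1^{\us})=\mathcal{R}^{\us}[\psi_1^{\us}]+\mathcal{N}^{\us},\qquad \mathcal{N}^{\us}:=\Minner(\psiin^{\us},\delta)-\Minner(\psiin^{\us},0),
\]
where $\mathcal{R}^{\us}$ is a linear differential operator whose coefficients depend on $\psi^{\us},\psiin^{\us}$ and are ``small'' on $\Dmchin{\us}$.

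Next I would produce pointwise estimates on $\mathcal{N}^{\us}$ and on the coefficients of $\mathcal{R}^{\us}$. Using Theorem~\ref{thminner} to control $\psiin^{\us}$ and its derivatives, together with $f,g,h=\mathcal{O}_3$, $|\param|\leq\param^{\ast}\delta$, and the crucial fact that $|\delta\vs|\leq K\delta^{\gamma}$ on $\Dmchin{\us}$, one inspects term by term the definition~\eqref{defopMinner}: the explicit $\coef\delta^{2}\vs^{2}\partial_\vs\psiin^{\us}$ piece contributes $\mathcal{O}(\delta^{2}|\vs|^{-2})$; the $\param\rho^{2}(\psiin^{\us},\vs,\delta)$ piece (absent from the inner equation because $\param=0$ when $\delta=0$) contributes $\mathcal{O}(\delta|\vs|^{-2})$; and the Taylor remainders of $\hat F,\hat G,\hat H$ at $\delta=0$, bounded using that these functions involve $\mathcal{O}_3$ components evaluated at arguments of size $\mathcal{O}(|\vs|^{-1})+\mathcal{O}(\delta)$, contribute $\mathcal{O}(\delta|\vs|^{-3})$ or smaller. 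Altogether $|\mathcal{N}^{\us}(\vs,\theta)|\leq M\delta|\vs|^{-2}$ with analogous bounds for $\partial_\vs\mathcal{N}^{\us}$ and $\partial_\theta\mathcal{N}^{\us}$; a similar inspection shows that the coefficients of $\mathcal{R}^{\us}$ are $\mathcal{O}(|\vs|^{-1})+\mathcal{O}(\delta^{2}|\vs|^{2})$, hence uniformly small on $\Dmchin{\us}$.

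The third step is a fixed-point argument in the Banach space of $2\pi$-periodic continuous functions on $\Dmchin{\us}\times\Tout$ equipped with the weighted norm $\|\phi\|:=\sup_{\Dmchin{\us}\times\Tout}|\vs|^{2}|\phi(\vs,\theta)|$. I would construct a right inverse $\Gmatch$ of $\Linner$ adapted to $\Dmchin{\us}$ as follows: decomposing in Fourier series in $\theta$, each harmonic is defined by integration along a characteristic of $\Linner$,
\[
(\Gmatch\phi)^{[l]}(\vs)=\vs^{2/\coef}\int_{\vs_{\ast}^{l}}^{\vs}\frac{e^{-il\alpha(w-\vs)/\coef}}{\coef\,w^{2/\coef}}\,\phi^{[l]}(w)\,dw,
\]
where the base point $\vs_{\ast}^{l}\in\partial\Dmchin{\us}$ is selected (depending on the sign of $l$) so that $|e^{-il\alpha(w-\vs)/\coef}|\leq 1$ along the path. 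The equation of Step~1 then rewrites as $\psi_1^{\us}=h^{\us}+\Gmatch\bigl(\mathcal{R}^{\us}[\psi_1^{\us}]+\mathcal{N}^{\us}\bigr)$, with $h^{\us}$ the homogeneous solution of $\Linner h^{\us}=0$ matching the boundary data $\psi_1^{\us}(\vs_{\ast}^{l},\cdot)$. The bound $|\psi_1^{\us}(\vs_{\ast}^{l},\theta)|\leq M|\vs_{\ast}^{l}|^{-3}\leq M\delta^{3(1-\gamma)}$, transported by the homogeneous factor $(|\vs|/|\vs_{\ast}^{l}|)^{2/\coef}$, gives $\|h^{\us}\|\leq M\delta^{1-\gamma}$, and integrating the pointwise estimate on $\mathcal{N}^{\us}$ through $\Gmatch$ produces $\|\Gmatch\mathcal{N}^{\us}\|\leq M\delta^{1-\gamma}$. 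Finally, the smallness of the coefficients of $\mathcal{R}^{\us}$ makes $\Gmatch\circ\mathcal{R}^{\us}$ a contraction on the ball of radius $C\delta^{1-\gamma}$, and the conclusion follows by uniqueness of the fixed point.

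The delicate point is the construction of $\Gmatch$ consistently with the triangular geometry of $\Dmchin{\us}$: one must ensure that the characteristic segments from the chosen base point $\vs_{\ast}^{l}$ to an interior point $\vs$ stay inside $\Dmchin{\us}$, that the complex oscillation $e^{-il\alpha w/\coef}$ is not amplified along the path, and that the algebraic weight $|\vs/w|^{2/\coef}$ does not spoil the bound on the homogeneous contribution. Once this geometric setup is in place, the quantitative estimates reduce to elementary integrations of negative powers of $|w|$, so the geometry, rather than the analysis per se, is where the real work lies.
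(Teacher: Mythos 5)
Your strategy coincides with the one the paper actually follows in Section~\ref{sec:matching}: the same equation $\Linner(\psi_1^{\us})=\Minner(\psiin^{\us}+\psi_1^{\us},\delta)-\Minner(\psiin^{\us},0)$, the same Fourier-by-Fourier right inverse anchored at the vertices $\vs_1,\vs_2$ of the matching triangle (chosen by the sign of $l$ so that the exponential is not amplified), the same homogeneous piece carrying boundary data of size $|\vs_j|^{-3}\sim\delta^{3(1-\gamma)}$ and hence of weighted size $\delta^{1-\gamma}$, and the same contraction closing the argument (the paper uses the a priori bound $\llfloor\psi_1^{\uns}\rrfloor_{2,\ost}^{\uns}\leq K\dist^{-1}$ plus the characterization of $\psi_1^{\us}$ by its Fourier initial data, Lemma~\ref{lemrewritepsi1uns}, rather than abstract uniqueness, but that is cosmetic). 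Two steps, however, do not work as written.

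First, the Banach space you propose --- continuous functions with norm $\sup|\vs|^{2}|\phi|$ --- is too weak. The operator $\mathcal{R}^{\us}[\phi]$ contains terms of the form $a_2\,\partial_\vs\phi$ and $(c\vs^{-1}+a_3)\,\partial_\theta\phi$, so the map $\phi\mapsto h^{\us}+\Gmatch\bigl(\mathcal{R}^{\us}[\phi]+\mathcal{N}^{\us}\bigr)$ is not even defined, let alone contractive, on that space. One must use a norm that also weights $\partial_\vs\phi$ and $\partial_\theta\phi$ (the paper's $\llfloor\cdot\rrfloor_{2,\ost}^{\uns}$), exploit that $\Gmatch^{\us}$ controls the derivatives of its output by the underivated norm of its input, and invoke the derivative bounds of Theorems~\ref{thmoutloc-innervariables} and~\ref{thminner} to know a priori that $\psi_1^{\us}$ lies in this space. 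Second, the aggregated estimate $|\mathcal{N}^{\us}|\leq M\delta|\vs|^{-2}$ does not yield $\|\Gmatch\mathcal{N}^{\us}\|\leq M\delta^{1-\gamma}$: integrating $\delta|w|^{-2}$ gives only $\delta|\vs|^{-1}=\delta|\vs|\cdot|\vs|^{-2}\leq K\delta^{\gamma}|\vs|^{-2}$, which beats $\delta^{1-\gamma}|\vs|^{-2}$ only when $\gamma\geq 1/2$, whereas $\gamma\in(0,1)$ is arbitrary. The estimate closes for all $\gamma$ only if you keep the finer structure you had before aggregating: the contributions decaying like $|\vs|^{-2}$ carry $\delta^{2}$ (hence give $\delta^{2}|\vs|^{-1}\leq K\delta^{1+\gamma}|\vs|^{-2}$ after inversion), while the contributions carrying a single power of $\delta$ decay like $|\vs|^{-3}$. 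This is exactly the bookkeeping of Lemma~\ref{lemboundGmatchMmatch0}, which shows $\|\Mmatch^{\uns}(0)\|_{3,\ost}^{\uns}\leq K\delta$, so that this forcing term is in fact $\mathcal{O}(\delta)$, strictly smaller than the $\mathcal{O}(\delta^{1-\gamma})$ coming from the boundary data, which is the true dominant contribution.
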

The proof of this Theorem can be found in Section~\ref{sec:matching}.

\subsection{An asymptotic formula for the difference $\Delta=r_1^\uns-r_1^\sta$}\label{subsec:introformuladiff}
We shall use the information obtained in the previous subsections to find an asymptotic formula
for $\Delta=r_1^\uns-r_1^\sta$ by means of the $\Delta\psi_\inn=\psi_\inn^{\uns}-\psi_\inn^{\sta}$. As we pointed out in~\eqref{expression-delta},
\begin{equation}\label{expression-deltabis}
\Delta(\vu,\theta)=\cosh^{2/\coef}(\coef\vu)(1+P_1(\vu,\theta))\sum_{l\in\mathbb{Z}}\Upsilon^{[l]}e^{il\xi(\vu,\theta)},
\end{equation}
with $P_1$ and $\xi$ the functions given in Theorem~\ref{thmdifpartsolinjective} and $\Upsilon^{[l]}$, the Fourier
coefficients of the function $\tilde k(\tau)$, which are unknown.
They depend on $\delta$ and $\param$.
Recall that in the dissipative case, we assume that $\param$ lies on one of the curves $\param_*(\delta)$ given by Theorem~\ref{thmktilde0}.

We shall look for a first asymptotic order of
$\Delta$ of the form:
\begin{equation}\label{defDelta0inn}
\Delta_0(\vu,\theta)= \cosh^{2/\coef}(\coef\vu)(1+P_1(\vu,\theta))\left (\Upsilon^{[0]}
+ \sum_{l\neq 0} \Upsilon_0^{[l]}e^{il\xi(\vu,\theta)} \right ),
\end{equation}
for certain $\Upsilon_0^{[l]}$ to be determined. Some remarks about the choice of $\Delta_0$ we have made:
\begin{itemize}
\item 
We stress that $\Upsilon^{[0]}$ is the same coefficient (depending on $\delta$ and  $\param$) appearing
in~\eqref{expression-delta}. From Theorem~\ref{thmktilde0} we know that in the conservative case the coefficient
$\Upsilon^{[0]}$ is zero, while in the dissipative case it can be made zero (or exponentially small) with the right choice of the parameter
$\param$ as a function of $\delta$ (see expression~\eqref{Upsilon0expsmall}).
\item
Notice that by Lemma~\ref{lemUpsilonlexpsmall} the coefficients $\Upsilon^{[l]}$ are exponentially small with respect to $\delta$.
This result hints that, disregarding the coefficient $\Upsilon^{[0]}$, the dominant term of $\Delta(\vu,\theta)$
is determined essentially by the coefficients $\Upsilon^{[1]}$ and $\Upsilon^{[-1]}$ in
expression~\eqref{expression-deltabis}. 
As we will see after we choose the appropriate $\Upsilon_0^{[l]}$, the same will happen to $\Delta_0$ which will have 
the coefficients $\Upsilon_0^{[1]}$ and $\Upsilon_0^{[-1]}$ as dominant terms in \eqref{defDelta0inn}.
However, just for technical reasons, we have preferred to include all the Fourier coefficients in the asymptotic first order
$\Delta_0$.
\item 
It is proven in Lemma~\ref{lemUpsilonlexpsmall} that the coefficients 
$\Upsilon^{[l]}=\mathcal{O}\left(\delta^{-2-2/\coef}e^{-\frac{\alpha\pi}{2\coef\delta}}\right)$ 
when $l\neq 0$. In fact smaller if $|l|\ge 2$.
For this reason,  when $l\neq 0$, we expect
$\Upsilon_0^{[l]}=\mathcal{O}\left(\delta^{-2-2/\coef}e^{-\frac{\alpha\pi}{2\coef\delta}}\right)$ at least.
\item 
We only need to deal with $\Upsilon_0^{[l]}$ with $l<0$ and then define $\Upsilon_0^{[l]}=\overline{\Upsilon_0^{[-l]}}$, for $l>0$, 
to make $\Delta_0$ a real analytic function.
\item 
As we pointed out in item 3 in Section~\ref{subsec:previousresults}, to obtain exponentially small bounds for $\Delta(u,\theta)-\Delta _0(u,\theta)$ 
for real values of $u$,
we will need to evaluate our different functions at $\vu_+=i(\pi/(2\coef)-\dist\delta)$. Roughly speaking,
what we need is to guarantee that, with the appropriate choice of $\Upsilon^{[l]}_0$,
$$
\Delta(\vu_+,\theta) - \Delta_0(\vu_+,\theta) = o\left (\Delta_0(\vu_+,\theta)\right).
$$
\end{itemize}

In what follows we will give some heuristic ideas  to explain how to choose the coefficients $\Upsilon_0^{[l]}$ with $l<0$.
For that we assume that the function $P_1\equiv 0$ and that the function
$\xi$ in~\eqref{defxi} is
\begin{equation}\label{defxisimple}
\xi(\vu,\theta) = \theta + \delta^{-1} \alpha \vu + \coef^{-1} \big (c + \alpha L_0\big )\log \cosh(\coef \vu) +L(\vu).
\end{equation}
That is the function $\chi \equiv 0$.
\begin{remark}
We want emphasize here that there is no attempt to be rigorous but to give some intuition about why our choice of
$\Upsilon_0^{[l]}$ could be the good one.
Later, in the proof of Proposition~\ref{properrorUpsilonsinner} in Section~\ref{sec:asymptotic}, we will see that the choice
of $\Upsilon_0^{[l]}$ we shall make provides indeed an asymptotic first order $\Delta_0(\vu,\theta)$ of the difference $\Delta(\vu,\theta)$.
\end{remark}
Note that $\vu_+\in\Dmchout{\uns}\cap\Dmchout{\sta}$.
In this region, as a consequence of Theorem~\ref{thmmatching}, the functions $r_1^\uns$ and $r_1^\sta$ are well
approximated by $\psiin^\uns$ and $\psiin^\sta$, respectively, the solutions of the inner equation \eqref{PDE-inner} given by Theorem~\ref{thminner}.
More precisely:
$$
r_1^{\uns,\sta} (\vu,\theta)\approx\delta^{-2}\psiin^{\uns,\sta}(s(\vu),\theta), \qquad \vu\in\Dmchout{\uns}\cap\Dmchout{\sta},
$$
where the change $\vs(\vu) =1/\big [\delta \tanh (\coef \vu)\big ]$ is defined in~\eqref{change-s}.
Since $\Delta(\vu,\theta)=r_1^{\uns}(\vu,\theta)-r_1^{\sta}(\vu,\theta)$,
one expects that, for  $\vu\in\Dmchout{\uns}\cap\Dmchout{\sta}$
\begin{equation}\label{DeltaApproxDeltain}
\Delta(\vu,\theta)\approx \Delta_\inn(\vu,\theta):=\delta^{-2}\psiin^\uns(s(\vu),\theta)-\delta^{-2}\psiin^\sta(s(\vu),\theta).
\end{equation}
Now, by expression~\eqref{expression-deltapsiinn} of $\Delta \psiin=\psiin^\uns-\psiin^\sta$ we have:
\begin{equation}\label{expression-deltain}
\Delta_\inn(\vu,\theta)=\delta^{-2} \vs^{2/\coef} (\vu)
\left(1+\Pinsmall\left(\vs(\vu),\theta\right)\right)\sum_{l<0}\Upsilon_\inn^{[l]}e^{il\xiin(\vs(\vu),\theta)},
\end{equation}
where $\Upsilon_\inn^{[l]}$ are constants independent of $\delta$ and $\param$ and $\xi_\inn$ is defined in Theorem~\ref{thmdiffinner}.
In this heuristic approach we make the same simplifications for $\Delta_\inn$ as the ones for $\Delta$. 
That is, we will assume that $\Pinsmall\equiv 0$ and that
\begin{equation}\label{defxiinnsimple}
\xi_\inn(\vs(\vu),\theta)=\theta+\coef^{-1}\alpha \vs(\vu)+\coef^{-1}(c+\alpha L_0)\log \left( \frac{\cosh(\coef \vu)}{\delta\sinh(\coef \vu)}\right ).
\end{equation}
A necessary condition to guarantee that approximation~\eqref{DeltaApproxDeltain} holds true at $\vu=\vu_+$ is:
$$
\delta^{-2} \vs^{2/\coef} (\vu_+)  \Upsilon_\inn^{[l]}e^{il \xiin(\vs(\vu_+),\theta)} \approx
\cosh^{2/\coef} (\coef \vu_+) \Upsilon^{[l]} e^{il\xi(\vu_+,\theta)}, \qquad l<0
$$
or equivalently
\begin{equation}\label{firstUpsilon0}
\Upsilon^{[l]} \approx \delta^{-2} \vs^{2/\coef} (\vu_+)  \cosh^{-2/\coef}(\coef \vu_+) \Upsilon_\inn^{[l]}
e^{il \big (\xiin(\vs(\vu_+),\theta)-\xi(\vu_+,\theta)\big )}, \qquad l<0.
\end{equation}
Now we estimate the right-hand side of~\eqref{firstUpsilon0}. First we notice that by Remark~\ref{rmkchange-s}
\begin{equation}\label{su+}
\vs(\vu_+) = -i\coef \dist + \mathcal{O}(\delta^2 \dist^3).
\end{equation}
Moreover subtracting expressions~\eqref{defxiinnsimple} and~\eqref{defxisimple} of $\xi_\inn(\vs(\vu),\theta)$ and $\xi(\vu,\theta)$ respectively, 
evaluated at $\vu=\vu_+$ and using~\eqref{su+} for $\vs(\vu_+)$, we have that
$$
\xi_\inn(\vs(\vu_+),\theta) - \xi(\vu_+,\theta)= - i\frac{\alpha  \pi}{2 \coef \delta } -\coef^{-1} (c+\alpha L_0) \log( \delta \sinh(\coef \vu_+))-
\alpha L(\vu_+) + \mathcal{O}(\delta^ 2 \kappa^3)
$$
Secondly we observe that, on the one hand,
\begin{equation}\label{coshsinhu+}
\begin{aligned}
\cosh (\coef \vu_+) &= \coef \delta  \dist + \mathcal{O}(\delta^3 \dist^3) \\
\log\sinh(\coef\vu_+)&=\log|\sinh(\coef\vu_+)|+i\frac{\pi}{2}=i\frac{\pi}{2}+\mathcal{O}\left(\delta^2\dist^2\right),
\end{aligned}
\end{equation}
and on the other hand, by Theorem~\ref{thmdifpartsolinjective}, the following limit is well-defined:
\begin{equation}\label{defL+}
L_+:=\lim_{\vu\to i\frac{\pi}{2\coef}}L(\vu).
\end{equation}
Therefore, since by \eqref{boundLchi-diff} $|L'(u)|$ is bounded, from~\eqref{firstUpsilon0} and dismissing the high order terms:
$$
\Upsilon^{[l]} \approx \delta^{-2 -2/\coef} (-i)^{2/\coef} \Upsilon^{[l]}_\inn
e^{\coef^{-1} (c+\alpha L_0)\left (-il\log  \delta +l\frac{\pi}{2} \right ) -il\alpha L_+}e^{l\frac{\alpha \pi}{2\coef \delta}},\qquad l<0.
$$
This concludes the heuristic approach and now we define:
\begin{equation}\label{defUpsilon0lneg}
\Upsilon_0^{[l]}=:\delta^{-2 -2/\coef} (-i)^{2/\coef} \Upsilon^{[l]}_\inn
e^{\coef^{-1} (c+\alpha L_0)\left (-il\log  \delta +l\frac{\pi}{2} \right ) -il\alpha L_+}e^{l\frac{\alpha \pi}{2\coef \delta}},\qquad l<0.
\end{equation}
Since $\Delta(\vu,\theta)$ is real analytic, $\Upsilon^{[l]}=\overline{\Upsilon^{[-l]}}$. Thus, we define:
\begin{equation}\label{defUpsilon0lpos}
\Upsilon_0^{[l]}:=\overline{\Upsilon_0^{[-l]}}=\delta^{-2 -2/\coef} i^{2/\coef} \overline{\Upsilon^{[-l]}_\inn}
e^{\coef^{-1} (c+\alpha L_0)\left (-il\log  \delta -l\frac{\pi}{2} \right ) -il\alpha L_+}e^{-l\frac{\alpha \pi}{2\coef \delta}}\qquad l>0.
\end{equation}

Of course, to prove that $\Delta_0$ in~\eqref{defDelta0inn} with the choice of $\Upsilon_0^{[l]}$ provided in~\eqref{defUpsilon0lneg} and~\eqref{defUpsilon0lpos}
is the first order of $\Delta$, we need to see that:
$\Delta_1 :=\Delta-\Delta_0$ is smaller than $\Delta_0$. To this end, let us  write
\begin{equation}\label{defDelta1}
\Delta_1(\vu,\theta)=
\cosh^{2/\coef}(\coef\vu)(1+P_1(\vu,\theta))\sum_{|l|\geq 1}\left (\Upsilon^{[l]}-\Upsilon^{[l]}_0\right )e^{il\xi(\vu,\theta)}.
\end{equation}
By Lemma \ref{lemUpsilonlexpsmall} and formulae~\eqref{defUpsilon0lneg} and~\eqref{defUpsilon0lpos} is clear that, generically, 
the terms involving $\Upsilon^{[l]},\Upsilon^{[l]}_0$
with $|l|\geq2$ are smaller than $\Delta_0$ (observe that $\Delta_0$ contains the biggest terms $\Upsilon_0^{[\pm 1]}$). The following result states that the terms $\Upsilon^{[\pm1]}-\Upsilon_0^{[\pm1]}$ are also small. Its proof can be found in Section \ref{sec:asymptotic}.

\begin{proposition}\label{properrorUpsilonsinner}
Let $\dist=\dist_0\log(1/\delta)$, with $\dist_0>0$ any constant such that $1-\gamma>\alpha\dist_0$. Let $\Upsilon_0^{[\pm1]}$ be defined
as~\eqref{defUpsilon0lneg} with $l=-1$ and~\eqref{defUpsilon0lpos} with $l=1$ respectively. There exists a constant $M$ such that:
$$
\left|\Upsilon^{[\pm1]}-\Upsilon_0^{[\pm1]}\right|\leq \frac{M}{\dist}\delta^{-2-\frac{2}{\coef}}e^{-\frac{\alpha\pi}{2\coef\delta}},
$$
where we assume that, in the dissipative case, $\param=\param_*(\delta)$ is one of the curves defined in Theorem \ref{thmktilde0}.
Recall that $\coef=1$ in the conservative case.
\end{proposition}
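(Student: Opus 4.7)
By real analyticity of $\Delta$ and $\Delta_0$ one has $\Upsilon^{[1]}=\overline{\Upsilon^{[-1]}}$ and $\Upsilon_0^{[1]}=\overline{\Upsilon_0^{[-1]}}$, so it suffices to bound $\Upsilon^{[-1]}-\Upsilon_0^{[-1]}$. My plan is to evaluate both~\eqref{expression-deltabis} and~\eqref{defDelta0inn} at the complex point $\vu_+:=i\bigl(\pi/(2\coef)-\delta\dist\bigr)$, which lies in the matching intersection $\Dmchout{\uns}\cap\Dmchout{\sta}$, and to extract the $(-1)$-th Fourier coefficient using the injectivity of $(\xi(\vu_+,\cdot),\cdot)$ from Theorem~\ref{thmdifpartsolinjective}. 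The key decomposition is
$$
\Delta(\vu_+,\theta)-\Delta_0(\vu_+,\theta)\;=\;\bigl[\Delta(\vu_+,\theta)-\delta^{-2}\Delta\psiin(\vs(\vu_+),\theta)\bigr]+\bigl[\delta^{-2}\Delta\psiin(\vs(\vu_+),\theta)-\Delta_0(\vu_+,\theta)\bigr].
$$

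For the first bracket I would use the identity $\delta^{2}r_1^{\us}(\vu(\vs),\theta)=\psi^{\us}(\vs,\theta)=\psiin^{\us}(\vs,\theta)+\psi_1^{\us}(\vs,\theta)$, so that it equals $\delta^{-2}(\psi_1^\uns-\psi_1^\sta)$. Combining Theorem~\ref{thmmatching} with $\vs(\vu_+)=-i\coef\dist+\mathcal{O}(\delta^2\dist^3)$ from~\eqref{su+} then gives the bound $M\delta^{-1-\gamma}\dist^{-2}$ for the first bracket. For the second bracket I would plug in the expansion~\eqref{expression-deltapsiinn} for $\delta^{-2}\Delta\psiin$ and the expansion~\eqref{defDelta0inn} for $\Delta_0$ with $\Upsilon_0^{[l]}$ given by~\eqref{defUpsilon0lneg}, and compare them mode by mode in $l$. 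The definition~\eqref{defUpsilon0lneg} was engineered so that, after replacing in the exact expansions the corrections $\vs(\vu_+)+i\coef\dist$, $\cosh(\coef\vu_+)-\coef\delta\dist$, $L(\vu_+)-L_+$ and the small functions $P_1,\Pinsmall,\chi,\varphi$ by zero, the two $l$-th summands coincide. Each of these corrections is $\mathcal{O}(1/\dist)$ or smaller at $(\vu_+,\vs(\vu_+))$ by~\eqref{boundLchi-diff}, \eqref{boundp1-diff}, \eqref{P1inntheorem} and~\eqref{coshsinhu+}, so the residue of the $l=-1$ mode is bounded by $M\dist^{-1}$ times the leading $l=-1$ summand of $\Delta_0(\vu_+,\theta)$, whose magnitude is of order $\delta^{-2}\dist^{2/\coef}e^{-\alpha\dist}$. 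The tail modes $|l|\geq 2$ are additionally absorbed via Lemma~\ref{lemUpsilonlexpsmall}.

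Finally, dividing the split by $\cosh^{2/\coef}(\coef\vu_+)(1+P_1(\vu_+,\theta))$ and integrating against $e^{i\xi(\vu_+,\theta)}\partial_\theta\xi(\vu_+,\theta)$ in $\theta\in[0,2\pi]$ introduces the prefactors $|\cosh(\coef\vu_+)|^{-2/\coef}\sim(\coef\delta\dist)^{-2/\coef}$ and $|e^{i\xi(\vu_+,\theta)}|\sim e^{-\alpha\pi/(2\coef\delta)+\alpha\dist}$. With $\dist=\dist_0\log(1/\delta)$, so that $e^{\alpha\dist}=\delta^{-\alpha\dist_0}$, the matching contribution becomes $\delta^{1-\gamma-\alpha\dist_0}\dist^{-1-2/\coef}$ times the target $M\dist^{-1}\delta^{-2-2/\coef}e^{-\alpha\pi/(2\coef\delta)}$, which is $o(1)$ of it by the hypothesis $1-\gamma>\alpha\dist_0$, while the heuristic contribution matches the target exactly. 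Summing the two yields the announced bound; the $l=+1$ case follows by complex conjugation. The hardest step I anticipate is the rigorous mode-by-mode comparison in the second bracket: since $P_1,\chi,\Pinsmall,\varphi$ depend nontrivially on $\theta$, their products with individual Fourier modes contaminate neighbouring modes under the $\theta$-integration, and showing that this cross-contamination, together with the exponentially small tail $|l|\geq 2$, does not exceed the target magnitude once the algebraic and exponential prefactors are applied requires careful bookkeeping.
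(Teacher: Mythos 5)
Your proposal reproduces the paper's strategy almost step for step: the reduction to $l=-1$ by real analyticity, the evaluation at $\vu_+=i(\pi/(2\coef)-\delta\dist)$, the extraction of the $(-1)$-st Fourier coefficient (the paper does this by passing to the variables $(w,\theta)=(\delta\alpha^{-1}(\xi(\vu,\theta)-\theta),\theta)$ rather than by integrating against $e^{i\xi}\partial_\theta\xi\,d\theta$, but the resulting prefactors $|\cosh(\coef\vu_+)|^{-2/\coef}$ and $e^{-\im\xi(\vu_+,\theta)}$ are identical), the decomposition of $\Delta-\Delta_0$ at $\vu_+$ into the matching error $\Delta-\Delta_\inn$ plus $\Delta_\inn-\Delta_0$, and the bound $M\delta^{-1-\gamma}\dist^{-2}$ on the matching bracket via Theorem~\ref{thmmatching}. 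All of your magnitude bookkeeping (the factor $\delta^{1-\gamma-\alpha\dist_0}$ gained by the matching term and the exact match of the inner term with the target) agrees with the paper's.

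The one place you diverge --- and precisely the step you single out as the hardest --- is the control of $\Delta_\inn-\Delta_0$ at $\vu_+$. You propose a mode-by-mode comparison and correctly observe that the $\theta$-dependence of $P_1$, $\chi$, $\Pinsmall$, $\varphi$ contaminates neighbouring modes, but you do not resolve this; as written, that step is a genuine gap. The paper sidesteps it: it introduces the reparametrization $f(\vu,\theta)$ defined by $F_\inn(f(\vu,\theta),\theta)=\hat F(\vu,\theta)$ in \eqref{functionf}, which makes the full (nonlinear, $\theta$-dependent) phase of $\Delta_\inn(f(\vu,\theta),\theta)$ coincide \emph{exactly} with that of $\Delta_0^{<0}(\vu,\theta)$, so these two differ only by the ratio of the prefactors $P$ and $P_\inn$, which is $1+\mathcal{O}(1/\dist)$. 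The residual $\Delta_\inn(\vu_+,\theta)-\Delta_\inn(f(\vu_+,\theta),\theta)$ is then handled by the mean value theorem, combining $|f(\vu_+,\theta)-\vu_+|\leq M\delta\dist^{-1}$ (which comes from item 2 of Lemma~\ref{lemxixiin}) with the derivative bound of Corollary~\ref{bounddiffinnerexp}; see \eqref{Deltainn-Delta0}--\eqref{boundDelta1_3}. The nonnegative modes $\Delta_0^{\geq 0}$, including the $\Upsilon^{[0]}$ term that your sketch omits, are estimated separately and shown to be exponentially smaller. To complete your version you should either adopt this reparametrization or carry out the quantitative cross-mode bookkeeping you anticipate; the former is what makes the paper's argument close.
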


Using this result and formulae \eqref{defUpsilon0lneg} and \eqref{defUpsilon0lpos} of $\Upsilon_0^{[l]}$ we can prove:
\begin{theorem}\label{mainthm-inner}
Define:
$$
\vt(\vu,\delta)=\delta^{-1}\alpha\vu+\coef^{-1}(c+\alpha L_0)\left[\log\cosh(\coef\vu)-\log\delta\right]+\alpha L(\vu),
$$
where $L_0$ and $L(\vu)$ are given in Theorem \ref{thmdifpartsolinjective}. 
Define:
$$
\mathcal{C}^*=\mathcal{C}_1^*+i\mathcal{C}_2^*:=2(-i)^{\frac{2}{\coef}}\Upsilon_\inn^{[-1]}e^{-\coef^{-1}(c+\alpha L_0)\frac{\pi}{2}+i\alpha L_+}.
$$
where $\Upsilon_\inn^{[-1]}$ appears in Theorem \ref{thmdiffinner} and $L$ is  defined in~\eqref{defL+}.
Then there exist $T_0>0$ and $\delta_0>0$ such that for all $\vu\in[-T_0,T_0]$, $\theta\in\mathbb{S}^1$ and $0<\delta<\delta_0$ the following holds:
\begin{align*}
\Delta(\vu,\theta)=&\cosh^{2/\coef}(\coef\vu)\Upsilon^{[0]}\left(1+\mathcal{O}(\delta)\right)\\
&+\delta^{-2-\frac{2}{\coef}}\cosh^{2/\coef}(\coef\vu)e^{-\frac{\alpha\pi}{2\coef\delta}}
\Bigg[\mathcal{C}_1^*\cos\Big(\theta+\vt(\vu,\delta)\Big)\\
&+\mathcal{C}_2^*\sin\Big(\theta+\vt(\vu,\delta)\Big)+\mathcal{O}\left(\frac{1}{\log(1/\delta)}\right)\Bigg],
\end{align*}
where, in the dissipative case, $\param=\param_*(\delta)$ is one of the curves defined in Theorem \ref{thmktilde0} meanwhile
$\Upsilon^{[0]}=0$ and
$\coef=1$ in the conservative case.
\end{theorem}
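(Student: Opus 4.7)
The plan is to start from the representation~\eqref{expression-deltabis}
$$\Delta(\vu,\theta)=\cosh^{2/\coef}(\coef\vu)\bigl(1+P_1(\vu,\theta)\bigr)\sum_{l\in\mathbb{Z}}\Upsilon^{[l]}e^{il\xi(\vu,\theta)},$$
and split the Fourier sum into three groups: $l=0$, $l=\pm 1$, and $|l|\geq 2$. The $l=0$ contribution is already $\cosh^{2/\coef}(\coef\vu)(1+P_1)\Upsilon^{[0]}$; since Theorem~\ref{thmdifpartsolinjective} gives $|P_1(\vu,\theta)|\leq M\delta/|\cosh(\coef\vu)|$, on the real segment $[-T_0,T_0]$ this factor is $1+\mathcal{O}(\delta)$ and we immediately recover the first line of the claimed formula (trivially so in the conservative case, where $\Upsilon^{[0]}=0$). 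For the tail $|l|\geq 2$, Lemma~\ref{lemUpsilonlexpsmall} yields $|\Upsilon^{[l]}|\le M\delta^{-2-2/\coef}\dist^{-3-2/\coef}e^{-3|l|\alpha\pi/(8\coef\delta)}$; summing the geometric series over $|l|\ge 2$ produces a factor $e^{-3\alpha\pi/(4\coef\delta)}$, which is much smaller than $\delta^{-2-2/\coef}e^{-\alpha\pi/(2\coef\delta)}/\log(1/\delta)$ and therefore absorbed into the $\mathcal{O}(1/\log(1/\delta))$ error. The same kind of estimate, using $|\Upsilon_{\inn}^{[l]}|\leq M$ from Theorem~\ref{thmdiffinner} and formulas~\eqref{defUpsilon0lneg}--\eqref{defUpsilon0lpos}, controls the $|l|\geq 2$ part of $\Delta_0$.

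The key step is to rewrite the $l=\pm 1$ Fourier modes. Choosing $\dist=\dist_0\log(1/\delta)$ with $\dist_0$ as in Proposition~\ref{properrorUpsilonsinner}, that proposition replaces each $\Upsilon^{[\pm 1]}$ by its heuristic counterpart $\Upsilon_0^{[\pm 1]}$ up to an error of size $M\delta^{-2-2/\coef}e^{-\alpha\pi/(2\coef\delta)}/\dist$, which, once divided by $\dist_0\log(1/\delta)$, is exactly the $\mathcal{O}(1/\log(1/\delta))$ tolerance permitted by the statement. So it remains to compute the contribution of $\Upsilon_0^{[\pm 1]}e^{\pm i\xi}$ explicitly.

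Using the expression~\eqref{defxi} for $\xi$, one checks the algebraic identity
$$\xi(\vu,\theta)-\coef^{-1}(c+\alpha L_0)\log\delta=\theta+\vt(\vu,\delta)+\chi(\vu,\theta).$$
Substituting~\eqref{defUpsilon0lneg} with $l=-1$ gives
$$\Upsilon_0^{[-1]}e^{-i\xi}=\tfrac{1}{2}\mathcal{C}^{*}\,\delta^{-2-2/\coef}e^{-\alpha\pi/(2\coef\delta)}e^{-i(\theta+\vt(\vu,\delta)+\chi(\vu,\theta))},$$
and, with $\Upsilon_0^{[1]}=\overline{\Upsilon_0^{[-1]}}$, summing the two modes produces
$$\delta^{-2-2/\coef}e^{-\alpha\pi/(2\coef\delta)}\Bigl[\mathcal{C}_1^{*}\cos\bigl(\theta+\vt+\chi\bigr)+\mathcal{C}_2^{*}\sin\bigl(\theta+\vt+\chi\bigr)\Bigr].$$
Because $|\chi(\vu,\theta)|\leq M\delta/|\cosh(\coef\vu)|=\mathcal{O}(\delta)$ on $[-T_0,T_0]\times\mathbb{S}^1$, expanding $\cos$ and $\sin$ and multiplying by the outer factor $(1+P_1)=1+\mathcal{O}(\delta)$ introduces only a further $\mathcal{O}(\delta)$ correction, comfortably inside $\mathcal{O}(1/\log(1/\delta))$. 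Assembling the three contributions yields the asymptotic formula stated in the theorem.

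\textbf{Main obstacle.} All the analytic heavy lifting has been concentrated in Proposition~\ref{properrorUpsilonsinner}, whose proof requires the matching analysis of Section~\ref{sec:matching} together with a Cauchy-type extraction of $\Upsilon^{[\pm 1]}$ from the integral representation of $\Delta$ at $\vu_{\pm}=\pm i(\pi/(2\coef)-\dist\delta)$; once that proposition is granted, the proof above is essentially a careful bookkeeping argument. The only subtlety to watch is that the factor $(-i)^{2/\coef}$ in~\eqref{defUpsilon0lneg} combined with the $\log\delta$ term in $\xi$ must recombine cleanly with the definition of $\vt$, but this is what the identity above guarantees.
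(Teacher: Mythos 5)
Your proposal is correct and follows essentially the same route as the paper's proof: both rest on Proposition~\ref{properrorUpsilonsinner} for the $l=\pm1$ modes, Lemma~\ref{lemUpsilonlexpsmall} for the $|l|\geq2$ tail, the explicit evaluation of $\Upsilon_0^{[-1]}e^{-i\xi}$ via the identity relating $\xi$ to $\vt$, and the $\mathcal{O}(\delta)$ bounds on $P_1$ and $\chi$. The only difference is cosmetic bookkeeping (you split the Fourier sum directly into $l=0$, $l=\pm1$, $|l|\ge2$, whereas the paper organizes the same estimates through the decomposition $\Delta=\Delta_0+\Delta_1$).
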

\begin{proof}
Let $\dist=\dist_0\log(1/\delta)$, with $\dist_0>0$ any constant such that $1-\gamma>\alpha\dist_0$.
On the one hand, using Lemma~\ref{lemUpsilonlexpsmall} to bound $|\Upsilon^{[l]}|$ for $l\geq2$, formulae~\eqref{defUpsilon0lneg} and~\eqref{defUpsilon0lpos}
for $\Upsilon^{[l]}_0$, Proposition~\ref{properrorUpsilonsinner}
to bound $|\Upsilon^{[\pm1]}-\Upsilon_0^{[\pm1]}|$ and the fact that $\xi(\vu,\theta)\in\mathbb{R}$ for
$(\vu,\theta)\in[-T_0,T_0]\times\mathbb{S}^1$, it is easy to see that, from expression~\eqref{defDelta1} of $\Delta_1 = \Delta-\Delta_0$,
$$|\Delta_1(\vu,\theta)|\leq K\cosh^{2/\coef}(\coef\vu)|1+P_1(\vu,\theta)|\frac{\delta^{-2-2/\coef}}{\dist}e^{-\frac{\alpha\pi}{2\coef\delta}}.$$
Since, by bound~\eqref{boundp1-diff} of $P_1$, we have:
\begin{equation}\label{boundP1-ureal}
|P_1(\vu,\theta)|\leq \frac{K\delta}{|\cosh(\coef\vu)|}\leq K\delta, \qquad \text{for }\vu\in[-T_0,T_0]
\end{equation}
and this yields (renaming $K$):
\begin{eqnarray}\label{boundDelta1-inner}
 |\Delta_1(\vu,\theta)|\leq K\cosh^{2/\coef}(\coef\vu)\frac{\delta^{-2-2/\coef}}{\log(1/\delta)}e^{-\frac{\alpha\pi}{2\coef\delta}}.
\end{eqnarray}
On the other hand, by definition~\eqref{defDelta0inn} of $\Delta_0$ and since $\Upsilon^{[l]}_0 = \overline{\Upsilon^{[-l]}_0}$,
$$
\Delta_0(\vu,\theta)=\cosh^{2/\coef}(\coef\vu)(1+P_1(\vu,\theta))\left[\Upsilon^{[0]}+2 \re \left (\Upsilon_0^{[-1]}e^{-i\xi(\vu,\theta)}\right)
+\mathcal{O}\left (\delta^{-2 -2/\coef} e^{-\frac{\alpha \pi}{\coef \delta}}\right )\right ],
$$
where $\Upsilon_0^{[-1]}$ is given in~\eqref{defUpsilon0lneg} with $l=-1$,
and $\xi$ is defined in \eqref{defxi}. Then one has:
$$
\Upsilon_0^{[-1]}e^{-i\xi(\vu,\theta)}=\delta^{-2-\frac{2}{\coef}}
e^{-\frac{\alpha\pi}{2\coef\delta}}\frac{\mathcal{C}^*}{2}e^{-i(\theta+\vt(\vu,\delta)+\chi(\vu,\theta))},
$$
with:
$$
\mathcal{C}^*=2(-i)^{\frac{2}{\coef}}\Upsilon_\inn^{[-1]}e^{-\coef^{-1}(c+\alpha L_0)\frac{\pi}{2}+i\alpha L_+}.
$$
Using Theorem~\ref{thmdifpartsolinjective}, one has that, if $\vu\in[-T_0,T_0]$ then
$|\chi(\vu,\theta)|\leq K\delta$, so that:
$$
\Upsilon_0^{[-1]}e^{-i\xi(\vu,\theta)}=\delta^{-2-\frac{2}{\coef}}
e^{-\frac{\alpha\pi}{2\coef\delta}}\frac{\mathcal{C}^*}{2}e^{-i(\theta+\vt(\vu,\delta))}(1+\mathcal{O}(\delta)).
$$
Then, using also bound~\eqref{boundP1-ureal} and the fact that $\Upsilon^{[1]}=\overline{\Upsilon^{-[1]}}$, we obtain:
\begin{align}\label{asymptotic-Delta0inner}
 \Delta_0&(\vu,\theta)=\cosh^{2/\coef}(\coef\vu)\Upsilon^{[0]}(1+\mathcal{O}(\delta))\\
&+\cosh^{2/\coef}(\coef\vu)\delta^{-2-\frac{2}{\coef}}e^{-\frac{\alpha\pi}{2\coef\delta}}
\left[\frac{\overline{\mathcal{C}^*}}{2}e^{i(\theta+\vt(\vu,\delta))}+\frac{\mathcal{C}^*}{2}e^{-i(\theta+\vt(\vu,\delta))}\right](1+\mathcal{O}(\delta)).\nonumber
\end{align}
Finally we only have to note that:
$$
\frac{\overline{\mathcal{C}^*}}{2}e^{i(\theta+\vt(\vu,\delta))}+\frac{\mathcal{C}^*}{2}e^{-i(\theta+\vt(\vu,\delta))}=\re \mathcal{C}^*\cos(\theta+\vt(\vu,\delta))+\im \mathcal{C}^*\sin(\theta+\vt(\vu,\delta)),
$$
so that using bound~\eqref{boundDelta1-inner} of $\Delta_1$, expression~\eqref{asymptotic-Delta0inner} of $\Delta_0$ and the fact that
$\Delta=\Delta_0 + \Delta_1$ we obtain the claim of the theorem.
In the conservative case we take into account that $\coef=1$ and $\Upsilon^{[0]}=0$ by Theorem \ref{thmktilde0}.
\end{proof}

\begin{remark}
Theorem \ref{mainthm-inner} yields straightforwardly Theorem~\ref{mainthm-inner-intro}.
\end{remark}

The remaining part of this work is devoted to provide the proofs of the results in this section.
We present first, in Section~\ref{sec:asymptotic}, the proof related to the exponentially small behavior of $\Delta(\vu,\theta)$ 
in Proposition \ref{properrorUpsilonsinner}, assuming
that all the previous results in the present section hold true. After that we deal with the results related to the inner equation in Section~\ref{sec:innergeneral}.
Indeed, first, in Section~\ref{sec:inner}, we deal with the existence and properties of the solutions $\psiin^{\uns,\sta}$ of the inner equation
and secondly, in Section~\ref{sec:diffinner}, we prove the
asymptotic expression for the difference $\Delta \psiin=\psiin^{\uns}-\psiin^{\sta}$ stated in Theorem~\ref{thmdiffinner}.
Then, in Section~\ref{sec:matching} we measure the matching errors $\psi_1^{\uns,\sta}(\vs,\theta)$ for $\vs$ belonging to the matching domains $\vs\in\Dmchin{\uns,\sta}$.

All the constants that appear in the statements of the following results might depend on $\delta^*$, $\delta_0$, $\param^*$, $\dist^*$ and $\dist_0$
but never on $\delta$, $\param$ and $\dist$. 
We assume that $\delta^*,\delta_0$ and $\param^*$ are sufficiently small,
and $\dist^*, \dist_0$ are big enough
satisfying condition~\eqref{conddist-inner}. These conventions are valid for all the sections of this work.
As in the previous work~\cite{BCS16a}, we shall skip the proofs that do not provide any interesting insight.
For these proofs we refer the reader to \cite{CastejonPhDThesis}.

\section{Exponentially small behavior. Proposition~\ref{properrorUpsilonsinner}}\label{sec:asymptotic}
This section is devoted to prove the asymptotic for $\Upsilon^{[\pm 1]}$ given in Proposition~\ref{properrorUpsilonsinner}.
To prove this result, we will assume all the results in Sections~\ref{preliminary}, \ref{subsec-introinner} and~\ref{subsec:intromatching}.

We first begin with a result which relates the functions $\xi(\vu,\theta)$ and $\xi_\inn(\vs(\vu),\theta)$, given in Theorems \ref{thmdifpartsolinjective}
and~\ref{thmdiffinner} respectively, when $\vu$ is close to the singularity $i\pi/(2\coef)$.
\begin{lemma}\label{lemxixiin}
Let $\dist$ be sufficiently large, $L_+$ the constant in~\eqref{defL+},
$\vu_+=i\left(\frac{\pi}{2\coef}-\delta\dist\right)$
and $\vs(\vu)=[\delta \hetz(\vu)]^{-1}$ the function defined in \eqref{change-s}.
There exists two functions $\varrho(\theta)$ and $\eta(\theta)$ and a constant $M$ satisfying:
$$\sup_{\theta\in\mathbb{S}^1}|\varrho(\theta)|,\,  \sup_{\theta\in\mathbb{S}^1}|\eta(\theta)|\leq \frac{M}{\dist},$$
such that:
\begin{enumerate}
\item The function $\xi(\vu,\theta)$ in Theorem~\ref{thmdifpartsolinjective} satisfies
$$
\xi(\vu_+,\theta) -\theta = i\frac{\alpha \pi}{2 \coef \delta} - i\alpha \dist +\coef^{-1} (c+\alpha L_0) \log (\delta \dist \coef) +\alpha L_+
+\varrho(\theta).
$$
\item The function $\xi_\inn(\vs(\vu),\theta)$, given in Theorem~\ref{thmdiffinner} is related to $\xi(\vu,\theta)$ by:
$$
\xiin(\vs(\vu_+),\theta)=\xi(\vu_+,\theta)-i\frac{\alpha\pi}{2\coef\delta}-\coef^{-1}(c+\alpha L_0)\left(\log\delta+i\frac{\pi}{2}\right)-\alpha L_++\eta(\theta).
$$
\end{enumerate}
\end{lemma}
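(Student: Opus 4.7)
My plan is a direct substitution argument: both $\xi(\vu_+,\theta)$ and $\xi_\inn(\vs(\vu_+),\theta)$ are given by explicit formulas (from~\eqref{defxi} and Theorem~\ref{thmdiffinner} respectively), so I will just evaluate each piece at $\vu_+=i(\pi/(2\coef)-\delta\dist)$ and collect error terms.

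For part 1, I would substitute $\vu_+$ into~\eqref{defxi}. The affine term gives $\delta^{-1}\alpha\vu_+=i\alpha\pi/(2\coef\delta)-i\alpha\dist$ exactly. For the logarithmic term I use $\cosh(\coef\vu_+)=\cosh(i\pi/2-i\coef\delta\dist)=\sin(\coef\delta\dist)=\coef\delta\dist\bigl(1+\mathcal{O}((\coef\delta\dist)^2)\bigr)$, so $\log\cosh(\coef\vu_+)=\log(\delta\dist\coef)+\mathcal{O}(\delta^2\dist^2)$. For $\alpha L(\vu_+)$, I use that $L$ is real analytic at $i\pi/(2\coef)$ with $|L'|\le M$ by~\eqref{boundLchi-diff}, so $L(\vu_+)=L_++\mathcal{O}(\delta\dist)$. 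Finally $\chi(\vu_+,\theta)=\mathcal{O}(\delta/|\cosh(\coef\vu_+)|)=\mathcal{O}(1/\dist)$ by~\eqref{boundLchi-diff} combined with the estimate $|\cosh(\coef\vu_+)|\asymp\delta\dist$. Grouping the error terms into a single $\theta$-dependent function $\varrho(\theta)$ yields the claimed identity, and each contribution is $\mathcal{O}(1/\dist)$ as soon as $\delta\dist^2$ stays bounded (which holds since $\delta$ is small and $\dist$ satisfies~\eqref{conddist-inner}, so in particular $\delta\dist\le \pi/(8\coef)$).

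For part 2, I would proceed analogously, plugging $\vs=\vs(\vu_+)$ into the expression $\xi_\inn(\vs,\theta)=\theta+\coef^{-1}\alpha\vs+\coef^{-1}(c+\alpha L_0)\log\vs+\varphi(\vs,\theta)$ from Theorem~\ref{thmdiffinner}. Using Remark~\ref{rmkchange-s} (or directly computing $\vs(\vu_+)=-i\tan(\coef\delta\dist)/\delta$) gives $\vs(\vu_+)=-i\coef\dist+\mathcal{O}(\delta^2\dist^3)$. Hence $\coef^{-1}\alpha\vs(\vu_+)=-i\alpha\dist+\mathcal{O}(\delta^2\dist^3)$, and
$$\log\vs(\vu_+)=\log(\coef\dist)-i\pi/2+\mathcal{O}(\delta^2\dist^2).$$
The term $\varphi(\vs(\vu_+),\theta)$ is $\mathcal{O}(1/\dist)$ by~\eqref{P1inntheorem} together with $|\vs(\vu_+)|\asymp\dist$. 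Subtracting the expression obtained in part 1 from this evaluation cancels the common $\theta$-part and produces
$$\xi_\inn(\vs(\vu_+),\theta)-\xi(\vu_+,\theta)=-i\frac{\alpha\pi}{2\coef\delta}-\coef^{-1}(c+\alpha L_0)\bigl(\log\delta+i\pi/2\bigr)-\alpha L_+ +\eta(\theta),$$
with $\eta(\theta)=-\varrho(\theta)+\mathcal{O}(1/\dist)$, which is again $\mathcal{O}(1/\dist)$.

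There is no real obstacle; the lemma is genuinely a bookkeeping exercise. The only care needed is in showing that each of the three types of error contributions, $\mathcal{O}(\delta^2\dist^2)$, $\mathcal{O}(\delta\dist)$, and $\mathcal{O}(1/\dist)$, is dominated by $M/\dist$. This reduces to $\delta\dist^2\le M$ (which follows since $\dist$ satisfies~\eqref{conddist-inner} and $\delta$ is small, and is verified \emph{a fortiori} in the application via the choice $\dist=\dist_0\log(1/\delta)$ made in Proposition~\ref{properrorUpsilonsinner}). The uniformity of the bounds in $\theta\in\mathbb{S}^1$ is immediate since the $\theta$-dependence enters only through the bounded functions $\chi$ and $\varphi$.
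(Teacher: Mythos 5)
Your proof is correct and follows essentially the same route as the paper: direct substitution of $\vu_+$ into the explicit formulas for $\xi$ and $\xi_\inn$, using the expansions of $\cosh(\coef\vu_+)$, $\sinh(\coef\vu_+)$ and $\vs(\vu_+)$ together with the bounds on $L'$, $\chi$ and $\varphi$ to collect all errors into $\varrho$ and $\eta$ of size $\mathcal{O}(1/\dist)$. The only cosmetic difference is that the paper forms the symbolic difference $\xi_\inn(\vs(\vu),\theta)-\xi(\vu,\theta)$ before evaluating at $\vu_+$, whereas you evaluate each function separately and subtract; and your explicit remark that the bound $\delta\dist^2\leq M$ is what is really needed (and is supplied by the choice $\dist=\dist_0\log(1/\delta)$) makes explicit a point the paper leaves implicit.
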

\begin{proof}
By definition of $\xi$ in~\eqref{defxi} and remaining that $u_+= i(\frac{\pi}{2d}-|delta \kappa)$,
$$
\xi(\vu_+,\theta)-\theta = i\frac{\alpha \pi}{2 \coef \delta} - i \alpha \kappa +\coef^{-1} (c + \alpha L_0) \log (\cosh(\coef \vu_+)) + \alpha L(\vu_+)
+\chi(\vu_+,\theta).
$$
By Theorem \ref{thmdifpartsolinjective}, for all $\vu\in\Doutinter$, $|L'(\vu)|\leq K$ and $|\chi(\vu,\theta)|\leq K \delta |\cosh (\coef \vu)|^{-1}$.
Then, by definition \eqref{defL+} of $L_+$ and using expression~\eqref{coshsinhu+} of $\cosh(\coef \vu_+)$:
\begin{equation}\label{boundLL+}
|L(\vu_+)-L_+|\leq K\delta\dist,\qquad |\chi(\vu,\theta)|\leq \frac{K}{\dist}.
\end{equation}
The first item follows using again~\eqref{coshsinhu+}.

Now we prove the second item.
By using definition of $\xi_\inn $ in Theorem~\ref{thmdiffinner} as well as
that $\vs(\vu) = 1/[\delta \tanh(\coef \vu)]$ one obtains
\begin{equation}\label{difxixinn}
\begin{aligned}
\xi_\inn(\vs(\vu),\theta) - \xi(\vu,\theta) = &\alpha\big (\coef^{-1}  \vs(\vu) - \delta^{-1} \vu\big ) -
\coef^{-1} (c+\alpha L_0) \log (\delta \sinh (\coef \vu)) \\ &-\alpha L(\vu) + \varphi(\vs (\vu),\theta) -\chi(\vu,\theta).
\end{aligned}
\end{equation}
We evaluate~\eqref{difxixinn} at $\vu=\vu_+$. As we pointed out in~\eqref{su+} and~\eqref{coshsinhu+}:
$$
\coef^{-1}  \vs(\vu_+) - \delta^{-1} \vu_+ = -i\frac{\alpha \pi}{2\coef \delta} +\mathcal{O}(\delta^2 \kappa^3),\qquad
\log (\delta \sinh (\coef \vu)) = \log \delta + i\frac{\pi}{2}.
$$
In addition, by Theorem~\ref{thmdiffinner} and using also expression~\eqref{su+} of $\vs(\vu_+)$, we have:
$$|\varphi\left(\vs(\vu_+),\theta\right)|\leq \frac{K}{|\vs(\vu_+)|}\leq \frac{K}{\dist}.$$
Then, by~\eqref{difxixinn} and using bounds in~\eqref{boundLL+}, we obtain readily:
$$
\xiin(\vs(\vu_+),\theta)=\xi(\vu_+,\theta)-i\frac{\alpha\pi}{2\coef\delta}-\coef^{-1}(c+\alpha L_0)\left(\log\delta+i\frac{\pi}{2}\right)-\alpha L_++\eta(\theta),
$$
with:
$$
\eta(\theta)=\varphi\left(\vs(\vu_+),\theta\right)+L(\vu_+)-L_+-\chi(\vu_+,\theta)+\mathcal{O}\left(\delta^2\dist^3\right).
$$
Clearly, $|\eta(\theta)|\leq K/\dist$ for some constant $K$.
\end{proof}

\begin{proof}[Proof of Proposition \ref{properrorUpsilonsinner}]
Since $\Delta(\vu,\theta)$, $\Delta_0(\vu,\theta)$ are real analytic, we just need to prove the result for $\Upsilon^{[-1]}-\Upsilon^{[-1]}_0$.

Rewriting expression~\eqref{defDelta1} of $\Delta_1=\Delta-\Delta_0$ one has 
\begin{equation}\label{rewdefDelta1}
\frac{\Delta_1(\vu,\theta)}{\cosh^{2/\coef}(\coef\vu)(1+P_1(\vu,\theta))}=
\sum_{l\neq 0 }\big (\Upsilon^{[l]}-\Upsilon^{[l]}_0\big )e^{il\xi(\vu,\theta)},
\end{equation}
with $\xi(\vu,\theta)$ defined in~\eqref{defxi}. We introduce the function
$$
F(\vu,\theta)= \delta \alpha^{-1} (\xi(\vu,\theta)-\theta).
$$
By Theorem \ref{thmdifpartsolinjective},
$(\xi(\vu,\theta),\theta)$ is injective in $\Doutinter\times\Tout$ then $(F(\vu,\theta),\theta)$ is also injective in the same domain. 
In particular, for all $(\vu,\theta)\in\Doutinter\times\mathbb{S}^1$, the change $(w,\theta)=(F(u,\theta),\theta)$ is a diffeomorphism between 
$\Doutinter\times\mathbb{S}^1$ and its image $\Doutintertilde\times\mathbb{S}^1$, with inverse $(\vu,\theta)=(G(w,\theta),\theta)$. 
Then, if we define the function:
$$
\Diffw(w,\theta)= \sum_{|l|\geq 1 }\big (\Upsilon^{[l]}-\Upsilon^{[l]}_0\big )e^{il(\theta+\delta^{-1}\alpha w)},
$$
by~\eqref{rewdefDelta1}, one has that:
\begin{equation}\label{DeltaDeltatilde-inner}
 \Diffw(w,\theta)=\frac{\Delta_1(G(w,\theta),\theta)}{\cosh^{2/\coef}(\coef G(w,\theta))(1+P_1(G(w,\theta),\theta))}.
\end{equation}
Note that $\Diffw(w,\theta)$ is $2\pi-$periodic in $\theta$, and its Fourier coefficient $\Diffw^{[-1]}(w)$ is:
$$
\Diffw^{[-1]}(w)=\big (\Upsilon^{[-1]}-\Upsilon^{[-1]}_0\big )e^{-i\delta^{-1}\alpha w}.
$$
Hence we know that for all $w\in \Doutintertilde$:
\begin{equation}\label{boundinterupsilons-inner}
\left|\Upsilon^{[-1]}-\Upsilon^{[-1]}_0\right|=\frac{1}{2\pi}\left|e^{i\delta^{-1}\alpha w}
\int_0^{2\pi}\Diffw (w,\theta)e^{i\theta}d\theta\right|
\leq\left|e^{i\delta^{-1}\alpha w}\right|\sup_{\theta\in\mathbb{S}^1}\left|\Diffw(w,\theta)\right|.
\end{equation}
For any $\theta_0 \in \mathbb{S}^1$,
we take $w= w_+:=F(\vu_+,\theta_0) = \delta\alpha^{-1} (\xi(\vu_+,\theta_0)-\theta_0)\in \Doutintertilde$ with $\vu_+= i\left(\frac{\pi}{2\coef}-\dist\delta\right)$
in \eqref{boundinterupsilons-inner}. Then~\eqref{boundinterupsilons-inner} yields:
\begin{equation}\label{boundinterupsilons-inner-v2}
\left|\Upsilon^{[-1]}-\Upsilon^{[-1]}_0\right|\leq \left |e^{i(\xi(\vu_+,\theta_0)-\theta_0)}\right |
\sup_{\theta\in\mathbb{S}^1}\left|\Diffw(w_+,\theta)\right|.
\end{equation}
Since $(F(\vu,\theta),\theta)$ is the inverse of $(G(w,\theta),\theta)$, from \eqref{DeltaDeltatilde-inner} we obtain:
$$
\Diffw(w_+,\theta)=\frac{\Delta_1(\vu_+,\theta)}{\cosh^{2/\coef}(\coef\vu_+)(1+P_1(\vu_+,\theta))}.
$$
Thus, using bound~\eqref{boundp1-diff} for $P_1$, that $|\cosh (\coef \vu_+) |\geq K \delta \dist$, and taking $\dist$ sufficiently large,
bound~\eqref{boundinterupsilons-inner-v2} writes out as:
\begin{equation}\label{difUpsilon1-supDelta1}
\left|\Upsilon^{[-1]}-\Upsilon^{[-1]}_0\right|\leq \frac{K}{\delta^{\frac{2}{\coef}}\dist^{\frac{2}{\coef}}}
e^{-\im \xi(\vu_+,\theta_0)}\sup_{\theta\in\mathbb{S}^1}\left|\Delta_1(\vu_+,\theta)\right|.
\end{equation}

By item 1 in Lemma~\ref{lemxixiin} and since $L_0\in\mathbb{R}$ (see Theorem \ref{thmdifpartsolinjective}) we have:
\begin{equation}\label{imxiu+}
\im \xi(\vu_+,\theta_0)= \frac{\alpha \pi}{2\coef \delta}-\alpha \dist  + \mathcal{O}(1)
\end{equation}
Therefore, since $\dist=\dist_0\log(1/\delta)$, bound~\eqref{difUpsilon1-supDelta1} writes out as:
\begin{equation} \label{boundUpsilons-Deltarough}
\left|\Upsilon^{[-1]}-\Upsilon_0^{[-1]}\right|\leq \frac{K}{\delta^{\frac{2}{\coef}+\alpha\dist_0}\log^{\frac{2}{\coef}}(1/\delta)}
e^{-\frac{\alpha\pi}{2\coef\delta}}\sup_{\theta\in\mathbb{S}^1}\left|\Delta_1(\vu_+,\theta)\right|.
\end{equation}
Now we claim that there exists a constant $K$ such that for all $\theta\in\mathbb{S}^1$:
\begin{equation}\label{boundDelta1rough-inner}
 |\Delta_1(\vu_+,\theta)|\leq K\frac{\delta^{-2+\alpha\dist_0}}{\dist^{1-\frac{2}{\coef}}}.
\end{equation}
Clearly, using~\eqref{boundDelta1rough-inner} in~\eqref{boundUpsilons-Deltarough} and recalling that $\dist=\dist_0\log(1/\delta)$ we obtain the claim of
the proposition (in the conservative case we just need to take $\coef=1$). Hence, the rest of the proof is devoted to prove bound~\eqref{boundDelta1rough-inner}.

To prove \eqref{boundDelta1rough-inner} we rewrite $\Delta_1(\vu_+,\theta)=\Delta(\vu_+,\theta)-\Delta_0(\vu_+,\theta)$ in the following way:
\begin{equation}\label{rewriteDelta1}
\Delta_1(\vu_+,\theta)=\Delta(\vu_+,\theta)-\Delta_\inn(\vu_+,\theta)+\Delta_\inn(\vu_+,\theta)-\Delta_0(\vu_+,\theta).
\end{equation}
First we bound $\Delta(\vu_+,\theta)-\Delta_\inn(\vu_+,\theta)$. We have:
\begin{align*}
\Delta(\vu,\theta)-\Delta_\inn(\vu,\theta)=&r_1^\uns(\vu,\theta)-r_1^\sta(\vu,\theta)-\delta^{-2}\left[\psiin^\uns(\vs(\vu),\theta)-\psiin^\sta(\vs(\vu),\theta)\right]\\
=&\delta^{-2}\left[\psi_1^\uns(\vs(\vu),\theta)-\psi_1^\sta(\vs(\vu),\theta)\right],
\end{align*}
where we have used that by definition $r_1^{\uns,\sta}(\vu,\theta)=\delta^{-2}\psi^{\uns,\sta}(\vs(\vu),\theta)$ and
$\psi_1^{\uns,\sta}=\psi^{\uns,\sta}-\psiin^{\uns,\sta}$. Thus, using that $|\psi_1^{\uns,\sta}(\vs,\theta)|\leq K \delta^{1-\gamma}|\vs|^{-2}$ by
Theorem~\ref{thmmatching}, it is clear that:
\begin{equation}\label{boundpartmatch}
\left|\Delta(\vu_+,\theta)-\Delta_\inn(\vu_+,\theta)\right|\leq \frac{K}{|\vs(\vu_+)|^2}\delta^{-1-\gamma}\leq\frac{K}{\dist^2}\delta^{-1-\gamma},
\end{equation}
where we have used that, by~\eqref{su+}
$
|\vs(\vu_+)|=\coef\dist+\mathcal{O}(\delta^2\dist^3).
$

Now we shall bound $\Delta_\inn(\vu_+,\theta)-\Delta_0(\vu_+,\theta)$. 
Recall definition~\eqref{defDelta0inn} of $\Delta_0$ and expression~\eqref{expression-deltain} of $\Delta_\inn$:
\begin{align*}
\Delta_0(\vu,\theta) &= \Delta_0^{\geq 0}(\vu,\theta) + \Delta_0^{<0}(\vu,\theta):=P(\vu,\theta) 
\left (\Upsilon^{[0]}+\sum_{l>0} \Upsilon_0^{[l]}e^{il\xi(\vu,\theta)}+\sum_{l<0} \Upsilon_0^{[l]}e^{il\xi(\vu,\theta)} \right ), \\
\Delta_\inn(\vu,\theta)& = \delta^{-2} P_\inn(\vs(\vu),\theta)\sum_{l<0} \Upsilon_\inn^{[l]}e^{il\xi_\inn(\vs(\vu),\theta)}
\end{align*}
with
\begin{equation}\label{defP}
P(\vu,\theta) = \cosh^{2/\coef} (\coef \vu) (1+P_1(\vu,\theta)),\qquad P_{\inn}(s,\theta) = s^{2/\coef} (1+P_1^{\inn}(s,\theta)).
\end{equation}
We introduce the following notation:
\begin{align*}
\hat F(\vu,\theta) &= \delta \alpha^{-1} \left (\xi(\vu,\theta)-\theta + \coef^{-1} (c+\alpha L_0)\left (-\log \delta -i \frac{\pi}{2}\right )- \alpha L_+\right ),\\
F_{\inn}(\vu,\theta) &=\delta \alpha^{-1} \left (\xi_\inn(\vs(\vu),\theta) - \theta + i\frac{\alpha \pi}{2 \coef \delta} \right ).
\end{align*}
Then, by definition of~\eqref{defUpsilon0lneg} of $\Upsilon_0^{[l]}$, if $l<0$, one has that
\begin{equation}\label{relationDelta0Deltainn_1}
\Upsilon_0^{[l]} e^{il\xi(\vu,\theta)} = \frac{ (-i)^{2/\coef}}{\delta^{2+2/\coef}} \Upsilon_\inn^{[l]} e^{l\frac{\alpha \pi}{2 \coef \delta}}
e^{il(\theta + \alpha \delta^{-1} \hat F(\vu,\theta))},\qquad l<0
\end{equation}
and
\begin{equation}\label{relationDelta0Deltainn_2}
\Upsilon_\inn^{[l]} e^{il\xi_\inn(\vs(\vu),\theta)} = \Upsilon_\inn^{[l]} e^{l\frac{\alpha \pi}{2 \coef \delta}} 
e^{il(\theta + \alpha \delta^{-1} F_\inn(\vu,\theta))},\qquad l<0.
\end{equation}
Therefore, from~\eqref{relationDelta0Deltainn_1} and~\eqref{relationDelta0Deltainn_2} we can rewrite $\Delta_0^{<0}$ and $\Delta_\inn$ as:
\begin{equation*}
\begin{aligned}
\Delta_0^{<0}(\vu,\theta) &=   (-i)^{2/\coef}\delta^{-2-2/\coef}P(\vu,\theta) 
\sum_{l<0}
\Upsilon_\inn^{[l]} e^{l\frac{\alpha \pi}{2 \coef \delta}} e^{il(\theta + \alpha \delta^{-1} \hat F(\vu,\theta))},\\
\Delta_\inn(\vu,\theta) &=\delta^{-2}P_\inn(\vs(\vu),\theta) \sum_{l<0}\Upsilon_\inn^{[l]} e^{l\frac{\alpha \pi}{2 \coef \delta}} e^{il(\theta + 
\alpha \delta^{-1} F_\inn(\vu,\theta))}.
\end{aligned}
\end{equation*}

We will not bound $|\Delta_0^{<0}(\vu,\theta)-\Delta_\inn(\vu,\theta)|$ directly. First, we will study the difference 
$|\Delta_0^{<0}(\vu,\theta)-\Delta_\inn(f(\vu,\theta),\theta)|$, where the function $f(\vu,\theta)$ is defined through:
\begin{equation}\label{functionf}
F_\inn(f(\vu,\theta),\theta)=\hat F(\vu,\theta).
\end{equation}
To see that $f(\vu,\theta)$ is well defined we proceed as follows. 
Since $(\xi_\inn(\vs,\theta),\theta)$ is injective in $\Ein\times\Tout$, one has that $(F_\inn(\vu,\theta),\theta)$ is also invertible
in $s^{-1} \left (\Ein\right)\times \Tout \subset \Doutinter\times\Tout$, choosing $\bar{\dist}$ and $\dist$ adequately. 
Let $(G_\inn(w,\theta),\theta)$ be the inverse of $(F_\inn(\vu,\theta),\theta)$. Then the function
$$
f(\vu,\theta)= G_\inn(\hat F(\vu,\theta),\theta),
$$
clearly satisfies equation \eqref{functionf}.
We emphasize that, by~\eqref{relationDelta0Deltainn_1} and~\eqref{relationDelta0Deltainn_2}:
\begin{align*}
\Delta_\inn (f(\vu,\theta),\theta) =  &
\delta^{-2}P_\inn(\vs(f(\vu,\theta)),\theta)\sum_{l<0}\Upsilon_\inn^{[l]} e^{l\frac{\alpha \pi}{2 \coef \delta}} e^{il(\theta + \alpha \delta^{-1} \hat F(\vu,\theta))}
\\ =& \frac{\delta^{2/\coef}P_\inn(\vs(f(\vu,\theta)),\theta)}{(-i)^{2/\coef}P(\vu,\theta) } \Delta_0^{<0}(\vu,\theta).
\end{align*}
All these considerations yield to the following decomposition of $\Delta_\inn-\Delta_0$:
\begin{equation}\label{Deltainn-Delta0}
\begin{aligned}
\Delta_\inn(\vu,\theta) - \Delta_0(\vu,\theta)=& \Delta_\inn(\vu,\theta)- \Delta_\inn (f(\vu,\theta),\theta)  -\Delta_0^{\geq 0}(\vu,\theta) \\
&+\Delta_\inn (f(\vu,\theta),\theta) \left (1 - \frac{(-i)^{2/\coef}P(\vu,\theta)}{\delta^{2/\coef}P_\inn(\vs(f(\vu,\theta)),\theta)}\right ).
\end{aligned}
\end{equation}
Now we proceed to bound each term in the above equality for $\vu=\vu_+$. 
For that we need a more precise knowledge about 
$f(\vu_+,\theta) = G_{\inn}(\hat F(\vu_+,\theta),\theta)$.  
First we compute $\hat F(\vu_+,\theta)$
which, straightforwardly from Lemma~\ref{lemxixiin}, is:
$$
\hat F(\vu_+,\theta) = i \frac{\pi}{2\coef} - i\delta \dist + \mathcal{O}(\delta \log \dist) = u_+ + \mathcal{O}(\delta \log \dist).
$$
Now we deal with $G_\inn$. By Remark~\ref{rmkchange-s} about $\vs(\vu)$ and using definition of $\xi_\inn(\vs,\theta)$ in Theorem~\ref{thmdiffinner}
$$
F_\inn(\vu,\theta) = u+ \alpha^{-1} \delta \coef^{-1} (c+\alpha L_0)\log (\vs(\vu)) + \alpha^{-1} \delta \varphi(\vs(\vu),\theta) +
\mathcal{O}\left (\big(\coef u- i\pi/2\big )^3\right ).
$$
consequently, since by bound~\eqref{P1inntheorem} of $|\varphi(\vs,\theta)| \leq K |\vs|^{-1}$, we have that
$$
F_\inn(\vu,\theta) = u + \delta \mathcal{O}(\log (\vs(\vu))) + \mathcal{O}\left (\big(\coef u- i\pi/2\big )^3\right ).
$$
Therefore, the inverse $(G_\inn(w,\theta),\theta)$ of $(F_\inn(\vu,\theta),\theta)$ also satisfies
$$
G_\inn(w,\theta) = w + \delta \mathcal{O}(\log (\vs(\vw))) + \mathcal{O}\left (\big(\coef w- i\pi/2\big )^3\right ).
$$
Recall that $\vs(\vu_+)\approx -i\coef \dist$ (see~\eqref{su+}). 
It is clear from the above considerations that
\begin{equation}\label{f-u+}
|f(\vu_+,\theta) -u_+| \leq K \delta \log \dist.
\end{equation}
In fact we have a more sharp bound of $|f(\vu_+,\theta) -u_+|$. 
Indeed, on the one hand, by using item 2 of Lemma~\ref{lemxixiin}
$$
|\hat F(\vu_+,\theta) - F_\inn(\vu_+,\theta)|=\frac{\delta}{\alpha} |\eta (\vu_+,\theta) |\leq K \delta \dist^{-1}
$$
and on the other hand, using~\eqref{f-u+} and the mean's value theorem as well, one has that
$$
|F_\inn(f(\vu_+,\theta),\theta)-F_\inn(\vu_+,\theta)| \geq \frac{1}{2} |f(\vu_+,\theta) -u_+|
$$
if $\dist$ is sufficiently large. 
Therefore, since $F_\inn(f(\vu,\theta),\theta) = \hat F(\vu,\theta)$:
\begin{equation}\label{f-u+sharp}
|f(\vu_+,\theta) -u_+| \leq 2|\hat F(\vu_+,\theta) - F_\inn(\vu_+,\theta)| \leq M \delta \dist^{-1}.
\end{equation}

Once we have bound $f(\vu_+,\theta) -u_+$ we proceed to bound  $\Delta_\inn(\vu_+,\theta)- \Delta_\inn (f(\vu_+,\theta),\theta)$.
We claim that, for $\vu_\lambda= \vu_+ +\lambda(f(\vu_+,\theta)-\vu_+)$, $\lambda\in [0,1]$, 
\begin{equation}\label{boundpartialuDeltainn}
\left |\partial_{\vu} \Delta_\inn(\vu_\lambda,\theta) \right | \leq K \delta^{-3} \dist^{2/\coef} e^{-\alpha \dist}.
\end{equation}
Indeed, by definition~\eqref{DeltaApproxDeltain} of $\Delta_\inn$, using Corollary~\ref{bounddiffinnerexp} 
and that $\partial_{\vu} s(\vu)=-\delta\frac{s^2(\vu)}{\cosh^{2}(\coef \vu)}$ one obtains:
\begin{equation}\label{exprpartialuDeltainn}
\left |\partial_{\vu} \Delta_\inn(\vu,\theta) \right |= \delta^{-2} \left |\partial_\vs \Delta \psiin (\vs(\vu),\theta) \partial_\vu \vs(\vu)\right |
\leq K \delta^{-1}e^{\im \xi_\inn(\vs(\vu),\theta)}  \frac{|\vs(\vu)|^{2+2/\coef}}{|\cosh^{2}(\coef \vu)|}.
\end{equation}
We have to control the terms in the above inequality when $\vu=\vu_\lambda$. By~\eqref{f-u+sharp}
we have that $\vu_\lambda= \vu_+ +\mathcal{O}(\delta \dist^{-1})$. 
Therefore, by expressions~\eqref{su+} and~\eqref{coshsinhu+} of $\vs(\vu_+)$ and $\cosh(\coef \vu_+)$ one obtains:
$$
|\vs(\vu_\lambda)| =\mathcal{O}(\dist),\qquad |\cosh (\vu_\lambda)| =\mathcal{O}(\dist \delta).
$$
Moreover, 
$$
\left | \im \xi_\inn(\vs(\vu),\theta) - \im \xi_\inn(\vu_+,\theta)\right |\leq 
\int_{0}^1 |\partial_\vs \im \xi_\inn(\vs(\vu_\lambda),\theta) ||\partial_{\vu}\vs(\vu_\lambda)|\, d\lambda \leq \frac{K}{\dist}.
$$
Then, by item 2 of Lemma~\ref{lemxixiin} which relates $\xi_\inn(\vs(\vu_+),\theta)$ with $\xi(\vu_+,\theta)$ 
and expression~\eqref{imxiu+} of $\im \xi(\vu_+,\theta)$ one gets: 
\begin{equation}\label{imxiinnu+}
\begin{aligned}
\im \xi_\inn(\vs(\vu_+),\theta) &=-i\alpha \kappa - \im \left [\coef^{1-} (c+\alpha L_0) \left (\log \delta +i\frac{\pi}{2}\right )\right ]+
\mathcal{O}(1) \\ &= 
-i\alpha \kappa + \mathcal{O}(1),
\end{aligned}
\end{equation}
where in the last equality we have used that $L_0\in \mathbb{R}$ (see Theorem~\ref{thmdifpartsolinjective}). Bound~\eqref{boundpartialuDeltainn}
follows from~\eqref{exprpartialuDeltainn} and previous considerations.

By the mean's value theorem and using bounds~\eqref{boundpartialuDeltainn} and~\eqref{f-u+sharp}
\begin{equation}\label{boundDelta1_1}
\left | \Delta_\inn(\vu_+,\theta)- \Delta_\inn (f(\vu_+,\theta),\theta)\right | \leq K \frac{\delta^{-2}}{\kappa^{1-2/\coef}}e^{-\alpha \dist} =
K \frac{\delta^{-2 + \alpha \kappa_0}}{\kappa^{1-2/\coef}}
\end{equation}
where we have used again that $\kappa=\kappa_0\log(1/\delta)$. 

Now we deal with $\Delta^{\geq 0}_0$, the second term in the decomposition~\eqref{Deltainn-Delta0} of $\Delta_\inn-\Delta_0$. 
On the one hand, we recall that in the conservative case $\Upsilon^{[0]}=0$, and in the dissipative one we take $\param=\param_*(\delta)$, so that:
$|\Upsilon^{[0]}|= |a_1|\delta^{a_2}e^{-\frac{a_3\pi}{2\coef\delta}}$,
for some $a_1$, $a_2\in\mathbb{R}$ and $a_3>0$. On the other hand, from the definition~\eqref{defUpsilon0lpos} of $\Upsilon_0^{[l]}$ and
the expression of $\xi(\vu_+,\theta)$ in Lemma~\ref{lemxixiin} one has that:
$$
\left|\Upsilon_0^{[l]} e^{il\xi(\vu_+,\theta)}\right|\leq K\delta^{-2-2/\coef}e^{-l\left (\frac{\alpha\pi}{\coef\delta} - \alpha \kappa -M\right )}
$$
for certain constant (independent of $\kappa$) $M$.
Hence, using definition~\eqref{defP} of $P$,
that $|\cosh(\coef\vu_+)|\leq K\delta\dist$, bound~\eqref{boundp1-diff} of $P_1$ and that $\dist = \dist_0 \log (1/\delta)$, we obtain:
\begin{equation}\label{boundDelta1_2}
|\Delta_0^{\geq 0}(\vu_+,\theta)| \leq K(\delta\dist)^{\frac{2}{\coef}}\left(|a_1|\delta^{a_2}e^{-\frac{a_3\pi}{2\coef\delta}}+
\delta^{-2-2/\coef-\alpha\dist_0}e^{-\frac{\alpha\pi}{\coef\delta}}\right),
\end{equation}
where we recall that $a_3>0$. 

We finally deal with the third term in~\eqref{Deltainn-Delta0}. Using definitions~\eqref{defP} of $P$ and $P_\inn$ and expressions~\eqref{su+}
and~\eqref{coshsinhu+} of $\vs(\vu_+)$ and $\cosh(\coef \vu_+)$ one readily obtains: 
$$
\left | 1 - \frac{(-i)^{2/\coef}P(\vu,\theta)}{\delta^{2/\coef}P_\inn(\vs(f(\vu,\theta)),\theta)}\right |\leq \frac{K}{\dist}.
$$
In addition, by Corollary~\ref{bounddiffinnerexp}
$$
|\Delta_\inn(\vu_+,\theta)| =\delta^{-2} |\Delta\psiin(\vs(\vu_+),\theta)| \leq K |\vs(\vu_+)|^{2/\coef} e^{\im \xi_\inn(\vs(\vu_+))} 
\leq K \delta^{-2} \dist^{2/\coef} e^{-\alpha \dist},
$$
where in the last inequality we have used expression~\eqref{imxiinnu+} of $\im \xi_\inn(\vs(\vu_+),\theta)$.
Therefore, since $\kappa=\kappa_0\log(1/\delta)$ and using also bound~\eqref{boundDelta1_1}
\begin{equation}\label{boundDelta1_3}
|\Delta_\inn (f(\vu_+,\theta),\theta)| \left | 1 - \frac{(-i)^{2/\coef}P(\vu,\theta)}{\delta^{2/\coef}P_\inn(\vs(f(\vu,\theta)),\theta)}\right |
\leq K \frac{\delta^{-2}}{\dist^{1-2/\coef}} e^{-\alpha \dist}=  K \frac{\delta^{-2+\alpha \kappa_0}}{\dist^{1-2/\coef}}.
\end{equation}

In conclusion, using bounds \eqref{boundDelta1_1}, \eqref{boundDelta1_2} and \eqref{boundDelta1_3} in \eqref{Deltainn-Delta0} we obtain:
$$|\Delta_\inn(\vu_+,\theta)-\Delta_0(\vu_+,\theta)|\leq K\frac{\delta^{-2+\alpha\dist_0}}{\dist^{1-\frac{2}{\coef}}}.$$
Using this bound and bound \eqref{boundpartmatch} in \eqref{rewriteDelta1} we obtain:
$$|\Delta_1(\vu_+,\theta)|\leq \frac{K}{\dist^2}\delta^{-1-\gamma}+K\frac{\delta^{-2+\alpha\dist_0}}{\dist^{1-\frac{2}{\coef}}}.$$
Then we just need to recall that $1-\gamma>\alpha\dist_0$ by hypothesis, so that $\delta^{-1-\gamma}<\delta^{-2+\alpha\dist_0}$, and we obtain 
bound \eqref{boundDelta1rough-inner}.
\end{proof}

\section{The inner equation. Theorems~\ref{thminner} and~\ref{thmdiffinner}}\label{sec:innergeneral}
In this section we present an exhaustive sketch of the proofs of Theorems~\ref{thminner}, in Section~\ref{sec:inner} and Theorem~\ref{thmdiffinner}
in Section~\ref{sec:diffinner}. We refer to the interested reader to~\cite{CastejonPhDThesis} where all the details are provided.

The inner equation was introduced in Section~\ref{subsec:derivationinner} in~\eqref{PDE-innershort} as $\Linner(\psi_\inn)=\Minner(\psi_\inn,0)$.
\subsection{Existence and properties of $\psi_{\inn}^{\uns,\sta}$}\label{sec:inner}
In this section we will prove Theorem~\ref{thminner}.
As we pointed out in equation~\eqref{eqinverses}, the operator $\Ginner^\uns$ defined in~\eqref{defGuns} is a right inverse of the linear operator $\Linner$
(see~\eqref{defopLinner} for its definition). Thus, the inner equation~\eqref{PDE-innershort} can be written as the following fixed point equation:
\begin{equation}\label{innerfixedpoint}
\psiin^\uns=\tilde\Minner^\uns(\psiin^\uns),
\end{equation}
where:
\begin{equation}\label{defopMinnertilde}
 \tilde\Minner^\uns(\phi)=\Ginner^\uns\circ\Minner(\phi,0),
\end{equation}
and $\Minner$ is defined in \eqref{defopMinner}. The proof of Theorem \ref{thminner} relies on proving that the operator $\tilde\Minner^\uns$
has a fixed point in a suitable Banach space.

In Section~\ref{subsec:Bspaces} we define such Banach space and provide some technical properties of,
the operator $\Ginner^{\uns}$ and the functions $\hat F$, $\hat G$, $\hat H$
(defined in~\eqref{notationFGHhat}) and their derivatives that will be used also in
Sections~\ref{subsec:statementDiffin} and~\ref{subsec:mmatchtilde}.

\subsubsection{Banach spaces and technical lemmas}\label{subsec:Bspaces}
For $\phi:\Din{\uns}\times\Tout\to\mathbb{C}$, writing $\phi(\vs,\theta)=\sum_{l\in\mathbb{Z}}\phi^{[l]}(\vs)e^{il\theta}$, we define the norms:
$$
\|\phi^{[l]}\|_n^\uns:=\sup_{\vs\in\Din{\uns}}|\vs^n\phi(\vs)|,\qquad
 \|\phi\|_{n,\ost}^\uns:=\sum_{l\in\mathbb{Z}}\|\phi^{[l]}\|_n^\uns e^{|l|\ost},
$$
and the Banach space $\Bsin{\uns}{n}$:
$$
 \Bsin{\uns}{n}:=\{\phi:\Din{\uns}\times\Tout\to\mathbb{C}\,:\, \phi\textrm{ is analytic,}\,\|\phi\|_{n,\ost}^\uns<\infty\}.
$$
We also consider the following norm:
$$
\llfloor\phi\rrfloor_{n,\ost}^\uns:=\|\phi\|_{n,\ost}^\uns+\|\partial_\vs\phi\|_{n+1,\ost}^\uns+\|\partial_\theta\phi\|_{n+1,\ost}^\uns,
$$
and the corresponding Banach space:
$$
 \Bsinfloor{\uns}{n}:=\{\phi:\Din{\uns}\times\Tout\to\mathbb{C}\,:\, \phi\textrm{ is analytic,}\,\llfloor\phi\rrfloor_{n,\ost}^\uns<\infty\}.
$$
For the stable case, we define analogous norms $\|.\|_{n,\ost}^\sta$ and $\llfloor.\rrfloor_{n,\ost}^\sta$ and Banach spaces $\Bsin{\sta}{n}$ and $\Bsinfloor{\sta}{n}$, just replacing the domain $\Din{\uns}$ by $\Din{\sta}$.

The following result has Theorem~\ref{thminner} as an obvious corollary.
\begin{proposition}\label{propinner}
Let $\beta_0>0$ and $\distin>0$ be large enough. Equation~\eqref{innerfixedpoint} has two solutions $\psiin^\uns\in \Bsinfloor{\uns}{3}$ and
$\psiin^{\sta}\in \Bsinfloor{\sta}{3}$ and there exists $M>0$ such that:
$$\llfloor\psiin^{\uns,\sta}\rrfloor_{3,\ost}^{\uns,\sta}\leq M,\qquad \llfloor\psiin^{\uns,\sta}-\tilde\Minner^{\uns,\sta}(0)\rrfloor_{4,\ost}^{\uns,\sta}\leq M.$$
\end{proposition}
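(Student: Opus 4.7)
The plan is to apply the Banach contraction principle to the fixed point equation \eqref{innerfixedpoint} in a small closed ball of $\Bsinfloor{\uns}{3}$ around the origin; the stable case is treated by the symmetric argument on $\Din{\sta}$.

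The first and technically most delicate step is to establish the mapping property $\Ginner^{\uns}:\Bsin{\uns}{n}\to\Bsinfloor{\uns}{n-1}$ for $n>1+2/\coef$, with a norm bounded by a constant that can be made small by taking $\distin$ large for the pieces that will need to be contractive. This is proved coefficient-wise from the formula for ${\Ginner^{\uns}}^{[l]}$ in \eqref{defGuns}: one takes the integration path to be the straight half-line in $\Din{\uns}$ going to $\re w\to -\infty$, completed, if necessary, by a short vertical segment when $s$ lies near the upper boundary of $\Din{\uns}$. Along such paths the oscillatory factor $e^{-il\alpha(w-s)/\coef}$ is bounded in modulus by $1$ for the appropriate sign of $l$ and decays exponentially for the other sign, so combining with $|w|^{-n-2/\coef}$ in the integrand yields $|{\Ginner^{\uns}}^{[l]}(\phi)(s)|\leq K|s|^{1-n}\|\phi^{[l]}\|^{\uns}_{n}$ uniformly in $l$. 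Summing with the weights $e^{|l|\ost}$ produces the $\|\cdot\|^{\uns}_{n-1,\ost}$ bound, and the derivative norms entering $\llfloor\cdot\rrfloor^{\uns}_{n-1,\ost}$ are controlled either by differentiation under the integral or by the algebraic identity $\coef\,\partial_s\Ginner^{\uns}(\phi)=\phi+2s^{-1}\Ginner^{\uns}(\phi)+\alpha\,\partial_\theta\Ginner^{\uns}(\phi)$ that follows from $\Linner\circ\Ginner^{\uns}=\mathrm{Id}$.

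The second step is to estimate the source $\tilde\Minner^{\uns}(0)=\Ginner^{\uns}(\Minner(0,0))$. Since $\param$ is proportional to $\delta=0$, at $\psi=0$ the definition \eqref{defopMinner} collapses to $\Minner(0,0)=\hat F(0,0)+\tfrac{\coef+1}{b}s^{-1}\hat H(0,0)$, the other summands vanishing because they carry factors of $\partial_s\psi$, $\partial_\theta\psi$ or $\param$. At $\psi=0$ one has $\rho(0,s,0)=i\sqrt{(\coef+1)/b}\,s^{-1}+\mathcal{O}(|s|^{-3})$, and all entries of the argument of $f,g,h$ in \eqref{notationFGHhat} are of size $\mathcal{O}(|s|^{-1})$; since $f,g,h=\mathcal{O}_3$ in their variables, $\hat F(0,0),\hat H(0,0)=\mathcal{O}(|s|^{-3})$, whence $\|\Minner(0,0)\|^{\uns}_{4,\ost}\leq K$. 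Applying the mapping property then yields $\llfloor\tilde\Minner^{\uns}(0)\rrfloor^{\uns}_{3,\ost}\leq K$.

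The third step is the Lipschitz estimate for $\Minner(\cdot,0):B_\rho\subset\Bsinfloor{\uns}{3}\to\Bsin{\uns}{4}$ with constant $L(\rho,\distin)$ small for $\rho$ small and $\distin$ large. Each term of $\Minner(\psi_1,0)-\Minner(\psi_2,0)$ is written via the mean value theorem as an integral of the partial derivative with respect to $\psi$ acting on $\psi_1-\psi_2$ or its $s,\theta$ derivatives; the explicit prefactors $s^{-1},s^{-2}$ in \eqref{defopMinner}, together with the $\mathcal{O}_3$ vanishing of $f,g,h$, provide the extra decay in $|s|$. A technical point here is the analyticity of the square root in $\rho(\psi,s,0)$, which requires $-(\coef+1)/(bs^2)+2\psi$ to avoid the branch cut; this holds once $\rho$ is small compared to $\distin^{-2}$. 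Composing with $\Ginner^{\uns}$ and combining with the second step, for $\distin$ large $\tilde\Minner^{\uns}$ sends a suitable ball $B_M\subset\Bsinfloor{\uns}{3}$ into itself as a contraction, and the Banach fixed point theorem produces the unique $\psi_\inn^{\uns}\in B_M$. The sharper estimate $\llfloor\psi_\inn^{\uns}-\tilde\Minner^{\uns}(0)\rrfloor^{\uns}_{4,\ost}\leq M$ follows by writing $\psi_\inn^{\uns}-\tilde\Minner^{\uns}(0)=\Ginner^{\uns}[\Minner(\psi_\inn^{\uns},0)-\Minner(0,0)]$ and noting that the Lipschitz argument actually lands in $\Bsin{\uns}{5}$, so that the mapping $\Ginner^{\uns}:\Bsin{\uns}{5}\to\Bsinfloor{\uns}{4}$ closes the estimate. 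The main obstacle throughout is the careful matching of the Fourier weights $e^{|l|\ost}$ with the oscillatory factors $e^{-il\alpha(w-s)/\coef}$ in the integral defining $\Ginner^{\uns}$, ensuring convergence of the sum over $l$ and a uniform $\distin$-dependence of all constants.
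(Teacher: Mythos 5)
Your proposal follows essentially the same route as the paper: the identical fixed-point formulation $\psiin^{\uns}=\Ginner^{\uns}(\Minner(\psiin^{\uns},0))$, the same bound on the independent term $\tilde\Minner^{\uns}(0)$ via $\Minner(0,0)=\hat F(0,0)+\tfrac{\coef+1}{b}\vs^{-1}\hat H(0,0)$, the same Lipschitz estimate gaining one power of $|\vs|$ that is turned into a contraction through the embedding $\|\cdot\|^{\uns}_{3,\ost}\leq M\distin^{-1}\|\cdot\|^{\uns}_{4,\ost}$, and the same derivation of the refined $\llfloor\cdot\rrfloor^{\uns}_{4,\ost}$ bound by applying the Lipschitz estimate with $\phi_2=0$; the paper simply defers the technical properties of $\Ginner^{\uns}$ and of $\hat F,\hat G,\hat H$ to the thesis, which you sketch directly. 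Two harmless slips worth fixing: the extra factor $\rho=\mathcal{O}(|\vs|^{-1})$ in your own formula gives $\hat F(0,0)=\mathcal{O}(|\vs|^{-4})$, not $\mathcal{O}(|\vs|^{-3})$ (and this is exactly what is needed for $\|\Minner(0,0)\|^{\uns}_{4,\ost}\leq K$), and the convergence condition for the zeroth Fourier coefficient of $\Ginner^{\uns}$ is $n>1-2/\coef$, not $n>1+2/\coef$, which is why the paper can state the property for all $n\geq 1$.
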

The rest of the section is devoted to prove this proposition for the unstable case. The proof for the stable one is completely analogous.

We present (see~\cite{CastejonPhDThesis} for their proofs) some technical results.
\begin{enumerate}
\item \label{itemproposnorm-inner}\textit{Banach spaces}. Let $n_1,n_2 \geq 0$. There exists $M>0$ such that
\begin{enumerate}
\item if $n_1 \leq n_2$, then $\Bsin{\uns}{n_2}\subset\Bsin{\uns}{n_1}$ and
$$
\|\phi\|_{n_1,\ost}^{\uns}\leq \frac{M}{\distin^{n_2-n_1}}\|\phi\|_{n_2,\ost}^{\uns}.
$$
\item If $\phi_1\in\Bsin{\uns}{n_1}$, $\phi_2\in\Bsin{\uns}{n_2}$, then $\phi_1\phi_2\in\Bsin{\uns}{n_1+n_2}$ and
$$
\|\phi_1\phi_2\|_{n_1+n_2,\ost}^{\uns}\leq M\|\phi_1\|_{n_1,\ost}^{\uns}\|\phi_2\|_{n_2,\ost}^{\uns}.
$$
\end{enumerate}
\item \label{itempropsGinner}\textit{The operator $\Ginner^{\uns}$}. Let $n\geq1$ and $\phi\in\Bsin{\uns}{n}$. There exists a constant $M$ such that:
$$
\|\Ginner^\uns(\phi)\|_{n-1,\ost}^\uns\leq M\|\phi\|_{n,\ost}^\uns,\qquad \llfloor\Ginner^\uns(\phi)\rrfloor_{n-1,\ost}^\uns\leq M\|\phi\|_{n,\ost}^\uns.
$$
In addition, if $\phi^{[0]}(\vs)=0$, then $\|\Ginner^\uns(\phi)\|_{n,\ost}^\uns\leq M\|\phi\|_{n,\ost}^\uns$.
\item \textit{The nonlinear terms, $\hat{F}, \hat{G}$ and $\hat{H}$}. Recall that these functions are defined in~\eqref{notationFGHhat}.
Let $C$ be any constant. Then:
\begin{enumerate}
\item \label{itemFGHhats} If $\phi\in\Bsin{\uns}{3}$ with $\|\phi\|_{3,\ost}^\uns\leq C$, there exists $M>0$ such that
$$
\|\hat F(\phi,0)\|_{4,\ost}^\uns, \;\|\hat G(\phi,0)\|_{2,\ost}^\uns,\;\|\hat H(\phi,0)\|_{3,\ost}^\uns\leq M.
$$
\item
If $\phi\in\Bsin{\uns}{3}$ with $\|\phi\|_{3,\ost}^\uns\leq C$ and $\distin$ is sufficiently large, there exists $M>0$ and:
$$
\|D_\phi\hat F(\phi,0)\|_{2,\ost}^\uns,\; \|D_\phi\hat G(\phi,0)\|_{0,\ost}^\uns,\; \|D_\phi\hat H(\phi,0)\|_{1,\ost}^\uns\leq M.
$$
\item \label{itemdifFGHhats} If $\phi_1,\phi_2\in\Bsin{\uns}{3}$ is such that $\|\phi_i\|_{3,\ost}^\uns\leq C$, for $i=1,2$, there exists $M>0$ such that:
\begin{align*}
&\|\hat F(\phi_1,0)- \hat F(\phi_2,0)\|_{5,\ost}^\uns\leq M\|\phi_1-\phi_2\|_{3,\ost}^\uns,\\
&\|\hat G(\phi_1,0)- \hat G(\phi_2,0)\|_{3,\ost}^\uns\leq M\|\phi_1-\phi_2\|_{3,\ost}^\uns,\\
&\|\hat H(\phi_1,0)- \hat H(\phi_2,0)\|_{4,\ost}^\uns\leq M\|\phi_1-\phi_2\|_{3,\ost}^\uns.
\end{align*}
\end{enumerate}
\end{enumerate}

\subsubsection{The fixed point equation}
Finally we can proceed to prove the existence of a fixed point of the operator $\tilde\Minner^\uns$, given in \eqref{defopMinnertilde}, in the Banach space
$\Bsinfloor{\uns}{3}$. We first begin by studying the \textit{independent term} $\tilde\Minner^\uns(0)$.

\begin{lemma}\label{lemfirstitpsiin}
Let $\Minner^{\uns}$ be the operator defined in~\eqref{defopMinnertilde}.
There exists a constant $M$ such that:
$\llfloor\tilde\Minner^\uns(0)\rrfloor_{3,\ost}^\uns\leq M$.
\end{lemma}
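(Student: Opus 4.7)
The plan is to directly apply the technical estimates collected in Section~\ref{subsec:Bspaces} after identifying $\Minner(0,0)$ explicitly. Since $\tilde\Minner^\uns(0)=\Ginner^\uns(\Minner(0,0))$, and the operator $\Ginner^\uns$ both gains one power of $\vs$-decay and controls the ``floor'' norm via item (2) of the technical lemmas, it suffices to show that $\Minner(0,0)\in \Bsin{\uns}{4}$ with $\|\Minner(0,0)\|_{4,\ost}^\uns\le M$; applying $\Ginner^\uns$ with $n=4$ then yields $\llfloor\tilde\Minner^\uns(0)\rrfloor_{3,\ost}^\uns\le M$ at once.

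First I would evaluate $\Minner(\phi,\delta)$ from~\eqref{defopMinner} at $\phi=0$, $\delta=0$, keeping in mind that in the inner setting $|\param|\le\param^*\delta$ forces $\param=0$ as well. Every summand that carries a factor $\partial_\theta\phi$ or $\partial_\vs\phi$ drops out; the term $\coef\delta^2\vs^2\partial_\vs\phi$ disappears already through the $\delta^{2}$; and $\param\rho^{2}(\phi,\vs,\delta)$ vanishes because $\param=0$. What remains is simply
$$
\Minner(0,0)=\hat F(0,0)+\frac{\coef+1}{b}\,\vs^{-1}\hat H(0,0).
$$

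Next I would bound each of these two pieces in the $\|\cdot\|_{4,\ost}^\uns$ norm. Item 3(a) of the technical lemmas gives $\|\hat F(0,0)\|_{4,\ost}^\uns\le M$ and $\|\hat H(0,0)\|_{3,\ost}^\uns\le M$. Since $\sup_{\vs\in\Din{\uns}}|\vs\cdot\vs^{-1}|=1$, the function $\vs^{-1}$ lies in $\Bsin{\uns}{1}$ with norm $1$; hence the product rule in item 1(b) yields $\|\vs^{-1}\hat H(0,0)\|_{4,\ost}^\uns\le M$. Summing,
$$
\|\Minner(0,0)\|_{4,\ost}^\uns\le M.
$$

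Finally I would invoke item 2 of the technical lemmas with $n=4$, which gives the desired estimate
$$
\llfloor\tilde\Minner^\uns(0)\rrfloor_{3,\ost}^\uns
=\llfloor\Ginner^\uns(\Minner(0,0))\rrfloor_{3,\ost}^\uns
\le M\,\|\Minner(0,0)\|_{4,\ost}^\uns\le M.
$$
There is no substantive obstacle: this lemma is a pure bookkeeping step whose sole purpose is to provide the ``independent term'' bound needed in the contraction argument for~\eqref{innerfixedpoint} in Proposition~\ref{propinner}. The only point that calls for minimal attention is the index accounting, namely keeping track of which weight $n$ is used in each applied inequality so that the final floor norm falls exactly at $n=3$.
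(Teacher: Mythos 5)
Your proof is correct and follows essentially the same route as the paper: identify $\Minner(0,0)=\hat F(0,0)+\frac{\coef+1}{b}\vs^{-1}\hat H(0,0)$, bound it in $\|\cdot\|_{4,\ost}^\uns$ via item 3(a) (and the product property), and then apply the $\Ginner^\uns$ estimate of item 2 to pass to the floor norm at weight $3$. The only difference is that you spell out the vanishing of the remaining terms and the product-rule step, which the paper leaves implicit.
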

\begin{proof}
Noting that:
$$\Minner(0,0)=\hat F(0,0)+\frac{\coef+1}{b}\vs^{-1}\hat H(0,0),$$
by item~\ref{itemFGHhats} in Section~\ref{subsec:Bspaces} it is clear that $\|\Minner(0,0)\|_{4,\ost}^\uns\leq K$ for some constant $K$.
Then, since $\tilde\Minner^\uns(0)=\Ginner^\uns\circ\Minner(0,0)$, one just needs to use item~\ref{itempropsGinner} in
Section~\ref{subsec:Bspaces} to obtain the claim of the lemma.
\end{proof}

The next step is to find a Lipschitz constant of the operator $\tilde\Minner^\uns$.
\begin{lemma}\label{lemlipconstinner}
Let $\phi_1,\phi_2\in\Bsinfloor{\uns}{3}$ such that $\llfloor\phi_i\rrfloor_{3,\ost}^\uns\leq C$, $i=1,2$, for some constant $C$. Then, there exists a constant $M$ such that:
$$\llfloor\tilde\Minner^\uns(\phi_1)-\tilde\Minner^\uns(\phi_2)\rrfloor_{4,\ost}^\uns\leq M\llfloor\phi_1-\phi_2\rrfloor_{3,\ost}^\uns.$$
\end{lemma}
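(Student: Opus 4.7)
The plan is to write $\tilde\Minner^\uns(\phi_1)-\tilde\Minner^\uns(\phi_2)=\Ginner^\uns\bigl(\Minner(\phi_1,0)-\Minner(\phi_2,0)\bigr)$, bound the difference $\Minner(\phi_1,0)-\Minner(\phi_2,0)$ in the simpler norm $\|\cdot\|_{5,\ost}^\uns$ by splitting it into its five natural pieces and adding/subtracting in the bilinear and nonlinear terms, and finally apply $\Ginner^\uns$ using item~2 of Section~\ref{subsec:Bspaces}, which trades one power of $\vs$ decay for the stronger $\llfloor\cdot\rrfloor$ norm and yields exactly $\llfloor\,\cdot\,\rrfloor_{4,\ost}^\uns$.

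In detail, from the definition~\eqref{defopMinner} at $\delta=0$ I would decompose
$\Minner(\phi_1,0)-\Minner(\phi_2,0)$ into the linear transport piece $c\vs^{-1}\partial_\theta(\phi_1-\phi_2)$, the nonlinear pieces $\hat F(\phi_1,0)-\hat F(\phi_2,0)$ and $\frac{\coef+1}{b}\vs^{-1}\bigl(\hat H(\phi_1,0)-\hat H(\phi_2,0)\bigr)$, the $\hat G\partial_\theta$ piece
$$-\bigl(\hat G(\phi_1,0)-\hat G(\phi_2,0)\bigr)\partial_\theta\phi_1-\hat G(\phi_2,0)\bigl(\partial_\theta\phi_1-\partial_\theta\phi_2\bigr),$$
and the quasilinear block
$$\vs^2\bigl(2b(\phi_1-\phi_2)+\hat H(\phi_1,0)-\hat H(\phi_2,0)\bigr)\partial_\vs\phi_1+\vs^2\bigl(2b\phi_2+\hat H(\phi_2,0)\bigr)(\partial_\vs\phi_1-\partial_\vs\phi_2).$$
Each factor is then estimated by the product rule (item~1(b)) together with the a priori bounds (item~3(a),(b)) and the Lipschitz estimates (item~3(c)), using that $\|\partial_\vs\phi_i\|_{4,\ost}^\uns$ and $\|\partial_\theta\phi_i\|_{4,\ost}^\uns$ are controlled by $\llfloor\phi_i\rrfloor_{3,\ost}^\uns\le C$, and analogously for the differences by $\llfloor\phi_1-\phi_2\rrfloor_{3,\ost}^\uns$. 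A bookkeeping of weights shows every piece lies in $\Bsin{\uns}{5}$ with norm $\leq M\llfloor\phi_1-\phi_2\rrfloor_{3,\ost}^\uns$; for instance, in the quasilinear block, $\vs^2\cdot(\Bsin{\uns}{3})\cdot(\Bsin{\uns}{4})\subset\Bsin{\uns}{5}$ and $\vs^2\cdot(\Bsin{\uns}{3})\cdot(\Bsin{\uns}{4})\subset\Bsin{\uns}{5}$, while the $\hat G$ block lands in $\Bsin{\uns}{6}$, which is absorbed by item~1(a).

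The main obstacle is the weight counting for the quasilinear term $\vs^2(2b\psi+\hat H)\partial_\vs\psi$ and, to a lesser extent, the $\hat G\partial_\theta\psi$ term: here the factor $\vs^2$ (respectively the weak decay of $\hat G\in\Bsin{\uns}{2}$) must be compensated by the extra decay of $\partial_\vs\phi_i$ and $\partial_\theta\phi_i$ in $\Bsin{\uns}{4}$ built into the $\llfloor\cdot\rrfloor_{3,\ost}^\uns$ norm. This is precisely why one cannot work with $\|\cdot\|_{3,\ost}^\uns$ alone on the input side, and why the output gains one additional power of decay so that $\Ginner^\uns$ returns a function in $\Bsinfloor{\uns}{4}$. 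Once this weight matching is verified, invoking $\llfloor\Ginner^\uns(\psi)\rrfloor_{4,\ost}^\uns\leq M\|\psi\|_{5,\ost}^\uns$ finishes the proof.
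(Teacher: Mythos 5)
Your proposal is correct and follows essentially the same route as the paper: linearity of $\Ginner^\uns$, the identical decomposition of $\Minner(\phi_1,0)-\Minner(\phi_2,0)$ into transport, nonlinear, $\hat G\partial_\theta$ and quasilinear blocks, the same weight bookkeeping via items 1 and 3 of Section~\ref{subsec:Bspaces}, and the final application of $\llfloor\Ginner^\uns(\cdot)\rrfloor_{4,\ost}^\uns\leq M\|\cdot\|_{5,\ost}^\uns$. No gaps.
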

\begin{proof}
First we note that since $\Ginner^\uns$ is linear:
 $$\tilde\Minner^\uns(\phi_1)-\tilde\Minner^\uns(\phi_2)=\Ginner^\uns(\Minner(\phi_1,0)-\Minner(\phi_2,0)).$$
Hence, by item~\ref{itempropsGinner} in Section~\ref{subsec:Bspaces}, we just need to prove:
\begin{equation}\label{difMinnerphi12}
\|\Minner(\phi_1,0)-\Minner(\phi_2,0)\|_{5,\ost}^\uns\leq K\llfloor\phi_1-\phi_2\rrfloor_{3,\ost}^\uns.
\end{equation}
Now, by definition \eqref{defopMinner} of $\Minner$, we decompose:
 \begin{align*}
\Minner(\phi_1,0)-&\Minner(\phi_2,0)=c\vs^{-1}\partial_\theta(\phi_1-\phi_2)+\hat F(\phi_1,0)- \hat F(\phi_2,0)\\
&+\frac{\coef+1}{b}\vs^{-1} \left[\hat H(\phi_1,0)- \hat H(\phi_2,0)\right]-\left[\hat G(\phi_1,0)- \hat G(\phi_2,0)\right]\partial_\theta\phi_1\\
&- \hat G(\phi_2,0)\partial_\theta(\phi_1-\phi_2)+\vs^2\left[2b\phi_2+\hat H(\phi_2,0)\right]\partial_\vs(\phi_1-\phi_2)\\
&+\vs^2\left[2b(\phi_1-\phi_2)+\hat H(\phi_1,0)- \hat H(\phi_2,0)\right]\partial_\vs\phi_1.
 \end{align*}
One just needs to use the properties in Section~\ref{subsec:Bspaces} exposed in item~\ref{itemproposnorm-inner},
items~\ref{itemFGHhats} and~\ref{itemdifFGHhats}, and take into account that:
 $$\|\phi_1-\phi_2\|_{3,\ost}^\uns\leq \llfloor\phi_1-\phi_2\rrfloor_{3,\ost}^\uns,\qquad \|\partial_\theta(\phi_1-\phi_2)\|_{4,\ost}^\uns\leq \llfloor\phi_1-\phi_2\rrfloor_{3,\ost}^\uns,$$
 $$\|\partial_\vs(\phi_1-\phi_2)\|_{4,\ost}^\uns\leq \llfloor\phi_1-\phi_2\rrfloor_{3,\ost}^\uns,$$
and then \eqref{difMinnerphi12} is obtained easily.
\end{proof}

\begin{proof}[End of the proof of Proposition \ref{propinner}]
Let $\phi_1,\phi_2\in B(2\llfloor\tilde\Minner^\uns(0)\rrfloor_{3,\ost}^\uns)\subset\Bsinfloor{\uns}{3}$. By using item~\ref{itemproposnorm-inner}
in Section~\ref{subsec:Bspaces} and Lemma~\ref{lemlipconstinner} we obtain:
$$
\llfloor\tilde\Minner^\uns(\phi_1)-\tilde\Minner^\uns(\phi_2)\rrfloor_{3,\ost}^\uns\leq\frac{K}{\distin}\llfloor\tilde\Minner^\uns(\phi_1)
-\tilde\Minner^\uns(\phi_2)\rrfloor_{4,\ost}^\uns\leq \frac{K}{\distin}\llfloor\phi_1-\phi_2\rrfloor_{3,\ost}^\uns.
$$
Hence, for $\distin$ sufficiently large, $\Minner^\uns$ is contractive and:
$$
\tilde\Minner^\uns:B(2\llfloor\tilde\Minner^\uns(0)\rrfloor_{3,\ost}^\uns)\rightarrow B(2\llfloor\tilde\Minner^\uns(0)\rrfloor_{3,\ost}^\uns),
$$
so that it has a unique fixed point $\psiin^\uns\in B(2\llfloor\tilde\Minner^\uns(0)\rrfloor_{3,\ost}^\uns)$. In other words, $\psiin^\uns$ satisfies
equation~\eqref{innerfixedpoint}, and $\llfloor\psiin^\uns\rrfloor_{3,\ost}^\uns\leq2\llfloor\tilde\Minner^\uns(0)\rrfloor_{3,\ost}^\uns\leq K$ by Lemma~\ref{lemfirstitpsiin}.
To finish the proof, using Lemma~\ref{lemlipconstinner} again, we conclude:
$$
\llfloor\psiin^\uns-\tilde\Minner^\uns(0)\rrfloor_{4,\ost}^\uns=
\llfloor\tilde\Minner^\uns(\psiin^\uns)-\tilde\Minner^\uns(0)\rrfloor_{4,\ost}^\uns\leq K\llfloor\psiin^\uns\rrfloor_{3,\ost}^\uns\leq K.
$$
\end{proof}

\subsection{The difference $\Delta\psiin$}\label{sec:diffinner}
In this section we provide the proof of Theorem~\ref{thmdiffinner} which deals with the form of $\Delta \psiin=\psiin^{\uns}-\psiin^{\sta}$.
As in Section~\ref{sec:inner} we refer to the reader to~\cite{CastejonPhDThesis} for the details.

\subsubsection{Preliminary considerations}\label{subsec:diffinner-pre}
As we explained in Section~\ref{subsec:introdiffinner}, since $\psiin^\uns$ and $\psiin^\sta$ are solutions of the same
equation~\eqref{PDE-inner}, subtracting $\psiin^\uns$ and $\psiin^\sta$ and using the mean value theorem, one obtains that
$\Delta\psiin=\psiin^\uns-\psiin^\sta$ satisfies equation~\eqref{PDE-difference-intro}:
\begin{align}\label{PDE-difference}
-\alpha\partial_\theta\Delta\psiin&+\coef\partial_\vs\Delta\psiin-2\vs^{-1}\Delta\psiin\nonumber\\
&=a_1(\vs,\theta)\Delta\psiin+a_2(\vs,\theta)\partial_\vs\Delta\psiin+(c\vs^{-1}+a_3(\vs,\theta))\partial_\theta\Delta\psiin.
\end{align}
Denoting $\psi_\lambda=(\psiin^\uns+\psiin^\sta)/2+\lambda(\psiin^\uns-\psiin^\sta)/2$, the functions $a_i$ are:
\begin{eqnarray}
a_1(\vs,\theta)&=&\frac{1}{2}\int_{-1}^1\partial_\psi\hat F(\psi_\lambda,0)d\lambda+
\frac{\coef+1}{2b}\vs^{-1}\int_{-1}^1\partial_\psi\hat H(\psi_\lambda,0)d\lambda\nonumber\\
&&-\frac{1}{2}\int_{-1}^1\partial_\psi\hat G(\psi_\lambda,0)\partial_\theta\psi_\lambda d\lambda+
b\vs^2(\partial_\vs\psiin^\uns+\partial_\vs\psiin^\sta)\nonumber\\
&&+\frac{1}{2}\vs^2\int_{-1}^1\partial_\psi\hat H(\psi_\lambda,0)\partial_\vs\psi_\lambda d\lambda,\label{defa1}\\
a_2(\vs,\theta)&=&b\vs^2(\psiin^\uns+\psiin^\sta)+\frac{1}{2}\vs^2\int_{-1}^1\hat H(\psi_\lambda,0)d\lambda\label{defa2}\\
a_3(\vs,\theta)&=&-\frac{1}{2}\int_{-1}^1\hat G(\psi_\lambda,0)d\lambda\label{defa3}.
\end{eqnarray}
We recall that $\hat F$, $\hat G$ and $\hat H$ are defined in~\eqref{notationFGHhat} and that the difference $\Delta\psiin$ is defined
for $\vs\in\Ein=\Din{\uns}\cap\Din{\sta}$ (see Figure~\ref{figEin}) and $\theta\in\Tout$.

We already argued in Section~\ref{subsec:introdiffinner} that $\Delta\psiin$ can be written as:
$$
\Delta \psiin(\vs,\theta) = \Pin(\vs,\theta) \kintilde(\xi_{\inn}(\vs,\theta),\theta),
$$
being $\kintilde(\tau)$ a $2\pi-$periodic function, $\Pin$ a particular
solution of the equation~\eqref{PDE-difference} and $\xi_\inn$ is a solution of the homogeneous PDE
equation~\eqref{PDE-k-intro}:
\begin{equation}\label{PDE-k}
-\alpha\partial_\theta k+\coef\partial_\vs k=a_2(s,\theta)\partial_\vs k +(c\vs^{-1}+a_3(\vs,\theta))
\partial_\theta k
\end{equation}
such that $(\xi_\inn(\vs,\theta),\theta)$ is injective in $\Ein\times\Tout$.

To take advantage of the perturbative setting, we look for $\Pin$ and $\xi_\inn$
of the form:
\begin{align}
\Pin(\vs,\theta) &= s^{2/d} (1+\Pin_1(\vs,\theta)),\label{formaPin}\\
\xi_\inn(\vs,\theta) &= \theta+\coef^{-1}\alpha\vs+\coef^{-1}(c+\alpha L_0)\log\vs+\varphi(\vs,\theta)\label{formaxinn}.
\end{align}
It can be easily checked that $\Pin_1$ has to be a solution of:
\begin{equation}\label{PDE-P1}
\begin{aligned}
 -\alpha\partial_\theta \Pinsmall+\coef\partial_\vs \Pinsmall=&(a_1+2\coef^{-1}\vs^{-1}a_2)(1+\Pinsmall)+a_2\partial_\vs \Pinsmall \\&+(c\vs^{-1}+a_3)\partial_\theta \Pinsmall
\end{aligned}
\end{equation}
and, denoting,
\begin{equation}\label{defa2bar}
\bar a_2(\vs,\theta)=a_2(\vs,\theta)-\coef\vs^{-1}L_0,
\end{equation}
$\varphi$ has to be a solution of:
\begin{align}\label{PDE-phi}
-\alpha\partial_\theta \varphi+\coef\partial_\vs\varphi=&\coef^{-1}\alpha\bar a_2+\coef^{-1}(c+\alpha L_0)\vs^{-1}a_2+a_3
+a_2\partial_\vs\varphi \nonumber\\ &+(c\vs^{-1}+a_3)\partial_\theta\varphi.
\end{align}
First, note that in the left-hand side of equations \eqref{PDE-phi} and \eqref{PDE-P1} we have the same linear operator, namely:
$$
\Ldiffin(\phi)=-\alpha\partial_\theta\phi+\coef\partial_\vs\phi.
$$
Moreover, $\varphi$ and $\Pinsmall$ are  defined in the same domain $\Ein\times\Tout$. Now, to solve equation \eqref{PDE-phi} we consider the operator:
\begin{align*}
\oprhsphi(\phi)=&\coef^{-1}\alpha\bar a_2(\vs,\theta)+\coef^{-1}(c+\alpha L_0)\vs^{-1}a_2(\vs,\theta)+a_3(\vs,\theta)\\
&+a_2(\vs,\theta)\partial_\vs\phi+(c\vs^{-1}+a_3(\vs,\theta))\partial_\theta\phi,
\end{align*}
and to solve equation~\eqref{PDE-P1}:
\begin{equation}\label{defoprhsP}
\begin{aligned}
\oprhsP(\phi)=&(a_1(\vs,\theta)+2\coef^{-1}\vs^{-1}a_2(\vs,\theta))(1+\phi)+a_2(\vs,\theta)\partial_\vs \phi\\&+(c\vs^{-1}+a_3(\vs,\theta))\partial_\theta \phi.
\end{aligned}
\end{equation}
Then equation~\eqref{PDE-phi} and~\eqref{PDE-P1} can be written respectively as:
\begin{equation}\label{PDE-phiP1short}
 \Ldiffin(\varphi)=\oprhsphi(\varphi),  \qquad \Ldiffin(\Pinsmall)=\oprhsP(\Pinsmall).
\end{equation}

Note that both equations in~\eqref{PDE-phiP1short} can be rewritten as fixed point equations using a suitable right inverse of the operator
$\Ldiffin$.

Let us denote $s_0=-i \distin$.
Then we define the following right inverse of $\Ldiffin$, which shall denote by $\Gdiffin$, as the operator acting on functions $\phi$ given by:
$$
\Gdiffin(\phi)(\vs,\theta)=\sum_{l\in\mathbb{Z}}\Gdiffin^{[l]}(\phi)(\vs)e^{il\theta},
$$
where:
\begin{align*}
 \Gdiffin^{[l]}(\phi)(\vs)&=\int_{\vs_0}^\vs e^{-il\alpha(w-\vs)}\phi^{[l]}(w)dw, &\qquad \textrm{if } l>0,\\
 \Gdiffin^{[l]}(\phi)(\vs)&=\int_{-i\infty}^\vs  e^{-il\alpha(w-\vs)}\phi^{[l]}(w)dw, &\qquad \textrm{if } l\leq 0.
\end{align*}
One can easily see that $\Linner\circ\Gdiffin=\textrm{Id}$.

\subsubsection{Banach Spaces, properties of $a_1,a_2, a_3$ and of the linear operator $\Gdiffin$}
Now we shall introduce the Banach spaces in which we will solve equations~\eqref{PDE-phiP1short}.
These spaces and norms are basically the same as in Section~\ref{subsec:Bspaces}, but restricted to $\Ein=\Din{\uns}\cap\Din{\sta}$.

For $\phi:\Ein\times\Tout\to\mathbb{C}$, writing $\phi(\vs,\theta)=\sum_{l\in\mathbb{Z}}\phi^{[l]}(\vs)e^{il\theta}$, we define the norms:
\begin{align*}
 &\|\phi^{[l]}\|_n:=\sup_{\vs\in\Ein}\,|\vs^n\phi(\vs)|,\qquad \|\phi\|_{n,\ost}:=\sum_{l\in\mathbb{Z}}\|\phi^{[l]}\|_n e^{|l|\ost}, \\
&\llfloor\phi\rrfloor_{n,\ost}:=\|\phi\|_{n,\ost}+\|\partial_\vs\phi\|_{n+1,\ost}+\|\partial_\theta\phi\|_{n+1,\ost},
\end{align*}
Then we define the Banach spaces:
\begin{align*}
 \Bsdiffin{n}&:=\{\phi:\Ein\times\Tout\to\mathbb{C}\,:\, \phi\textrm{ is analytic,}\,\|\phi\|_{n,\ost}<\infty\},\\
 \Bsdiffinfloor{n}&:=\{\phi:\Ein\times\Tout\to\mathbb{C}\,:\, \phi\textrm{ is analytic,}\,\llfloor\phi\rrfloor_{n,\ost}<\infty\}.
\end{align*}
These Banach spaces and norms satisfy the same properties stated in item~\ref{itemproposnorm-inner} in Section~\ref{subsec:Bspaces}.
We will use them without mention.
\begin{lemma}\label{lema123}
Consider the functions $a_i(\vs,\theta)$, $i=1,2,3$ defined respectively in~\eqref{defa1}, \eqref{defa2} and~\eqref{defa3}, and the function $\bar a_2(\vs,\theta)$ defined in~\eqref{defa2bar}. There exists a constant $M$ such that:
$$
\|a_1\|_{2,\ost}\leq M,\quad \|a_2\|_{1,\ost}\leq M \quad \|\bar a_2^{[0]}\|_2\leq M,\quad \|a_3\|_{2,\ost}\leq M.
$$
\end{lemma}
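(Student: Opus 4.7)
My plan is to verify the four bounds in the stated order, treating the first three as Banach-space bookkeeping and reserving the main effort for the last. Throughout I use that by Proposition~\ref{propinner}, $\llfloor\psiin^{\uns,\sta}\rrfloor_{3,\ost}^{\uns,\sta}\le M$; since $\Ein\subset\Din{\uns}\cap\Din{\sta}$, restriction yields $\llfloor\psiin^{\uns,\sta}\rrfloor_{3,\ost}\le M$, and hence $\llfloor\psi_\lambda\rrfloor_{3,\ost}\le M$ uniformly in $\lambda\in[-1,1]$.

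For $\|a_3\|_{2,\ost}\le M$, apply item~(a) of the nonlinear-terms section to get $\|\hat G(\psi_\lambda,0)\|_{2,\ost}\le M$ and integrate in $\lambda$. For $\|a_2\|_{1,\ost}\le M$, note that $\psiin^{\uns,\sta}\in\Bsdiffin{3}$ gives $s^{2}\psiin^{\uns,\sta}\in\Bsdiffin{1}$ by multiplicativity, and $\|\hat H(\psi_\lambda,0)\|_{3,\ost}\le M$ gives $\|s^{2}\hat H(\psi_\lambda,0)\|_{1,\ost}\le M$. For $\|a_1\|_{2,\ost}\le M$, bound the five summands individually using the derivative estimates $\|\partial_\psi\hat F(\psi_\lambda,0)\|_{2,\ost}$, $\|\partial_\psi\hat G(\psi_\lambda,0)\|_{0,\ost}$, $\|\partial_\psi\hat H(\psi_\lambda,0)\|_{1,\ost}\le M$ together with $\|\partial_\theta\psi_\lambda\|_{4,\ost},\|\partial_s\psi_\lambda\|_{4,\ost}\le M$ and the embedding $\Bsdiffin{n_2}\hookrightarrow\Bsdiffin{n_1}$ for $n_1\le n_2$; each summand then lies in $\Bsdiffin{n}$ for some $n\ge 2$.

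The substantive part is $\|\bar a_2^{[0]}\|_{2}\le M$, which asserts the precise asymptotic $a_2^{[0]}(s)=\coef L_0/s+\mathcal{O}(|s|^{-2})$. I would extract this leading term as follows. Using $\hat H(\psi_\lambda,0)=\hat H(0,0)+\int_0^1\partial_\psi\hat H(\mu\psi_\lambda,0)\,d\mu\,\psi_\lambda$ together with Theorem~\ref{thminner} (which gives $\psiin^{\uns,\sta}-\tilde\Minner^{\uns,\sta}(0)\in\Bsin{\uns,\sta}{4}$), I decompose $a_2$ into a sum of explicit leading pieces and remainders whose zero-th Fourier modes visibly lie in $\Bsdiffin{2}$ (because of the extra two powers of $1/s$ gained either from the Taylor remainder or from $\psiin^{\uns,\sta}-\tilde\Minner^{\uns,\sta}(0)$). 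The leading $1/s$ terms of the zero-th mode come from averaging in $\theta$ the function $\hat H(0,0)(s,\theta)=h(\rho_0\cos\theta,\rho_0\sin\theta,s^{-1},0,0)$ with $\rho_0^2=-(\coef+1)/(bs^2)$, together with the explicit leading behaviour of $\tilde\Minner^{\uns,\sta}(0)=\Ginner^{\uns,\sta}(\Minner(0,0))$ obtained by evaluating the operator $\Ginner^{\uns,\sta}$ on $\hat F(0,0)+\tfrac{\coef+1}{b}s^{-1}\hat H(0,0)$. Summing these contributions produces a specific coefficient $\tilde a_2^{[0]}$ in front of $1/s$.

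The main obstacle is the identification $\tilde a_2^{[0]}=\coef L_0$ where $L_0$ is the constant of Theorem~\ref{thmdifpartsolinjective}, since that constant was originally defined at the level of the outer problem. Matching the two definitions requires comparing the asymptotic produced here with the explicit formula recorded in~\rmkL. Once this matching is verified, $\bar a_2^{[0]}=a_2^{[0]}-\coef L_0/s$ satisfies the required decay $\mathcal{O}(|s|^{-2})$ and the bound follows. Everything else in the lemma is straightforward Banach-algebra manipulation of the estimates already proved in Section~\ref{subsec:Bspaces}.
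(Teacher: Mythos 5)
Your treatment of the first three bounds is correct and coincides with the paper's (which dismisses them as straightforward consequences of Proposition~\ref{propinner} and the estimates on $\hat F,\hat G,\hat H$), and your extraction of the leading coefficient $a_0=\lim_{|\vs|\to\infty}\vs\,a_2^{[0]}(\vs)$ from the explicit pieces $b\vs^2\big[\Ginner^\uns(\Minner(0,0))+\Ginner^\sta(\Minner(0,0))\big]+\vs^2\hat H(0,0)$ plus an $\mathcal{O}(\vs^{-2})$ remainder is exactly the paper's first step. The problem is the step you yourself flag as ``the main obstacle'': the identification $a_0=\coef L_0$. In your proposal this is only named, not proved --- you defer it to ``comparing the asymptotic produced here with the explicit formula recorded in~\rmkL'' and then say ``once this matching is verified\dots''. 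Since $\|\bar a_2^{[0]}\|_2\leq M$ is literally equivalent to $a_2^{[0]}(\vs)=\coef L_0\vs^{-1}+\mathcal{O}(\vs^{-2})$, that identification is the entire content of the fourth bound, and leaving it unexecuted is a genuine gap.

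The paper closes this gap by a different mechanism, and it is worth knowing what it is: it does not compute either constant. Recall that $L_0$ is defined through the outer problem as a double limit of $\delta^{-1}l_2^{[0]}(\vu)\tanh^{-1}(\coef\vu)$, where $l_2$ is built from $r_1^{\uns}+r_1^{\sta}$ and $H$. Writing $r_1^{\uns,\sta}=\delta^{-2}\psi^{\uns,\sta}$, decomposing $\psi^{\uns,\sta}=\psiin^{\uns,\sta}+\psi_1^{\uns,\sta}$, and invoking Theorem~\ref{thmmatching} to control the matching errors $\psi_1^{\uns,\sta}$, one gets $l_2(\vu(\vs),\theta)=\coef^{-1}a_2(\vs,\theta)+\mathcal{O}(\delta^{1-\gamma})$ for $\vs\in\Dmchin{\uns}\cap\Dmchin{\sta}$; restricting to $|\vs|\leq K\delta^{(\gamma-1)/2}$ turns the error into $\mathcal{O}(\vs^{-2})$, and the double limit defining $L_0$ then returns $\coef^{-1}a_0$. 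Your alternative --- matching two explicit formulas, one computed on the inner side and one quoted from~\rmkL --- is not obviously wrong, but it would require actually carrying out both computations and verifying they agree, a nontrivial calculation the paper deliberately avoids; moreover, since $L_0$ is defined via the outer solutions near the singularity, any honest derivation of its ``explicit formula'' already encodes information of matching type. As written, your proof of the key inequality is incomplete.
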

\begin{proof}
Recalling that by Proposition \ref{propinner} we have that $\psiin^\uns\in\Bsinfloor{\uns}{3}$, using bounds of $\hat{F}, \hat{G}$ and $\hat{H}$
in Section~\ref{subsec:Banach-match}, it is straightforward to prove the bounds for $a_1,a_2$ and $a_3$.

The bound for $\bar a_2^{[0]}$, the
mean of $\bar a_2$ is more involved. We will give an sketch of the proof, the details are in~\cite{CastejonPhDThesis}. Let us denote:
$$
a_0=\lim_{\substack{\vs\in\Ein\\|\vs|\to\infty}}\vs a_2^{[0]}(\vs).
$$
From the definition~\eqref{defa2} of $a_2$ and Theorem~\ref{thminner} (which gives some properties of the functions
$\psiin^\uns$ and $\psiin^\sta$) one obtains that, for all $(\vs,\theta) \in \Ein\times\Tout$,
\begin{equation*}
a_2(\vs,\theta)=b\vs^2\left[\Ginner^\uns(\Minner(0,0))(\vs,\theta)+\Ginner^\sta(\Minner(0,0))(\vs,\theta)\right]+\vs^2\hat H(0,0)+\mathcal{O}(\vs^{-2}),
\end{equation*}
where $\Ginner^{\uns,\sta}$ are defined in~\eqref{defGuns} and $\Minner$ in~\eqref{defopMinner}. From this expression,
one can see that $a_0$ is well defined. From definitions \eqref{notationFGHhat} of $\hat F$ and $\hat H$ one checks that, for some constant $K$:
$$
|a_2^{[0]}(\vs)-\vs^{-1}a_0|\leq \frac{K}{|\vs|^2},\qquad (\vs,\theta) \in \Ein\times\Tout.
$$

Since $\bar a_2(\vs,\theta) = a_2(\vs,\theta) - \coef \vs^{-1}L_0$, if we check that $a_0=\coef L_0$ the bound for $\bar a_2^{[0]}$
will hold true.
We state now the definition of $L_0$ introduced in~\cite{BCS16a}:
$$
L_0=\lim_{\vu\to i\frac{\pi}{2\coef}} \lim_{\delta\to0}\delta^{-1}l_2^{[0]}(\vu)\tanh^{-1}(\coef\vu),
$$
being $l_2^{0]}$ the mean of
\begin{equation}\label{defl2}
l_2(\vu,\theta)=-\frac{b}{\coef(1-\hetz^2(\vu))}(r_1^\uns+r_1^\sta)-\frac{\delta^p}{2\coef(1-\hetz^2(\vu))}\int_{-1}^1H(r_\lambda)d\lambda,
\end{equation}
where $H$ is defined in~\eqref{notationFGHbis}.

Let us consider $u(s)=d^{-1}\textrm{arctanh}(1/\delta s)$, as in~\eqref{change-s}.
From the definitions~\eqref{defl2} of $l_2(\vu,\theta)$ and~\eqref{defa2} of $a_2(\vs,\theta)$, recalling that:
$$
\psi^{\uns,\sta}(\vs,\theta)=\delta^2r_1^{\uns,\sta}(\vu(\vs),\theta),
$$
the fact that $\psi^{\uns,\sta}=\psiin^{\uns,\sta}+\psi_1^{\uns,\sta}$, using the bounds provided in Theorems~\ref{thmoutloc-innervariables}
and~\ref{thmmatching} for $\psi^{\uns,\sta}$ and $\psi_1^{\uns,\sta}$ respectively, and using formula~\eqref{notationFGHhat}
(which relates $H$ and $\hat H$) one can see, after some straightforward but tedious computations, that for $\vs\in\Dmchin{\uns}\cap\Dmchin{\sta}$:
$$
l_2(\vu(\vs),\theta)=\coef^{-1}a_2(\vs,\theta)+\mathcal{O}(\delta^{1-\gamma}).
$$
In particular, taking $|\vs|\leq K\delta^{(\gamma-1)/2}$, $\vs\in\Dmchin{\uns}\cap\Dmchin{\sta}$, one has:
$$
l_2(\vu(\vs),\theta)=\coef^{-1}a_2(\vs,\theta)+\mathcal{O}(\vs^{-2}),
$$
so that:
$$
l_2^{[0]}(\vu(\vs))=\coef^{-1}a_2^{[0]}(\vs)+\mathcal{O}(\vs^{-2}).
$$
Since
$|a_2^{[0]}(\vs)-\vs^{-1}a_0|\leq K|\vs|^{-2}$,
this yields:
$$
l_2^{[0]}(\vu(\vs))=\coef^{-1}a_0\vs^{-1}+\mathcal{O}(\vs^{-2}).
$$
Taking $\vs=1/(\delta\tanh(\coef\vu))$ (with $|\vu-i\pi/(2\coef)|\leq \delta^{(1+\gamma)/2}$ so that $|\vs|\leq K\delta^{(\gamma-1)/2}$) we obtain:
$$l_2^{[0]}(\vu)=\delta\coef^{-1}a_0\tanh(\coef\vu)+\mathcal{O}(\delta^2\tanh^2(\coef\vu)).$$
Thus:
$$L_0=\lim_{\vu\to i\frac{\pi}{2\coef}}\lim_{\delta\to0}\delta^{-1}l_2^{[0]}(\vu)\tanh^{-1}(\coef\vu)=\coef^{-1}a_0,$$
and the claim is proved.
\end{proof}

Next lemma deals with the operator $\Gdiffin$ and its Fourier coefficients.

\begin{lemma}\label{lempropsGdiffin}
Let $n\geq1$ and $\phi\in\Bsin{\uns}{n}$. There exists a constant $M$ such that:
\begin{enumerate}
\item If $l\neq0$, $\|\Gdiffin^{[l]}(\phi)\|_{n}\leq \displaystyle\frac{M}{|l|}\|\phi^{[l]}\|_{n}$. Moreover
$\|\partial_\theta\Gdiffin(\phi)\|_{n,\ost},\|\partial_\vs\Gdiffin(\phi)\|_{n,\ost}\leq M\|\phi\|_{n}.$
\item If $n>1$, then $\|\Gdiffin^{[0]}(\phi)\|_{n-1}\leq M\|\phi^{[0]}\|_{n}$ and $\|\Gdiffin(\phi)\|_{n-1,\ost}\leq M\|\phi\|_{n,\ost}$. In addition,
 $\llfloor\Gdiffin(\phi)\rrfloor_{n-1,\ost}\leq M\|\phi\|_{n,\ost}$.
\end{enumerate}
\end{lemma}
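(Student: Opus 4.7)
The plan is to work Fourier-mode by Fourier-mode, since $\Gdiffin$ acts diagonally on the expansion $\phi=\sum_{l}\phi^{[l]}(\vs)e^{il\theta}$, so both parts of the statement reduce to pointwise estimates on the scalar integrals $\Gdiffin^{[l]}(\phi)(\vs)$ followed by resummation against the weights $e^{|l|\ost}$.

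For part 1 ($l\neq 0$) I would exploit the oscillatory factor $e^{-il\alpha(w-\vs)}$ by a single integration by parts: writing $e^{-il\alpha(w-\vs)}=\frac{1}{-il\alpha}\partial_w\,e^{-il\alpha(w-\vs)}$ produces
$$
\Gdiffin^{[l]}(\phi)(\vs)=\frac{\phi^{[l]}(\vs)}{-il\alpha}-\frac{e^{il\alpha(\vs-\vs_\star)}\phi^{[l]}(\vs_\star)}{-il\alpha}+\frac{1}{il\alpha}\int_{\vs_\star}^\vs e^{-il\alpha(w-\vs)}(\phi^{[l]})'(w)\,dw,
$$
where $\vs_\star=\vs_0=-i\distin$ if $l>0$ and $\vs_\star=-i\infty$ if $l\le 0$. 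The first term contributes the expected $\mathcal{O}(1/(|l|\,|\vs|^n))$ bound via $|\phi^{[l]}(\vs)|\le\|\phi^{[l]}\|_n/|\vs|^n$. The boundary term vanishes at infinity for $l\le 0$, and for $l>0$ is controlled by $|\phi^{[l]}(\vs_0)|\le\|\phi^{[l]}\|_n/\distin^n$ together with a bounded exponential factor along an appropriate contour inside $\Ein$. The integral remainder involving $(\phi^{[l]})'$ is handled by a Cauchy estimate: since $\phi^{[l]}$ is analytic in a small complex neighborhood of $\Ein$, one obtains $|(\phi^{[l]})'(w)|\le K\|\phi^{[l]}\|_n/|w|^{n+1}$, which integrates to another $\mathcal{O}(1/(|l|\,|\vs|^n))$ contribution. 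Summing the three pieces gives $\|\Gdiffin^{[l]}(\phi)\|_n\le M|l|^{-1}\|\phi^{[l]}\|_n$. The derivative bounds are then algebraic: differentiation under the integral yields the identity $\partial_\vs\Gdiffin^{[l]}(\phi)=\phi^{[l]}+il\alpha\,\Gdiffin^{[l]}(\phi)$, from which $\|\partial_\vs\Gdiffin(\phi)\|_{n,\ost}\le(1+M|\alpha|)\|\phi\|_{n,\ost}$, while the factor $il$ brought down by $\partial_\theta$ is absorbed by the $1/|l|$ gain to produce $\|\partial_\theta\Gdiffin(\phi)\|_{n,\ost}\le M\|\phi\|_{n,\ost}$.

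For part 2 ($l=0$) there is no oscillation, so I would bound the integral directly. Choosing a path from $-i\infty$ to $\vs$ that stays uniformly inside $\Ein$ (a vertical ray joined to a horizontal segment works), the pointwise bound $|\phi^{[0]}(w)|\le\|\phi^{[0]}\|_n/|w|^n$ with $n>1$ integrates to $\mathcal{O}(1/|\vs|^{n-1})$, giving $\|\Gdiffin^{[0]}(\phi)\|_{n-1}\le M\|\phi^{[0]}\|_n$. Combining with part 1 (and using item~1(a) of Section~\ref{subsec:Bspaces} to pass from $\|\cdot\|_n$ to $\|\cdot\|_{n-1}$ on the $l\neq 0$ modes, at the cost of a harmless $1/\distin$ factor) yields $\|\Gdiffin(\phi)\|_{n-1,\ost}\le M\|\phi\|_{n,\ost}$, and adjoining the derivative estimates from part 1 produces the full $\llfloor\cdot\rrfloor_{n-1,\ost}$ bound.

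The main technical obstacle is the $(\phi^{[l]})'$ term that appears after the integration by parts, since the norm $\|\cdot\|_n$ controls $\phi^{[l]}$ but not its derivative; the remedy is the Cauchy estimate above, applied on a slightly shrunk subdomain of $\Ein$. This is the same device already used implicitly in the analogous bound for $\Ginner^{\uns,\sta}$ listed in Section~\ref{subsec:Bspaces}, and it is what ultimately closes the $1/|l|$ gain in the oscillatory case.
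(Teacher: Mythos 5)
Your overall architecture---working Fourier mode by mode, extracting a $1/|l|$ gain for $l\neq0$, using the identity $\partial_\vs\Gdiffin^{[l]}(\phi)=\phi^{[l]}+il\alpha\,\Gdiffin^{[l]}(\phi)$ for the derivative bounds, estimating the non-oscillatory $l=0$ mode directly, and resumming against the weights $e^{|l|\ost}$---is the right one and is essentially the proof the paper defers to~\cite{CastejonPhDThesis}. However, the mechanism you use to produce the factor $1/|l|$ has a genuine gap. After the integration by parts you must bound $(\phi^{[l]})'$, while the hypothesis controls only the weighted sup norm of $\phi^{[l]}$ on $\Ein$. The Cauchy estimate $|(\phi^{[l]})'(w)|\leq K\|\phi^{[l]}\|_n|w|^{-n-1}$ requires a disk of radius comparable to $|w|$ centred at $w$ and contained in $\Ein$; this fails for $w$ near $\partial\Ein$ (near the vertex $-i\distin$ the inscribed radius is $O(1)$, and on the boundary it is zero). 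Passing to a shrunk subdomain does not repair this, because $\|\Gdiffin^{[l]}(\phi)\|_n$ is a supremum over all of $\Ein$; and in the applications (Propositions~\ref{propk} and~\ref{propP1}) the operator is applied to right-hand sides such as $\oprhsphi(\varphi)$ and $\oprhsP(\Pinsmall)$, for which only the $\|\cdot\|_{n,\ost}$ norm is available, so the hypothesis cannot be strengthened to include a derivative.

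The standard argument---the same one behind the analogous property of $\Ginner^{\uns}$ in item~2 of Section~\ref{subsec:Bspaces}---gets the $1/|l|$ without integrating by parts: choose the integration path inside $\Ein$ in the direction along which the kernel decays, so that $|e^{-il\alpha(w-\vs)}|\leq e^{-c\alpha|l|\,|w-\vs|}$, and use that along such a (vertical) path in the sector $\Ein$ one has $|w|\geq\sin\beta_0\,|\vs|$. Then
\[
|\Gdiffin^{[l]}(\phi)(\vs)|\leq\|\phi^{[l]}\|_n\int_0^\infty\frac{e^{-c\alpha|l|t}}{|w(t)|^n}\,dt\leq\frac{K}{|l|}\,\frac{\|\phi^{[l]}\|_n}{|\vs|^n},
\]
which is all of part~1 without ever differentiating $\phi^{[l]}$ (for the modes anchored at the finite basepoint $\vs_0$, split the integral at $t=|\vs|/2$). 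The rest of your argument---the ODE identity for $\partial_\vs$, absorbing the factor $il$ from $\partial_\theta$ into the $1/|l|$ gain, the $l=0$ estimate losing one power of $|\vs|$, and the passage from $\|\cdot\|_n$ to $\|\cdot\|_{n-1}$ on the $l\neq0$ modes---is correct as written. One caveat you inherit from the paper's formulas rather than introduce yourself: the decay of the kernel toward $-i\infty$ (hence the convergence of $\Gdiffin^{[l]}$ for $l\le0$ and the vanishing of your boundary term) depends on the sign conventions for $\alpha$ and the exponent, and with the formulas as printed the roles of $l>0$ and $l\le0$ appear interchanged; the signs should be checked before the paths are fixed.
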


\subsubsection{Existence and properties of $\varphi$}\label{subsec:statementDiffin}

Now we are going to prove the statements of Theorem~\ref{thmdiffinner} related to the existence and properties of $\varphi$.
Concretely we prove:
\begin{proposition}\label{propk}
Equation~\eqref{PDE-phiP1short} has a solution $\varphi$ which is $2\pi$-periodic in $\theta$ and
\begin{equation}\label{boundphi}
\|\varphi\|_{1,\ost}\leq M,\qquad \|\partial_\vs \varphi\|_{1,\ost}\leq M,\qquad \|\partial_\theta \varphi\|_{1,\ost}\leq M.
\end{equation}
In addition, $(\xi_\inn(\vs,\theta),\theta)$, is injective in $\Ein\times\Tout$, with $\xi_\inn$ defined by~\eqref{formaxinn}.
\end{proposition}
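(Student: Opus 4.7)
The strategy is to recast the PDE $\Ldiffin(\varphi)=\oprhsphi(\varphi)$ as a fixed-point equation in $\Bsdiffinfloor{1}$ by composing with the right inverse $\Gdiffin$ constructed above. Since $\Ldiffin\circ\Gdiffin=\mathrm{Id}$, any solution of $\varphi=\Gdiffin\circ\oprhsphi(\varphi)$ solves the original equation. I will show that the map $\mathcal{T}(\varphi):=\Gdiffin\circ\oprhsphi(\varphi)$ is a contraction on a closed ball of $\Bsdiffinfloor{1}$, provided $\distin$ is sufficiently large.

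First I would estimate the independent term $\mathcal{T}(0)=\Gdiffin\big(\coef^{-1}\alpha\bar a_2+\coef^{-1}(c+\alpha L_0)\vs^{-1}a_2+a_3\big)$. Here the subtraction that defines $\bar a_2=a_2-\coef\vs^{-1}L_0$ is crucial: by Lemma~\ref{lema123} we have $\|\bar a_2^{[0]}\|_2\leq M$, while $\|a_2\|_{1,\ost}\leq M$ controls the nonzero Fourier modes, and $\|\vs^{-1}a_2\|_{2,\ost}\leq M$, $\|a_3\|_{2,\ost}\leq M$. Splitting $\bar a_2$ into its mean plus nonzero modes and applying Lemma~\ref{lempropsGdiffin} mode-by-mode (item 2 for the mean, where one power of decay is lost, and item 1 for the $l\neq 0$ modes, where no decay is lost) one obtains $\llfloor\mathcal{T}(0)\rrfloor_{1,\ost}\leq M$. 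Without the subtraction $\coef\vs^{-1}L_0$ in $\bar a_2$, the mean $a_2^{[0]}$ of order $\vs^{-1}$ would produce a term of order $\log\vs$ under $\Gdiffin^{[0]}$, which explains the definition of $L_0$.

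Next, for the Lipschitz estimate, subtracting gives
\begin{equation*}
\oprhsphi(\varphi_1)-\oprhsphi(\varphi_2)=a_2\,\partial_\vs(\varphi_1-\varphi_2)+(c\vs^{-1}+a_3)\,\partial_\theta(\varphi_1-\varphi_2).
\end{equation*}
Using Lemma~\ref{lema123} and the algebra/inclusion properties of the norms, together with the definition $\llfloor\cdot\rrfloor_{1,\ost}=\|\cdot\|_{1,\ost}+\|\partial_\vs\cdot\|_{2,\ost}+\|\partial_\theta\cdot\|_{2,\ost}$, one gets $\|\oprhsphi(\varphi_1)-\oprhsphi(\varphi_2)\|_{3,\ost}\leq M\llfloor\varphi_1-\varphi_2\rrfloor_{1,\ost}$. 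Then Lemma~\ref{lempropsGdiffin} (item 2, losing one power in $\vs$) yields $\llfloor\mathcal{T}(\varphi_1)-\mathcal{T}(\varphi_2)\rrfloor_{2,\ost}\leq M\llfloor\varphi_1-\varphi_2\rrfloor_{1,\ost}$, and an inclusion estimate trades one power of $\vs$ for a factor $\distin^{-1}$, producing $\llfloor\mathcal{T}(\varphi_1)-\mathcal{T}(\varphi_2)\rrfloor_{1,\ost}\leq (M/\distin)\llfloor\varphi_1-\varphi_2\rrfloor_{1,\ost}$. For $\distin$ large this is a contraction and $\mathcal{T}$ maps the ball $B(2\llfloor\mathcal{T}(0)\rrfloor_{1,\ost})$ into itself; the Banach fixed point theorem produces $\varphi$ with the bounds~\eqref{boundphi}.

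Finally, for injectivity of $(\xi_\inn(\vs,\theta),\theta)$, since $\theta$ is preserved, it suffices to show $\xi_\inn(\cdot,\theta)$ is injective on $\Ein$. Differentiating \eqref{formaxinn} gives
\begin{equation*}
\partial_\vs\xi_\inn(\vs,\theta)=\coef^{-1}\alpha+\coef^{-1}(c+\alpha L_0)\vs^{-1}+\partial_\vs\varphi(\vs,\theta),
\end{equation*}
and by \eqref{boundphi} together with the fact that $|\vs|\geq\distin\cos\beta_0$ on $\Ein$, the last two terms are $\mathcal{O}(\distin^{-1})$, so $|\partial_\vs\xi_\inn|\geq\coef^{-1}\alpha/2$ for $\distin$ large. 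Given $\vs_1,\vs_2\in\Ein$ with $\xi_\inn(\vs_1,\theta)=\xi_\inn(\vs_2,\theta)$, integrating $\partial_\vs\xi_\inn$ along the segment between them (which lies in $\Ein$ by convexity of the domain) forces $\vs_1=\vs_2$. The main technical obstacle is the $\vs^{-1}$ decay of $a_2$ (rather than $\vs^{-2}$): the gain needed for contraction has to come entirely from the $\distin^{-1}$ produced by the inclusion between norms combined with the one-power loss in $\Gdiffin^{[0]}$, which is why the cancellation built into the definition of $\bar a_2$ via $L_0$ is essential.
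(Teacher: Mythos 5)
Your overall strategy (fixed point via $\Gdiffin$, the role of the subtraction defining $\bar a_2$ and of $L_0$, and the mean-value argument for injectivity) is the same as the paper's, and those parts are sound. However, there is a genuine gap in the functional-analytic setup: the claim $\llfloor\mathcal{T}(0)\rrfloor_{1,\ost}\leq M$ is false. By definition, $\llfloor\phi\rrfloor_{1,\ost}=\|\phi\|_{1,\ost}+\|\partial_\vs\phi\|_{2,\ost}+\|\partial_\theta\phi\|_{2,\ost}$, so you would need the derivatives of $\mathcal{T}(0)=\Gdiffin(\oprhsphi(0))$ to decay like $|\vs|^{-2}$. But the nonzero Fourier modes of $\bar a_2$ coincide with those of $a_2$ and only decay like $|\vs|^{-1}$ (Lemma~\ref{lema123} gives the extra decay only for the mean $\bar a_2^{[0]}$), so $\|\oprhsphi(0)\|_{2,\ost}$ is not available. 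Lemma~\ref{lempropsGdiffin} gives $\|\partial_\vs\Gdiffin(\phi)\|_{n,\ost}\leq M\|\phi\|_{n,\ost}$ with no gain of decay — indeed $\partial_\vs\Gdiffin^{[l]}(\phi)=\phi^{[l]}+il\alpha\,\Gdiffin^{[l]}(\phi)$ contains $\phi^{[l]}$ itself — so $\partial_\vs\mathcal{T}(0)$ genuinely decays only like $|\vs|^{-1}$ and $\|\partial_\vs\mathcal{T}(0)\|_{2,\ost}=\sup_{\Ein}|\vs^2\partial_\vs\mathcal{T}(0)|$ is infinite on the unbounded domain $\Ein$. Consequently $\mathcal{T}(0)\notin\Bsdiffinfloor{1}$, the ball $B(2\llfloor\mathcal{T}(0)\rrfloor_{1,\ost})$ is ill-defined, and the contraction cannot be run in $\Bsdiffinfloor{1}$. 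Note also that the statement itself only asserts $\|\partial_\vs\varphi\|_{1,\ost}\leq M$, not $\|\partial_\vs\varphi\|_{2,\ost}\leq M$, which is a hint that $\varphi$ should not be expected to lie in $\Bsdiffinfloor{1}$.

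The paper's fix, which you would need to adopt, is to run the contraction in the weaker space $\Bsdiffinfloor{0}$: one has $\|\mathcal{T}(0)\|_{1,\ost}$, $\|\partial_\vs\mathcal{T}(0)\|_{1,\ost}$, $\|\partial_\theta\mathcal{T}(0)\|_{1,\ost}\leq M$, hence $\llfloor\mathcal{T}(0)\rrfloor_{0,\ost}\leq M$ after trading one power of $\vs$ for $\distin^{-1}$; the Lipschitz estimate then reads $\llfloor\mathcal{T}(\phi_1)-\mathcal{T}(\phi_2)\rrfloor_{1,\ost}\leq M\llfloor\phi_1-\phi_2\rrfloor_{0,\ost}$ (the input is only controlled in $\llfloor\cdot\rrfloor_{0,\ost}$, so your $\|\cdot\|_{3,\ost}$ bound on $\oprhsphi(\phi_1)-\oprhsphi(\phi_2)$ must be weakened to $\|\cdot\|_{2,\ost}$), giving a contraction factor $M/\distin$ in $\llfloor\cdot\rrfloor_{0,\ost}$. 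The sharper bounds~\eqref{boundphi} are then recovered \emph{a posteriori} from the decomposition $\varphi=\mathcal{T}(0)+\bigl(\mathcal{T}(\varphi)-\mathcal{T}(0)\bigr)$, bounding the first term by~\eqref{indepB} and the second by the Lipschitz estimate at level $1$. Your injectivity argument is correct as written.
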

\begin{proof}
By means of $\Gdiffin$, we rewrite the equation for $\varphi$ in~\eqref{PDE-phiP1short} as a fixed point equation:
$$
\varphi=\Gdiffin\circ\oprhsphi(\varphi)=:\tilde\oprhsphi(\varphi).
$$
We will prove that $\tilde\oprhsphi$ is a contraction in a certain ball. We first claim that
\begin{equation}\label{indepB}
\|\tilde\oprhsphi(0)\|_{1,\ost}\leq M,\qquad \|\partial_\theta\tilde\oprhsphi(0)\|_{1,\ost}\leq M,\qquad\|\partial_\vs\tilde\oprhsphi(0)\|_{1,\ost}\leq M,
\end{equation}
and hence $\llfloor\tilde\oprhsphi(0)\rrfloor_{0,\ost}\leq M$.
\begin{remark}
Note that, although we can bound $\tilde\oprhsphi(0)$ using the norm $\|.\|_{1,\ost}$, we have to take the norm $\llfloor.\rrfloor_{0,\ost}$ since the bounds of the derivatives are with the norm $\|.\|_{1,\ost}$ too.
\end{remark}
Indeed, by Lemma~\ref{lema123} it is straightforward to see that
$\|\oprhsphi(0)\|_{1,\ost}\leq K$, which in particular, implies that, for all $l\in\mathbb{Z}$,
$\|\oprhsphi^{[l]}(0)\|_{1}\leq K$. Therefore, using Lemma~\ref{lempropsGdiffin}.
\begin{equation}\label{boundGoprhsphil}
\|\Gdiffin^{[l]}(\oprhsphi(0))\|_{1}\leq \frac{K}{|l|}\|\oprhsphi^{[l]}(0)\|_{1}\leq \frac{K}{|l|},\qquad l\neq 0
\end{equation}
However, again using Lemma~\ref{lema123}, for the zeroth Fourier coefficient, one has that
$\|\oprhsphi^{[0]}(0)\|_{2}\leq K$
which,  using item 2 of Lemma~\ref{lempropsGdiffin}, yields
\begin{equation}\label{boundGoprhsphi0}
\|\Gdiffin^{[0]}(\oprhsphi(0))\|_{1}\leq K\|\oprhsphi^{[0]}(0)\|_{2} \leq K.
\end{equation}
Since $\tilde\oprhsphi=\Gdiffin\circ\oprhsphi$, bounds~\eqref{boundGoprhsphil} and~\eqref{boundGoprhsphi0} imply that
$\|\tilde\oprhsphi(0)\|_{1,\ost}\leq K$.

Finally using item 4 of Lemma \ref{lempropsGdiffin} with bounds \eqref{boundGoprhsphil} we obtain that:
$$\|\partial_\theta\tilde\oprhsphi(0)\|_{1,\ost}\leq K,\qquad\|\partial_\vs\tilde\oprhsphi(0)\|_{1,\ost}\leq K.$$

Using the same tools, one can check that, if $\phi_1,\phi_2\in\Bsdiffinfloor{0}$ are such that $\llfloor\phi_i\rrfloor_{0,\ost}\leq C$ for some constant $C$, then there exists a constant $M$ such that:
\begin{equation}\label{prop-lipconst-B}
\llfloor\tilde\oprhsphi(\phi_1)-\tilde\oprhsphi(\phi_2)\rrfloor_{1,\ost}\leq M\llfloor\phi_1-\phi_2\rrfloor_{0,\ost}.
\end{equation}
In particular $\llfloor\tilde\oprhsphi(\phi_1)-\tilde\oprhsphi(\phi_2)\rrfloor_{0,\ost}\leq M\distin^{-1}\llfloor\phi_1-\phi_2\rrfloor_{0,\ost}$.

Bound~\eqref{prop-lipconst-B} and~\eqref{indepB} imply that, taking $\distin$ is sufficiently large, the operator $\tilde \oprhsphi$ satisfies that
$\tilde\oprhsphi:B(2\llfloor\tilde\oprhsphi(0)\rrfloor_{0,\ost})\to B(2\llfloor\tilde\oprhsphi(0)\rrfloor_{0,\ost})$, and it has a unique fixed point:
$$\varphi\in B(2\llfloor\tilde\oprhsphi(0)\rrfloor_{0,\ost})\subset\Bsdiffinfloor{0}.$$
By construction, $\varphi$ satisfies equation \eqref{PDE-phi} and, as we pointed out in Section~\ref{subsec:diffinner-pre}, $\xi_\inn$ defined
as~\eqref{formaxinn} satisfies equation \eqref{PDE-k}.

By the definition of the norm $\llfloor.\rrfloor_{n,\ost}$ and since $\llfloor\varphi\rrfloor_{0,\ost}\leq2\llfloor\tilde\oprhsphi(0)\rrfloor_{0,\ost}\leq K$
by bounds~\eqref{indepB}, one obtains the corresponding bounds to $\partial_{\vs}$ and $\partial_{\theta}$ in~\eqref{boundphi}. We point out that this does
not imply \eqref{boundphi} directly, but it can be used to prove this bound \textit{a posteriori} with the following argument.
Indeed, since $\varphi$ is the unique fixed point of $\tilde\oprhsphi$, we can write:
\begin{equation}\label{rewritevarphi}
\varphi=\tilde\oprhsphi(\varphi)=\tilde\oprhsphi(0)+\tilde\oprhsphi(\varphi)-\tilde\oprhsphi(0).
\end{equation}
On the one hand, by~\eqref{indepB} we already know that $\|\tilde\oprhsphi(0)\|_{1,\ost}\leq K$.
On the other hand, $\llfloor\varphi\rrfloor_{0,\ost}\leq2\llfloor\tilde\oprhsphi(0)\rrfloor_{0,\ost}\leq K$ by~\eqref{prop-lipconst-B}, we have:
$$\|\tilde\oprhsphi(\varphi)-\tilde\oprhsphi(0)\|_{1,\ost}\leq\llfloor\tilde\oprhsphi(\varphi)-\tilde\oprhsphi(0)\rrfloor_{1,\ost}\leq K\llfloor\varphi\rrfloor_{0,\ost}\leq K.$$
Then, from \eqref{rewritevarphi} it is clear that $\llfloor\varphi\rrfloor_{1,\ost}\leq K$ and~\eqref{boundphi} is proven.

Now we shall prove that $(\xi_\inn(\vs,\theta),\theta)$, with $\xi_\inn$ as in~\eqref{formaxinn},
is injective in $\Ein\times\Tout$. We first note that if $\vs_1$,$\vs_2\in\Ein$, then $\vs_\lambda=\vs_1+\lambda(\vs_2-\vs_1)\in\Ein$. Thus, $|\vs_\lambda|\geq \distin$.
Assume now that $\xi_\inn(\vs_1,\theta)=\xi_\inn(\vs_2,\theta)$ for some $\vs_1,\vs_2\in\Ein$. By the mean value theorem and the bounds for $\varphi$ we have that
\begin{align*}
0 &=|\vs_1-\vs_2| \left|\coef^{-1}\alpha+\int_0^1\left[\frac{\coef^{-1}(c+\alpha L_0)}{\vs_\lambda}+\partial_\vs\varphi(\vs_\lambda,\theta)\right]d\lambda\right| \\
&\geq |\vs_1-\vs_2|\left (\coef^{-1}\alpha-\frac{K}{\distin}\right ).
\end{align*}
Taking $\distin$ is large enough one concludes that $\vs_1=\vs_2$.
\end{proof}

\subsubsection{Existence and properties of $\Pinsmall$}
As we did in the previous section with $\varphi$, we now prove the properties for $\Pinsmall$ established in Theorem~\ref{thmdiffinner}
which we summarize below:
\begin{proposition}\label{propP1}
Equation~\eqref{PDE-phiP1short} has a $2\pi-$periodic in $\theta$ solution, $\Pinsmall$, such that, for some $M>0$,
\begin{equation}\label{boundP1-inner}
\llfloor\Pinsmall\rrfloor_{1,\ost}\leq M.
\end{equation}
Therefore, $\Pin$ given by~\eqref{formaPin}, satisfies that $\Pin(\vs,\theta)\neq0$, for all $(\vs,\theta)\in\Ein\times\Tout$.
\end{proposition}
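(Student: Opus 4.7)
The plan is to mirror the fixed-point argument used in the proof of Proposition~\ref{propk}, with the simplification that the independent term of the operator associated to $\Pinsmall$ lies in $\Bsdiffin{2}$ without any delicate treatment of its zeroth Fourier coefficient. Using the right inverse $\Gdiffin$ of $\Ldiffin$, equation~\eqref{PDE-phiP1short} for $\Pinsmall$ is rewritten as the fixed point equation
$$
\Pinsmall=\Gdiffin\circ\oprhsP(\Pinsmall)=:\tilde\oprhsP(\Pinsmall),
$$
with $\oprhsP$ the operator in~\eqref{defoprhsP}. I would apply the Banach fixed point theorem to $\tilde\oprhsP$ on a small ball of $\Bsdiffinfloor{0}$.

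For the independent term, note that $\oprhsP(0)=a_1+2\coef^{-1}\vs^{-1}a_2$. By Lemma~\ref{lema123}, both $a_1$ and $\vs^{-1}a_2$ belong to $\Bsdiffin{2}$, so $\|\oprhsP(0)\|_{2,\ost}\leq M$. Then items~2 and~4 of Lemma~\ref{lempropsGdiffin} give $\llfloor\tilde\oprhsP(0)\rrfloor_{1,\ost}\leq M$, in particular $\llfloor\tilde\oprhsP(0)\rrfloor_{0,\ost}\leq M$. Observe that, in contrast with the proof of Proposition~\ref{propk}, the whole $\oprhsP(0)$ (including its zeroth Fourier coefficient) lives in $\Bsdiffin{2}$, so no separate treatment of the mean is required.

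For the Lipschitz estimate, given $\phi_1,\phi_2\in\Bsdiffinfloor{0}$ with $\llfloor\phi_i\rrfloor_{0,\ost}\leq C$, the difference
\begin{align*}
\oprhsP(\phi_1)-\oprhsP(\phi_2)=&\,(a_1+2\coef^{-1}\vs^{-1}a_2)(\phi_1-\phi_2)+a_2\partial_\vs(\phi_1-\phi_2)\\
&+(c\vs^{-1}+a_3)\partial_\theta(\phi_1-\phi_2)
\end{align*}
can be bounded in $\|\cdot\|_{2,\ost}$ by $M\llfloor\phi_1-\phi_2\rrfloor_{0,\ost}$ using Lemma~\ref{lema123}, the product property of the norms and the definition of the floor-norm. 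Applying Lemma~\ref{lempropsGdiffin} then yields
$$
\llfloor\tilde\oprhsP(\phi_1)-\tilde\oprhsP(\phi_2)\rrfloor_{1,\ost}\leq M\llfloor\phi_1-\phi_2\rrfloor_{0,\ost},
$$
so that $\llfloor\tilde\oprhsP(\phi_1)-\tilde\oprhsP(\phi_2)\rrfloor_{0,\ost}\leq M\distin^{-1}\llfloor\phi_1-\phi_2\rrfloor_{0,\ost}$. Taking $\distin$ large enough, $\tilde\oprhsP$ sends $B(2\llfloor\tilde\oprhsP(0)\rrfloor_{0,\ost})\subset\Bsdiffinfloor{0}$ into itself and is contractive there, producing a unique fixed point $\Pinsmall$.

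To upgrade to the sharper estimate~\eqref{boundP1-inner} I would argue a posteriori, as in Proposition~\ref{propk}: writing $\Pinsmall=\tilde\oprhsP(0)+\big(\tilde\oprhsP(\Pinsmall)-\tilde\oprhsP(0)\big)$, the first summand is already bounded in $\llfloor\cdot\rrfloor_{1,\ost}$, and the second is bounded in $\llfloor\cdot\rrfloor_{1,\ost}$ by $M\llfloor\Pinsmall\rrfloor_{0,\ost}\leq M$ thanks to the Lipschitz bound above. Finally, $\llfloor\Pinsmall\rrfloor_{1,\ost}\leq M$ implies $|\Pinsmall(\vs,\theta)|\leq M|\vs|^{-1}\leq M\distin^{-1}<1$ for $\distin$ large, hence $1+\Pinsmall\neq 0$ on $\Ein\times\Tout$ and therefore $\Pin(\vs,\theta)=\vs^{2/\coef}(1+\Pinsmall(\vs,\theta))\neq 0$. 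The main technical obstacle is in fact already handled by the previous section: obtaining good bounds for $a_1,a_2,a_3$ (Lemma~\ref{lema123}); once these are in place, the contraction is essentially mechanical and strictly simpler than the one for $\varphi$.
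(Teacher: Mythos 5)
Your proof is correct and follows essentially the same route as the paper: the same fixed point equation $\Pinsmall=\Gdiffin\circ\oprhsP(\Pinsmall)$, with the independent term and the Lipschitz constant controlled through Lemma~\ref{lema123} and Lemma~\ref{lempropsGdiffin}, and you rightly observe that no special treatment of the zeroth Fourier coefficient is needed here. The only (harmless) difference is that the paper runs the contraction directly in $\Bsdiffinfloor{1}$, using the Lipschitz gain from $\llfloor\cdot\rrfloor_{1,\ost}$ to $\llfloor\cdot\rrfloor_{2,\ost}$, so that the bound $\llfloor\Pinsmall\rrfloor_{1,\ost}\leq 2\llfloor\tilde\oprhsP(0)\rrfloor_{1,\ost}$ comes out immediately and your a posteriori bootstrap step is unnecessary.
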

\begin{proof} Similarly as in the previous subsection, we first rewrite equation the equation for $\Pinsmall$ in~\eqref{PDE-phiP1short} as a fixed point equation:
$$
\Pinsmall=\Gdiffin\circ\oprhsP(\Pinsmall):=\tilde\oprhsP(\Pinsmall).
$$
Again, we prove that the operator $\tilde\oprhsP$ has a unique fixed point in a certain ball.
We have that, if $\phi_1,\phi_2\in\Bsdiffinfloor{1}$ such that $\llfloor\phi_i\rrfloor_{1,\ost}\leq C$ for some constant $C$, then
$$
\llfloor\tilde\oprhsP(0)\rrfloor_{1,\ost}\leq M,\qquad \llfloor\tilde\oprhsP(\phi_1)-\tilde\oprhsP(\phi_2)\rrfloor_{2,\ost}\leq M\llfloor\phi_1-\phi_2\rrfloor_{1,\ost}
$$
for some constant $M>0$. We can prove this properties using Lemma~\ref{lempropsGdiffin},
definition~\eqref{defoprhsP} of $\oprhsP$ and Lemma~\ref{lema123}.
As a consequence,
$$\llfloor\tilde\oprhsP(\phi_1)-\tilde\oprhsP(\phi_2)\rrfloor_{1,\ost}\leq \frac{K}{\distin} \llfloor\phi_1-\phi_2\rrfloor_{1,\ost}$$
Therefore, if $\distin$ sufficiently large, then $\tilde\oprhsP:B(2\llfloor\tilde\oprhsP(0)\rrfloor_{1,\ost})\to B(2\llfloor\tilde\oprhsP(0)\rrfloor_{1,\ost})$
and it has a unique fixed point:
$\Pinsmall\in B(2\llfloor\tilde\oprhsP(0)\rrfloor_{1,\ost})\subset \Bsdiffinfloor{1}$.
Since $\Pinsmall$ satisfies the fixed point equation $\Pinsmall=\tilde\oprhsP(\Pinsmall)$ it satisfies equation~\eqref{PDE-phiP1short},
and $\Pin(\vs,\theta)=\vs^{2/\coef}(1+\Pinsmall(\vs,\theta))$ satisfies equation~\eqref{PDE-difference}.

Bound \eqref{boundP1-inner} follows from the fact that,
$\llfloor \Pinsmall\rrfloor_{1,\ost}\leq2\llfloor\tilde\oprhsP(0)\rrfloor_{1,\ost}\leq K$.
\end{proof}

\subsubsection{End of the proof of Theorem~\ref{thmdiffinner}} 
We have that, for some periodic function $\kintilde$
\begin{equation}\label{formDeltapsiin}
\Delta\psiin(\vs,\theta)=\vs^{2/\coef}(1+\Pinsmall(\vs,\theta))\kintilde( \xi_\inn(\vs,\theta))
\end{equation}
with $\xi_\inn$ of the form~\eqref{formaxinn} and $\Pinsmall$ given in~\ref{propP1}.

It only remains to check that $\kintilde$ satisfies the properties stated in the theorem.
Since $\kintilde$ is $2\pi$-periodic, we can write it in its Fourier series:
$$\kintilde(\tau)=\sum_{l\in\mathbb{Z}}\Upsilon_\inn^{[l]}e^{il\tau}.$$
Now we note that by the definition of $\Delta\psiin=\psiin^{\uns}-\psiin^{\sta}$ and Theorem~\ref{thminner} we have for all $\vs\in\Ein$:
$$
|\Delta\psiin(\vs,\theta)|\leq |\psiin^\uns(\vs,\theta)|+|\psiin^\uns(\vs,\theta)|\leq \frac{K}{|\vs|^3}.
$$
In particular:
$$\lim_{\im\vs\to-\infty}\Delta\psiin(\vs,\theta)=0.$$
Since $\Delta\psiin(\vs,\theta)$ is defined for $\im\vs\to-\infty$, expression~\eqref{formDeltapsiin} of $\Delta\psiin$ implies that $\kintilde$ is
defined for $\im\tau\to-\infty$. Moreover:
$$\lim_{\im\tau\to-\infty}\kintilde(\tau)=0$$
and in particular $|\kintilde(\tau)|\leq M$.
This trivially implies that, on the one hand $\Upsilon_\inn^{[l]}=0$ if $l\geq 0$ and on the other hand 
$|\Upsilon_\inn^{[l]}\leq M$ if $l<0$.

The bounds for $\varphi$ and $\Pinsmall$ follow from the corresponding ones in Proposition~\ref{propk} and Proposition~\ref{propP1}, respectively. 
\section{The matching errors. Proof of Theorem \ref{thmmatching}}\label{sec:matching}
The main object of study of this section are the matching errors, defined in~\eqref{defpsi1us-intro} as:
\begin{equation}\label{defpsi1us}
\psi_1^\uns(\vs,\theta)=\psi^\uns(\vs,\theta)-\psiin^\uns(\vs,\theta),\qquad \psi_1^\sta(\vs,\theta)=\psi^\sta(\vs,\theta)-\psiin^\sta(\vs,\theta),
\end{equation}
where $\psi^{\uns,\sta}$ and $\psiin^{\uns,\sta}$ are given in Theorems~\ref{thmoutloc-innervariables} and~\ref{thminner} respectively.

\subsection{Banach spaces}
We present the Banach spaces we will work with, which are the same as in
Section~\ref{sec:inner}, but using the domains $\Dmchin{\uns,\sta}$ (see~\ref{Dmchindef}) instead.
For completeness we write them.
For $\phi:\Dmchin{\uns}\times\Tout\to\mathbb{C}$, with $\phi(\vs,\theta)=\sum_{l\in\mathbb{Z}}\phi^{[l]}(\vs)e^{il\theta}$, we define the norms:
\begin{align*}
&\|\phi\|_n^\uns:=\sup_{\vs\in\Dmchin{\uns}}|\vs^n\phi(\vs)|,\qquad
 \|\phi\|_{n,\ost}^\uns:=\sum_{l\in\mathbb{Z}}\|\phi^{[l]}\|_n^\uns e^{|l|\ost} \notag\\
&\llfloor\phi\rrfloor_{n,\ost}^\uns:=\|\phi\|_{n,\ost}^\uns+\|\partial_\vs\phi\|_{n+1,\ost}^\uns+\|\partial_\theta\phi\|_{n+1,\ost}^\uns, \notag
\end{align*}
and we endow the space of analytic functions with these norms:
\begin{align*}
 \Bsin{\uns}{n}&:=\{\phi:\Din{\uns}\times\Tout\to\mathbb{C}\,:\, \phi\textrm{ is analytic,}\,\|\phi\|_{n,\ost}^\uns<\infty\},\\
 \Bsinfloor{\uns}{n}&:=\{\phi:\Din{\uns}\times\Tout\to\mathbb{C}\,:\, \phi\textrm{ is analytic,}\,\llfloor\phi\rrfloor_{n,\ost}^\uns<\infty\}.
\end{align*}
For the stable case, we define analog norms $\|.\|_{n,\ost}^\sta$ and $\llfloor.\rrfloor_{n,\ost}^\sta$ and Banach spaces $\Bsin{\sta}{n}$ and $\Bsinfloor{\sta}{n}$, just replacing the domain $\Dmchin{\uns}$ by $\Dmchin{\sta}$.

We will prove the following proposition, which is equivalent to Theorem~\ref{thmmatching}.

\begin{proposition}\label{propmatching-inner2D}
Consider the functions $\psi_1^{\uns,\sta}(\vs,\theta)$ defined in~\eqref{defpsi1us}.
Then, $\psi_1^\uns\in\Bsinfloor{\uns}{2}$ and $\psi_1^\sta\in\Bsinfloor{\sta}{2}$. Moreover there exists a constant $M$ such that:
 $$\llfloor\psi_1^\uns\rrfloor_{2,\ost}^\uns\leq M\delta^{1-\gamma},\qquad\llfloor\psi_1^\sta\rrfloor_{2,\ost}^\sta\leq M\delta^{1-\gamma}.$$
\end{proposition}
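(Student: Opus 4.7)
The plan is to derive a fixed-point equation for $\psi_1^\uns$ on the matching domain $\Dmchin{\uns}$ and solve it by Banach contraction in the restriction of $\Bsinfloor{\uns}{2}$ to this domain. Subtracting equation~\eqref{PDE-psishort} (satisfied by $\psi^\uns$) and equation~\eqref{PDE-innershort} (satisfied by $\psiin^\uns$), one has
\[
\Linner(\psi_1^\uns) = R(\vs,\theta) + \mathcal{N}(\psi_1^\uns),
\]
where $R := \Minner(\psiin^\uns,\delta) - \Minner(\psiin^\uns,0)$ collects the $\delta$-dependent terms absent from the inner equation, and $\mathcal{N}(\phi) := \Minner(\psiin^\uns+\phi,\delta) - \Minner(\psiin^\uns,\delta)$ vanishes at $\phi=0$ and is essentially linear in $\phi$ via the mean value theorem. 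The source $R$ consists of $\dist\delta^2 \vs^2\partial_\vs\psiin^\uns$, $\param\rho^2(\psiin^\uns,\vs,\delta)$ with $\param=\mathcal{O}(\delta)$, and the $\delta$-corrections $\hat F(\psiin^\uns,\delta) - \hat F(\psiin^\uns,0)$ and its analogues for $\hat G,\hat H$; these are all $\mathcal{O}(\delta)$ because $f,g,h$ are $\mathcal{O}_3$ and $\hat F,\hat G,\hat H$ depend analytically on $\delta$. Using the bounds on $\psiin^\uns$ from Theorem~\ref{thminner}, a term-by-term estimate gives $\|R\|_{3,\ost}^\uns \leq M\delta$ on $\Dmchin{\uns}$, already well within the target $\delta^{1-\gamma}$.

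The crucial ingredient is a right inverse $\tilde{\Ginner}^\uns$ of $\Linner$ adapted to the bounded triangular domain $\Dmchin{\uns}$. Expanding in Fourier series in $\theta$, each mode reduces to a first-order linear ODE in $\vs$ with homogeneous solution $\vs^{2/\dist} e^{il\alpha\vs/\dist}$. I would fix the integration constant in each mode by prescribing the value at the outer vertex $\vs_1$: the a priori estimates of Theorems~\ref{thmoutloc-innervariables} and~\ref{thminner} give $|\psi_1^\uns(\vs_1,\theta)| \leq |\psi^\uns(\vs_1,\theta)| + |\psiin^\uns(\vs_1,\theta)| \leq 2M|\vs_1|^{-3}$, with $|\vs_1|\sim\delta^{\gamma-1}$ by \eqref{fitasj-inner2D}. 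Transporting this boundary datum along the homogeneous solution, the contribution to $\|\psi_1^\uns\|_{2,\ost}^\uns$ is bounded by
\[
\sup_{\vs\in\Dmchin{\uns}} |\vs|^2 \cdot \frac{|\vs|^{2/\dist}}{|\vs_1|^{2/\dist}} \cdot \frac{2M}{|\vs_1|^3} \;\leq\; \frac{CM}{|\vs_1|} \;\sim\; \delta^{1-\gamma},
\]
while $\|\tilde{\Ginner}^\uns(R)\|_{2,\ost}^\uns \leq M\|R\|_{3,\ost}^\uns \leq M\delta$ is smaller. Derivative bounds go through analogously, yielding the full $\llfloor\cdot\rrfloor_{2,\ost}^\uns$ estimate. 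So the $\delta^{1-\gamma}$ rate is driven by the boundary contribution, not the source.

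Finally, following the pattern of Lemma~\ref{lemlipconstinner}, the coefficients arising in $\mathcal{N}$ come from derivatives of $\hat F,\hat G,\hat H$ and products with $\psiin^\uns$, hence decay algebraically in $|\vs|$ and produce a small Lipschitz constant (controlled by $\distin^{-1}$ after composing with $\tilde{\Ginner}^\uns$). Therefore the map $\phi\mapsto\tilde{\Ginner}^\uns(R+\mathcal{N}(\phi))$ contracts on the ball of radius $2M\delta^{1-\gamma}$ in the restriction of $\Bsinfloor{\uns}{2}$ to $\Dmchin{\uns}$ for $\distin$ large, and its unique fixed point is $\psi_1^\uns$, with the asserted bound. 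The stable case is identical after reflecting. The main obstacle is expected to be the construction and analysis of $\tilde{\Ginner}^\uns$: unlike the right inverses of Sections~\ref{sec:inner} and~\ref{sec:diffinner} which exploited decay at $\re \vs\to\mp\infty$ to fix the integration constant automatically, here one must integrate on a bounded domain with a $\theta$-dependent initial datum at $\vs_1$, choose convenient integration paths inside $\Dmchin{\uns}$ for each Fourier mode, and maintain the weighted bound despite the growth factor $|\vs|^{2/\dist}$ present in the homogeneous solutions.
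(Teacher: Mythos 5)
Your overall architecture is the same as the paper's: write $\Linner(\psi_1^\uns)=\Mmatch^\uns(\psi_1^\uns)$ with $\Mmatch^\uns(\phi)=\Minner(\psiin^\uns+\phi,\delta)-\Minner(\psiin^\uns,0)$, split off the $\delta$-source $\Mmatch^\uns(0)$ (your $R$, bounded by $M\delta$ in $\|\cdot\|_{3,\ost}^\uns$ exactly as in Lemma~\ref{lemboundGmatchMmatch0}), transport the boundary data of the Fourier modes along the homogeneous solutions $\vs^{2/\coef}e^{il\alpha\vs/\coef}$ of $\Linner$, and identify that transport (the paper's $\Phi^\uns$) as the term responsible for the rate $\delta^{1-\gamma}$ via $|C^\uns_l|\leq K|\vs_j|^{-3}$ and $|\vs_j|\sim\delta^{\gamma-1}$. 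The final inversion of $\mathrm{Id}-\Delta\Mmatch^\uns$ is also as in the paper (which, rather than contracting on the small ball, combines the a priori bound $\llfloor\psi_1^\uns\rrfloor_{2,\ost}^\uns\leq K\dist^{-1}$ with the Lipschitz estimate of Lemma~\ref{lemnormMmatchtilde}; your variant works too, since any fixed point is $\psi_1^\uns$ by the uniqueness of Lemma~\ref{lemrewritepsi1uns}).

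There is, however, one concrete step that fails as you state it: prescribing the data of \emph{every} Fourier mode at the single vertex $\vs_1$. The obstruction is precisely the exponential factor you flag at the end. Transporting the $l$-th mode from $\vs_j$ produces the factor $e^{il\alpha(\vs-\vs_j)/\coef}$, whose modulus is $e^{-l\alpha\,\im(\vs-\vs_j)/\coef}$; it stays bounded by $1$ on all of $\Dmchin{\uns}$ only if $l\,\im(\vs-\vs_j)\geq 0$ there. In the triangle $\Dmchin{\uns}$ one has $\im\vs_1>0>\im\vs_2$, with $\im\vs_1-\im\vs_2$ of order $\delta^{\gamma-1}$, and $\vs_1$ (resp.\ $\vs_2$) is the vertex of maximal (resp.\ minimal) imaginary part. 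Hence basing a mode with $l>0$ at $\vs_1$ produces, near $\vs_2$, a factor of size $e^{l\alpha(\im\vs_1-\im\vs_2)/\coef}\sim e^{Cl\delta^{\gamma-1}}$, which overwhelms the algebraic bound $|C^\uns_l|\leq K|\vs_1|^{-3}$ (no better information on these boundary values is available), and the same growth ruins the kernel $e^{-il\alpha(w-\vs)/\coef}$ in the associated right inverse. The paper's construction (Lemmas~\ref{lemrewritepsi1uns}--\ref{lemboundPhi} and the operator $\Gmatch^\uns$) resolves this by splitting the base point according to the sign of $l$: modes with $l<0$ are anchored at $\vs_1$ and modes with $l\geq0$ at $\vs_2$, so that $\im\bigl(l(\vs-\vs_j)\bigr)\geq0$ throughout the domain and the exponential factors are uniformly bounded by $1$. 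With that correction (and the cosmetic fix $\vs^{2/\coef}e^{il\alpha\vs/\coef}$, not $\vs^{2/\dist}$), your argument goes through and coincides with the paper's proof.
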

The rest of this section is devoted to proving this result for the unstable case,
but the argument can be analogously done for the stable case.

\subsection{Decomposition of $\psi_1^\uns$}
Note that we already know the existence of $\psi^{\uns}_1=\psi^{\uns}-\psiin^{\uns}$ by Theorems~\ref{thmoutloc-innervariables}
and~\ref{thminner}. These theorems provide us with an \textit{a priori} bound of the matching error $\psi_1^\uns$. Indeed, it is clear that
\begin{equation}\label{prioriboundpsi1}
\psi_1^\uns \in  \Bsinfloor{\uns}{2},\qquad \llfloor\psi_1^{\uns}\rrfloor_{2,\ost}^\uns \leq K\dist^{-1}
\end{equation}
for some constant $K$. Our goal is to improve these bounds as stated in Proposition~\ref{propmatching-inner2D}.
This strategy to bound the matching error, was introduced in~\cite{BFGS12}.

Recall that $\psi^\uns(\vs,\theta)$ is defined for $\vs\in\DoutTinvars{\uns}$ (see~\eqref{defDoutTuns-innervars} for its definition) and
$\psiin^{\uns}(\vs,\theta)$ is defined for $\vs\in\Din{\uns}$ (see \eqref{defdinus-inner2D}).
Then, since
$$
\Dmchin{\uns}\subset\DoutTinvars{\uns}\subset\Din{\uns},
$$
one has that
$\psi_1^\uns$ is defined in $\Dmchin{\uns}$. We also recall that $\psi^{\uns}$ and  $\psiin^\uns$ satisfy:
$$\Linner(\psi^\uns)=\Minner(\psi^\uns,\delta),\qquad \Linner(\psiin^\uns)=\Minner(\psiin^\uns,0),$$
respectively, where $\Linner$ is the linear operator defined in~\eqref{defopLinner} and $\Minner$ is the operator defined in~\eqref{defopMinner}.
Defining the operator $\Mmatch^\uns$ as:
\begin{equation}\label{defMmatch}
\Mmatch^\uns(\psi_1^\uns)=\Minner(\psiin^\uns+\psi_1^\uns,\delta)-\Minner(\psiin^\uns,0),
\end{equation}
then $\psi_1^\uns$ satisfies:
\begin{equation}\label{PDE-matchshort}
 \Linner(\psi_1^\uns)=\Mmatch^\uns(\psi_1^\uns).
\end{equation}
For convenience, we avoid writing explicitly the dependence of $\Mmatch^\uns$ with respect to $\delta$.

We recall that $\psi_1^\uns(\vs,\theta)$ is $2\pi-$periodic in $\theta$.
Next lemma characterizes $\psi_1^\uns$ by means of the initial conditions of its Fourier coefficients in appropriate values of $\vs$.
\begin{lemma}\label{lemrewritepsi1uns}
Let $s_1$ and $s_2$ be the points defined in~\eqref{etsj-inner2D}:
$ \vs_j=\delta^{-1}(\vu_j-i\frac{\pi}{2\coef})$, for $j=1,2$ (see Figure~\ref{figDmatch} and~\eqref{defuj} for definition of $\vu_1,\vu_2$).

Then, the function $\psi_1^\uns=\psi^{\uns}-\psi^{\uns}_{\inn}$ defined in~\eqref{defpsi1us} is the unique function satisfying equation~\eqref{PDE-matchshort}
whose Fourier coefficients ${\psi_1^\uns}^{[l]}(s)$ satisfy:
\begin{equation}\label{icsFourierpsi1}
\begin{array}{rclr}{\psi_1^\uns}^{[l]}(\vs_1)&=&{\psi^\uns}^{[l]}(\vs_1)-{\psiin^\uns}^{[l]}(s_1)&\qquad\textrm{ if }l<0,\\
{\psi_1^\uns}^{[l]}(\vs_2)&=&{\psi^\uns}^{[l]}(\vs_2)-{\psiin^\uns}^{[l]}(\vs_2)&\qquad\textrm{ if }l\geq0.
\end{array}
\end{equation}
\end{lemma}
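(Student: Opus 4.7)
The plan splits into a straightforward existence verification and a uniqueness argument that recasts~\eqref{PDE-matchshort}-\eqref{icsFourierpsi1} as a fixed-point problem, providing the framework on which the rest of Section~\ref{sec:matching} will rely.

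For existence, subtracting the identities $\Linner(\psi^\uns)=\Minner(\psi^\uns,\delta)$ and $\Linner(\psiin^\uns)=\Minner(\psiin^\uns,0)$ (the defining PDEs of $\psi^\uns$ and $\psiin^\uns$ supplied by Theorems~\ref{thmoutloc-innervariables} and~\ref{thminner}) and using the linearity of $\Linner$ yields
$$
\Linner(\psi_1^\uns)=\Minner(\psiin^\uns+\psi_1^\uns,\delta)-\Minner(\psiin^\uns,0)=\Mmatch^\uns(\psi_1^\uns),
$$
by definition~\eqref{defMmatch} of $\Mmatch^\uns$. The initial conditions~\eqref{icsFourierpsi1} are tautological: they are obtained by evaluating the $l$-th Fourier coefficient of $\psi_1^\uns=\psi^\uns-\psiin^\uns$ at $s=s_1$ or $s=s_2$.

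For uniqueness, I would Fourier-decompose equation~\eqref{PDE-matchshort}. For each $l\in\mathbb{Z}$, the $l$-th mode of $\Linner(\psi_1^\uns)=\Mmatch^\uns(\psi_1^\uns)$ is a first-order linear ODE in $s$ whose homogeneous solutions form the one-parameter family $C_l\,s^{2/\coef}e^{il\alpha s/\coef}$. Prescribing $\psi_1^{\uns,[l]}(s_j)$ pins down the free constant, and variation of constants then gives
$$
\psi_1^{\uns,[l]}(s)=\left(\frac{s}{s_j}\right)^{\!2/\coef}\!\!e^{il\alpha(s-s_j)/\coef}\,\psi_1^{\uns,[l]}(s_j)+\frac{1}{\coef}\int_{s_j}^s\!\left(\frac{s}{w}\right)^{\!2/\coef}\!\!e^{il\alpha(s-w)/\coef}\,\Mmatch^{\uns,[l]}(\psi_1^\uns)(w)\,dw,
$$
with $s_j=s_1$ for $l<0$, $s_j=s_2$ for $l\geq 0$, and the integration running along a path in $\Dmchin{\uns}$. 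Summing over $l$ defines a right inverse $\mathcal{G}_\mathrm{mch}^\uns$ of $\Linner$ satisfying $\mathcal{G}_\mathrm{mch}^{\uns,[l]}(\phi)(s_j)=0$, so the pair~\eqref{PDE-matchshort}-\eqref{icsFourierpsi1} becomes the fixed-point equation $\psi_1^\uns=\Psi_0+\mathcal{G}_\mathrm{mch}^\uns(\Mmatch^\uns(\psi_1^\uns))$, where $\Psi_0$ is the explicit homogeneous solution carrying the prescribed initial data. Uniqueness (and, later, the sharper bound of Proposition~\ref{propmatching-inner2D}) will follow from a contraction argument in $\Bsinfloor{\uns}{2}$: $\Mmatch^\uns$ is Lipschitz with small constant (its $\delta$-dependent pieces vanish as $\delta\to 0$, and the $\delta=0$ pieces cancel against $\Minner(\psiin^\uns,0)$ at leading order), while $\mathcal{G}_\mathrm{mch}^\uns$ turns out to be bounded uniformly in $\delta$.

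The main obstacle I expect lies in this last uniform boundedness of $\mathcal{G}_\mathrm{mch}^\uns$, which hinges on the geometric choice of the endpoints $s_1,s_2$. One must verify, using the triangular shape of $\Dmchin{\uns}$ (Figure~\ref{figDmatch}) and the bounds~\eqref{fitasj-inner2D}-\eqref{upperlowerboundss}, that every $s\in\Dmchin{\uns}$ can be joined to $s_1$ (resp.\ $s_2$) by a path in $\Dmchin{\uns}$ along which $\im\bigl(l\alpha(s-w)/\coef\bigr)\leq 0$ for $l<0$ (resp.\ $l\geq 0$), so the oscillatory kernel $|e^{il\alpha(s-w)/\coef}|$ stays bounded by $1$; the split at $l=0$ is tailored to the orientations of the two slanted sides of the triangle, $s_1$ lying on the side reachable without increasing $\im w$ and $s_2$ on the opposite one. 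Once this path selection is in place, the estimates for $\mathcal{G}_\mathrm{mch}^\uns$ in the norms $\llfloor\,\cdot\,\rrfloor_{n,\ost}^\uns$ follow the pattern of Section~\ref{subsec:Bspaces} and Lemma~\ref{lempropsGdiffin}, and the a priori bound~\eqref{prioriboundpsi1} places $\psi_1^\uns$ in the relevant contraction ball, closing the argument.
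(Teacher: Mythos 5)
Your proposal is correct, and its existence part coincides with the paper's (subtract the two PDEs, note that the initial conditions are tautological). For uniqueness, however, you take a heavier route than the paper does for this particular lemma: the paper simply observes that, written in Fourier modes, equation~\eqref{PDE-matchshort} becomes a system of first-order ODEs in $\vs$, and that solutions of ODEs are uniquely determined by prescribing each coefficient at one point ($\vs_1$ for $l<0$, $\vs_2$ for $l\geq0$); that is the whole proof. What you do instead is build the variation-of-constants representation, the right inverse $\Gmatch^\uns$ annihilating the data at $\vs_j$, and the fixed-point equation $\psi_1^\uns=\Phi^\uns+\Gmatch^\uns(\Mmatch^\uns(\psi_1^\uns))$, and then extract uniqueness from a contraction estimate. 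This is exactly the machinery the paper sets up \emph{after} the lemma (the functions $\Phi^\uns$, the operator $\Gmatch^\uns$, and equation~\eqref{rewritepsi1-idG-intro}) in order to prove the quantitative bound of Proposition~\ref{propmatching-inner2D}; you are front-loading it into the uniqueness argument. Your route costs more (it requires the boundedness of $\Gmatch^\uns$ and the Lipschitz smallness of $\Delta\Mmatch^\uns$, which the paper defers to Lemmas~\ref{lemboundPhi}--\ref{lemnormMmatchtilde}, and a contraction argument only yields uniqueness within the ball where the a priori bound~\eqref{prioriboundpsi1} places the solution), but it is more explicit about the point the paper glosses over, namely that the Fourier modes are coupled through the nonlinear operator $\Mmatch^\uns$, so the ``ODE uniqueness'' is really uniqueness for the integrated fixed-point system. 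Two small remarks: you call each mode's equation ``linear,'' which it is not ($\Mmatch^\uns$ couples the modes nonlinearly), though treating the right-hand side as a known inhomogeneity in the Duhamel formula is legitimate; and your path condition should read $l\,\im(\vs-w)\geq 0$ (so that $|e^{il\alpha(\vs-w)/\coef}|=e^{-(l\alpha/\coef)\im(\vs-w)}\leq1$), not $\im\bigl(l\alpha(\vs-w)/\coef\bigr)\leq 0$ — the sign is reversed, although your stated conclusion about the kernel being bounded by $1$ is the correct one and matches the estimate the paper uses in Lemma~\ref{lemboundPhi}.
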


\begin{proof}
Since $\psi_1^\uns$ is $2\pi-$periodic in $\theta$, it is uniquely determined by its Fourier coefficients. Writing equation~\eqref{PDE-matchshort}
in terms of these Fourier coefficients, one easily obtains that each Fourier coefficient ${\psi_1^{\uns}}^{[l]}$ satisfies a given ODE.
Moreover, solutions of ODEs are uniquely determined by an initial condition at a given time. We choose this initial time to be $s=s_1$ for $l<0$
and $s=s_2$ for $l\geq0$. Since by definition ${\psi_1^{\uns}}^{[l]}(\vs)={\psi^{\uns}}^{[l]}(\vs)-{\psiin^{\uns}}^{[l]}(\vs)$,
we obtain precisely~\eqref{icsFourierpsi1}.
\end{proof}

The values ${\psi_1^\uns}^{[l]}(s_1)$ and ${\psi_1^\uns}^{[l]}(s_2)$ will be bounded later on using Theorem~\ref{thmoutloc-innervariables} and Theorem~\ref{thminner}.
We will denote them by $C_l^\uns$:
\begin{equation}\label{defCluns}
 C_l^\uns:=\left\{\begin{array}{rcl}{\psi^\uns}^{[l]}(s_1)-{\psiin^\uns}^{[l]}(s_1)&\qquad&\textrm{ if }l<0,\\
 {\psi^\uns}^{[l]}(s_2)-{\psiin^\uns}^{[l]}(s_2)&\qquad&\textrm{ if }l\geq0.
 \end{array}\right.
\end{equation}

Recall that our ultimate goal is to find a sharp bound of $\psi_1^\uns(\vs,\theta)$, for $(\vs,\theta)\in\Dmchin{\uns}$. To that aim we write $\psi_1^\uns$ as a function that satisfies \eqref{PDE-matchshort} and \eqref{icsFourierpsi1} by means of a solution of the homogeneous equation $\Linner(\psi)=0$ with initial conditions \eqref{icsFourierpsi1} and a suitable solution of a fixed point equation. More precisely, we shall follow three steps:

\begin{enumerate}
\item First, we construct a function $\Phi^\uns$ that satisfies:
\begin{enumerate}
\item $\Linner\circ\Phi^{\uns}=0,$
\item ${\Phi^{\uns}}^{[l]}(\vs_i)=C_l^\uns$, where we take $i=1$ if $l<0$ and $i=2$ otherwise.
\end{enumerate}
This can be trivially done, defining $\Phi^\uns$ as the function:
\begin{equation}\label{defPhiuns}
 \Phi^\uns(\vs,\theta)=\sum_{k\in\mathbb{Z}}{\Phi^{\uns}}^{[l]}(\vs)e^{il\theta},
\end{equation}
where:
\begin{equation}\label{defPhiuns-coeffs}
 \begin{array}{rclr}
  {\Phi^{\uns}}^{[l]}(\vs)&=&\displaystyle\frac{C_l^\uns}{\vs_1^{2/\coef}}\vs^{2/\coef}e^{\coef^{-1}\alpha(\vs-\vs_1)il}&\qquad\textrm{ if }l<0,\bigskip\\
  {\Phi^{\uns}}^{[l]}(\vs)&=&\displaystyle\frac{C_l^\uns}{\vs_2^{2/\coef}}\vs^{2/\coef}e^{\coef^{-1}\alpha(\vs-\vs_2)il}&\qquad\textrm{ if }l\geq0.
 \end{array}
\end{equation}

\item The second step consists in finding a right inverse $\Gmatch^\uns$ of the operator $\Linner$. We can define it via its Fourier coefficients ${\Gmatch^\uns}^{[l]}$. That is, given a function $\phi(\vs,\theta)$ we consider:
\begin{equation}\label{defGmatch}
 \Gmatch^\uns(\phi)(\vs,\theta)=\sum_{k\in\mathbb{Z}}{\Gmatch^{\uns}}^{[l]}(\phi)(\vs)e^{il\theta},
\end{equation}
and we choose ${\Gmatch^{\uns}}^{[l]}$ so that for all functions $\phi(\vs,\theta)$ the following holds:
\begin{enumerate}
\item[(c)] ${\Gmatch^\uns}^{[l]}(\phi)(\vs_1)=0,$ if $l<0$,
\item[(d)]  ${\Gmatch^\uns}^{[l]}(\phi)(\vs_2)=0,$ if $l\geq0$.
\end{enumerate}
One can easily see that if we define:
\begin{align*}
{\Gmatch^{\uns}}^{[l]}(\phi)(\vs)&=\coef^{-1}\vs^{\frac{2}{\coef}}\int_{\vs_1}^\vs\frac{e^{-\frac{il\alpha}{\coef}(w-\vs)}}{w^{\frac{2}{\coef}}}\phi^{[l]}(w)dw\qquad \textrm{ if }l<0,\notag\\
{\Gmatch^{\uns}}^{[l]}(\phi)(\vs)&=\coef^{-1}\vs^{\frac{2}{\coef}}\int_{\vs_2}^\vs\frac{e^{-\frac{il\alpha}{\coef}(w-\vs)}}{w^{\frac{2}{\coef}}}\phi^{[l]}(w)dw\qquad \textrm{ if }l\geq0,\notag
\end{align*}
then $\Gmatch^\uns$ defined as in \eqref{defGmatch} satisfies conditions (c) and (d).

\item Now we point out that items (a)--(d) above imply that the function $\phi$ defined implicitly by:
$$\phi=\Phi^{\uns}+\Gmatch^\uns(\Mmatch^\uns(\phi)),$$
satisfies \eqref{PDE-matchshort} and \eqref{icsFourierpsi1}. Since by Lemma \ref{lemrewritepsi1uns} $\psi_1^\uns$ is the only function satisfying \eqref{PDE-matchshort} and \eqref{icsFourierpsi1}, we can write:
\begin{equation}\label{rewritepsi1-intro}
\psi_1^\uns=\Phi^{\uns}+\Gmatch^\uns(\Mmatch^\uns(\psi_1^\uns)).
\end{equation}
We define the operator:
$$
\Delta {\Mmatch^\uns}(\phi):=\Gmatch^\uns(\Mmatch^\uns(\phi))-\Gmatch^\uns(\Mmatch^\uns(0)).
$$
Then we can rewrite \eqref{rewritepsi1-intro} as:
\begin{equation}\label{rewritepsi1-idG-intro}
(\textrm{Id}-\Delta {\Mmatch^\uns})(\psi_1^\uns)=\Phi^{\uns}+\Gmatch^\uns(\Mmatch^\uns(0)).
\end{equation}
Thus, we just need to see that the operator $\Delta {\Mmatch^\uns}$ has ``small'' norm in $\Bsinfloor{\uns}{2}$.
We point out that, unlike $\psi_1^\uns$, we have an explicit formula for functions $\Phi^{\uns}$ and $\Gmatch^\uns(\Mmatch^\uns(0))$, so that these functions can be bounded easily. Then \eqref{rewritepsi1-idG-intro} will allow us to bound $\psi_1^\uns$ using bounds of the functions $\Phi^{\uns}$ and
$\Gmatch^\uns(\Mmatch^\uns(0))$.
\end{enumerate}

We shall proceed as follows. First we state several technical results about $\Gmatch^{\uns}$ and $\hat{F}, \hat{G}$ and $\hat{H}$ whose proofs
can be encountered in~\cite{CastejonPhDThesis}. Next we summarize the main properties of the operator $\Gmatch^\uns$. After that, we find bounds of
$\Phi^\uns$  and  $\Gmatch^\uns(\Mmatch^\uns(0))$. This is done in Section~\ref{subsec:phi-gmatchmmatch}.
Finally, in Section~\ref{subsec:mmatchtilde} we study the operator $\Delta \Mmatch^\uns$ to see that $\textrm{Id}-\Delta \Mmatch^\uns$ is invertible,
which yields the proof of Proposition~\ref{propmatching-inner2D}.

\subsection{Preliminary properties of $\Gmatch^{\uns}$, $\hat{F}, \hat{G}$ and $\hat{H}$}\label{subsec:Banach-match}

Even when we are not going to prove the following properties, let us just to point out that, to prove these properties we have to take into account
that for $\vs\in\Dmchin{\uns}$ one has:
\begin{equation}\label{boundmch}
K_1\dist\leq|\vs|\leq K_2\delta^{\gamma-1}\qquad \textrm{and}\qquad \delta<|\vs|^{-1}.
\end{equation}

\begin{enumerate}
\item
\textit{Banach spaces}. The same properties given in item~\ref{itemproposnorm-inner} in Section~\ref{subsec:Bspaces} hold
true in this case.
\item \label{itempropsGmatch}\textit{The operator $\Gmatch^{\uns}$}. Again the same properties in item~\ref{itempropsGinner} are valid in this case.
\item \textit{The nonlinear terms, $\hat{F}, \hat{G}$ and $\hat{H}$}. The definition of these functions is given in~\eqref{notationFGHhat}.
Let $C$ be any constant. Then:
\begin{enumerate}
\item If $\phi\in\Bsin{\uns}{3}$ with $\|\phi\|_{3,\ost}^\uns\leq C$, then there exists $M>0$ such that
\begin{align}
&\|\hat F(\phi,\delta)\|_{4,\ost}^\uns, \;\|\hat G(\phi,\delta)\|_{2,\ost}^\uns,\;\|\hat H(\phi,\delta)\|_{3,\ost}^\uns\leq M \notag
\\
&\|D_\delta\hat F(\phi,\delta)\|_{3,\ost}^\uns,\; \|D_\delta\hat G(\phi,\delta)\|_{1,\ost}^\uns,\;
\|D_\delta\hat H(\phi,\delta)\|_{2,\ost}^\uns\leq M.\label{itemdifFGHhatsdelta}
\end{align}
\item
If $\phi\in\Bsin{\uns}{2}$ with $\|\phi\|_{2,\ost}^\uns\leq C/\distin$ and $\distin$ is sufficiently large, there exists $M>0$ and:
$$
\|D_\phi\hat F(\phi,\delta)\|_{2,\ost}^\uns,\; \|D_\phi\hat G(\phi,\delta)\|_{0,\ost}^\uns,\; \|D_\phi\hat H(\phi,\delta)\|_{1,\ost}^\uns\leq M.
$$
Note that, in the definition \eqref{notationFGHhat} of $\hat F$, $\hat G$ and $\hat H$, the variable $\phi$ always appears inside
the function $\rho(\phi,\vs,\delta)$ defined in~\eqref{defrho}. The condition $\|\phi\|_{2,\ost}^\uns$ small assures that $\rho(\phi,\vs,\delta)\neq0$
which is needed to prove the actual item.
\item
If $\phi\in\Bsin{\uns}{3}$ is such that $\|\phi\|_{3,\ost}^\uns\leq C$, there exists $M>0$ such that:
\begin{align*}
\|\hat F(\phi,\delta)- \hat F(\phi,&\,0)\|_{3,\ost}^\uns\leq M\delta, \quad
\|\hat G(\phi,\delta)- \hat G(\phi,0)\|_{1,\ost}^\uns\leq M\delta,\\
&\|\hat H(\phi,\delta)- \hat H(\phi,0)\|_{2,\ost}^\uns\leq M\delta.
\end{align*}
\end{enumerate}
\end{enumerate}

\subsection{The functions $\Phi^{\uns}$ and $\Gmatch^\uns(\Mmatch^\uns(0))$}\label{subsec:phi-gmatchmmatch}
Along this section we use the previous properties of $\Gmatch$, $\hat{F},\hat{G}$ and $\hat{H}$ to give suitable
properties of $\Phi^{\uns}$ and $\Gmatch^\uns(\Mmatch^\uns(0))$.
Looking at equality~\eqref{rewritepsi1-idG-intro} this is mandatory to obtain sharp bounds of the matching error $\psi_1^{\uns}$.
\begin{lemma}\label{lemboundPhi}
The function $\Phi^\uns$ defined in~\eqref{defPhiuns}--\eqref{defPhiuns-coeffs} satisfies
$\Phi^\uns\in\Bsinfloor{\uns}{2}$. Moreover, there exists a constant $M$ such that:
$\llfloor\Phi^\uns\rrfloor_{2,\ost}^\uns\leq M\delta^{1-\gamma}$.
\end{lemma}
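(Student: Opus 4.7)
My plan is to estimate each Fourier coefficient ${\Phi^\uns}^{[l]}$ separately using the explicit formula~\eqref{defPhiuns-coeffs}, then sum in $l$. Two structural features drive the argument.

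The first is that on the matching domain the exponential factor is benign: $|e^{il\coef^{-1}\alpha(\vs-\vs_i)}|\leq 1$ for all $\vs\in\Dmchin{\uns}$ and all $l\in\mathbb{Z}$. Since this modulus equals $e^{-l\coef^{-1}\alpha\im(\vs-\vs_i)}$, it suffices to verify that $\vs_1$ is the top vertex of the triangle $\Dmchin{\uns}$ (relevant when $l<0$ and $\vs_i=\vs_1$) and $\vs_2$ its bottom vertex (relevant when $l\geq 0$ and $\vs_i=\vs_2$). A direct computation from \eqref{defuj} and the change of variables $\vs=\coef(\vu-i\pi/(2\coef))/\delta$ yields, to leading order, $\im\vs_1\approx\coef\delta^{\gamma-1}\sin\beta_1>0$, $\im\vs_2\approx-\coef\delta^{\gamma-1}\sin\beta_2<0$ and $\im\vs_3=-\coef\dist$ of much smaller magnitude, so $\vs_1,\vs_2$ are indeed the extremal vertices in the imaginary direction. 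The second ingredient is \eqref{fitasj-inner2D}, giving $|\vs_i|\geq K_1\delta^{\gamma-1}$ and hence $|\vs_i|^{-1}\leq K\delta^{1-\gamma}$.

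I will combine these with the a priori bound $\llfloor\psi_1^\uns\rrfloor_{3,\ost}^\uns\leq K$ inherited from $\psi^\uns,\psiin^\uns\in\Bsinfloor{\uns}{3}$ via Theorems~\ref{thmoutloc-innervariables} and~\ref{thminner}, together with $|C_l^\uns|=|{\psi_1^\uns}^{[l]}(\vs_i)|\leq\|{\psi_1^\uns}^{[l]}\|_m^\uns/|\vs_i|^m$. Since $|\vs|\leq K|\vs_i|$ on $\Dmchin{\uns}$, the pointwise sup of $|\vs|^n|{\Phi^\uns}^{[l]}(\vs)|$ is controlled by $K|C_l^\uns||\vs_i|^n$. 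Taking $n=2$, $m=3$ and summing with weight $e^{|l|\ost}$ gives $\|\Phi^\uns\|_{2,\ost}^\uns\leq K|\vs_i|^{-1}\|\psi_1^\uns\|_{3,\ost}^\uns\leq K\delta^{1-\gamma}$. Taking $n=3$, $m=4$ and summing with weight $|l|e^{|l|\ost}$ gives $\|\partial_\theta\Phi^\uns\|_{3,\ost}^\uns\leq K|\vs_i|^{-1}\|\partial_\theta\psi_1^\uns\|_{4,\ost}^\uns\leq K\delta^{1-\gamma}$. For the $\vs$-derivative, differentiating \eqref{defPhiuns-coeffs} yields $\partial_\vs{\Phi^\uns}^{[l]}(\vs)={\Phi^\uns}^{[l]}(\vs)\bigl(\tfrac{2}{\coef\vs}+\tfrac{il\alpha}{\coef}\bigr)$, which reduces $\|\partial_\vs\Phi^\uns\|_{3,\ost}^\uns$ to a combination of $\|\Phi^\uns\|_{2,\ost}^\uns$ and $\|\partial_\theta\Phi^\uns\|_{3,\ost}^\uns$, both already shown to be $O(\delta^{1-\gamma})$.

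The step I expect to require most care is the geometric verification that the supremum of $|\vs|^n|{\Phi^\uns}^{[l]}(\vs)|$ on the triangle is effectively $K|C_l^\uns||\vs_i|^n$: for $|l|\geq 1$ one must check that the exponential decay away from $\vs_i$, at rate of order $|l|\delta^{\gamma-1}$, suppresses the algebraic growth $|\vs|^{n+2/\coef}$ at the opposite vertex; this is exactly where the $l$-dependent choice of $\vs_i$ in~\eqref{defPhiuns-coeffs} pays off and is the heart of the matching construction.
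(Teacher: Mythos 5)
Your proposal is correct and follows essentially the same route as the paper: bound $|C_l^\uns|$ via the $\|\cdot\|_{3}^\uns$ (resp. $\|\cdot\|_4^\uns$) norms of $\psi^\uns$ and $\psiin^\uns$ from Theorems~\ref{thmoutloc-innervariables} and~\ref{thminner}, use that the $l$-dependent choice of base point makes $|e^{il\coef^{-1}\alpha(\vs-\vs_j)}|\leq1$ on $\Dmchin{\uns}$, absorb $|\vs_j|^{-2/\coef}|\vs|^{n+2/\coef}$ into $K|\vs_j|^{n}$ via \eqref{fitasj-inner2D}--\eqref{upperlowerboundss}, and treat $\partial_\vs\Phi^\uns$ by the same explicit differentiation. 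Your extra care on the geometry (identifying $\vs_1$, $\vs_2$ as the extremal vertices in the imaginary direction) is exactly the justification the paper compresses into the assertion $\im(\vs-\vs_j)l>0$, so nothing is missing.
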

\begin{proof}
Let us recall the definition~\eqref{defPhiuns-coeffs} of the Fourier coefficients ${\Phi^\uns}^{[l]}$:
\begin{equation}\label{defPhiunsl-proof}
{\Phi^\uns}^{[l]}(\vs)=\frac{C_l^\uns}{\vs_j^{2/\coef}}\vs^{2/\coef}e^{\coef^{-1}\alpha(\vs-\vs_j)il},
\end{equation}
where $j=1$ if $l<0$ and $j=2$ if $l\geq0$. From the definition~\eqref{defCluns} of $C_l^\uns$ and using
Theorems~\ref{thmoutloc-innervariables} and~\ref{thminner}, it is clear that:
\begin{equation}\label{boundCluns}
|C_l^\uns|\leq \left(\|{\psi^\uns}^{[l]}\|_{3}^\uns+\|{\psiin^\uns}^{[l]}\|_{3}^\uns\right)|\vs_j|^{-3}.
\end{equation}
Moreover, since $\im(s-\vs_j)l>0$, we have $|e^{\coef^{-1}\alpha(\vs-\vs_j)il}|<1$. Then:
$$\left|{\Phi^\uns}^{[l]}(\vs)\vs^2\right|\leq |\vs_j|^{-3-2/\coef}|\vs|^{2+2/\coef}\left(\|{\psi^\uns}^{[l]}\|_{3}^\uns+\|{\psiin^\uns}^{[l]}\|_{3}^\uns\right).$$
As we pointed out in~\eqref{fitasj-inner2D} and~\eqref{upperlowerboundss},
$|s_j|\geq K_1 \delta^{\gamma-1}$ and $|s|\leq  K_2\delta^{\gamma-1}$ for all $\vs\in\Dmchin{\uns}$, thus
$$\left|{\Phi^\uns}^{[l]}(\vs)\vs^2\right|\leq K|\vs_j|^{-1}\left(\|{\psi^\uns}^{[l]}\|_{3}^\uns+\|{\psiin^\uns}^{[l]}\|_{3}^\uns\right).$$
We use now the definition of the norm $\|.\|_{3,\ost}^{\uns}$, that by
Theorems~\ref{thmoutloc-innervariables} and~\ref{thminner}, $\|\psi^\uns\|_{3,\ost}^\uns+ \|\psiin^\uns\|_{3,\ost}^\uns\leq K$ and that
$|s_j|\geq K_1 \delta^{\gamma-1}$, to conclude that
\begin{equation}\label{normPhiuns}
\|\Phi^\uns\|_{2,\ost}^\uns\leq K\delta^{1-\gamma}.
\end{equation}

Now we proceed to bound $\|\partial_\theta\Phi^\uns\|_{3,\ost}$. We note that, if $\phi(\vs,\theta)$ is a $2\pi$-periodic function with Fourier coefficients
$\phi^{[l]}(\vs)$ the Fourier coefficients of $\partial_{\theta}\phi(\vs,\theta)$ are $il \phi^{[l]}(\vs)$. Using this and definition~\eqref{defCluns} of $C_l^\uns$
we have that
\begin{equation}\label{boundilCluns}
|ilC_l^\uns|\leq \left(\|\left(\partial_\theta\psi^\uns\right)^{[l]}\|_{4}^\uns+\|\left(\partial_\theta\psiin^\uns\right)^{[l]}\|_{4}^\uns\right)|\vs_j|^{-4},
\end{equation}
Then, reasoning analogously as in the previous case we reach that:
\begin{align}\label{normpartialthetaPhiuns}
\|\partial_\theta\Phi^\uns\|_{3,\ost}^\uns\leq& K\delta^{1-\gamma}\left(\|\partial_\theta\psi^\uns\|_{4,\ost}^\uns+
\|\partial_\theta\psiin^\uns\|_{4,\ost}^\uns\right)\leq  K\delta^{1-\gamma}.
\end{align}

Finally we bound $\|\partial_\vs\Phi^\uns\|_{3,\ost}$. Differentiating the Fourier coefficients of ${\Phi^{\uns}}^{[l]}$ defined in \eqref{defPhiunsl-proof} with respect to $\vs$ we obtain:
$$\frac{d}{d\vs}{\Phi^{\uns}}^{[l]}(\vs)=\frac{2C_l^\uns}{\coef\vs_j^{2/\coef}}\vs^{2/\coef-1}e^{\coef^{-1}\alpha(\vs-\vs_j)il}+\frac{C_l^\uns}{\vs_j^{2/\coef}}\vs^{2/\coef}\coef^{-1}\alpha ile^{\coef^{-1}\alpha(\vs-\vs_j)il}.$$
Using bounds~\eqref{boundCluns} and~\eqref{boundilCluns} of $C_l^{\uns}$ and $ilC_l^\uns$ respectively, we obtain:
\begin{equation}\label{normpartialsPhiuns}
\|\partial_\vs\Phi^\uns\|_{3,\ost}^\uns\leq K\delta^{1-\gamma}\left(\|\partial_{\vs}\psi\|^\uns_{4,\ost}+
\|\psiin^\uns\|_{4,\ost}^\uns\right)\leq K\delta^{1-\gamma}.
\end{equation}

Bounds \eqref{normPhiuns}, \eqref{normpartialthetaPhiuns} and \eqref{normpartialsPhiuns} yield directly the claim of the lemma.
\end{proof}

\begin{lemma}\label{lemboundGmatchMmatch0}
Let $\param=\mathcal{O}(\delta)$. Then the function $\Gmatch^\uns(\Mmatch^\uns(0))\in\Bsinfloor{\uns}{2},$ where $\Mmatch^\uns$ is defined in \eqref{defMmatch} and $\Gmatch^\uns$ is defined in \eqref{defGmatch}. Moreover, there exists a constant $M$ such that:
 $$\llfloor\Gmatch^\uns(\Mmatch^\uns(0))\rrfloor_{2,\ost}^\uns\leq M\delta.$$
\end{lemma}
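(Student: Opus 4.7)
The plan is to expand $\Mmatch^\uns(0) = \Minner(\psiin^\uns,\delta)-\Minner(\psiin^\uns,0)$ term by term using definition~\eqref{defopMinner}, bound each resulting summand in an appropriate Banach space, and then invoke the mapping properties of $\Gmatch^\uns$ gathered in item~\ref{itempropsGmatch} of Section~\ref{subsec:Banach-match}. All summands of $\Minner$ that depend neither on $\delta$ nor on $\sigma$ cancel under the subtraction; and since $\sigma=\mathcal{O}(\delta)$ vanishes at $\delta=0$, the surviving contributions are precisely the explicit perturbation $\coef\delta^2\vs^2\partial_\vs\psiin^\uns$, the $\sigma$-term $\sigma\rho^2(\psiin^\uns,\vs,\delta)$, and four terms built out of the $\delta$-increments $\Delta\hat F:=\hat F(\psiin^\uns,\delta)-\hat F(\psiin^\uns,0)$, $\Delta\hat G$, $\Delta\hat H$, namely $\Delta\hat F$, $\tfrac{\coef+1}{b}\vs^{-1}\Delta\hat H$, $-\Delta\hat G\,\partial_\theta\psiin^\uns$, and $\vs^2\Delta\hat H\,\partial_\vs\psiin^\uns$.

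For the five contributions other than $\sigma\rho^2$ the bookkeeping is routine. The Lipschitz-in-$\delta$ bounds~\eqref{itemdifFGHhatsdelta} of Section~\ref{subsec:Banach-match} give $\|\Delta\hat F\|_{3,\ost}^\uns,\;\|\Delta\hat G\|_{1,\ost}^\uns,\;\|\Delta\hat H\|_{2,\ost}^\uns\leq K\delta$; combining these with the bounds on $\psiin^\uns$ and its partial derivatives from Theorem~\ref{thminner} and the product inequality of item~\ref{itemproposnorm-inner} of Section~\ref{subsec:Banach-match}, each of those four terms lies in $\Bsin{\uns}{3}$ with norm $\leq K\delta$. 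For the explicit $\delta^2$-term, the bound $\|\partial_\vs\psiin^\uns\|_{4,\ost}^\uns\leq K$ together with $\delta|\vs|\leq K$ on $\Dmchin{\uns}$ yields $\|\coef\delta^2\vs^2\partial_\vs\psiin^\uns\|_{3,\ost}^\uns\leq K\delta^{1+\gamma}\leq K\delta$. Feeding these five contributions into $\Gmatch^\uns$ produces, by the mapping property $\llfloor\Gmatch^\uns(\phi)\rrfloor_{n-1,\ost}^\uns\leq K\|\phi\|_{n,\ost}^\uns$ of item~\ref{itempropsGmatch}, a function in $\Bsinfloor{\uns}{2}$ with full norm $\leq K\delta$.

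The main obstacle is the $\sigma$-term $\sigma\rho^2(\psiin^\uns,\vs,\delta) = \sigma\bigl[\tfrac{\coef+1}{b}(-\vs^{-2}+\delta^2) + 2\psiin^\uns\bigr]$: because of its $\theta$-independent piece decaying only like $|\vs|^{-2}$, it lies in $\Bsin{\uns}{2}$ with norm $\mathcal{O}(\delta)$ but not in $\Bsin{\uns}{3}$ with that bound, so it cannot be treated by the crude argument above. The plan is to split it into the genuinely oscillating part $2\sigma\bigl(\psiin^\uns - {\psiin^\uns}^{[0]}\bigr)$, which has zero Fourier mean and belongs to $\Bsin{\uns}{3}$ with norm $\leq K\delta$, and the $\theta$-independent remainder $2\sigma{\psiin^\uns}^{[0]} + \sigma\tfrac{\coef+1}{b}(-\vs^{-2}+\delta^2)$. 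The first piece is handled by the refined mapping property of $\Gmatch^\uns$ for mean-zero data, yielding a $\Bsin{\uns}{3}$-bound of order $\delta$. For the $\theta$-independent remainder the plan is to evaluate $\Gmatch^{\uns,[0]}$ by the defining contour integral from $\vs_2$ to $\vs$, carry out the elementary antiderivatives, and then estimate the resulting closed-form expression directly in the norms $\|\cdot\|_{2,\ost}^\uns$ (and the analogous norms for the $\vs$ and $\theta$ derivatives), using $|\sigma|\leq K\delta$, the size bounds $K\dist\leq|\vs|,|\vs_2|\leq K\delta^{\gamma-1}$ from~\eqref{fitasj-inner2D}--\eqref{upperlowerboundss}, and the cancellation between the prefactor $\vs^{2/\coef}$ and the boundary contribution $\vs_2^{-2/\coef-1}$. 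Assembling all pieces gives $\llfloor\Gmatch^\uns(\Mmatch^\uns(0))\rrfloor_{2,\ost}^\uns\leq M\delta$.
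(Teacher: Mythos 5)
Your overall strategy coincides with the paper's: reduce everything to the single estimate $\|\Mmatch^\uns(0)\|_{3,\ost}^\uns\leq K\delta$ and then invoke the mapping property $\llfloor\Gmatch^\uns(\phi)\rrfloor_{2,\ost}^\uns\leq M\|\phi\|_{3,\ost}^\uns$ from item~\ref{itempropsGmatch} of Section~\ref{subsec:Banach-match}; your treatment of the five ``routine'' contributions is exactly what the paper does.

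The difficulty is your third paragraph. The ``main obstacle'' you isolate comes from reading the term $\param\rho^{2}(\psi,\vs,\delta)$ in~\eqref{defopMinner} literally, but that formula is missing a factor of $\delta$: computing $\delta^{3}\Fout(\delta^{-2}\psi)$ directly from~\eqref{defFout}, the $\param$-term is $\delta^{3}\cdot 2\param\big(\hetr(\vu(\vs))+\delta^{-2}\psi\big)=\param\delta\,\rho^{2}(\psi,\vs,\delta)$, which is the form the paper's proof actually uses (it writes it as $2\param\tfrac{\coef+1}{2b}\big((\delta^{3}-\delta\vs^{-2})+\delta\psiin^\uns\big)$, consistently with the term $2\param\delta\phi$ appearing in the proof of Lemma~\ref{lemnormMmatchtilde}). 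With the extra $\delta$ the term is harmless: since $|\param|\leq K\delta$ and $|\vs|\leq K\delta^{\gamma-1}$ on $\Dmchin{\uns}$, one gets $\|\param\delta\rho^{2}\|_{3,\ost}^\uns\leq K\delta^{1+\gamma}\leq K\delta$, and no splitting into mean-zero and zero-mode parts is needed. More importantly, your proposed repair does not close the bound even for the term as you wrote it: feeding the zero mode $-\param\tfrac{\coef+1}{b}\vs^{-2}$ into ${\Gmatch^{\uns}}^{[0]}$ gives, up to constants, $\param\big(\vs^{-1}-\vs^{2/\coef}\vs_2^{-2/\coef-1}\big)$, and both pieces have $\|\cdot\|_{2}^{\uns}$-norm of order $|\param|\,\sup_{\vs\in\Dmchin{\uns}}|\vs|\sim\delta\cdot\delta^{\gamma-1}=\delta^{\gamma}$, not $\delta$; there is no cancellation to exploit, since $|\vs_2|\sim\delta^{\gamma-1}$ while $|\vs|$ ranges over the whole matching domain. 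So with the literal $\param\rho^{2}$ your argument would only yield $\llfloor\Gmatch^\uns(\Mmatch^\uns(0))\rrfloor_{2,\ost}^\uns\leq M\delta^{\gamma}$. The fix is to use the corrected term $\param\delta\rho^{2}$, after which your first two paragraphs already prove the lemma.
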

\begin{proof}
By item~\ref{itempropsGmatch} in Section~\ref{subsec:Banach-match} it is enough to prove that:
\begin{equation}\label{boundnormGmatch0}
\|\Mmatch^\uns(0)\|_{3,\ost}^\uns\leq K\delta.
\end{equation}
Recall that
$\Mmatch^\uns(0)=\Minner(\psiin^\uns,\delta)-\Minner(\psiin^\uns,0)$.
Thus, from definition \eqref{defopMinner} of $\Minner$ we obtain:
\begin{align*}
\Mmatch^\uns(0)=&\coef\delta^2\vs^2\partial_\vs\psiin^\uns+2\param\frac{\coef+1}{2b}\left(\left(\delta^3-\frac{\delta}{\vs^2}\right)+\delta\psiin^\uns\right)\\
&+\hat F(\psiin^\uns,\delta)-\hat F(\psiin^\uns,0)+\frac{\coef+1}{b}\vs^{-1}\left(\hat H(\psiin^\uns,\delta)-\hat H(\psiin^\uns,0)\right)\\
&-\left(\hat G(\psiin^\uns,\delta)-\hat G(\psiin^\uns,0)\right)\partial_\theta\psiin^\uns+\vs^2\left(\hat H(\psiin^\uns,\delta)-\hat H(\psiin^\uns,0)\right)\partial_\vs\psiin^\uns.
\end{align*}
Now, using that for $\vs\in\Dmchin{\uns}$ (see \eqref{boundmch}):
$K_1\dist\leq|\vs|\leq K_2\delta^{\gamma-1}$, and the fact that $\llfloor\psiin^\uns\rrfloor_{3,\ost}^\uns\leq K$, it is easy to check that:
$$\left\|\delta^2\vs^2\partial_\vs\psiin^\uns\right\|_{3,\ost}^\uns\leq K\delta^{1+\gamma},$$
and since $\param=\mathcal{O}(\delta)$:
$$\left\|2\param\frac{\coef+1}{2b}\left(\left(\delta^3-\frac{\delta}{\vs^2}\right)+\delta\psiin^\uns\right)\right\|_{3,\ost}^\uns\leq K\delta^{1+\gamma}.$$
These facts and bound~\eqref{itemdifFGHhatsdelta} of $D_{\delta}\hat{F}, D_{\delta}\hat{G}$ and $D_{\delta}\hat{H}$, jointly with the properties of the norm
$\|.\|_{n,\ost}^\uns$, yield directly bound~\eqref{boundnormGmatch0}.
\end{proof}

\subsection{The operator $\Delta \Mmatch^\uns$}\label{subsec:mmatchtilde}
In this subsection we are going to prove that the operator $\text{Id}-\Delta \Mmatch^\uns$ is invertible in the
Banach space $\Bsinfloor{\uns}{2}$. After that we will end the proof of Proposition~\ref{propmatching-inner2D}.

\begin{lemma}\label{lemnormMmatchtilde}
Let $\param=\mathcal{O}(\delta)$ and $C>0$. For any $\phi\in\Bsinfloor{\uns}{2}$ satisfying $\llfloor\phi\rrfloor_{2,\ost}^\uns\leq C/\dist$,
we have that $\Delta \Mmatch^\uns(\phi)\in\Bsinfloor{\uns}{2}$. Moreover there exists $M>0$ such that:
 $$\llfloor\Delta \Mmatch^\uns(\phi)\rrfloor_{2,\ost}^\uns\leq \frac{M}{\dist}\llfloor\phi\rrfloor_{2,\ost}^\uns.$$
\end{lemma}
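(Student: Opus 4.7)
Following the same reduction used in Lemma~\ref{lemlipconstinner}, the plan is to exploit the linearity of $\Gmatch^\uns$ to pull the difference outside and reduce the claim to an estimate on the operator $\Minner$ itself. Writing
\[
\Delta\Mmatch^\uns(\phi)=\Gmatch^\uns\bigl(\Mmatch^\uns(\phi)-\Mmatch^\uns(0)\bigr)=\Gmatch^\uns\bigl(\Minner(\psiin^\uns+\phi,\delta)-\Minner(\psiin^\uns,\delta)\bigr),
\]
and invoking item~\ref{itempropsGmatch} in Section~\ref{subsec:Banach-match}, which says that $\Gmatch^\uns$ turns a bound in $\|\cdot\|_{n,\ost}^\uns$ into a bound of the full triple norm $\llfloor\cdot\rrfloor_{n-1,\ost}^\uns$, it is enough to establish
\[
\|\Minner(\psiin^\uns+\phi,\delta)-\Minner(\psiin^\uns,\delta)\|_{3,\ost}^\uns\leq \frac{K}{\dist}\llfloor\phi\rrfloor_{2,\ost}^\uns.
\]

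Using the explicit form~\eqref{defopMinner} of $\Minner$, I would expand this difference into its constituent pieces. The terms linear in $\psi$ contribute $c\vs^{-1}\partial_\theta\phi$, $\coef\delta^2\vs^2\partial_\vs\phi$ and $2\param\phi$ (the last since $\param\rho^2(\psi,\vs,\delta)$ is affine in $\psi$ by~\eqref{defrho}). The nonlinear pieces involving $\hat F,\hat G,\hat H$ are linearised by the mean value theorem,
\[
\hat F(\psiin^\uns+\phi,\delta)-\hat F(\psiin^\uns,\delta)=\phi\int_0^1 D_\psi\hat F(\psiin^\uns+t\phi,\delta)\,dt,
\]
and analogously for $\hat G$ and $\hat H$. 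The cross products $\hat G(\psi,\delta)\partial_\theta\psi$ and $\vs^2(2b\psi+\hat H(\psi,\delta))\partial_\vs\psi$ are then split by adding and subtracting $\hat G(\psiin^\uns,\delta)\partial_\theta\psiin^\uns$ (respectively $\vs^2(2b\psiin^\uns+\hat H(\psiin^\uns,\delta))\partial_\vs\psiin^\uns$), yielding only quantities linear in $\phi$ or its first derivatives. The working hypothesis $\llfloor\phi\rrfloor_{2,\ost}^\uns\leq C/\dist$ together with $\llfloor\psiin^\uns\rrfloor_{3,\ost}^\uns\leq K$ (Theorem~\ref{thminner}) keeps $\psiin^\uns+t\phi$ in the regime where the bounds on $D_\psi\hat F,D_\psi\hat G,D_\psi\hat H$ from Section~\ref{subsec:Banach-match} apply, in particular ensuring $\rho(\psiin^\uns+t\phi,\vs,\delta)\neq 0$.

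The gain of $1/\dist$ is then extracted from three sources valid on $\Dmchin{\uns}$: the inequality $|\vs|^{-1}\leq K/\dist$ from~\eqref{upperlowerboundss}, which absorbs the terms $c\vs^{-1}\partial_\theta\phi$ and $\frac{\coef+1}{b}\vs^{-1}(\hat H(\psiin^\uns+\phi,\delta)-\hat H(\psiin^\uns,\delta))$; the estimate $\delta\leq K/\dist$ from~\eqref{conddist-inner}, which handles the $2\param\phi$ contribution since $\param=\mathcal{O}(\delta)$; and the product bound $\delta^2|\vs|^2\leq K\delta^{2\gamma}$, obtained from $|\vs|\leq K\delta^{\gamma-1}$ in~\eqref{upperlowerboundss}. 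The main obstacle will be the linear term $\coef\delta^2\vs^2\partial_\vs\phi$, since $\delta^2\vs^2$ is \emph{not} pointwise small on $\Dmchin{\uns}$; here one must argue carefully that
\[
\|\delta^2\vs^2\partial_\vs\phi\|_{3,\ost}^\uns\leq K\delta^{2\gamma}\|\partial_\vs\phi\|_{3,\ost}^\uns\leq K\delta^{2\gamma}\llfloor\phi\rrfloor_{2,\ost}^\uns,
\]
which together with $\delta\leq K/\dist$ yields the required $K/\dist$ factor (the analogous bookkeeping applying to the cross-product $\vs^2\hat H(\psi,\delta)\partial_\vs\psi$). All remaining pieces carry an explicit $\vs^{-1}$ or $\delta$ factor and give the $1/\dist$ gain straightforwardly; collecting them and applying $\Gmatch^\uns$ closes the argument.
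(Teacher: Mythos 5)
Your overall route is the paper's: use the linearity of $\Gmatch^\uns$ to reduce the claim to $\|\Mmatch^\uns(\phi)-\Mmatch^\uns(0)\|_{3,\ost}^\uns\leq K\dist^{-1}\llfloor\phi\rrfloor_{2,\ost}^\uns$, expand $\Minner(\psiin^\uns+\phi,\delta)-\Minner(\psiin^\uns,\delta)$ term by term, linearize $\hat F,\hat G,\hat H$ by the mean value theorem, and extract the factor $\dist^{-1}$ from the weights on $\Dmchin{\uns}$. There is, however, one genuine gap. The splitting of $\vs^2\big(2b\psi+\hat H(\psi,\delta)\big)\partial_\vs\psi$ does \emph{not} produce only terms linear in $\phi$: it leaves the quadratic term $2b\vs^2\phi\,\partial_\vs\phi$. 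That term carries no explicit $\vs^{-1}$ and no $\delta$, so none of the three sources of smallness you list applies to it; the best weighted estimate it admits is
$$
\|2b\vs^2\phi\,\partial_\vs\phi\|_{3,\ost}^\uns\leq K\,\|\phi\|_{2,\ost}^\uns\,\|\partial_\vs\phi\|_{3,\ost}^\uns\leq K\big(\llfloor\phi\rrfloor_{2,\ost}^\uns\big)^2,
$$
and the only way to turn this into $K\dist^{-1}\llfloor\phi\rrfloor_{2,\ost}^\uns$ is to invoke the hypothesis $\llfloor\phi\rrfloor_{2,\ost}^\uns\leq C/\dist$ a second time, as quadratic smallness. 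In your write-up that hypothesis is used only to keep $\rho(\psiin^\uns+t\phi,\vs,\delta)\neq0$, and your closing assertion that all remaining pieces carry an explicit $\vs^{-1}$ or $\delta$ factor is false for this piece (note also that one cannot trade $\|\phi\|_{2,\ost}^\uns$ for $\|\phi\|_{3,\ost}^\uns$ here, since on $\Dmchin{\uns}$ one only has $|\vs|\leq K\delta^{\gamma-1}$, which goes the wrong way). This is precisely why the paper's proof separates the two terms $\coef\delta^2\vs^2\partial_\vs\phi$ and $2b\vs^2\phi\,\partial_\vs\phi$ from the remainder and treats them individually, and it is the reason the smallness hypothesis on $\phi$ appears in the statement of the lemma at all. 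The rest of your bookkeeping (the $\delta^2\vs^2\leq K\delta^{2\gamma}$ bound, the $\vs^{-1}$-weighted terms, the $2\param\phi$ contribution) matches the paper's argument.
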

\begin{proof}
 Recall that
$$
\Delta \Mmatch^\uns(\phi)=\Gmatch^\uns(\Mmatch^\uns(\phi))-\Gmatch^{\uns}(\Mmatch^\uns(0))=\Gmatch^\uns(\Mmatch^\uns(\phi)-\Mmatch^\uns(0)).
$$
Thus, by item~\ref{itempropsGmatch} in Section~\ref{subsec:Banach-match} it is sufficient to prove that:
\begin{equation}\label{boundnormMmatch0}
\|\Mmatch^\uns(\phi)-\Mmatch^\uns(0)\|_{3,\ost}^\uns\leq \frac{K}{\dist}\llfloor\phi\rrfloor_{2,\ost}^\uns.
\end{equation}
By definition \eqref{defMmatch} of $\Mmatch^\uns$, one has:
$$
\Mmatch^\uns(\phi)-\Mmatch^\uns(0)=\Minner(\psiin^\uns+\phi,\delta)-\Minner(\psiin^\uns,\delta).
$$
Using definition \eqref{defopMinner} of $\Minner$, one obtains:
\begin{align*}\Mmatch^\uns (\phi)-\Mmatch^\uns(0)=&c\vs^{-1}\partial_\theta\phi+\coef\delta^2\vs^2\partial_\vs\phi+2\param\delta\phi+\hat F(\psiin^\uns+\phi,\delta)-\hat F(\psiin^\uns,\vs,\delta)\\
&+\frac{\coef+1}{b}\vs^{-1}\left(\hat H(\psiin^\uns+\phi,\vs,\delta)-\hat H(\psiin^\uns,\vs,\delta)\right)\\
&-\left(\hat G(\psiin^\uns+\phi,\delta)-\hat G(\psiin^\uns,\delta)\right)\partial_\theta(\psiin^\uns+\phi)\\
&-\hat G(\psiin^\uns,\delta)\partial_\theta\phi +\vs^2\left(2b\psiin^\uns+\hat H(\psiin^\uns,\delta)\right)\partial_\vs\phi\\
&+\vs^2\left(2b\phi+\hat H(\psiin^\uns+\phi,\delta)-\hat H(\psiin^\uns,\delta)\right)\partial_\vs(\psiin^\uns+\phi).
\end{align*}
Let us denote:
$$\mathcal{R}(\phi):=\Mmatch^\uns(\phi)-\Mmatch^\uns(0)-\coef\delta^2\vs^2\partial_\vs\phi-2b\vs^2\phi\partial_\vs\phi.$$
Using properties in Section~\ref{subsec:Banach-match}, that $\llfloor\psiin^\uns\rrfloor_{3,\ost}^\uns\leq K$ and that
$\delta\leq K|\vs|^{-1}$ for $\vs\in\Dmchin{\uns}$, one obtains that
$\|\mathcal{R}(\phi)\|_{4,\ost}^\uns\leq K\llfloor\phi\rrfloor_{2,\ost}^\uns.$
Then,
\begin{equation}\label{boundN}
\|\mathcal{R}(\phi)\|_{3,\ost}^\uns\leq \frac{K}{\dist}\|\mathcal{R}(\phi)\|_{4,\ost}^\uns\leq \frac{K}{\dist}\llfloor\phi\rrfloor_{2,\ost}^\uns.
\end{equation}

Now we just need to note that since $|\vs|\leq K\delta^{\gamma-1}$:
\begin{equation}\label{boundsmallterm}
\|\coef\delta^2\vs^2\partial_\vs\phi\|_{3,\ost}^\uns\leq K\delta^{2\gamma}\llfloor\phi\rrfloor_{2,\ost}^\uns.
\end{equation}
Finally, since by assumption $\llfloor\phi\rrfloor_{2,\ost}^\uns\leq C/\dist$, then:
\begin{equation}\label{boundquadraticterm}
\|2b\vs^2\phi\partial_\vs\phi\|_{3,\ost}^\uns\leq K\left(\llfloor\phi\rrfloor_{2,\ost}^\uns\right)^2\leq \frac{K}{\dist}\llfloor\phi\rrfloor_{2,\ost}^\uns.
\end{equation}

Bounds~\eqref{boundN}, \eqref{boundsmallterm} and~\eqref{boundquadraticterm} yield~\eqref{boundnormMmatch0}, and so the proof is finished.
\end{proof}

\begin{proof}[End of the proof of Proposition~\ref{propmatching-inner2D}]
By Lemmas~\ref{lemboundPhi} and~\ref{lemboundGmatchMmatch0} we have that:
$$
\llfloor\Phi^{\uns}+\Gmatch^\uns(\Mmatch^\uns(0))\rrfloor_{2,\ost}^\uns\leq \|\Phi^{\uns}\|_{2,\ost}^\uns+\|\Gmatch^\uns(\Mmatch^\uns(0))\|_{2,\ost}^\uns
\leq K\big(\delta^{1-\gamma}+\delta\big) \leq K\delta^{1-\gamma}.
$$
In addition, using equation~ \eqref{rewritepsi1-idG-intro}, we have that
$$
\llfloor \psi_1^\uns \rrfloor_{2,\ost}^{\uns} - \llfloor \Delta \Mmatch^\uns (\psi_1^\uns) \rrfloor_{2,\ost}^{\uns} \leq K\delta^{1-\gamma}.
$$
Then, since, $\llfloor \psi_1^\uns \rrfloor_{2,\ost}^{\uns} \leq K\dist^{-1}$,  as we pointed out in~\eqref{prioriboundpsi1}, one obtains
by applying Lemma~\ref{lemnormMmatchtilde}:
$$
\llfloor \psi_1^\uns \rrfloor_{2,\ost}^{\uns} (1-K\dist^{-1}) \leq K\delta^{1-\gamma}
$$
so that Proposition \ref{propmatching-inner2D} is proved.
\end{proof}
\begin{remark}
Notice that $\Delta \Mmatch^\uns (\psi_1^{\uns})$ can be expressed as $\mathcal{B}(s) \psi_1^{\uns}$, that is a linear operator. Since $\llfloor \psi_1^\uns \rrfloor_{2,\ost}^{\uns}\leq K\dist^{-1}$, $\mathcal{B}(s)$ has norm strictly less than $1$ so that the linear operator $\textrm{Id} - \mathcal{B}(s)$ is invertible and then we can express $\psi_1^{\uns}$ as
$\psi_1^\uns = \big (\textrm{Id} - \mathcal{B}(s)\big )^{-1} \Phi^{\uns}+\Gmatch^\uns(\Mmatch^\uns(0))$.
\end{remark}
\cleardoublepage
\bibliography{references}

\def\cprime{$'$}
\begin{thebibliography}{BFGS12}

\bibitem[Bal06]{BaldomaInner}
I.~Baldom{\'a}.
\newblock {The inner equation for one and a half degrees of freedom rapidly
  forced Hamiltonian systems}.
\newblock {\em Nonlinearity}, 19(6):1415--1446, 2006.

\bibitem[BCS13]{BCS13}
I.~Baldom{\'a}, O.~Castej{\'o}n, and T.~M. Seara.
\newblock {Exponentially small heteroclinic breakdown in the generic Hopf-Zero
  singularity}.
\newblock {\em Journal of Dynamics and Differential Equations}, 25(2):335--392,
  2013.

\bibitem[BCS16]{BCS16a}
I.~Baldom{\'a}, O.~Castej{\'o}n, and T.~M. Seara.
\newblock {Breakdown of a 2D heteroclinic connection in the Hopf-zero
  singularity (I)}.
\newblock {\em preprint}, 2016.

\bibitem[BFGS12]{BFGS12}
I.~Baldom{\'a}, E.~Fontich, M.~Guardia, and T.~M. Seara.
\newblock {Exponentially small splitting of separatrices beyond {M}elnikov
  analysis: rigorous results}.
\newblock {\em J. Differential Equations}, 253(12):3304--3439, 2012.

\bibitem[BM12]{BM12}
I.~Baldom{\'a} and P.~Mart{\'i}n.
\newblock {The inner equation for generalized standard maps}.
\newblock {\em SIAM J. Appl. Dyn. Syst.}, 11(3):1062--1097, 2012.

\bibitem[BS08]{BaSe08}
I.~Baldom{\'a} and T.~M. Seara.
\newblock {The inner equation for generic analytic unfoldings of the Hopf-zero
  singularity}.
\newblock {\em Discrete Contin. Dyn. Syst., Ser. B}, 10(2-3):323--347, 2008.

\bibitem[Cas15]{CastejonPhDThesis}
O.~Castej{\'o}n.
\newblock {\em {Study of invariant manifolds in two different problems: the
  Hopf-zero singularity and neural synchrony}}.
\newblock PhD thesis, 2015.

\bibitem[DIKS13]{diks}
F.~Dumortier, S.~Ib{\'a}{\~n}ez, H.~Kokubu, and C.~Sim{\'o}.
\newblock {About the unfolding of a {H}opf-zero singularity}.
\newblock {\em Discrete Contin. Dyn. Syst.}, 33(10):4435--4471, 2013.

\bibitem[{\'E}ca81a]{Ecalle81a}
J.~{\'E}calle.
\newblock {\em {Les fonctions r{\'e}surgentes. {T}ome {I}}}, volume~5 of {\em
  {Publications Math{\'e}matiques d'Orsay 81 [Mathematical Publications of
  Orsay 81]}}.
\newblock Universit{\'e} de Paris-Sud D{\'e}partement de Math{\'e}matique,
  Orsay, 1981.
\newblock Les alg{\`e}bres de fonctions r{\'e}surgentes. [The algebras of
  resurgent functions], With an English foreword.

\bibitem[{\'E}ca81b]{Ecalle81b}
J.~{\'E}calle.
\newblock {\em {Les fonctions r{\'e}surgentes. {T}ome {II}}}, volume~6 of {\em
  {Publications Math{\'e}matiques d'Orsay 81 [Mathematical Publications of
  Orsay 81]}}.
\newblock Universit{\'e} de Paris-Sud D{\'e}partement de Math{\'e}matique,
  Orsay, 1981.
\newblock Les fonctions r{\'e}surgentes appliqu{\'e}es {\`a} l'it{\'e}ration.
  [Resurgent functions applied to iteration].

\bibitem[Gel97]{Gel97-2}
V.~G. Gelfreich.
\newblock {Reference systems for splittings of separatrices}.
\newblock {\em Nonlinearity}, 10(1):175--193, 1997.

\bibitem[GS01]{GS01}
V.~G. Gelfreich and D.~Sauzin.
\newblock {Borel summation and splitting of separatrices for the {H}{\'e}non
  map}.
\newblock {\em Ann. Inst. Fourier (Grenoble)}, 51(2):513--567, 2001.

\bibitem[LS09]{larrealseara}
O.~J. Larreal and T.~M. Seara.
\newblock {C{\'a}lculos num{\'e}ricos de la escisi{\'o}n exponencialmente
  peque{\~n}a de una conexi{\'o}n heterocl{\'i}nica en la singularidad {H}opf
  {Z}ero}.
\newblock In {\em {Actas del XXI Congreso de Ecuaciones Diferenciales y
  Aplicaciones, XI Congreso de Matem{\'a}tica Aplicada (electronic)}}, pages
  1--8. Ediciones de la Universidad de Castilla--La Mancha, 2009.

\bibitem[MSS11]{MSS11b}
Pau Mart{\'i}n, David Sauzin, and Tere~M. Seara.
\newblock {Resurgence of inner solutions for perturbatinos of the {M}c{M}illan
  map}.
\newblock {\em Discrete Contin. Dyn. Syst.}, 31(2):165--207, 2011.

\bibitem[OSS03]{OSS03}
C.~Oliv{\'e}, D.~Sauzin, and T.~M. Seara.
\newblock {Resurgence in a {H}amilton-{J}acobi equation}.
\newblock In {\em {Proceedings of the {I}nternational {C}onference in {H}onor
  of {F}r{\'e}d{\'e}ric {P}ham ({N}ice, 2002)}}, volume~53, pages 1185--1235,
  2003.

\bibitem[Sto64]{St64}
G.~G. Stokes.
\newblock {On the discontinuity of arbitrary constants which appear in
  divergent developments}.
\newblock {\em Trans. Camb. Phil. Soc.}, 10:106--128, 1864.

\bibitem[Sto02]{St02}
G.~G. Stokes.
\newblock {On the discontinuity of arbitrary constants that appear as
  multipliers of semi-convergent series}.
\newblock {\em Acta Math.}, 26(1):393--397, 1902.
\newblock A letter to the editor.

\end{thebibliography}
\bibliographystyle{alpha}

\end{document}